\newcommand\fnote[1]{\captionsetup{font=footnotesize}\caption*{#1}}
\definecolor{midgray}{gray}{0.5}
\numberwithin{equation}{section}
\theoremstyle{remark} 
\newtheorem*{rem}{Remark} 
\theoremstyle{plain} 
\newtheorem{thm}{Theorem}[section] 
\newtheorem{cor}[thm]{Corollary}
\newtheorem{prop}[thm]{Proposition}
\newtheorem{lem}[thm]{Lemma} 
\theoremstyle{definition} 
\newtheorem{deff}[thm]{Definition}
\newtheorem{hyp}[thm]{Hypothesis}
\newtheorem{hyps}[thm]{Hypothesis}
\newtheorem{remark}[thm]{Remark}
\newtheorem{example}[thm]{Example}
\begin{document}

\title[Multivariate Hawkes processes on inhomogeneous random graphs]
{Multivariate Hawkes processes on inhomogeneous random graphs}

\author{ Zo\'e \textsc{Agathe-Nerine}}
\address{ Universit\'e de  Paris, Sorbonne Paris Cité, Laboratoire MAP5, UMR CNRS 8145 \& FP2M, CNRS FR 2036. E-mail: \href{mailto: zoe.agathe-nerine@u-paris.fr}{zoe.agathe-nerine@u-paris.fr}}

\date{}

\maketitle

\begin{abstract} 
We consider a population of $N$ interacting neurons, represented by a multivariate Hawkes process: the firing rate of each neuron depends on the history of the connected neurons. Contrary to the mean-field framework where the interaction occurs on the complete graph, the connectivity between particles is given by a random possibly diluted and inhomogeneous graph where the probability of presence of each edge depends on the spatial position of its vertices. We address the well-posedness of this system and Law of Large Numbers results as $N\to\infty$. A crucial issue will be to understand how spatial inhomogeneity influences the large time behavior of the system.
\end{abstract}

\noindent {\sc {\bf Keywords.}} Multivariate nonlinear Hawkes processes, Mean-field systems, Neural networks, Spatially extended system, Random graph, Graph convergence.\\
\noindent {\sc {\bf AMS Classification.}} 60F15, 60G55, 44A35,92B20.


\section{Introduction}

\subsection{Biological and mathematical context}

Neurons are cells specialised in the reception, integration and transfer of information in the brain. A propagating electrical signal is transmitted from a neuron to the others in terms of all-or-none emission of action potential also called \emph{spike} which is a stereotyped phenomenon. More precisely, neurons possess a permeable membrane which allows ion exchanges. Without stimulus, the difference of respective ion concentrations induces a voltage gradient called resting potential. This potential evolves depending on the information received from other neurons: a presynaptic neuron emitting a spike leads to the release of neurotransmitters, and induces a change in the ions distribution around the membrane of post-synaptic neurons. If the stimulus reaches a sufficient threshold, the neuron generates an action potential, the \emph{synaptic integration}.\\

The progress of monitoring methods as MRI (Magnetic Resonance Imaging) and ECG (Electrocardiography) since the 50's led to a better understanding of the physiology of a neuron.  As a result, the implementation of mathematical models started with the Hodgkin-Huxley model \cite{Hodgkin1952} (in 1952) describing the evolution of the membrane potential in terms of a system of four ODEs, further simplified in two equations by FitzHugh \cite{FitzHugh1961} and Nagumo \cite{Nagumo1962} (in 1962). 

Stochasticity is intrinsic to the neuronal activity: noise in neuronal systems may come from different sources. To name a few, randomness accounts for the emergence of spontaneous spikes \cite{fatt1952spontaneous}, failed propagation \cite{smith1980mechanisms}, and the stochastic opening and closing of the ion channels (the probability of the channel being open or closed depends on the membrane potential). Stochasticity is also present at the scale of a whole population in the large variability  of synaptic connections between neurons. From a mathematical perspective, this naturally led to diffusion models: mean-field Hodgkin-Huxley and FitzHugh-Nagumo's models in \cite{Baladron2012}, mean-field Piecewise Deterministic Markov Processes (PDMP) in \cite{DeMasi2014,Cormier2020}. Another popular model is the integrate-and-fire dynamics, first introduced in the seminal work of Lapicque \cite{lapique1907}, and still studied mathematically, as e.g. in \cite{Delarue2015}.

The previous type of modeling of the membrane potential typically leads to non-linear Fokker–Planck equations whose large time behavior is often hard to determine analytically. A usual approach in this context (that we follow here) yields more tractable and explicitely solvable models: as spikes are stereotyped, all the information is coded in the duration of time between the spikes. Hence we model the activity of a neuron by a point process where each point represents the time of a spike. In this context, the framework of \emph{Hawkes processes} is particularly relevant since it can account for the dependence of the activity of a neuron on the past of the whole population: the spike of one neuron can trigger others spikes. Hawkes processes have been first introduced in \cite{HAWKES1971} in 1971 to model earthquakes, and have been thoroughly studied since (with applications for instance to seismology  \cite{Ogata1988}). It is not possible to quote the vast mathematical literature on Hawkes  processes since the seminal works of \cite{HAWKES1971, Hawkes1974, bremaud1996stability}, we refer nonetheless to \cite{delattre2016,Hodara2017,Chevallier2017} and references therein.

In this paper, the main issue we concentrate on is the structure of interaction between neurons. There is indeed experimental evidences that neurons are spatially organized \cite{Bosking1997, Mountcastle1997}. The first approach, where this spatial structure is missing, assumes a complete graph of interaction (mean-field framework). Mean-field analysis goes back to \cite{McKean,Sznitman1989}, originally for diffusion models as in \cite{Baladron2012}. The literature on mean-field analysis is huge and does not restrict to neuroscience applications (see the following references as far as neurosciences are concerned: integrate and fire models \cite{Delarue2015}, PDMP \cite{DeMasi2014,Cormier2020}). As for mean-field Hawkes processes, similar models have been considered in \cite{delattre2016, Heesen2021, Hodara2017} and expanded with additional features (age dependence in  \cite{Chevallier2017, Raad2020}, inhibition in \cite{Costa2020, duval:hal-03233710, raad2020stability}). What makes the mean-field analysis for Hawkes processes particularly tractable is that the large population limit is given in terms of an inhomogeneous Poisson process whose intensity solves a convolution equation \cite{delattre2016}. 

The spatial organization in the brain has been originally analysed mathematically from a phenomenological perspective: we may refer to the celebrated neural field equation  \cite{Wilson1972,Amari1977,Bressloff2011}, which has given a macroscopic description of excitable units with non-local interaction. Several works have extended the mean-field framework to take into account the presence of a macroscopic spatial structure in the interaction (originally for diffusion models \cite{Touboul2014, Luon2016, Budhiraja2016}, as well as for Hawkes processes \cite{Ditlevsen2017,CHEVALLIER20191}). More specifically, \cite{CHEVALLIER20191} has given a mesoscopic interpretation of the neural field equation in terms of the limit of spatially extended Hawkes processes interacting through a mesoscopic spatial kernel.

The main contribution of this paper is to go further and provide a microscopic interpretation of this spatial structure in terms of random graphs. We assume that the interaction between neurons is given by a possibly inhomogeneous and diluted graph, where  the probability of presence of an edge depends on the positions of its vertices. The main example that we have in mind concerns the class of $W$-random graph (see \cite{diaconis2007graph,lovasz2012large, janson2011graphons, Borgs2018, Borgs2019}), that includes homogeneous Erd\"os R\'enyi graphs. The only previous works so far on particle systems with similar interaction address the case of diffusions. Law of Large Numbers (LLN) and Large Deviations results on homogeneous Erd\"os R\'enyi graphs have been considered in \cite{DelattreGL2016, Coppini2019, Oliveira2019} and further extended to the inhomogenous case in \cite{bet2020weakly, Luon2020, bayraktar2021graphon, Medvedev2013} on a bounded time interval. The behavior of such systems on a time scale no longer bounded (but may depend on the size of the population) is more difficult, and remains largely open so far (in this direction, see \cite{coppini2019long}). The present work is, to the best of our knowledge, the first paper to address similar issues to Hawkes processes. We address here quenched LLN results on bounded time interval and large time asymptotics of the limiting process. The behavior of the system on unbounded time scale is a working progress. Note also that all the existing works consider graphs with interaction of diverging degrees.  The case with sparse interaction (see \cite{Oliveira2020,lacker2021local} for diffusions) remains open for Hawkes processes and will be the object of future works.

\subsection{Our model}

The aim of this paper is to describe the behavior in large population and large time of a network of particles interacting on a spatially structured random graph. Let $N$ be the size of the population, consider the multivariate Hawkes process $\left( Z_1^{(N)}(t), \cdots, Z_N^{(N)}(t) \right)_{t>0}$: for $i=1\cdots N$, the $i$th neuron is located on $x_i \in I$ where $I\subset\mathbb{R}^d$ represents the spatial domain of the neuron (suppose e.g. that $I=[0,1]$ or $I=\mathbb{R}^d$), $Z_i^{(N)}(t)$ counts the number of spikes during the time interval $[0,t]$. Its intensity at time $t$ conditioned on the past $[0, t)$ is given by
\begin{equation}\label{eq:def_lambdaiN_intro}
\lambda_i^{(N)}(t)= f\left(u_0(t,x_i)+\dfrac{1}{N}\sum_{j=1}^N w_{ij}^{(N)}\int_0^{t-} h(t-s) dZ_j^{(N)}(s)\right).
\end{equation}
Here, $f ~:~  \mathbb{R}  \longrightarrow \mathbb{R}_+$ represents the synaptic integration, $u_0~:~  \mathbb{R}_+ \times I \longrightarrow \mathbb{R}$ a spontaneous activity of the neuron, $h~:~  \mathbb{R}_+ \longrightarrow \mathbb{R}$ a memory function which models how a past jump of the system affects the present intensity. The novelty here is  $w_{ij}^{(N)}$, representing the random inhomogeneous interaction between the neurons $i$ and $j$ that depends on their positions $x_i$ and $x_j$. We refer to Section \ref{S:system} for precise definitions. 

We study the behavior of the process $\left( Z_1^{(N)}(t), \cdots, Z_N^{(N)}(t) \right)_{t>0}$ as $N\to \infty$ and $t\to\infty$. The large population convergence is considered for a fixed realization of the graph (quenched model). Its limit is described in terms of an inhomogeneous Poisson process whose intensity involves the macroscopic spatial structure of the graph. A second aspect of the present work of independent interest will be to analyse the long time dynamics of the macroscopic process. We generalise the phase transition already observed for mean-field linear Hawkes processes \cite{delattre2016}. An important issue will be to understand how the inhomogeneity of the graph influences the long time dynamics. This will be illustrated by different examples and simulations.

\subsection{Organisation of the paper}

After introducing some notation, we start in Section \ref{S:system} by defining formally the process of interest \eqref{eq:def_ZiN}. The well-posedness of such process is treated by Proposition \ref{prop:exis_H_N}. We study the large population behavior of the process $\left( Z_1^{(N)}(t), \cdots, Z_N^{(N)}(t) \right)_{t>0}$ in Section \ref{S:sys-lim}. We show, under suitable hypotheses on the parameters, that the behavior of a neuron located in $x\in I$ within an infinite population is described by an intensity $\lambda(\cdot,x)$ solving
\begin{equation}\label{eq:def_lambdabarre}
\lambda(t,x)= f\left(u_0(t,x)+\int_{I} W(x,y)\int_0^t h(t-s) \lambda(s,y)ds~ \nu(dy)\right).
\end{equation}
Here, $ W ~:~ I \times I \longrightarrow \mathbb{R}_+$ is seen as the limit interaction kernel, and $\nu$, probability measure on $I$ describes  the macroscopic distribution of the positions. Well-posedness and regularity of \eqref{eq:def_lambdabarre} is considered in Theorem \ref{thm:existence_lambda}. In Section \ref{S:conv}, we study the behavior of the process  \eqref{eq:def_ZiN} in large population (Theorems  \ref{thm:cvg_0} and \ref{thm:cvg-sup_0}). The behavior of the empirical measure and respectively the spatial profile (Definition \ref{def:spatial_prof}) is analysed in Section \ref{SS:conv_mes_emp} (resp. Section \ref{S:spatial_prof}). In Section \ref{S:sys-lim_temps}, we study the behavior of \eqref{eq:def_lambdabarre} as $t\to\infty$ in the linear case, that is when $f(x)=x$. We extend the phase transition observed without spatial structure in \cite{delattre2016} to a general interaction kernel $W$. Finally in Section \ref{S:application}, we apply our results to concrete cases and present some simulations.  The proofs are gathered in the remaining Sections.

\subsection*{Acknowledgements}

This is a part of my PhD thesis. I would like to thank my PhD supervisors Eric \textsc{Lu\c con} and Ellen \textsc{Saada} for introducing this subject, for their useful advices and for their encouragement. This research has been conducted within the FP2M federation (CNRS FR 2036), and is supported by ANR-19-CE40-0024 (CHAllenges in MAthematical NEuroscience). I would like to warmly thank  two anonymous Referees for their useful and precise comments, which considerably helped to improve the paper.

\section{A system of \texorpdfstring{$N$}{N} interacting particles on a graph and its limit}\label{S:system}

\subsection{Notation}

For $n\in \mathbb{N}$, we write $\Vert \cdot \Vert$ for the usual Euclidian norm in $\mathbb{R}^n$, $\Vert \left(x_1,\cdots,x_n\right) \Vert = \left( \vert x_1 \vert ^2 + \cdots + \vert x_n \vert ^2 \right) ^\frac{1}{2}$. For $(E,\mathcal{A},\mu)$ a measured space, for a function $h$ in $L^p(E,\mu)$ with $p\geq 1$, we write $\Vert h \Vert_{E,\mu,p}:=\left( \int_E \vert h \vert^p d\mu \right)^\frac{1}{p}$ . When $p=2$, we write as $<f,g>_{E,\mu}=  \int_E fg d\mu$ the scalar product. Without ambiguity, we may omit the subscript $(E,\mu)$ or $\mu$. For instance, for $T>0$ and $h$ in $L^p([0,T])$, we write $\Vert h \Vert_{[0,T],p}:=\left( \int_0^T\vert h(t) \vert^p dt\right)^\frac{1}{p}$. When we omit the notation $[0,T]$, the integration is on $\mathbb{R}_+$. For a  real-valued bounded function $g$ on a space $E$,  we write $\Vert g \Vert _\infty := \Vert g \Vert _{E,\infty}=\sup_{x\in E} \vert g(x) \vert$. If $d$ is a distance on $E$, we denote by $ \Vert f \Vert_L = \sup_{x\neq y} \vert f(x) - f(y) \vert / d(x,y)$ the Lipchitz seminorm of a real-valued function $f$ on $E$. We also denote by $\Vert f \Vert_{BL}:= \Vert f \Vert_L + \Vert f \Vert_{E,\infty}$ the bounded Lipschitz norm of $f$. For $\mu$ and $\nu$ measures on $E$, we define
\begin{equation}\label{eq:def_dBL1}
d_{BL}(\mu,\nu):= \sup_{g, \Vert g \Vert_{BL}\leq 1} \left| \int_E g\left(d\mu - d\nu\right)\right|.
\end{equation}
We denote by $\mathbb{D}\left([0,T],\mathbb{N}\right)$ the space of c\`adl\`ag (right continuous with left limits) functions defined on $[0,T]$ and taking values in $\mathbb{N}$. For any integer $N\geq 1$, we denote by $\llbracket 1, N \rrbracket$ the set $\left\{1,\cdots,N\right\}$. For any distribution $\nu$, $X\sim\nu$ means that the random variable $X$ has distribution $\nu$. We denote by $\mathcal{U}(0,1)$ the uniform distribution on $[0,1]$, and for any $p\in [0,1]$, $\mathcal{B}(p)$ denotes the Bernoulli distribution with parameter $p$. 

\subsection{The model} 

\subsubsection{Definitions} 
The graph of interaction for \eqref{eq:def_lambdaiN_intro} is constructed as follows:
\begin{deff}\label{def:espace_proba_bb}
On a common probability space $\left(\widetilde{\Omega}, \widetilde{\mathcal{F}},\mathbb{P}\right)$, we consider a sequence $\left(\left(x_i^{(N)}\right)_{i \in \llbracket 1,N \rrbracket}\right)_{N\geq 1}$ of (possibly random) positions and a family of random variables $\xi^{(N)}=\left( \xi^{(N)}_{ij}\right)_{N\geq 1, i,j \in \llbracket 1,N \rrbracket}$ on $\widetilde{\Omega}$ such that under $\mathbb{P}$, for any $N\geq 1$ and  $i,j \in \llbracket 1,N \rrbracket$, conditioned on the positions $\left(x_1^{(N)},\ldots,x_N^{(N)}\right)$, $\xi^{(N)}$ is a collection of mutually independent Bernoulli random variables such that for $1\leq i,j \leq N$, $\xi_{ij}^{(N)}$ has parameter $W_N(x_i,x_j)$.  We assume that the particles in \eqref{eq:def_lambdaiN_intro} are connected according to the oriented graph $\mathcal{G}^{(N)}= \left( \left\{1,\cdots,N\right\} , \xi^{(N)}\right)$. For any $i$ and $j$, $\xi^{(N)}_{ij}=1$ encodes for the presence of the edge $j\to i$ and $\xi^{(N)}_{ij}=0$ for its absence.
\end{deff}
It is possible to construct via a coupling this graph simultaneously for all $N$: consider an infinite sequence of fixed positions in $I$ $\left(x_1,\ldots, x_N, \ldots\right)$ (that is, for each $N\geq 1, ~x_i^{(N)}=x_i$) and i.i.d. random variables $\left(U_{i,j}\right)_{i,j\in \mathbb{N}}\sim \mathcal{ U}[0, 1]$. Define $\xi_{ij}^{(N)}=\mathbf{1}_{\left\{ U_{i,j} \leq W_N(x_i,x_j)\right\}}$: conditioned on the positions $\left(x_1,\ldots,x_N\right)$, $\xi^{(N)}$ is a collection of independent variables and $\xi_{ij}^{(N)} \sim \mathcal{B}\left( W_N(x_i,x_j) \right)$. We now fix these sequences, and work on a filtered probability space $\left(\Omega,\mathcal{F},\left(\mathcal{F}_t\right)_{t\geq 0},\mathbf{P}\right)$ rich enough for all the following processes can be defined. We denote by $\mathbf{E}$ the expectation under $\mathbf{P}$ and $\mathbb{E}$ the expectation w.r.t. $ \mathbb{ P}$. In the following definitions, $N$ is fixed and we denote by $\underline{x}^{(N)}=\left(x_1^{(N)},\ldots,x_N^{(N)}\right)$ the vector of positions.
\begin{deff}\label{def:H2} Let $\left(\pi_i(ds,dz)\right)_{1\leq i \leq N}$ be a sequence of i.i.d. Poisson random measures on $\mathbb{R}_+\times \mathbb{R}_+$ with intensity measure $dsdz$.
A $\left(\mathcal{F}_t\right)$-adapted multivariate counting process  $\left(Z_1^{(N)}\left(t\right),...,Z_N^{(N)}\left(t\right)\right)_{t\geq 0}$ defined on $\left(\Omega,\mathcal{F},\left(\mathcal{F}_t\right)_{t\geq 0},\mathbf{P}\right)$ is called \emph{a multivariate Hawkes process} with the set of parameters
$\left(N,f,\xi^{(N)},W_N,u_0,h,\underline{x}^{(N)}\right)$ if $\mathbf{P}$-almost surely, for all $t\geq 0$ and $i \in \llbracket 1, N \rrbracket$:
\begin{equation}\label{eq:def_ZiN}
Z_i^{(N)}(t) = \int_0^t \int_0^\infty \mathbf{1}_{\{z\leq \lambda_i^{(N)}(s)\}} \pi_i(ds,dz)
\end{equation}
with $\lambda_i^{(N)}(t)$ defined by
\begin{equation}\label{eq:def_lambdaiN}
\lambda_i^{(N)}(t)= f\left(u_0(t,x_i^{(N)})+\dfrac{\kappa_i^{(N)}}{N}\sum_{j=1}^N \xi_{ij}^{(N)}\int_0^{t-} h(t-s) dZ_j^{(N)}(s)\right).
\end{equation}
\end{deff}
We denote by $\kappa_i^{(N)}\geq 0$ a dilution parameter which may depend on $\underline{x}^{(N)}$, and $\xi^{(N)}$. The idea behind this dilution parameter is that $\kappa_i^{(N)}\simeq \dfrac{N}{\mathbb{E}[deg_N(i)]}$ (where $deg_N (i)=\sum_{j=1}^N \xi_{ij}^{N}$ is the indegree of the particle $i$, that is, the number of edges incident to it), so that the interaction term remains of order 1 as $N\to\infty$. This means that the interaction in \eqref{eq:def_lambdaiN_intro} is fixed as $w_{ij}^{(N)}=\kappa_i^{(N)}\xi_{ij}^{(N)}$. 
\begin{remark}\label{rem:equivalence_def}
By Proposition 3 of \cite{delattre2016}, the process $\left(Z_1^{(N)}\left(t\right),\ldots,Z_N^{(N)}\left(t\right)\right)_{t\geq 0}$ defined by \eqref{eq:def_ZiN} is such that $\mathbf{P}$-almost surely, $Z_i^{(N)}$ and $Z_j^{(N)}$ do not jump simultaneously for all $i\neq j $, and for all $i \in \llbracket 1, N \rrbracket$, the compensator of $Z_i^{(N)}(t)$ is $\int_0^t \lambda_i^{(N)} (s) ds$ (see \cite{jacod2013limit} about compensators of increasing processes).
\end{remark}

\subsubsection{Existence}

We first provide well-posedness results of $\left(Z_1^{(N)},\ldots,Z_N^{(N)}\right)$ given by \eqref{eq:def_ZiN}. We require the following assumptions:
\begin{hyp}\label{hyp:existence_Zin} We suppose that $f$ is Lipschitz continuous with Lipschitz constant $L_f \geq 0$, and that either $f$ is nonnegative or that $f(x)=x$ with $u_{ 0}\geq 0$ and $h\geq 0$ (linear case). We also  suppose that $h$ is locally square integrable on $[0,+\infty)$, that $(t,x) \mapsto u_0(t,x)$ is continuous in $t$ and Lipschitz continuous in $x$ (uniformly in $t$) with Lipschitz constant $L_{u_0} \geq 0$. Moreover $u_{0}$ is supposed bounded uniformly in $(t,x)$ i.e, $\Vert u_0 \Vert_\infty<\infty$.
\end{hyp}
\begin{prop}\label{prop:exis_H_N} Under Hypothesis \ref{hyp:existence_Zin}, for a fixed realisation of the family $\left(\pi_i\right)_{1\leq i \leq N}$,  there exists a pathwise unique multivariate Hawkes process (in the sense of Definition \ref{def:H2}) such that for any $T<\infty$, $\sup_{t\in [0,T]} \sup_{1\leq i \leq N} \mathbf{E}[Z_i^{(N)}(t)] <\infty$.
\end{prop}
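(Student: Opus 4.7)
The plan is to adapt the standard Picard-iteration construction for multivariate nonlinear Hawkes processes (as in \cite{delattre2016}), working on the filtered probability space with fixed realizations of the graph $\xi^{(N)}$, the positions $\underline{x}^{(N)}$, and the Poisson measures $(\pi_i)_{1\leq i\leq N}$. I would set $Z^{(N),0}_i \equiv 0$ and define recursively
\begin{equation*}
Z^{(N),k+1}_i(t) := \int_0^t\!\int_0^\infty \mathbf{1}_{\{z\leq \lambda^{(N),k}_i(s)\}}\,\pi_i(ds,dz),
\end{equation*}
where $\lambda^{(N),k}_i$ is given by \eqref{eq:def_lambdaiN} using $(Z^{(N),k}_j)_j$. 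Hypothesis \ref{hyp:existence_Zin} ensures $\lambda^{(N),k}_i\geq 0$ in both the nonnegative and the linear cases, so each $Z^{(N),k}_i$ is a bona fide counting process.

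The first step is to establish uniform-in-$k$ moment bounds. Writing $m^k(t):=\max_{1\leq i\leq N} \mathbf{E}[Z^{(N),k}_i(t)]$, the compensator formula together with $|f(x)|\leq |f(0)|+L_f|x|$ yields the Volterra-type inequality
\begin{equation*}
m^{k+1}(t) \leq C_1 t + C_2 \int_0^t |h(t-s)|\, m^k(s)\,ds,
\end{equation*}
with $C_1,C_2$ depending only on $L_f$, $|f(0)|$, $\Vert u_0\Vert_\infty$ and $\max_i \kappa^{(N)}_i$. Since $h\in L^2_{loc}\subset L^1_{loc}$ by Cauchy--Schwarz, a standard Gronwall argument for Volterra equations gives $\sup_k m^k(T)<\infty$ for each $T>0$.

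The second step is the convergence of the iterates. Coupling $Z^{(N),k+1}_i$ and $Z^{(N),k}_i$ through the common $\pi_i$ and using $|\mathbf{1}_{z\leq a}-\mathbf{1}_{z\leq b}|=\mathbf{1}_{\min(a,b)<z\leq \max(a,b)}$, the total variation of the difference satisfies $\mathbf{E}[|Z^{(N),k+1}_i - Z^{(N),k}_i|_{\mathrm{TV}}(t)]\leq \int_0^t \mathbf{E}[|\lambda^{(N),k}_i(s)-\lambda^{(N),k-1}_i(s)|]\,ds$. Combined with the Lipschitz property of $f$ and the bound $|\int h\,d\mu|\leq \int |h|\,d|\mu|$ for signed measures $\mu$, one obtains for $\Psi^k(s):=\max_i \mathbf{E}[|\lambda^{(N),k}_i(s)-\lambda^{(N),k-1}_i(s)|]$ a recursive inequality $\Psi^k(s)\leq C_2\int_0^s|h(s-u)|\Psi^{k-1}(u)\,du$. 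Using a Bielecki-type weighted norm $\Vert\varphi\Vert_\eta:=\sup_{t\leq T} e^{-\eta t}|\varphi(t)|$ together with the Cauchy--Schwarz estimate $\int_0^T e^{-\eta t}|h(t)|\,dt\leq \Vert h\Vert_{[0,T],2}/\sqrt{2\eta}$, the associated convolution operator becomes a strict contraction for $\eta$ large enough. Therefore $\sum_k \Psi^k<\infty$ on $[0,T]$, the iterates form a Cauchy sequence in $L^1$, their limit $Z^{(N)}$ solves \eqref{eq:def_ZiN}, and the desired moment bound follows by Fatou from step one.

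Uniqueness is obtained by the same contraction argument applied to the difference of two putative solutions coupled via the common $(\pi_i)$. The main obstacle is the convergence step: one must carefully relate the total variation of signed differences of counting processes to the intensity discrepancies and close the Gronwall-type iteration under the weakened assumption $h\in L^2_{loc}$ (rather than bounded or $L^1$-small, as in simpler versions of the result); the Bielecki weighting is what allows this.
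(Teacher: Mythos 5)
Your proposal is correct and follows the same Picard-iteration skeleton as the paper's proof (which itself adapts \cite{delattre2016}), but it differs in the intermediate quantities and in the tool used to close the recursion. The paper works with the time-integrated total-variation quantities $\delta_{i,n}(t)=\mathbf{E}\bigl[\int_0^t |d(Z_{i,n+1}-Z_{i,n})|\bigr]$, aggregated by summation over $i$, and closes the Volterra recursion $\delta_{n+1}(t)\leq L_f W_N\int_0^t|h(t-s)|\delta_n(s)\,ds$ by invoking a specialized lemma (Lemma~\ref{lem:picard_gen}\textit{(ii)}) borrowed from \cite{delattre2016}. You instead track the pointwise intensity discrepancies $\Psi^k(s)=\max_i \mathbf{E}[|\lambda_i^{(N),k}(s)-\lambda_i^{(N),k-1}(s)|]$ and apply a Bielecki weighted-norm contraction. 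To make the pointwise recursion $\Psi^k(s)\leq C_2\int_0^s|h(s-u)|\Psi^{k-1}(u)\,du$ rigorous one needs the compensator identity (Campbell's formula) $\mathbf{E}\bigl[\int_0^{s-}|h(s-u)|\,d\Delta_j^{k-1}(u)\bigr]=\int_0^s|h(s-u)|\,\mathbf{E}[|\lambda_j^{(N),k-1}(u)-\lambda_j^{(N),k-2}(u)|]\,du$, valid here because $|h(s-\cdot)|$ is deterministic and the moment bounds from your Step~1 provide the needed integrability; the paper instead integrates over $s$ first and uses the deterministic Fubini of Lemma~\ref{lem:interversion_int}. Both routes are sound. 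Two minor remarks: working with $\max_i$ rather than $\sum_i$ is immaterial since $N$ is fixed; and the final claim that the Bielecki weighting is ``what allows'' $h\in L^2_{\mathrm{loc}}$ is slightly off — the paper's Lemma~\ref{lem:picard_gen} needs only $h\in L^1_{\mathrm{loc}}$, and your weighted-norm estimate could likewise be run with dominated convergence in place of Cauchy--Schwarz; the $L^2_{\mathrm{loc}}$ assumption is used elsewhere in the paper, not specifically for this existence step. The extraction of the limit process from the $L^1$-Cauchy property and the verification that it solves \eqref{eq:def_ZiN} is glossed over in your writeup; the paper spells out the Borel--Cantelli/subsequence argument exploiting that the total-variation distance between counting paths is integer-valued, so the iterates eventually stabilize a.s.\ along a subsequence.
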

The proof of Proposition \ref{prop:exis_H_N} will be given in Section \ref{S:proof_prop:exis_H_N}. 

\subsection{Large population limit process}\label{S:sys-lim}

We want to study the behavior of the process defined in Definition \ref{def:H2} when $N \to \infty$ on bounded time interval. After some heuristics, we show the well-posedness of the limit of the system \ref{eq:def_ZiN}.
 
\subsubsection{Heuristics}

In this paragraph, we motivate the proper limit for the particle system \eqref{eq:def_ZiN} as $N\to\infty$. A minimal requirement is that the empirical distribution of the positions $\nu^{(N)}:= \frac{1}{N}\sum_{i=1}^N \delta_{x_i}$ has itself a macroscopic limit $\nu$. We will consider below different scenarios under which such LLN holds. Concerning the macroscopic behavior of the graph, another minimal requirement is that in a way to define later on, the graph $\mathcal{G}^{(N)}$ given in Definition \ref{def:H2} converges towards a macroscopic interaction kernel $ W ~:~ I \times I \longrightarrow \mathbb{R}_+$. We refer to Section \ref{S:hyp_cvg} for more precise statements. Then, as $N \to \infty$, an informal LLN argument shows that the empirical mean in \eqref{eq:def_lambdaiN} becomes an expectation w.r.t both the candidate limit for $Z_i^{(N)}$ and w.r.t the macroscopic law $\nu$ of the positions: we can replace the sum in \eqref{eq:def_lambdaiN_intro} by the integral in \eqref{eq:def_lambdabarre}, the microscopic interaction term $w_{ij}$ in \eqref{eq:def_lambdaiN_intro} by the macroscopic term $W(x,y)$ in \eqref{eq:def_lambdabarre} (where $y$ describes the macroscopic distribution of the positions), and the past activity of the neuron $dZ_j^{(N)}(s)$ by its intensity in large population. Hence, the macroscopic description of a neuron at position $x\in I$ should be described in terms of its intensity $\lambda(t,x)$ solving \eqref{eq:def_lambdabarre}. This heuristics gives a limit process at position $x$ defined as an inhomogeneous Poisson point process with deterministic intensity $\lambda(\cdot,x)$ satisfying \eqref{eq:def_lambdabarre}. 

\subsubsection{Well-posedness of the macroscopic limit}

We propose a framework under which  \eqref{eq:def_lambdabarre} is well-posed, with more hypotheses on the regularity of $(f,u_0,W)$.
\begin{hyp}\label{hyp:existence_lambda_barre} Assume that the macroscopic indegree at position $x$ defined by 
\begin{equation}\label{eq:def_D(x)}
D(x)=\int_I W(x,y)\nu(dy)
\end{equation}
has a H\"older regularity and is uniformly bounded on $I$: there exist $C_w>0$ and $\vartheta \in ]0,1]$ such that
\begin{align}
&\int_I \vert W(x,y)-W(x',y) \vert \nu(dy) \leq C_w \Vert x - x'\Vert^\vartheta,\ x,x^{ \prime}\in I \quad \text{ and} \label{eq:hyp_W_pseudolip_theta}\\
&\sup_{x\in I} D(x) =: C_W^{(1)} < \infty.\label{eq:hyp_w_int_nu_y}
\end{align}
\end{hyp}
\begin{thm}\label{thm:existence_lambda}
Let $T>0$. Under Hypotheses \ref{hyp:existence_Zin} and \ref{hyp:existence_lambda_barre}, there exists a unique solution  $\lambda$ to \eqref{eq:def_lambdabarre} that is continuous and bounded on $\left[0,T\right]\times I$ and this solution is nonnegative. Moreover, there exists  $C_\lambda>0$ depending on $(f,u_0,W,h,\nu,T)$ such that for all $(t,x,z)\in [0,T]\times I \times I$,
\begin{equation} \label{eq:pt_fixe_lips_espace}
\vert \lambda(t,x) - \lambda(t,z) \vert \leq C_\lambda \left( \Vert x-z \Vert + \Vert x-z \Vert^\vartheta\right) =: C_\lambda \phi \left(\Vert x-z \Vert\right) .
\end{equation}
In the linear case $f(x)=x$, $u_{ 0}, h\geq0$, if $u_0$ is continuously differentiable in time and $\dfrac{\partial u_0}{\partial t}$ is bounded on $[0,T]\times I$, $h$ is continuous and piecewise continuously differentiable, then $\lambda$ is differentiable in time and
\begin{equation} \label{eq:pt_fixe_derivee}
\dfrac{\partial \lambda}{\partial t}(t,x) = \dfrac{\partial u_0}{\partial t} (t,x) + h(t) \int_I W(x,y) \lambda(0,y) \nu(dy) + \int_I \int_0^t h(t-s) W(x,y)\dfrac{\partial \lambda}{\partial t} (s,y) \nu(dy) ds,
\end{equation}
and $\left| \dfrac{\partial \lambda}{\partial t}\right| $ is bounded on $[0,T]\times I$.
\end{thm}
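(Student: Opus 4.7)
The plan is to prove existence, uniqueness and the stated bounds via a Picard-type fixed-point argument, then extract space-regularity from a direct difference estimate, and finally obtain time differentiability in the linear case by solving an auxiliary integral equation for the candidate derivative.

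First I would set up the operator $\Phi$ on the Banach space $C_b([0,T]\times I)$ by
\[
\Phi(\lambda)(t,x) := f\!\left(u_0(t,x)+\int_I W(x,y)\int_0^t h(t-s)\lambda(s,y)\,ds\,\nu(dy)\right).
\]
The boundedness and joint continuity of $\Phi(\lambda)$ follow from Hypothesis \ref{hyp:existence_Zin} (in particular $h$ locally $L^2$, hence locally $L^1$ by Cauchy--Schwarz on $[0,T]$), \eqref{eq:hyp_w_int_nu_y} and Lipschitz continuity of $f$. For two candidates $\lambda_1,\lambda_2$, the Lipschitz property of $f$ and \eqref{eq:hyp_w_int_nu_y} give
\[
\sup_{x\in I}\bigl|\Phi(\lambda_1)(t,x)-\Phi(\lambda_2)(t,x)\bigr|\;\le\; L_f\,C_W^{(1)}\int_0^t |h(t-s)|\,\sup_{y\in I}|\lambda_1(s,y)-\lambda_2(s,y)|\,ds.
\]
Iterating this Volterra-type inequality $n$ times (or, equivalently, using an exponentially weighted norm $\sup_{t,x}e^{-\beta t}|\lambda(t,x)|$ with $\beta$ large), one obtains a genuine contraction on $C_b([0,T]\times I)$, and Banach's theorem yields a unique fixed point $\lambda$. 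Non-negativity is automatic when $f\ge 0$; in the linear case ($f(x)=x$, $u_0,h\ge 0$), start the Picard iteration from $\lambda_0\equiv 0$ and observe $\Phi$ preserves non-negativity.

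For the space-regularity estimate \eqref{eq:pt_fixe_lips_espace}, I would subtract the equations for $\lambda(t,x)$ and $\lambda(t,z)$: only $u_0$ and $W(\cdot,y)$ depend on the spatial argument, so
\[
|\lambda(t,x)-\lambda(t,z)| \le L_f\Bigl[\,L_{u_0}\|x-z\|+\Lambda_T\,\|h\|_{[0,T],1}\int_I|W(x,y)-W(z,y)|\,\nu(dy)\Bigr],
\]
with $\Lambda_T:=\sup_{[0,T]\times I}\lambda<\infty$ the bound obtained in Step 1. Applying \eqref{eq:hyp_W_pseudolip_theta} directly gives \eqref{eq:pt_fixe_lips_espace}, with no Gr\"onwall needed at this stage.

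For the linear case, the candidate derivative $\mu(t,x)$ is identified by the change of variable $s\mapsto t-s$ inside $\int_0^t h(t-s)\lambda(s,y)\,ds=\int_0^t h(s)\lambda(t-s,y)\,ds$ and formal differentiation, yielding the linear Volterra equation
\[
\mu(t,x) = \partial_t u_0(t,x) + h(t)\!\int_I W(x,y)\lambda(0,y)\,\nu(dy) + \int_I W(x,y)\!\int_0^t h(t-s)\mu(s,y)\,ds\,\nu(dy).
\]
Under the extra smoothness on $u_0$ and $h$, the inhomogeneous term is bounded on $[0,T]\times I$, so the very same Picard argument as in Step 1 produces a unique bounded solution $\mu$, which also yields the uniform bound on $|\partial_t\lambda|$. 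To close the argument, I would set $\tilde\lambda(t,x):=\lambda(0,x)+\int_0^t\mu(s,x)\,ds$, use Fubini and the integral equation for $\mu$ to check that $\tilde\lambda$ satisfies \eqref{eq:def_lambdabarre}, and conclude by the uniqueness obtained in Step 1 that $\tilde\lambda=\lambda$, whence $\partial_t\lambda=\mu$ and formula \eqref{eq:pt_fixe_derivee} holds.

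The main obstacle is the contraction step: because $h$ is merely square-integrable, $\Phi$ is not a contraction in one shot for the plain sup-norm on $[0,T]\times I$, and one must either iterate the Volterra estimate or work with an exponentially weighted norm to obtain contractivity uniformly on $[0,T]$. A secondary technical point is the rigorous identification $\partial_t\lambda=\mu$: it is tempting but not direct to differentiate under the integral, and the cleanest route is the antiderivative trick above combined with uniqueness of \eqref{eq:def_lambdabarre}.
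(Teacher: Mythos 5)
Your proposal is correct and follows the same overall strategy as the paper for existence, uniqueness, boundedness and the spatial H\"older estimate: a Banach fixed-point argument on $\mathcal{C}_b([0,T]\times I)$ with contraction extracted by iterating a Volterra estimate, and then a direct difference estimate in the spatial variable (no Gr\"onwall needed, since the inner time-integral does not depend on the spatial argument) applied to the fixed point $\lambda$.

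Where you diverge is the linear-case differentiability step, and your route is arguably cleaner. The paper constructs two Picard sequences $\lambda_n\to\lambda$ and $\mu_n\to\mu$ uniformly, shows by integration by parts that $\partial_t\lambda_{n+1}$ satisfies the iteration for $\mu_{n+1}$ up to a small error, bounds $\sup_n\Vert\partial_t\lambda_n\Vert_\infty$ via Lemma~\ref{lem:picard_gen}, and concludes $\partial_t\lambda_n\to\mu$ uniformly so that $\partial_t\lambda=\mu$ by the classical interchange theorem for uniform convergence of derivatives. You instead solve the auxiliary linear Volterra equation for the candidate $\mu$ directly, then form the antiderivative $\tilde\lambda(t,x)=\lambda(0,x)+\int_0^t\mu(s,x)\,ds$ (with $\lambda(0,x)=u_0(0,x)$ in the linear case) and verify via Fubini that $\tilde\lambda$ solves \eqref{eq:def_lambdabarre}, concluding $\tilde\lambda=\lambda$ by the uniqueness already established. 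This avoids the two-sequence bookkeeping and the uniform-bound step entirely, at the cost of a short but explicit Fubini computation. Both routes are valid; yours gives a marginally more self-contained identification of $\partial_t\lambda=\mu$, while the paper's makes explicit the uniform estimates on $\partial_t\lambda_n$ that it reuses. One small point worth being careful about: you should confirm that $h$ being locally $L^2$ is enough to guarantee $\int_0^t h(t-s)\,ds$ is continuous in $t$ (so that the inhomogeneous term in the equation for $\mu$ is continuous); under the additional hypotheses assumed in the linear case ($h$ continuous, piecewise $C^1$) this is immediate.
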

Theorem \ref{thm:existence_lambda} will be proved in Section \ref{S:proof_thm:existence_lambda}. Note that Theorem \ref{thm:existence_lambda} provides the existence of a unique solution  $\lambda$ of \eqref{eq:def_lambdabarre} that is continuous on $\mathbb{R}_+\times I$ and locally bounded.
\section{Convergence of the model in large population}\label{S:conv}

\subsection{Coupling} 
From now on, $\lambda$ refers to the unique solution to \eqref{eq:def_lambdabarre}. To check that our heuristics about the large population behavior is correct, we introduce a suitable coupling between the process defined in \eqref{eq:def_ZiN} (at positions $x_i$) and a Poisson process with intensity $\lambda(\cdot, x_i)$ at the same position $x_i$.
\begin{deff}\label{def:couplageZZbarre}
For the family $\left(\pi_i\left(ds,dz\right)\right)_{1\leq i \leq N}$ of i.i.d. Poisson random measures on  $\mathbb{R}_+\times\mathbb{R}_+$ from Definition \ref{def:H2}, we construct for all $i$ in $\llbracket 1, N \rrbracket$:
\begin{equation}\label{eq:construction_lim_Zbarre_i}
 \overline{Z}_i(t) = \int_0^t \int_0^\infty \mathbf{1}_{\{z\leq \lambda(s,x_i)\}} \pi_i(ds,dz)
\end{equation}
with $\lambda$ satisfying \eqref{eq:def_lambdabarre}. Each process $\overline{Z}_i$ is an inhomogenous Poisson process with (deterministic) intensity $\lambda(\cdot,x_i)$, and as the family $\left(\pi_i\right)$ is independent, the processes $\left(\overline{Z}_i\right)_{i=1,\cdots,N}$ are also independent. 
\end{deff}

\subsection{Hypotheses}\label{S:hyp_cvg}

Regarding the behavior of the graph when $N\to \infty$, we use here the formalism of graph convergence developped in \cite{lovasz2012large} and introduce different norms on $I^2$. The key notion is to represent graphs in term of graphons, that are positive kernels defined on $I^2$. Note that we will not necessarily restrict ourselves to the symmetric case and bounded graphons.
\begin{deff}\label{def:norm_graph_mult}
Let $W$ be a $\mathbb{R}$-valued function defined on $I\times I$, where $I$ is endowed with some probability measure $\nu$. When the following terms are correctly defined, we write: 
\begin{align}
\Vert W \Vert_{\Box,\nu} :&= \sup_{S,T\subset I} \left| \int_{S\times T} W\left(x,y\right)\nu(dx)\nu(dy) \right|,\label{eq:cutnorm_graphon}\\
\Vert W \Vert_{\infty\to 1,\nu} :&= \sup_{\Vert g \Vert_{\infty}\leq 1} \int_I \left| \int_I W(x,y) g(y) \nu(dy) \right| \nu(dx),\label{eq:normeinf1_graphon}\\
\Vert W \Vert_{\infty\to \infty,\nu} :&= \sup_{\Vert g \Vert_{\infty}\leq 1} \sup_{x\in I} \left| \int_I W(x,y) g(y) \nu(dy) \right|\label{eq:normeinfinf_graphon}.
\end{align}
\end{deff}
These norms go back to the formalism of graph convergence introduced in \cite{lovasz2012large, diaconis2007graph} and further developed in \cite{Borgs2011,Borgs2018, Borgs2019} (and references therein). The last two norms can be seen as the norms of the linear operator $T_W:g\mapsto \left( x\longmapsto  \int_I W(x,y) g(y) \nu(dy) \right)$ when considering respectively $T_W : L^\infty(I,\nu) \to L^1(I,\nu)$ and $T_W : L^\infty(I,\nu) \to L^\infty(I,\nu)$. We also define the \emph{cut-distance} between two functions by
\begin{equation}\label{eq:def_cut_distance}
d_{\Box,\nu}\left(W_1,W_2\right)=\Vert W_1-W_2 \Vert_{\Box,\nu}.
\end{equation}
\begin{remark}\label{rem:equ_norm_graphon}
Lemma 8.11 of \cite{lovasz2012large} gives that $ \Vert \cdot \Vert_{\Box,\nu}$ and $ \Vert \cdot \Vert_{\infty\to 1,\nu}$ are equivalent: if $W$ is a function defined on $I^2$ with values in $\mathbb{R}$, then
\begin{equation}\label{eq:equ_norm_graphon_nu}
 \Vert W \Vert_{\Box,\nu} \leq \Vert W \Vert_{\infty\to 1,\nu} \leq 4 \Vert W \Vert_{\Box,\nu} .
\end{equation}
As $\Vert W \Vert_{1,\nu}:=\int_{I^2}\left| W(x,y)\right| \nu(dx)\nu(dy)$, we always have $\Vert W \Vert_{\Box,\nu}\leq \Vert W \Vert_{1,\nu}$.
\end{remark}
Usual representations of graphons consist in taking $I=[0,1]$ endowed with Lebesgue measure. We extend this definition to the general case where $\nu$ is a probability measure on $I$. To do this, we require the following assumption for the whole article.
\begin{hyp}\label{hyp:nu_AC/leb}
The probability measure $\nu$ is absolutely continuous w.r.t Lebesgue measure on $\mathbb{R}^d$.
\end{hyp}
\begin{lem}\label{lem:partition_I}
Under Hypothesis \ref{hyp:nu_AC/leb}, for any $N\geq 1$, there exists a partition  $\mathcal{P}_N:=\left(B_i^{(N)}\right)_{i=1,\cdots,N}$ of $I$ (and we use the notation $I=\bigsqcup_{i=1}^{N}B_i^{(N)}$) such that for all $i = 1, \cdots ,N$ , $\nu\left(B_i^{(N)}\right) = \dfrac{1}{N}$.
\end{lem}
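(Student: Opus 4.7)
My plan is to reduce the $d$-dimensional problem to one dimension by using the first-coordinate marginal of $\nu$. Since $\nu$ is absolutely continuous with respect to Lebesgue measure on $\mathbb{R}^d$, Fubini's theorem ensures that its marginal $\nu_1(A) := \nu(\{x \in I : x_1 \in A\})$ is absolutely continuous with respect to Lebesgue measure on $\mathbb{R}$. Its cumulative distribution function $F_1(t) := \nu_1((-\infty, t])$ is therefore continuous and non-decreasing, with limits $0$ at $-\infty$ and $1$ at $+\infty$.

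The construction of the partition is then immediate. By the intermediate value theorem, for each $i = 1, \ldots, N-1$ I pick $t_i \in \mathbb{R}$ with $F_1(t_i) = i/N$, set $t_0 = -\infty$ and $t_N = +\infty$, and define the "vertical slabs"
\[
B_i^{(N)} := \{x \in I \,:\, t_{i-1} < x_1 \leq t_i\}, \quad i = 1, \ldots, N.
\]
These sets clearly form a partition of $I$, and each has $\nu$-measure $F_1(t_i) - F_1(t_{i-1}) = 1/N$ by telescoping.

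There is no real obstacle here: the argument hinges only on continuity of $F_1$, which is the one-dimensional manifestation of the non-atomicity of $\nu$ ensured by Hypothesis \ref{hyp:nu_AC/leb}. A minor subtlety is that $F_1$ need not be strictly increasing (its density may vanish on intervals), so the $t_i$ are not unique, but any measurable choice works. As an alternative, one could invoke the classical fact (a consequence of Sierpinski's theorem) that any non-atomic probability measure on a standard Borel space admits measurable sets of arbitrary prescribed measure, and build the partition iteratively; the explicit quantile construction above is however more transparent and self-contained.
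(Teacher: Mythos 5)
Your proof is correct and follows essentially the same route as the paper: project $\nu$ onto the first coordinate, observe that the resulting marginal CDF $F_1$ is continuous by absolute continuity, and carve $I$ into vertical slabs at the $i/N$-quantiles. The only cosmetic difference is that you select explicit thresholds $t_i$ via the intermediate value theorem, whereas the paper writes the same slabs directly as preimages $F_1^{-1}\left(\left(\tfrac{i-1}{N},\tfrac{i}{N}\right]\right)\times\mathbb{R}^{d-1}$ intersected with $I$.
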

Without ambiguity, we will forget the upper index $^{(N)}$ and only write $\left(B_1,\cdots,B_N\right)$.

 \begin{proof}
Denote by $\nu^{(1)}$ the first marginal of $\nu$ that is absolutely continuous w.r.t. Lebesgue measure on $\mathbb{R}$. Let $F_{ 1}$ be its continuous probability distribution function. Then for all $i=1, \ldots, N$, defining $B_i:= \left( F_{ 1}^{ -1} \left( \left( \frac{ i-1}{ N}, \frac{ i}{ N}\right]\right) \times \mathbb{R}^{d-1} \right)\cap I$ gives the result.
 \end{proof}

For every weighted graph $\mathcal{G}$ with weights $\left( g_{ij} \right)$, we associate a step-function $W^{\mathcal{G}}$ constructed, upon this partition, as follows (see e.g. \cite{lovasz2012large,Borgs2011}):
 \begin{equation}\label{eq:graphonG}
W^{\mathcal{G}}(u,v)= \sum_{i=1}^N \sum_{j=1}^N g_{ij} \mathbf{1}_{B_i}(u) \mathbf{1}_{B_j}(v), \quad \left(u,v\right)\in I^2.
\end{equation}

\begin{deff}\label{def:graphs_G1}\
We denote by $\mathcal{G}_N^{(1)}$ the directed weighted graph with vertices $\left\{ 1, \cdots,N\right\}$ such that every edge $j\rightarrow i$ is present, and with weight $\kappa_i^{(N)} W_N (x_i,x_j)$.
\end{deff}
Here $\mathcal{G}_N^{(1)}$ represents the average version of the graph $\mathcal{G}^{(N)}$ (where $\xi_{ij}\sim\mathcal{B}\left(W_N(x_i,x_j)\right)$ has been replaced by $\mathbb{E}\left(\xi_{ij}\right)$), renormalized by the dilution coefficient $\kappa_i^{(N)}$. 
A key argument of Theorems \ref{thm:cvg_0} and \ref{thm:cvg-sup_0} will be to show that $\mathcal{G}^{(N)}$ and $\mathcal{G}_N^{(1)}$ are close as $N\to\infty$ through concentration arguments that require the following uniformity assumptions on $W_N$.
\begin{hyps}\label{hyp:conv_graph_concentration} We suppose that there exist $\kappa_N\geq 1$ and $w_N\in]0,1]$ such that:
\begin{align}
&\max_{i\in \llbracket 1, N \rrbracket} \left( \kappa_i^{(N)} \right)\leq \kappa_N, \label{eq:kappa}\\
&\max_{i,j \in \llbracket 1, N \rrbracket} \left( W_N(x_i,x_j) \right)\leq w_N,\label{eq:w_n} \\
&\dfrac{1}{\kappa_N}\leq w_N \leq 1,\quad \text{and asymptotically: }\label{eq:kappaw_n1}\\
&\kappa_N^2 w_N \underset{N\to\infty}{=} o\left( \dfrac{N}{\log(N)}\right) \quad \text{and} \quad \dfrac{\kappa_N}{N}\xrightarrow[N\to\infty]{} 0.\label{eq:kappaw_nlog}
\end{align}
We also suppose that there exists $C_W>0$ independent of $N$ such that
\begin{equation}\label{eq:control_i}
\sup_{i\in\llbracket 1, N \rrbracket} \dfrac{1}{N} \sum_{j=1}^N \kappa_i^{(N)} W_N(x_i,x_j)\leq C_W.
\end{equation}
\end{hyps}
To illustrate the above conditions, think of the case where $W_N=\rho_N$ is a constant with $\rho_N \xrightarrow[N\to\infty]{}0$. This corresponds to a diluted Erd\"os-R\'enyi graph random graph. In this case, we can take $w_N=\rho_N$ and $\kappa_i^{(N)}=\kappa_N=1/\rho_N$. Then \eqref{eq:kappaw_nlog} boils down to $\rho_N \gg \log(N)/N$. Inequality \eqref{eq:control_i} is the microscopic counterpart of \eqref{eq:hyp_w_int_nu_y}: we require that the weighted indegrees of vertices in $\mathcal{G}_N^{(1)}$ are uniformly bounded.

\subsection{Convergence}

We study the proximity between the particle systems \eqref{eq:def_ZiN} and its macroscopic limit \eqref{eq:construction_lim_Zbarre_i}. We show two theorems that require different sets of hypotheses on the parameter functions, under two main scenarios.
\begin{deff}\label{def:scenarios}
We consider two different frameworks for the choices of the positions:
\begin{enumerate}
\item \textbf{Random spatial distribution: } For $(\widetilde{x_1},\widetilde{x_2},\cdots, \widetilde{x_N}, \cdots)$ a random sequence of i.i.d. variables distributed according to $\nu$ on $I$, we set for all $N\geq 1$ $\underline{x}_N=\left(x_1,\cdots, x_N\right)$ as the lexicographic ordering of the $N$ first positions $(\widetilde{x_1},\widetilde{x_2},\cdots, \widetilde{x_N})$. We assume that there exists some $\chi>5$ such that $\Vert W \Vert _ {L^\chi(I^2,\nu \otimes \nu)}<\infty$.
\item \textbf{Deterministic regular distribution of the positions:} For every $N\geq 1$ and $1\leq i \leq N$, we set $x_i^{(N)}=i/N$ and $I=[0,1]$ endowed with $\nu(dx)=dx$. We assume that $W$ is piecewise continuous on $[0,1]^2$.
\end{enumerate}
\end{deff} 
The assumption $ \chi>5$ of Scenario (1) is required in Proposition \ref{prop:toolbox_iid}, as a sufficient hypothesis for a Borel-Cantelli argument.

\subsubsection{First case: convergence in average}
\begin{hyp}\label{hyp:cvg_graph} We suppose that the annealed graph $\mathcal{G}_N^{(1)}$ converges to $W$ for the cut-distance:
\begin{align}
d_{\Box,\nu}\left( W^{\mathcal{G}_N^{(1)}}, W \right) \xrightarrow[N\to\infty]{} 0, \text{ as well as} \label{eq:hyp_cut_cvg}\\
\sup_{j\in\llbracket 1, N \rrbracket} \dfrac{1}{N} \sum_{i=1}^N \kappa_i^{(N)} W_N(x_i,x_j)\leq C_W. \label{eq:control_j}
\end{align}
\end{hyp}
Note that \eqref{eq:hyp_cut_cvg} implies that $ \displaystyle \lim_{N\to\infty}\Vert W^{\mathcal{G}_N^{(1)}} - W \Vert_{\infty\to 1,\nu}=0 $ (see Remark \ref{rem:equ_norm_graphon}). The hypothesis \eqref{eq:control_j} differs from \eqref{eq:control_i} in the sense that \eqref{eq:control_j} asks for a uniform bound on the outdegree (that is, the number of tail ends adjacent to a vertex) whereas \eqref{eq:control_i} relates to a uniform bound on the indegree.
\begin{thm}\label{thm:cvg_0}
Let $T>0$. Suppose that the sequence of positions $\left(\underline{x}_N\right)_N$ satisfies one of the scenarios of Definition \ref{def:scenarios}. Then, under the set of Hypotheses \ref{hyp:existence_Zin}, \ref{hyp:existence_lambda_barre}, \ref{hyp:conv_graph_concentration}, \ref{hyp:nu_AC/leb} and \ref{hyp:cvg_graph}, we have
\begin{equation}\label{eq:moy_ecart_limite_nulle}
\frac{1}{N} \sum_{i=1}^N \mathbf{E} \left[ \sup_{t\in [0,T]} \left| Z_i^{(N)}(t) - \overline{Z}_i(t) \right| \right] \xrightarrow[N\to \infty]{} 0
\end{equation}
for $\mathbb{P}$-almost realisations of the connectivity sequence $\left(\xi^{(N)}\right)_{N\geq 1}$ and positions $\left(\underline{x}_N\right)_{N\geq 1}$.
\end{thm}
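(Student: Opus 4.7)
The plan is to set up a Gronwall inequality on the quenched average coupling error
\begin{equation*}
u_N(t) := \frac{1}{N} \sum_{i=1}^N \mathbf{E}\Bigl[\sup_{s \leq t}\bigl|Z_i^{(N)}(s) - \overline{Z}_i(s)\bigr|\Bigr].
\end{equation*}
Using the common Poisson measures of Definitions \ref{def:H2} and \ref{def:couplageZZbarre}, one obtains the pathwise bound $\sup_{s \leq t}|Z_i^{(N)}(s) - \overline{Z}_i(s)| \leq \int_0^t \int_0^\infty |\mathbf{1}_{\{z\leq \lambda_i^{(N)}(s)\}} - \mathbf{1}_{\{z\leq \lambda(s,x_i)\}}|\, \pi_i(ds,dz)$. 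Taking expectations, together with Fubini and the Lipschitz property of $f$ from Hypothesis \ref{hyp:existence_Zin}, reduces matters to estimating $\int_0^t \mathbf{E}|A_i^{(N)}(s) - B(s,x_i)|\,ds$, where $A_i^{(N)}(s)$ is the weighted empirical convolution inside $f$ in \eqref{eq:def_lambdaiN} and $B(s,x):=\int_I W(x,y)\int_0^s h(s-u)\lambda(u,y)\,du\,\nu(dy)$ is its macroscopic counterpart from \eqref{eq:def_lambdabarre}.

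I would then decompose $A_i^{(N)}(s) - B(s,x_i) = R_1^{(i)}(s) + R_2^{(i)}(s) + R_3^{(i)}(s)$, setting $\Phi(s,y) := \int_0^s h(s-u)\lambda(u,y)\,du$, with $R_1^{(i)}(s) = \frac{\kappa_i^{(N)}}{N}\sum_j \xi_{ij}^{(N)}\int_0^s h(s-u)(dZ_j^{(N)}(u) - d\overline{Z}_j(u))$, $R_2^{(i)}(s) = \frac{\kappa_i^{(N)}}{N}\sum_j \xi_{ij}^{(N)}\int_0^s h(s-u)(d\overline{Z}_j(u) - \lambda(u,x_j)\,du)$, and $R_3^{(i)}(s) = \frac{\kappa_i^{(N)}}{N}\sum_j \xi_{ij}^{(N)} \Phi(s,x_j) - B(s,x_i)$. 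The term $R_1^{(i)}$ produces the Gronwall kernel: after an absolute-value bound and averaging over $i$, the outdegree hypothesis \eqref{eq:control_j} lets us swap the summation order to give a contribution $\leq C_T \int_0^t u_N(s)\,ds$. For $R_2^{(i)}$, conditionally on $\xi^{(N)}$ and $\underline{x}$ the compensated Poisson integrals $\overline{Z}_j - \int_0^\cdot \lambda(u,x_j)\,du$ are independent martingales; an $L^2$-isometry computation, together with boundedness of $\lambda$ from Theorem \ref{thm:existence_lambda}, bounds $\mathbf{E}|R_2^{(i)}(s)|$ in average by a quantity of order $\sqrt{\kappa_N^2 w_N/N}$, which vanishes thanks to \eqref{eq:kappaw_nlog}.

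The main obstacle is the graph-approximation term $R_3^{(i)}$, which I would further split as
\begin{align*}
R_3^{(i)}(s) &= \frac{\kappa_i^{(N)}}{N}\sum_j \bigl(\xi_{ij}^{(N)} - W_N(x_i,x_j)\bigr)\Phi(s,x_j) \\
&\quad + \int_I W^{\mathcal{G}_N^{(1)}}(x_i,y)\Phi(s,y)\,\nu^{(N)}(dy) - \int_I W(x_i,y)\Phi(s,y)\,\nu(dy).
\end{align*}
The first sub-term is a sum of conditionally independent centered Bernoullis; a Bernstein-type concentration inequality combined with a Borel--Cantelli argument on a polynomial covering of $[0,T]\times I$, legitimized by the spatial H\"older continuity \eqref{eq:pt_fixe_lips_espace} of $\lambda$ (hence of $\Phi$), gives a quenched almost-sure bound that vanishes uniformly in $i$; the growth condition $\kappa_N^2 w_N = o(N/\log N)$ from \eqref{eq:kappaw_nlog} is precisely what is needed for the Bernstein exponent to beat the union-bound logarithm. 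The second sub-term is bounded, after averaging over $i$ and up to a H\"older-type correction coming from replacing $\nu^{(N)}$ by $\nu$ (via Proposition \ref{prop:toolbox_iid}, which is where $\chi>5$ enters in Scenario (1), or by a direct Riemann-sum approximation in Scenario (2)), by $\|\Phi\|_\infty \, \|W^{\mathcal{G}_N^{(1)}} - W\|_{\infty\to 1, \nu}$, which tends to $0$ by Hypothesis \ref{hyp:cvg_graph} and the equivalence \eqref{eq:equ_norm_graphon_nu}. Collecting all estimates yields $u_N(t) \leq \varepsilon_N + C_T \int_0^t u_N(s)\,ds$ on $[0,T]$ with a deterministic $\varepsilon_N$ going to $0$ for $\mathbb{P}$-almost every realisation of $(\xi^{(N)}, \underline{x}_N)$, and Gronwall's lemma closes the argument.
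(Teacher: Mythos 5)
Your overall scaffolding matches the paper's proof in Section \ref{S:proof_thm:cvg_0}: same Gronwall setup, same intensity decomposition (your $R_1,R_2,R_3$ correspond to the paper's $A_{i,t,1},A_{i,t,2}$ and $A_{i,t,3}+A_{i,t,4}+A_{i,t,5}$), and the same treatments of the Gronwall kernel $R_1$ and the Poisson-variance term $R_2$, modulo the observation that your $R_1$ step also tacitly needs a concentration estimate (Corollary \ref{prop:estimees_IC}) to replace the realized normalized outdegree $\sum_i\kappa_i^{(N)}\xi_{ij}^{(N)}/N$ by its mean before \eqref{eq:control_j} applies. Where you diverge is in the analysis of $R_3$, and in both places the paper's treatment is sharper than your outline. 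For the graph-noise sub-term $\frac{\kappa_i^{(N)}}{N}\sum_j(\xi_{ij}^{(N)}-W_N(x_i,x_j))\Phi(s,x_j)$, the paper never discretizes $[0,T]$: it squares, applies Jensen in $s$, folds the time integral into the kernel $\Gamma_T(x_j,x_l)=\int_0^T\gamma(s,x_j)\gamma(s,x_l)\,ds$, and then applies Proposition \ref{prop:toolbox_Xij} to the resulting bilinear form $\frac{\kappa_i^{(N)}}{N}\sum_j\overline{\xi}_{ij}X_{ij}$, needing only a union bound over the $N^2$ discrete index pairs. Your proposed polynomial covering of $[0,T]\times I$ is a genuine departure; it would additionally require a quantitative temporal modulus of continuity of $\Phi$, which Hypothesis \ref{hyp:existence_Zin} ($h\in L^2_{loc}$) does not directly supply, and the spatial H\"older bound \eqref{eq:pt_fixe_lips_espace} you invoke is not the relevant regularity there, since uniformity in $i$ is just a union bound over a finite discrete set. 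For the second sub-term $\int W^{\mathcal{G}_N^{(1)}}\Phi\,d\nu^{(N)}-\int W\Phi\,d\nu$, replacing $\nu^{(N)}$ by $\nu$ first would leave the $N$-dependent kernel $W^{\mathcal{G}_N^{(1)}}$ paired against $\nu^{(N)}-\nu$, which Proposition \ref{prop:toolbox_iid} cannot control since it requires a fixed $L^\chi$ kernel. The paper instead exploits that $W^{\mathcal{G}_N^{(1)}}$ and $W^{\mathcal{G}_N^{(2)}}$ are step functions on the $\nu$-partition of Lemma \ref{lem:partition_I}, so the discrete $1/N$-average over $j$ equals a $\nu$-integral of the associated step function exactly; this converts $A_{i,t,4}$ into $\|W^{\mathcal{G}_N^{(1)}}-W^{\mathcal{G}_N^{(2)}}\|_{\infty\to1,\nu}$ (bounded via the triangle inequality through $W$ and \eqref{eq:equ_norm_graphon_nu}) and isolates the empirical-measure fluctuation with the \emph{fixed} kernel $W$ into $A_{i,t,5}$, whose control together with $d_{\Box,\nu}(W^{\mathcal{G}_N^{(2)}},W)\to0$ is the content of the nontrivial Proposition \ref{prop:scenario_hyp}. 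Your outline compresses this into Proposition \ref{prop:toolbox_iid} plus a Riemann-sum remark, which understates both the role of the partition step functions and the substantial scenario-specific work behind \eqref{eq:lem_S_box} and \eqref{eq:lem_S_A5}.
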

The proof of Theorem \ref{thm:cvg_0} will be given in Section \ref{S:proof_thm:cvg_0}.

\subsubsection{Second case: convergence of the supremum}
Some graphs do not satisfy \eqref{eq:control_j}, see the examples of Section \ref{S:general_class_ex_div}. We propose here another result of convergence that does not require the control \eqref{eq:control_j}, but ask in return for a stronger convergence of the graphons.
\begin{hyp}\label{hyp:cvg_graph_infinf}
We suppose that
\begin{equation}\label{eq:hyp_infinf_cvg}
\Vert W^{\mathcal{G}_N^{(1)}}- W  \Vert_{\infty \to \infty, \nu}\xrightarrow[N\to\infty]{} 0 .
\end{equation}
\end{hyp}
\begin{thm}\label{thm:cvg-sup_0}
Let $T>0$. Suppose that the sequence of positions $\left(\underline{x}_N\right)_N$ satisfies one of the scenarios of Definition \ref{def:scenarios}. Consider the coupling introduced in Definition \ref{def:couplageZZbarre}. Then, under the set of Hypotheses \ref{hyp:existence_Zin}, \ref{hyp:existence_lambda_barre}, \ref{hyp:conv_graph_concentration}, \ref{hyp:nu_AC/leb} and  \ref{hyp:cvg_graph_infinf}, we have
\begin{equation}\label{eq:sup_ecart_limite_nulle}
\max_{1\leq i \leq N} \mathbf{E} \left[ \sup_{t\in [0,T]} \left| Z_i^{(N)}(t) - \overline{Z}_i(t) \right| \right] \xrightarrow[N\to \infty]{} 0.
\end{equation}
$\mathbb{P}$-almost surely.
\end{thm}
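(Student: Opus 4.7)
The strategy follows that of Theorem \ref{thm:cvg_0}, with the crucial upgrade that every estimate must be uniform in $i$ rather than averaged. The first step is the usual Hawkes coupling: since $Z_i^{(N)}$ and $\overline{Z}_i$ are driven by the same Poisson measure $\pi_i$, a Fubini computation gives
\[
\mathbf{E}\Big[\sup_{s \leq t} \big| Z_i^{(N)}(s) - \overline{Z}_i(s) \big|\Big] \leq \int_0^t \mathbf{E}\big[|\lambda_i^{(N)}(u) - \lambda(u, x_i)|\big]\, du,
\]
so that, setting $\eta^*_N(s) := \max_i \mathbf{E}\big[|\lambda_i^{(N)}(s) - \lambda(s, x_i)|\big]$, it suffices to prove $\sup_{s \leq T} \eta^*_N(s) \to 0$ $\mathbb{P}$-a.s.

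Using the Lipschitz property of $f$, I would split the argument of $f$ in the difference $\lambda_i^{(N)}(s) - \lambda(s, x_i)$ into four pieces $A_i(s) + B_i(s) + C_i(s) + D_i(s)$, obtained by successively (A) replacing each $dZ_j^{(N)}$ by $d\overline{Z}_j$, (B) centering $d\overline{Z}_j$ around its compensator $\lambda(u, x_j)\,du$, (C) centering the Bernoulli connectivity $\xi_{ij}^{(N)}$ around its mean $W_N(x_i, x_j)$, and (D) passing from the resulting Riemann sum over $\underline{x}^{(N)}$ to the limiting integral against $\nu$ and $W$. Term $A$ provides the Gronwall feedback: computing $\mathbf{E}|A_i(s)|$ directly and estimating the weighted indegree $\tfrac{\kappa_i^{(N)}}{N}\sum_j \xi_{ij}^{(N)}$ by $C_W + o(1)$ uniformly in $i$ ($\mathbb{P}$-a.s., via Hoeffding combined with \eqref{eq:control_i}, \eqref{eq:kappaw_nlog} and Borel--Cantelli) yields $\max_i \mathbf{E}|A_i(s)| \leq (C_W + o(1)) \int_0^s |h(s-u)|\, \eta^*_N(u)\, du$. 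Term $B$ is a conditionally independent sum of centered Poisson stochastic integrals whose variance is of order $\kappa_N/N$ by \eqref{eq:control_i}, so $\max_i \mathbf{E}|B_i(s)| = O(\sqrt{\kappa_N/N}) \to 0$ by \eqref{eq:kappaw_nlog}.

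The main work is the uniform-in-$i$ control of $C_i$ and $D_i$. For $C_i$, the $N$ summands are centered independent bounded variables with total variance of order $\kappa_N^2 w_N / N$, so Bernstein's inequality gives $\mathbb{P}(|C_i(s)| > \varepsilon) \leq 2 \exp(-c \varepsilon^2 N / (\kappa_N^2 w_N))$; a union bound over $i \in \llbracket 1, N \rrbracket$ and a finite time grid, combined with Borel--Cantelli, then produces $\max_i \sup_{s \leq T} |C_i(s)| \to 0$ $\mathbb{P}$-a.s., which is precisely where the sharpened condition $\kappa_N^2 w_N = o(N/\log N)$ in \eqref{eq:kappaw_nlog} is used. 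For $D_i$, I would rewrite the finite sum as $\int_I \kappa_i^{(N)} W_N(x_i, y) G(s, y)\, \nu^{(N)}(dy)$, with $G(s, y) := \int_0^s h(s-u) \lambda(u, y)\, du$; identifying $x_i$ with its partition block $B_{k(i)}$ turns this into the difference $\int_I (W^{\mathcal{G}_N^{(1)}}(x_i, y) - W(x_i, y)) G(s, y) \nu(dy)$, which is uniformly small in $x_i$ by Hypothesis \ref{hyp:cvg_graph_infinf}; the residual $\nu^{(N)} - \nu$ gap is handled via the H\"older regularity \eqref{eq:pt_fixe_lips_espace} together with a LLN-type argument in Scenario (1), and trivially in Scenario (2) since $\underline{x}^{(N)}$ is then the regular grid.

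Combining the four bounds yields $\eta^*_N(s) \leq L_f \{(C_W + o(1)) \int_0^s |h(s-u)| \eta^*_N(u)\, du + r_N\}$ for some $r_N \to 0$ $\mathbb{P}$-a.s.\ uniformly in $s \in [0, T]$. A generalized convolution Gronwall lemma, valid under the local $L^2$-integrability of $h$, then produces $\sup_{s \leq T} \eta^*_N(s) \to 0$ $\mathbb{P}$-a.s., and \eqref{eq:sup_ecart_limite_nulle} follows. The main obstacle is the uniform-in-$i$ Bernstein concentration for $C_i$: the hypothesis \eqref{eq:kappaw_nlog} is calibrated precisely so that a union bound over $N$ Bernstein tails remains summable, and the absence of the outdegree control \eqref{eq:control_j} used in Theorem \ref{thm:cvg_0} is compensated here by the stronger $\|\cdot\|_{\infty\to\infty}$-convergence from Hypothesis \ref{hyp:cvg_graph_infinf}.
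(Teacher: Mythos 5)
Your proposal follows essentially the same strategy as the paper's: the decomposition into a Lipschitz/Gronwall feedback term, a Poisson-centering term, a graph-centering term, and a Riemann-to-integral term matches the paper's $A^{(N)}_{i,t,1},\ldots,A^{(N)}_{i,t,5}$ (with your $D_i$ merging $A_4$ and $A_5$), the concentration-plus-Borel--Cantelli machinery and the convolution Gronwall closure are the same, and the role of Hypothesis~\ref{hyp:cvg_graph_infinf} in replacing the outdegree control \eqref{eq:control_j} is correctly identified. The one genuine variation is in the graph-centering term $C_i$: you propose a pointwise Bernstein bound at each time of a finite grid, a union bound over $i$ and grid points, Borel--Cantelli, and an equicontinuity argument to fill in between grid points, whereas the paper deliberately avoids the grid (it explicitly remarks that applying the concentration bound inside the time integral would not give an a.s. statement) by squaring $A^{(N)}_{i,t,3}$ via Jensen, integrating out the time variable to produce the time-free kernel $\Gamma_T(x_j,x_l)=\int_0^T\gamma(s,x_j)\gamma(s,x_l)\,ds$, and then applying Proposition~\ref{prop:toolbox_Xij} to the resulting $X_{ij}$ uniformly in $(i,j)$. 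Your grid route does work — the modulus of continuity of $s\mapsto\gamma(s,\cdot)$ is uniform in the spatial variable because $\lambda$ is bounded and $h\in L^2_{\mathrm{loc}}$, so the interpolation error is controlled, and a polynomial grid keeps the union bound summable under \eqref{eq:kappaw_nlog} — but this equicontinuity step is implicit in your sketch and must be verified, whereas the paper's Jensen/$\Gamma_T$ trick sidesteps it entirely.
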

The proof of Theorem \ref{thm:cvg-sup_0} will be given in Section \ref{S:proof_thm:cvg-sup_0}. 
\begin{remark}\label{rem:vitesse_conv}
Theorems \ref{thm:cvg_0} and \ref{thm:cvg-sup_0} are quenched results, and do not provide any speed of convergence. In this case, the speed of convergence is unknown. Nevertheless, if we integrate also with respect to the graph (annealed case), one can obtain explicit speed of convergence as follows:
\begin{align*}
\max_{1\leq i \leq N} \mathbb{E}\mathbf{E} \left[ \sup_{t\in [0,T]} \left| Z_i^{(N)}(t) - \overline{Z}_i(t) \right| \right] &\leq C_T \dfrac{\kappa_N \sqrt{w_N}}{\sqrt{N}},\\
\frac{1}{N} \sum_{i=1}^N   \mathbb{E}\mathbf{E} \left[ \sup_{t\in [0,T]} \left| Z_i^{(N)}(t) - \overline{Z}_i(t) \right| \right] &\leq C_T \dfrac{\kappa_N \sqrt{w_N}}{\sqrt{N}}.
\end{align*}
Working in the annealed case simplifies considerably the proof (left to the reader), the previous estimates can be easily derived from the calculation done in the proofs of the previous theorems: we no longer have to deal with concentration estimates (see the term $A^{(N)}_{i,t,3}$ below in \eqref{eq:def_A3} which becomes a simple variance term).
\end{remark}

\subsection{Consequence on the empirical measure}\label{SS:conv_mes_emp}

A direct consequence of Theorems \ref{thm:cvg_0} and \ref{thm:cvg-sup_0} concerns the behavior as $N\to\infty$ of the empirical distribution on the space $S:=\mathbb{D}\left([0,T],\mathbb{N}\right)\times I$ of trajectories and positions.
\begin{deff}\label{def:mesure_process}
We define the following probability measures on $S$:
\begin{align}
\mu_N(d\eta,dx)&:= \dfrac{1}{N} \sum_{i=1}^N \delta_{\left(Z_i^{(N)}\left([0,T]\right),x_i^{(N)}\right)}(d\eta,dx), \text{ and}\label{eq:def_mesure_empirique_process}\\
\mu_\infty(d\eta,dx)&:= P_{[0,T],\infty}\left( d\eta \vert x\right) \nu(dx),\label{eq:def_mesure_limite_process}
\end{align}
where $P_{[0,T],\infty}\left( \cdot \vert x\right)$  is the law of an inhomogeneous Poisson point process with intensity $\left(\lambda(t,x)\right)_{0\leq t \leq T}$ (solution of \eqref{eq:def_lambdabarre}). Note that $\mu_N$ is random.
\end{deff}
\begin{thm}\label{thm:cvg_dBL}
Under the assumptions of Theorem \ref{thm:cvg_0} or Theorem \ref{thm:cvg-sup_0}, we have
\begin{equation}\label{eq:thm_cvg_dBL}
\mathbf{E} \left[ d_{BL}\left(\mu_N,\mu_\infty\right)\right] \xrightarrow[N\to\infty]{} 0.
\end{equation}
for $\mathbb{P}$-almost realisations of the connectivity sequence $\left( \xi^{(N)}\right)_{N\geq 1}$ and positions $\left(\underline{x}_N\right)_{N\geq 1}$ under scenarios of Definition \ref{def:scenarios}, where $d_{BL}$ is the bounded Lipschitz distance introduced in \eqref{eq:def_dBL1}.
\end{thm}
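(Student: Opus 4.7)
My strategy is to reduce the claim via two applications of the triangle inequality, inserting the empirical measure built on the coupled limit processes and its conditional expectation given the positions:
\begin{equation*}
\overline{\mu}_N := \frac{1}{N}\sum_{i=1}^N \delta_{(\overline{Z}_i,\,x_i^{(N)})}, \qquad \widetilde{\mu}_N(d\eta,dx) := \frac{1}{N}\sum_{i=1}^N P_{[0,T],\infty}(d\eta\mid x_i^{(N)})\,\delta_{x_i^{(N)}}(dx),
\end{equation*}
where the $\overline{Z}_i$'s are those of Definition~\ref{def:couplageZZbarre}, and then using
\begin{equation*}
d_{BL}(\mu_N,\mu_\infty) \leq d_{BL}(\mu_N,\overline{\mu}_N) + d_{BL}(\overline{\mu}_N,\widetilde{\mu}_N) + d_{BL}(\widetilde{\mu}_N,\mu_\infty).
\end{equation*}
I equip $S=\mathbb{D}([0,T],\mathbb{N})\times I$ with the product of the Skorokhod and Euclidean metrics (making $S$ Polish); since the Skorokhod distance is dominated by the uniform distance on $\mathbb{D}$, every $g$ with $\|g\|_{BL}\leq 1$ satisfies $|g(\eta,x)-g(\eta',x')|\leq \sup_t|\eta(t)-\eta'(t)|+\|x-x'\|$.

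The first term follows directly from the coupling: by the Lipschitz property and taking the supremum over $g$ and then the expectation, one obtains $\mathbf{E}[d_{BL}(\mu_N,\overline{\mu}_N)] \leq \frac{1}{N}\sum_i \mathbf{E}[\sup_t|Z_i^{(N)}(t)-\overline{Z}_i(t)|]$, which tends to $0$ $\mathbb{P}$-a.s.\ by Theorem~\ref{thm:cvg_0} (resp.\ Theorem~\ref{thm:cvg-sup_0}). For the third term, setting $G_g(x):=\int g(\eta,x)\,P_{[0,T],\infty}(d\eta\mid x)$, coupling two inhomogeneous Poisson processes through a single auxiliary Poisson measure gives
\begin{equation*}
|G_g(x)-G_g(x')| \leq \|x-x'\|+\int_0^T|\lambda(s,x)-\lambda(s,x')|\,ds \leq \|x-x'\|+T\,C_\lambda\,\phi(\|x-x'\|)
\end{equation*}
by Theorem~\ref{thm:existence_lambda}, so that $\{G_g:\|g\|_{BL}\leq 1\}$ is uniformly bounded and equicontinuous on $I$. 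Combined with $\nu^{(N)}\to\nu$ weakly ($\mathbb{P}$-a.s.\ via the classical LLN in scenario~(1), by construction in scenario~(2)), an Arzelà--Ascoli argument yields $d_{BL}(\widetilde{\mu}_N,\mu_\infty)=\sup_g\bigl|\int G_g\,d(\nu^{(N)}-\nu)\bigr|\to 0$ $\mathbb{P}$-a.s.

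For the middle fluctuation term I distinguish the two scenarios. In scenario~(1), the pairs $(\overline{Z}_i,x_i^{(N)})_i$ are i.i.d.\ with common law $\mu_\infty$ under the joint law $\mathbb{P}\otimes\mathbf{P}$, so Varadarajan's theorem on the Polish space $S$ yields $d_{BL}(\overline{\mu}_N,\mu_\infty)\to 0$ almost surely under $\mathbb{P}\otimes\mathbf{P}$; Fubini converts this into $\mathbb{P}$-a.s.\ $\mathbf{P}$-a.s.\ convergence, and bounded convergence (since $d_{BL}\leq 2$) then gives $\mathbf{E}[d_{BL}(\overline{\mu}_N,\mu_\infty)]\to 0$ $\mathbb{P}$-a.s., absorbing the middle and third terms at once. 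In scenario~(2) the positions are deterministic and I proceed quenchedly: for each fixed test $g$, the summands $g(\overline{Z}_i,x_i^{(N)})-G_g(x_i^{(N)})$ are independent, centred and uniformly bounded, so $\mathbf{E}\!\left[\left(\int g\,d(\overline{\mu}_N-\widetilde{\mu}_N)\right)^2\right]\leq 4/N$, and uniformity over the BL-unit ball is obtained by an Arzelà--Ascoli / chaining argument, using that this ball is totally bounded in the uniform topology on $\{\eta\in\mathbb{D}([0,T],\mathbb{N}):\eta(T)\leq K\}\times I$, together with the tightness estimate $\sup_i\mathbf{P}(\overline{Z}_i(T)>K)\to 0$ as $K\to\infty$, which follows from the boundedness of $\lambda$ on $[0,T]\times I$.

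\textbf{Main obstacle.} The delicate point is the fluctuation term $d_{BL}(\overline{\mu}_N,\widetilde{\mu}_N)$: $d_{BL}$ is a supremum over the infinite-dimensional BL-unit ball, so a single-function variance bound is not enough. Scenario~(1) circumvents this elegantly via Varadarajan and Fubini at the annealed level; scenario~(2) requires combining the quenched variance estimate with the equicontinuity of $\{G_g\}$ inherited from the Hölder regularity of $\lambda$ in Theorem~\ref{thm:existence_lambda} and a truncation on the total number of jumps $\overline{Z}_i(T)$.
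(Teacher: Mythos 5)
Your decomposition via the triangle inequality -- coupling error $d_{BL}(\mu_N,\overline{\mu}_N)$, intra-population fluctuation $d_{BL}(\overline{\mu}_N,\widetilde{\mu}_N)$, and position LLN $d_{BL}(\widetilde{\mu}_N,\mu_\infty)$ -- is a natural repackaging of the paper's terms $A$, $B$, $C$ (which the paper establishes for each fixed test function and then upgrades to the supremum via a single truncation/covering step). Your treatment of the first and third terms, and your scenario-(2) handling of the second (per-test variance $O(1/N)$ plus an Arzel\`a--Ascoli covering combined with a jump-count truncation), are all correct and coincide in substance with the paper's proof; the equi-H\"older bound $|G_g(x)-G_g(x')|\leq\|x-x'\|+T C_\lambda\,\phi(\|x-x'\|)$ is exactly what the paper uses in dealing with $C$.

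The genuine gap is your scenario-(1) shortcut for the middle term. The pairs $(\overline{Z}_i,x_i^{(N)})_{i\leq N}$ are \emph{not} i.i.d.\ under $\mathbb{P}\otimes\mathbf{P}$: in scenario (1) the positions $x_i^{(N)}$ are the lexicographic reordering of the first $N$ i.i.d.\ samples, so within each $N$ they are order statistics (exchangeable, not independent), and $\overline{Z}_i$ is built from the fixed Poisson measure $\pi_i$ with intensity $\lambda(\cdot,x_i^{(N)})$, which changes as $N$ grows. Re-indexing by the original labels $\widetilde{x}_j$ only shows that, for each fixed $N$, the family $\{(\overline{Z}_{\sigma_N^{-1}(j)},\widetilde{x}_j)\}_{j\leq N}$ has the same law as $N$ i.i.d.\ draws from $\mu_\infty$; it is a triangular array, not the initial segments of a single infinite i.i.d.\ sequence. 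Varadarajan's theorem (Dudley, Thm.~11.4.1) concerns the latter and does not yield $\mathbb{P}\otimes\mathbf{P}$-a.s.\ convergence of $d_{BL}(\overline{\mu}_N,\mu_\infty)$ for this array, so there is nothing for the Fubini/bounded-convergence step to convert. Equality in distribution for each $N$ only gives $L^1(\mathbb{P}\otimes\mathbf{P})$ convergence, hence convergence of $\mathbf{E}[d_{BL}(\overline{\mu}_N,\mu_\infty)]$ in $\mathbb{P}$-probability -- not the $\mathbb{P}$-a.s.\ convergence along the full sequence that the theorem requires. The remedy is simply to run your scenario-(2) argument in scenario (1) as well: freezing the positions, the $\overline{Z}_i$ are independent with uniformly controlled jump count, and the supremum over the BL unit ball is absorbed by the compactness/truncation argument of the paper's Step~2 (Lemma 4.5 of \cite{Luon2020} and Theorem 11.3.3 of \cite{Dudley2002}), which is entirely deterministic in the positions. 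Once you do that the scenario distinction disappears and your proof coincides with the paper's.
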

The proof of Theorem \ref{thm:cvg_dBL} will be given in Section \ref{S:proof_thm:cvg_dBL}. We can see this result as an extension of Theorems 1 and 2 of \cite{CHEVALLIER20191}, where the memory function is an exponential kernel and the interaction comes from a fixed interaction kernel that depends on the positions.

\subsection{Spatial profile}\label{S:spatial_prof}
Here we are under the conditions of Scenario (2) of Definition \ref{def:scenarios}, where $\lambda$ solves \eqref{eq:def_lambdabarre}. 
\begin{deff}\label{def:spatial_prof}
Define the random profile
\begin{align}
U_N(t,x)&:=\sum_{i=1}^N U_{i,N}(t) \mathbf{1}_{x\in\left(\frac{i-1}{N}, \frac{i}{N}\right]}, \text{ where} \label{eq:def_UN} \\
U_{i,N}(t)&:= u_0(t,x_i)+\dfrac{\kappa_i^{(N)}}{N}\sum_{j=1}^N \xi_{ij}^{(N)}\int_0^{t-} h(t-s) dZ_j^{(N)}(s),\label{eq:def_UiN}
\end{align}
and the deterministic profile
\begin{equation}\label{eq:def_u_lim}
u(t,x):=u_0(t,x)+\int_{I} W(x,y)\int_0^t h(t-s) \lambda(s,y)ds~ \nu(dy).
\end{equation}
We see from Theorem \ref{thm:existence_lambda} that $u$ is continuous and bounded.
\end{deff}
Note that $\lambda_i^{(N)}(t)=f\left( U_{i,N}(t)\right)$ and that $U_{i,N}(t)$ describes the accumulated activity of neuron $i$ up to time $t$. A similar quantity has already been considered in \cite{CHEVALLIER20191} for $h(t)=e^{-\alpha t}$ with a deterministic graph of interaction. In this case, with $u_0(t,x)=e^{-\alpha t} \widetilde{u}(x)$, \eqref{eq:def_u_lim} is the solution of the scalar neural field equation
$$\partial_t u(t,x) = -\alpha u(t,x) + \int_I W(x,y) f( u(t,y)) \nu(dy).$$ It has been extensively studied in the literature as an important example of macroscopic structured model with non local interaction (see \cite{Amari1977, Wilson1972,Bressloff2011}).
\begin{prop}\label{prop:cvg_profil_spatial} 
Under the Hypotheses of Theorem \ref{thm:cvg_0},
\begin{equation}\label{eq:cvg_profil_spatial}
\mathbf{E}\left[  \int_0^T  \int_0^1 \left| U_N(t,x) - u(t,x) \right| dx ~dt \right] \xrightarrow[N\to\infty]{} 0, 
 \end{equation}
for $\mathbb{P}$-almost realisations of the connectivity sequence $\left( \xi^{(N)}\right)_{N\geq 1}$ and positions $\left(\underline{x}_N\right)_{N\geq 1}$.
\end{prop}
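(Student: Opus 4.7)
The plan is to partition $[0,1]$ into the cells $B_i=((i-1)/N,i/N]$, on which $U_N(t,\cdot)$ is constant equal to $U_{i,N}(t)$, and to reduce the spatial $L^1$ error to the grid-point error already controlled in the proof of Theorem \ref{thm:cvg_0}. The triangle inequality yields
\[
\int_0^1 |U_N(t,x)-u(t,x)|\,dx \leq \frac{1}{N}\sum_{i=1}^N |U_{i,N}(t)-u(t,x_i)| + \sum_{i=1}^N\int_{B_i} |u(t,x_i)-u(t,x)|\,dx.
\]
The second, deterministic, summand is $O(N^{-\vartheta})$ after integration in $t$: from \eqref{eq:def_u_lim}, the Lipschitz regularity of $u_0$, the H\"older estimate \eqref{eq:hyp_W_pseudolip_theta} on $W$ and the uniform boundedness of $\lambda$ from Theorem \ref{thm:existence_lambda}, the function $u(t,\cdot)$ inherits a modulus of continuity of the form $C(\|\cdot\|+\|\cdot\|^\vartheta)$, uniformly in $t\in[0,T]$.

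It therefore suffices to show that $\delta_N(T):=\frac{1}{N}\sum_{i=1}^N \mathbf{E}\!\int_0^T |U_{i,N}(t)-u(t,x_i)|\,dt$ tends to $0$. Mimicking the decomposition used for Theorem \ref{thm:cvg_0}, I would write $U_{i,N}(t)-u(t,x_i)=R^{(N)}_{i,t,1}+R^{(N)}_{i,t,2}+R^{(N)}_{i,t,3}$ with
\[
R^{(N)}_{i,t,1} = \frac{\kappa_i^{(N)}}{N}\sum_j \xi_{ij}^{(N)}\int_0^{t-} h(t-s)\,d(Z_j^{(N)}-\overline{Z}_j)(s),
\]
\[
R^{(N)}_{i,t,2} = \frac{\kappa_i^{(N)}}{N}\sum_j \xi_{ij}^{(N)}\!\left[\int_0^{t-} h(t-s)\,d\overline{Z}_j(s)-\int_0^t h(t-s)\lambda(s,x_j)\,ds\right],
\]
\[
R^{(N)}_{i,t,3} = \frac{\kappa_i^{(N)}}{N}\sum_j \xi_{ij}^{(N)}\!\int_0^t h(t-s)\lambda(s,x_j)\,ds-\int_I W(x_i,y)\!\int_0^t h(t-s)\lambda(s,y)\,ds\,\nu(dy).
\]
The term $R^{(N)}_{i,t,2}$ is a conditional martingale fluctuation: conditioning on $\xi^{(N)}$, the compensated Poisson integrals $M_j(t):=\int_0^{t-}h(t-s)d\overline{Z}_j(s)-\int_0^t h(t-s)\lambda(s,x_j)ds$ are pairwise orthogonal and uniformly bounded in $L^2$ by $\|h\|_{[0,T],2}^2 \sup_j \sup_s \lambda(s,x_j)$, leading to a bound of order $\kappa_N\sqrt{w_N/N}$ that vanishes by \eqref{eq:kappaw_nlog}. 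The term $R^{(N)}_{i,t,3}$ is handled by combining a Chernoff-type concentration of $\xi^{(N)}$ around $W_N$ (converted into a quenched $\mathbb{P}$-a.s.\ statement via Borel--Cantelli, exploiting \eqref{eq:kappaw_nlog}) with the cut-distance convergence \eqref{eq:hyp_cut_cvg} tested against the continuous bounded function $y\mapsto\int_0^t h(t-s)\lambda(s,y)\,ds$.

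The main obstacle is the coupling term $R^{(N)}_{i,t,1}$, which feeds back into $\delta_N(T)$ itself. Using Fubini and the pointwise bound $\bigl|\int_0^{t-} h(t-s)\,d(Z^{(N)}_j-\overline{Z}_j)(s)\bigr|\leq \int_0^{t-}|h(t-s)|\,d|Z^{(N)}_j-\overline{Z}_j|(s)$, then exchanging the summations in $i$ and $j$,
\[
\frac{1}{N}\sum_i \mathbf{E}\!\int_0^T|R^{(N)}_{i,t,1}|\,dt \leq \|h\|_{[0,T],1}\,\Bigl(\sup_j \tfrac{1}{N}\sum_i \kappa_i^{(N)}\xi_{ij}^{(N)}\Bigr)\cdot\frac{1}{N}\sum_j \mathbf{E}\bigl[|Z_j^{(N)}-\overline{Z}_j|_{TV,[0,T]}\bigr].
\]
The bracketed factor is $\mathbb{P}$-a.s.\ uniformly bounded by \eqref{eq:control_j} plus the same concentration argument used for $R^{(N)}_{i,t,3}$. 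Since $|Z^{(N)}_j-\overline{Z}_j|_{TV,[0,t]}-\int_0^t|\lambda^{(N)}_j(s)-\lambda(s,x_j)|ds$ is a martingale (by the Poisson coupling), one has $\mathbf{E}[|Z_j^{(N)}-\overline{Z}_j|_{TV,[0,t]}]=\mathbf{E}\!\int_0^t|\lambda^{(N)}_j(s)-\lambda(s,x_j)|ds\leq L_f\,\mathbf{E}\!\int_0^t|U_{j,N}(s)-u(s,x_j)|ds$. Replacing $T$ by a running $t$ in the whole analysis yields the self-referential inequality $\delta_N(t)\leq C\int_0^t \delta_N(s)\,ds+\varepsilon_N$ with $\varepsilon_N\to 0$, and Gronwall's lemma closes the loop and gives $\delta_N(T)\to 0$, which concludes the proof.
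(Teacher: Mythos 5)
Your proof is correct and follows essentially the same route as the paper: the same triangle-inequality split into a grid-point error $\frac{1}{N}\sum_i\mathbf{E}\int_0^T|U_{i,N}(t)-u(t,x_i)|\,dt$ plus a deterministic interpolation error, the latter handled by (uniform/H\"older) continuity of $u$ and the former closed by a convolution-Gronwall argument. The only difference is one of economy: the paper observes that $\int_0^T\mathbf{E}[|U_{i,N}(t)-u(t,x_i)|]\,dt \le \sum_{k=1}^5 A^{(N)}_{i,T,k}$ and simply recycles the already-established bounds on the $A$-terms (and the already-closed Gronwall loop) from the proof of Theorem~\ref{thm:cvg_0}, whereas you rederive them from scratch via the bundled remainders $R^{(N)}_{i,t,1}$, $R^{(N)}_{i,t,2}$, $R^{(N)}_{i,t,3}$.
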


The proof of Proposition \ref{prop:cvg_profil_spatial}  will be given in Section \ref{S:proof_prop:cvg_profil_spatial}.


\section{Large time behavior of the limit process in the linear case}\label{S:sys-lim_temps}

We want to see how the limiting intensity \eqref{eq:def_lambdabarre} behaves as $t\to\infty$. We restrict here to the following \emph{linear case}, that is, when $f(x)=x$:
\begin{equation}\label{eq:def_lambdabarre_linear}
\lambda(t,x) = u_0(t,x) + \int_I W(x,y) \int_0^th(t-s)\lambda(s,y)ds~\nu(dy)
\end{equation}
on $\mathbb{R}_+\times I$. The case without spatial interaction, that is, $\lambda(t)=u_0 + \int_0^t h(t-s) \lambda(s) ds$ is standard and has been thoroughly studied in \cite[Th.~10 and~11]{delattre2016}. Depending on the value of $\Vert h \Vert_1$, there is a phase transition in the behavior of such $\lambda$ when $t\to\infty$: in the subcritical case ($\Vert h \Vert_1 <1$), $\lambda(t) \xrightarrow[t\to\infty]{} \dfrac{u_0}{1-\Vert h \Vert_1}$ and in the supercritical case ($\Vert h \Vert_1 >1$), $\lambda(t) \xrightarrow[t\to\infty]{} \infty$. The point of the present paragraph is to extend this result to the spatial case. We require the following assumptions:
\begin{hyp}\label{hyp:cas_lin} Suppose that we are in the linear case of Hypothesis \ref{hyp:existence_Zin}. In addition to Hypotheses \ref{hyp:existence_Zin} and \ref{hyp:existence_lambda_barre}, we suppose that $h$ is in $L^1(\mathbb{R}^+)$ and piecewise continuously differentiable. We also suppose that $u_0$ is continuously differentiable in time,  that there exists $C_{u_0}>0$ such that 
\begin{equation}\label{eq:borne_partial_u0}
\sup_{x\in I} \left\Vert \dfrac{\partial u_0}{\partial_t}(\cdot,x)\right\Vert_{1}=\sup_{x\in I}\int_{\mathbb{R}^+} \left\vert\dfrac{\partial u_0}{\partial s}(s,x)\right\vert ds\leq C_{u_0} <\infty.
\end{equation}
We also suppose that there exists $u$ Lipschitz continuous on $I$ such that $\displaystyle \lim_{N\to\infty}\sup_{x\in I} \left| u_0(t,x)-u(x)\right|=0$. Hence, when $u_0$ does not depend on time, we simply suppose $u_0=u$.
\end{hyp}
To describe the phase transition, we introduce the following linear operator
\begin{equation} \label{eq:def_operator_T}
  \begin{array}{rrcl}
T_W:&    L^\infty (I) & \longrightarrow &L^\infty (I) \\
   & g & \longmapsto & \left(T_Wg : x \longmapsto \int_I W(x,y) g(y) \nu(dy) \right).
  \end{array}
\end{equation}
The continuity of $T_W$ follows directly from \eqref{eq:hyp_w_int_nu_y}, and we have $\Vert T_W \Vert \leq C_W^{(1)}$. We denote by $r_\infty(T_W)$ the spectral radius of $T_W$: 
\begin{equation}\label{eq:def_spectral_radius}
r_\infty:=r_\infty(T_W) = \sup_{\sigma \in Sp(T_W)} \vert \sigma \vert = \lim_{n\to\infty} \Vert T_W^n \Vert ^\frac{1}{n}.
\end{equation}
The phase transition is given in terms of  $\Vert h \Vert_1 r_\infty <1 $ (subcritical) and  $\Vert h \Vert_1 r_\infty >1 $ (supercritical). The two cases are described separately below, after dealing with the usual exponential case.

\subsection{The exponential case} \label{SS:exp}

Previous works \cite{CHEVALLIER20191} have considered $h(t)=e^{-\alpha t}$ with $\alpha>0$ (hence $\Vert h \Vert_1 = 1/\alpha$). The term $\alpha$ is then called the \emph{leakage rate}. Note that in this case, the dynamics becomes Markovian \cite{dion2020exponential}. At the large population limit, the spatial profile seen in Section \ref{S:spatial_prof} is in this case linked to the scalar neural field equation \cite{CHEVALLIER20191}. In the exponential case, with the introduction of the operator $T_W$  we can give an explicit solution of \eqref{eq:def_lambdabarre_linear}.
\begin{prop} In the exponential case $h(t)=e^{-\alpha t}$, the solution of  \eqref{eq:def_lambdabarre_linear} when $u_0$ does not depend on time is explicitly  given by
\begin{equation}\label{eq:solution_lambdabarre_exp}
\lambda(t,x) = e^{-\alpha t} e ^{tT_W} u_0(x) + \alpha \int_0^t e^{-\alpha (t-s)} e^{(t-s)T_W}  u_0(x) ds,
\end{equation} 
where $e^{tT_W}$, $t\geq 0$ is the semigroup of the bounded operator $T_W$ defined as 
\begin{equation}\label{eq:eTW}
e^{tT_W}v:= \sum_{k=0}^\infty \dfrac{t^k}{k!} T_W^k v, \quad v\in L^\infty(I).
\end{equation}
\end{prop}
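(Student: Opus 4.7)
My plan is to verify the formula directly by showing that the candidate satisfies \eqref{eq:def_lambdabarre_linear}, then conclude by uniqueness from Theorem~\ref{thm:existence_lambda}.

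First I would set up the functional framework. Since $T_W$ is a bounded operator on $L^\infty(I)$ with $\Vert T_W\Vert\le C_W^{(1)}$ (by \eqref{eq:hyp_w_int_nu_y}), the series \eqref{eq:eTW} converges absolutely in operator norm uniformly for $t$ in compact sets, so $(e^{tT_W})_{t\ge 0}$ is a norm-continuous semigroup on $L^\infty(I)$ with generator $T_W$. Since $T_W$ and $-\alpha I$ commute, one has the factorization $e^{t(T_W - \alpha I)} = e^{-\alpha t} e^{tT_W}$. Writing $A := T_W - \alpha I$, the candidate from \eqref{eq:solution_lambdabarre_exp} can be rewritten compactly as
\begin{equation*}
\lambda^*(t) = e^{tA} u_0 + \alpha \int_0^t e^{(t-s)A} u_0\, ds,
\end{equation*}
where $\lambda^*(t)$ denotes the function $x\mapsto \lambda^*(t,x)$ in $L^\infty(I)$. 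Note that $\lambda^*(0) = u_0$.

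Next I would verify that $\lambda^*$ solves the linear ODE associated to the problem. Differentiating under the Bochner integral (legitimate since $A$ is bounded), one gets
\begin{equation*}
\dot\lambda^*(t) = A e^{tA} u_0 + \alpha u_0 + \alpha A \int_0^t e^{(t-s)A} u_0\, ds = A \lambda^*(t) + \alpha u_0 = (T_W - \alpha I)\lambda^*(t) + \alpha u_0.
\end{equation*}

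The core step is then to check that $\lambda^*$ solves the integral equation \eqref{eq:def_lambdabarre_linear} with $h(t)=e^{-\alpha t}$. Set
\begin{equation*}
K(t) := \int_0^t e^{-\alpha(t-s)} \lambda^*(s)\, ds, \qquad F(t) := \lambda^*(t) - u_0 - T_W K(t),
\end{equation*}
viewed as $L^\infty(I)$-valued. Then $F(0) = 0$ and $K'(t) = \lambda^*(t) - \alpha K(t)$. Using the ODE for $\lambda^*$ derived above,
\begin{equation*}
F'(t) = \dot\lambda^*(t) - T_W K'(t) = T_W \lambda^*(t) - \alpha \lambda^*(t) + \alpha u_0 - T_W \lambda^*(t) + \alpha T_W K(t) = -\alpha F(t).
\end{equation*}
Hence $F\equiv 0$, which is exactly the integral equation \eqref{eq:def_lambdabarre_linear} evaluated pointwise in $x$. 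The continuity and boundedness of $\lambda^*$ on $[0,T]\times I$ follow from the norm-continuity of $e^{tT_W}$, so by the uniqueness statement of Theorem~\ref{thm:existence_lambda}, $\lambda = \lambda^*$.

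The only mildly delicate point is justifying the commutation of $T_W$ with the time integral (used to pass from pointwise operations to $L^\infty(I)$-valued identities) and the differentiation under the integral. Both are immediate because $T_W$ is bounded on $L^\infty(I)$ and $s\mapsto \lambda^*(s)$ is norm-continuous with values in $L^\infty(I)$; alternatively, one can carry out the whole computation pointwise in $x$ since all integrands are jointly continuous and uniformly bounded on $[0,T]\times I$, and then reinterpret the identities as equalities of functions.
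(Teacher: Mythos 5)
Your proof is correct, and it runs in the opposite direction from the paper's. The paper's argument takes the solution $\lambda$ of \eqref{eq:def_lambdabarre_linear} as given, sets $A(t):=e^{\alpha t}\lambda(t,\cdot)$, differentiates the integral equation to obtain the linear ODE $A'(t)=\alpha e^{\alpha t}u_0 + T_W A(t)$, and then solves it by variation of constants to \emph{derive} the formula. Your argument instead starts from the candidate $\lambda^*$, establishes its ODE, and \emph{verifies} that it satisfies the convolution equation via the observation that $F(t):=\lambda^*(t)-u_0-T_W K(t)$ solves $F'=-\alpha F$ with $F(0)=0$, then invokes uniqueness from Theorem~\ref{thm:existence_lambda}. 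Both rest on the same structural fact — the exponential kernel turns the convolution equation into a first-order linear ODE. Your direction has a small technical advantage: it only uses existence and uniqueness of the continuous bounded solution, whereas the paper's differentiation of $A(t)$ implicitly requires the time-differentiability of $\lambda$ (which is available here via the second part of Theorem~\ref{thm:existence_lambda}, since $h$ is $C^1$ and $u_0$ is time-independent, but must be noted). The paper's variation-of-constants derivation is arguably more transparent about where the formula comes from; your verification is shorter on prerequisites. Either way the result stands.
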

\begin{proof}
Define for $t\geq 0$ $A(t):= x \mapsto e^{\alpha t} \lambda(t,x)$. Multiplying \eqref{eq:def_lambdabarre_linear} by $e^{\alpha t}$, we obtain that $A(t)$ solves in $L^\infty(I)$ the differential equation $\dfrac{d}{dt} A(t) = \alpha  e^{\alpha t}u_0 + T_W A(t)$
with $A(0)=\lambda(0,\cdot)=u_0$. As $t\to e^{tT_W}v$ is the unique solution of $X'(t)=T_W X(t)$ with initial condition $X(0)=v$ for $v\in L^\infty(I)$, a variation of constants formula gives
$A(t)=e^{tT_W}u_0 + \alpha \int_0^t e^{(t-s)T_W}  e^{\alpha s} u_0 ds$, and \eqref{eq:solution_lambdabarre_exp} follows by definition of $A$.
\end{proof}
\begin{example}\label{ex:EDD_exp} Consider the particular case of Expected Degree Distribution (EED) (see \cite{Chung2002,Ouadah2019}): where $W(x,y)=f(x)g(y)$ with $f$ and $g$ two positive functions on $I$ such that $f,g \in L^2(I,\nu)$. Without any loss of generality, we assume $\int_I g d\nu=1$ and then $D(x)=f(x)$. We have then $r_\infty=\langle f,g\rangle$. When $\alpha \neq \langle  f, g \rangle$, the solution of \eqref{eq:solution_lambdabarre_exp} is given by
$$ \lambda(t,x) = u_0(x) + \dfrac{\langle g, u_0\rangle}{\alpha - \langle f,g\rangle } \left( 1 - e^{t\left( \langle f,g\rangle- \alpha \right)} \right) f(x).$$
The large time behavior depends then explicitly on the sign of $\langle f,g\rangle - \alpha$:
\begin{align*}
\langle f,g\rangle > \alpha \Rightarrow \forall x\in I,& \quad \lambda(t,x) \xrightarrow[t \to \infty]{} + \infty \text{ and }\\
\langle f,g\rangle < \alpha \Rightarrow \forall x\in I, &\quad \lambda(t,x) \xrightarrow[t \to \infty]{} u_0(x) +  \dfrac{\langle g,u_0\rangle}{\alpha - \langle f,g\rangle}  f(x).
\end{align*}
\end{example}
\begin{proof}
Recall that we have here $u_0(t,x)=u_0(x)$. By induction, we have explicitly that for $k\geq 1$, $T^k_W u_0 = f \langle  g, u_0 \rangle  \langle f,g\rangle^{k-1}$. Since $\displaystyle e^{tT_W}v=v +\sum_{k=1}^\infty \dfrac{t^k}{k!} T_W^k v$,  when $v\in L^\infty(I)$, together in \eqref{eq:solution_lambdabarre_exp}, we obtain
\begin{align*}
\lambda(t,x) &= e^{-\alpha t}\left(u_0(x) + \sum_{k=1}^\infty \dfrac{t^k}{k!}f(x) \langle  g, u_0 \rangle  \langle f,g\rangle^{k-1} \right) \\&\quad+ \alpha \int_0^t e^{-\alpha (t-s)} \left( u_0(x) + \sum_{k=1}^\infty \dfrac{(t-s)^k}{k!} f(x) \langle  g, u_0 \rangle  \langle f,g\rangle^{k-1} \right) ds\\
&= u_0(x) +  f(x) \dfrac{\langle  g, u_0 \rangle}{\langle  f, g \rangle} \left( e^{-t(\alpha-\langle  f, g \rangle)} - e^{-\alpha(t)}+ \dfrac{\alpha\left(1-e^{-t(\alpha-\langle  f, g \rangle)}\right)}{\alpha-\langle  f, g \rangle)}-1+e^{-\alpha t}\right)
\end{align*}
which gives then the result.
\end{proof}
We now consider the general case.

\subsection{Subcritical case}

We assume that:
\begin{equation}\label{eq:CS_sous_critical}
\Vert h \Vert_1 r_\infty <1.
\end{equation}
The main result is the following
\begin{thm}\label{thm:lambda_temps_long_sous_critique}
Assume \eqref{eq:CS_sous_critical}. Under Hypotheses \ref{hyp:existence_lambda_barre} and \ref{hyp:cas_lin} 
\begin{itemize}
\item there exists a unique function $\ell:I\mapsto \mathbb{R}^+$ solution of
\begin{equation}\label{eq:def_l_lim}
\ell(x) = u(x) + \Vert h \Vert_1 \int_I W(x,y)\ell(y)\nu(dy),
\end{equation}
continuous and bounded on $I$. Moreover, there exists $C_\ell>0$ such that for all $(x,y)\in I^2$,
\begin{equation} \label{eq:continuite_ell}
\vert \ell (x)-\ell(y) \vert \leq C_\ell \phi \left(\Vert x-y \Vert\right),
\end{equation}
where $\phi$ is given in \eqref{eq:pt_fixe_lips_espace}.
\item for any $x \in I$, we have the convergence
\begin{equation}\label{eq:cvg_ell}
\lambda(t,x) \xrightarrow[t\to\infty]{} \ell(x).
\end{equation}
\end{itemize} 
\end{thm}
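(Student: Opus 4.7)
The plan hinges on Gelfand's formula: since $\|h\|_1 r_\infty < 1$, there exists an integer $n$ with $q_n := \|h\|_1^n \|T_W^n\|_{L^\infty \to L^\infty} < 1$, even though the one-step quantity $\|h\|_1 \|T_W\| \leq \|h\|_1 C_W^{(1)}$ may exceed $1$. This contraction at the level of $n$-fold iterates drives both parts of the theorem.

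\textbf{Existence, uniqueness and regularity of $\ell$.} Rewrite \eqref{eq:def_l_lim} as $(I - \|h\|_1 T_W)\ell = u$ in $L^\infty(I)$ and set $\ell := \sum_{k \geq 0}(\|h\|_1 T_W)^k u$. The Neumann series converges absolutely in $L^\infty(I)$ by \eqref{eq:CS_sous_critical} and Gelfand's formula, which yields the unique bounded solution; nonnegativity of $\ell$ is automatic in the linear case, where $u_0, h, W \geq 0$. The regularity \eqref{eq:continuite_ell} follows directly from \eqref{eq:def_l_lim} applied at $x$ and $x'$:
\[
|\ell(x) - \ell(x')| \leq |u(x) - u(x')| + \|h\|_1 \|\ell\|_\infty \int_I |W(x,y) - W(x',y)|\, \nu(dy),
\]
which is bounded by $C_\ell \phi(\|x-x'\|)$ using the Lipschitz continuity of $u$ (Hypothesis \ref{hyp:cas_lin}) and \eqref{eq:hyp_W_pseudolip_theta}; continuity of $\ell$ follows since $\phi(r) \to 0$ as $r \to 0$.

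\textbf{Convergence $\lambda(t,x) \to \ell(x)$.} First, upgrade the local bound of Theorem \ref{thm:existence_lambda} to a global one. Iterating \eqref{eq:def_lambdabarre_linear} $n$ times against its convolution structure yields
\[
\lambda(t,\cdot) = \sum_{k=0}^{n-1} h^{*k} \star (T_W^k u_0)(t,\cdot) + h^{*n} \star (T_W^n \lambda)(t,\cdot),
\]
where $\star$ is time-convolution and $h^{*k}$ is the $k$-fold convolution of $h$. Taking $\|\cdot\|_\infty$ and writing $\Lambda(T_0) := \sup_{s \leq T_0}\|\lambda(s,\cdot)\|_\infty$ gives $\Lambda(T_0) \leq C + q_n \Lambda(T_0)$ with $C$ independent of $T_0$, hence $\lambda$ is globally bounded. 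Now set $\eta(t,x) := \lambda(t,x) - \ell(x)$. Using $\|h\|_1 = \int_0^t h + \int_t^\infty h$ in \eqref{eq:def_l_lim} and subtracting from \eqref{eq:def_lambdabarre_linear} gives
\[
\eta(t,x) = \varepsilon(t,x) + \int_I W(x,y) \int_0^t h(t-s)\eta(s,y)\,ds\,\nu(dy),
\]
with $\varepsilon(t,x) = [u_0(t,x) - u(x)] - \bigl(\int_t^\infty h\bigr) \int_I W(x,y)\ell(y)\nu(dy)$ satisfying $\|\varepsilon(t,\cdot)\|_\infty \to 0$ as $t \to \infty$, by the asymptotic hypothesis on $u_0$ in Hypothesis \ref{hyp:cas_lin} and integrability of $h$. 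Iterating this identity $n$ times produces $\eta = \sum_{k=0}^{n-1} h^{*k} \star (T_W^k \varepsilon) + h^{*n} \star (T_W^n \eta)$. For fixed large $T$, split each time-convolution as $\int_0^t = \int_0^T + \int_T^t$: the contribution from $[0,T]$ is bounded by a constant times $\int_{t-T}^t h^{*k}(u)\,du$, which vanishes as $t \to \infty$ by integrability of $h^{*k}$; the contribution from $[T,t]$ is bounded by $\|T_W^k\|\|h\|_1^k \sup_{s \geq T}\|\varepsilon(s,\cdot)\|_\infty$ for the $\varepsilon$-terms and by $q_n \sup_{s \geq T}\|\eta(s,\cdot)\|_\infty$ for the $\eta$-term. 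Taking $\limsup_{t\to\infty}$ then $T \to \infty$ yields $M \leq q_n M$ for $M := \limsup_{t\to\infty}\|\eta(t,\cdot)\|_\infty$. Since $q_n < 1$ and $M < \infty$ by the global bound on $\lambda$, we get $M = 0$, which proves \emph{uniform} (a fortiori pointwise) convergence of $\lambda(t,\cdot)$ to $\ell$ on $I$.

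The main obstacle is precisely this last bookkeeping: a direct Gronwall argument on $\|\eta(t,\cdot)\|_\infty$ fails because the natural operator norm $\|h\|_1 \|T_W\|_{L^\infty\to L^\infty}$ need not be less than $1$. The remedy — iterating the fixed-point equation $n$ times and using Gelfand's formula to convert \eqref{eq:CS_sous_critical} into a genuine contraction of $h^{*n} \star T_W^n$ — has to be carried out while simultaneously controlling the decaying residual $\varepsilon$ and the memory tails of $h^{*k}$ through a careful $\limsup$ argument.
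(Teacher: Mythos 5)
Your proof is correct, and the convergence part takes a genuinely different route from the paper's. For the first bullet (existence, uniqueness, regularity of $\ell$), your Neumann-series construction is essentially a rephrasing of the paper's Banach fixed-point argument, both resting on the same Gelfand contraction $\Vert h \Vert_1^{n_0}\Vert T_W^{n_0}\Vert<1$; deriving \eqref{eq:continuite_ell} from the equation rather than from the iterates is a minor presentational change. The interesting divergence is in the second bullet. The paper first proves $\sup_{x\in I}\int_0^\infty \left\vert \partial_t\lambda(t,x)\right\vert\,dt<\infty$ by iterating the convolution equation for $\partial_t\lambda$, deduces that $\lambda(t,x)$ has a finite pointwise limit, and then identifies that limit via the final-value property $\lim_{z\to 0}z\Lambda(z,x)=\lim_{t\to\infty}\lambda(t,x)$ of the Laplace transform together with the fact that convolution maps to multiplication; passing $z\to 0$ in \eqref{eq:laplace_pointfixe_z} recovers \eqref{eq:def_l_lim}. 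Your argument never touches $\partial_t\lambda$ or Laplace transforms: you subtract \eqref{eq:def_l_lim} from \eqref{eq:def_lambdabarre_linear}, obtain a renewal-type relation for $\eta=\lambda-\ell$ with a residual $\varepsilon(t,\cdot)$ tending uniformly to $0$, iterate $n$ times to expose the contraction $q_n$, and close with a split-tail $\limsup$ bootstrap giving $M\leq q_n M$, hence $M=0$. Your route is more elementary, yields \emph{uniform} (not merely pointwise) convergence of $\lambda(t,\cdot)\to\ell$, and avoids the differentiability assumptions on $h$ and $u_0$ that the paper imposes in Hypothesis \ref{hyp:cas_lin} specifically to run the Laplace-transform argument; it needs only $h\in L^1$ nonnegative and $\sup_{x}\vert u_0(t,x)-u(x)\vert\to 0$. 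The one place the two proofs share machinery is the global-in-time bound $\sup_{t\geq 0}\Vert\lambda(t,\cdot)\Vert_\infty<\infty$, which both obtain by the same $n$-fold iteration of the convolution equation.
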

The proof of Theorem \ref{thm:lambda_temps_long_sous_critique} will be given in Section \ref{S:proof_thm:lambda_temps_long_sous_critique}. We are now in position to address the question that motivates our paper: to what extent does the inhomogeneity of the underlying graph influence the macroscopic dynamics?
\begin{prop}\label{prop:ell_resultat_souscritique}
In the subcritical case \eqref{eq:CS_sous_critical}, $\ell$ solution of \eqref{eq:def_l_lim} is explicitly defined by
\begin{equation}\label{eq:explicit_ell_sscriti}
\ell = \sum_{k=0}^\infty \Vert h \Vert_1^k T_W^k u.
\end{equation} 
In particular, if $u_0$ is constant (i.e. for all $(t,x),~  u_0(t,x)=u(x)=u_0$), $\ell$ is uniform (i.e. $\ell(x)=\ell$ for every $x\in I$) if and only if the indegree is uniform (i.e. $D(x)=\int_I W(x,y) \nu(dy)=D$ for every $x\in I$). In such case, $r_\infty=D$.
\end{prop}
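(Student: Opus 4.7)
For the explicit formula \eqref{eq:explicit_ell_sscriti}, I would rewrite the fixed-point equation \eqref{eq:def_l_lim} as $(I-\Vert h \Vert_1 T_W)\ell = u$ on $L^\infty(I)$. The subcriticality assumption \eqref{eq:CS_sous_critical} gives $r_\infty(\Vert h \Vert_1 T_W)=\Vert h\Vert_1 r_\infty<1$, so by the Gelfand formula \eqref{eq:def_spectral_radius} there exist $\rho<1$ and $C>0$ with $\Vert (\Vert h\Vert_1 T_W)^k\Vert\le C\rho^k$ for all $k$. The Neumann series $\sum_{k\ge 0}(\Vert h\Vert_1 T_W)^k$ therefore converges in operator norm on $L^\infty(I)$ to a bounded inverse of $I-\Vert h\Vert_1 T_W$. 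Applying this inverse to $u$ yields \eqref{eq:explicit_ell_sscriti}; uniqueness of the solution is then automatic, which also matches the existence/uniqueness part of Theorem \ref{thm:lambda_temps_long_sous_critique}.

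For the equivalence, assume $u_0$ is a (positive) constant, so that $u\equiv u_0$ as a function on $I$. For the easy direction ($\Leftarrow$), if $D(x)\equiv D$ is constant, then the constant function $\mathbf 1$ on $I$ satisfies $T_W\mathbf 1 = D\,\mathbf 1$. By induction $T_W^k u_0 = u_0 D^k\,\mathbf 1$ for every $k\ge 0$, and plugging into \eqref{eq:explicit_ell_sscriti} gives
\begin{equation*}
\ell(x)=u_0\sum_{k=0}^\infty (\Vert h\Vert_1 D)^k = \frac{u_0}{1-\Vert h\Vert_1 D},
\end{equation*}
which is uniform. Here the geometric series converges because the lower bound $r_\infty\ge D$ (since $D$ is an eigenvalue of $T_W$) combined with \eqref{eq:CS_sous_critical} forces $\Vert h\Vert_1 D<1$.

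Conversely ($\Rightarrow$), suppose $\ell(x)\equiv\ell$ is constant. Substituting into \eqref{eq:def_l_lim} and using $u\equiv u_0$,
\begin{equation*}
\ell = u_0 + \Vert h\Vert_1\,\ell\int_I W(x,y)\,\nu(dy) = u_0 + \Vert h\Vert_1\,\ell\,D(x),\qquad x\in I.
\end{equation*}
If $\ell=0$ then $u_0=0$, contradicting $u_0>0$; hence $\ell\neq 0$ and, solving the above identity pointwise, $D(x)=(\ell-u_0)/(\Vert h\Vert_1\ell)$ is independent of $x$. (The degenerate case $u_0\equiv 0$ must be excluded since then $\ell\equiv 0$ trivially, independently of $D$.)

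Finally, the identity $r_\infty = D$ under uniform indegree follows by bounding from both sides: on the one hand, $T_W\mathbf 1=D\,\mathbf 1$ shows $D$ is an eigenvalue of $T_W$, so $r_\infty\ge D$. On the other hand, the operator norm of $T_W:L^\infty(I)\to L^\infty(I)$ equals $\sup_{x\in I}\int_I W(x,y)\,\nu(dy)=\sup_{x\in I} D(x)=D$, hence $r_\infty\le\Vert T_W\Vert=D$. The main technical point of the whole proposition is really just the justification of the Neumann expansion via Gelfand; the rest is algebra around $T_W\mathbf 1=D\mathbf 1$.
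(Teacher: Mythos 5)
Your proof is correct and follows essentially the same route as the paper: rewrite \eqref{eq:def_l_lim} as a Neumann series using $\Vert h\Vert_1 r_\infty<1$, then for the equivalence substitute a constant $\ell$ to read off $D(x)$, and for the converse use $T_W\mathbf 1 = D\mathbf 1$ together with the $L^\infty$ operator-norm bound to get $r_\infty=D$ and identify the constant fixed point $u_0/(1-\Vert h\Vert_1 D)$ by uniqueness. Your remark about the degenerate case $u_0\equiv 0$ is a sensible precision that the paper leaves implicit.
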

Note that \eqref{eq:explicit_ell_sscriti} informs us about the influence of the macroscopic graph $W$ on the dynamics: when $u_0$ is constant (thus $u_0=u$), we have
\begin{equation}\label{eq:ell_degre_DL}
\ell (x) =u_0 \left( \sum_{k=0}^\infty \Vert h \Vert_1 ^k D^{(k)}(x)\right),
\end{equation}
where $D^{(0)}=1$, $D^{(1)}=D(x)$ and $D^{(k+1)}=T_WD^{(k)}$. We see from \eqref{eq:ell_degre_DL} that in order to understand $\ell(x)$, one needs to explore the structure of the macroscopic graph around $x$.
\begin{proof}
Equation \eqref{eq:def_l_lim} can be written $\ell=u+\Vert h \Vert_1 T_W\ell$ which leads to $\Vert h \Vert_1 \left( \frac{Id}{\Vert h \Vert_1} - T_W \right) \ell = u$. As $r_\infty<\frac{1}{\Vert h \Vert_1}$ in the subcritical case, $\left( \frac{Id}{\Vert h \Vert_1} - T_W \right) $ is invertible (recall that $r_\infty= \sup_{\sigma \in Sp(T_W)} \vert \sigma \vert$) and then $\ell = \left( Id - \Vert h \Vert_1 T_W \right) ^{-1} u = \sum_{k=0}^\infty \Vert h \Vert_1^k T_W^k u$. 

We take now $u_0$ constant. Theorem \ref{thm:lambda_temps_long_sous_critique} gives the existence of a unique $\ell$ satisfying \eqref{eq:def_l_lim}. Assume that this solution is a constant function $\ell_0$, then for all $x\in I$ we have from \eqref{eq:def_l_lim} $\ell_0 = u_0 + \Vert h \Vert_1 \ell_0 \int_I W(x,y)\nu(dy)$ thus $\int_I W(x,y)\nu(dy)$ is constant and is equal to $\dfrac{\ell_0 - u_0}{\ell_0 \Vert h \Vert_1}$. Conversely, assume  $\int_I W(x,y)\nu(dy)$ constant and equal to $D$. Then, a direct computation gives $\Vert T_Wf \Vert_\infty \leq \Vert f \Vert_\infty D$ hence (as $r_\infty=\lim_{n\to\infty} \Vert T_W^n \Vert ^\frac{1}{n}$) $r_\infty\leq D$. As $T_W \mathbf{1}= D \mathbf{1}$ (where $\mathbf{1}(x)\equiv 1$), we have $D\leq r_\infty$ thus $D=r_\infty$. The subcritical case can then be written as $\Vert h \Vert _1 D <1$ and we can define $\ell_0:= \dfrac{u_0}{1-\Vert h \Vert_1 D}>0$. The constant function $\ell_0$ is continuous, bounded and solution of \eqref{eq:def_l_lim} which is unique: thus the solution of \eqref{eq:def_l_lim} is indeed constant.
\end{proof}

\subsection{Supercritical case}

We assume that:
\begin{equation}\label{eq:CS_sur_critical}
\Vert h \Vert_1 r_\infty >1.
\end{equation}
Note again that, without space interaction (i.e. $W=1$), \eqref{eq:CS_sur_critical} reduces to $\Vert h \Vert_1>1$ and it can be shown (see \cite{delattre2016}, Theorem  11) that $\lambda(t) \sim \alpha e^{\beta t} \to \infty$ for some $\alpha,\beta>0$. In our context with nontrivial $W$, one does not expect to have $\lambda(t,x) \xrightarrow[t\to\infty]{}\infty$ uniformly on $x$ as one can see from the obvious following example: take $W(x,y)=\alpha \mathbf{1}_{[0,\frac{1}{2})^2}(x,y)+\beta  \mathbf{1}_{[\frac{1}{2},1]^2}(x,y)$ for $\alpha>\beta$, then $r_\infty=\frac{\alpha}{2}$. This corresponds to two disconnected mean-field components $A$ (for neurons with positions in $I_A=[0,\frac{1}{2})$) and $B$ (for neurons with positions in $I_B=[\frac{1}{2},1]$). The critical parameter for population $A$ (resp. $B$) is hence $\alpha_c=\frac{2}{\Vert h \Vert_1}$ (resp. $\beta_c=\frac{2}{\Vert h \Vert_1}$). Taking now $\alpha>\alpha_c$ and $\beta<\beta_c$, \eqref{eq:CS_sur_critical} is satisfied but one does not have $\lambda(t,x) \xrightarrow[t\to\infty]{}\infty$ uniformly on $x$ as the population $B$ is subcritical, we only have $\lambda(t,x) \xrightarrow[t\to\infty]{}\infty$ for $x\in I_A$. 

In order to avoid such trivial examples, we assume that the graphon $W$ is sufficiently connected in the following way. Defining for $k\geq 1$: $$W^{(k)} (x,y):= \int_{I \times \cdots \times I} W(x,x_1) \cdots W(x_{k-1},y) dx_1\cdots dx_{k-1},$$ we assume primitivity of $W$ i.e. that there exists $k$ such that 
\begin{equation}\label{eq:Wkpos}
W^{(k)}>0.
\end{equation}
Note that $W^{(k)}$ is the kernel of the operator $T_W^k$. To understand \eqref{eq:Wkpos}, think of the finite dimensional case with $N$ particles interacting through a connectivity matrix $A$. In this context, $A$ being primitive means the existence of some $k\geq 1$ such that $A^k (i,j)>0$ for all $i,j$. Hypothesis \eqref{eq:Wkpos} is the exact counterpart in infinite dimension. We also assume the more technical assumptions:
\begin{hyp}\label{hyp:surcritique+}
\begin{equation}\label{eq:W_deg2}
\sup_x \int_I W(x,y)^2 \nu(dy) =: C_W^{(2)} < \infty,
\end{equation}
and
\begin{equation}\label{eq:W_sym}
\forall (x,y) \in I^2, \quad W(x,y)=W(y,x).
\end{equation}
We also assume that we can define the Laplace transform of $h$ for any $z\geq 0$ : $\mathcal{L}(h)(z):=\int_0^\infty e^{-tz} h(t)dt<\infty$.
Having $h$ of polynomial growth works for instance.
\end{hyp}
\begin{prop}
\label{prop:spectral_TW_L2}
Under Hypothesis \ref{hyp:surcritique+}, for all $p\geq1$, the linear operator $T_{ W}^{ p}$ is continuous  from $L^{ 2}(I)$ to $L^{ 2}(I)$, is compact, self-adjoint, its spectrum is the union of $\{0\}$ and a discrete sequence of eigenvalues $(\mu_{ n}^{(p)})_{ n\geq1}$ such that $ \mu_{ n}^{ (p)}\to0$ as $n\to\infty$. Moreover, the spectral radius $r_{ 2}(T_{ W}^{ p})$ verifies
\begin{equation}
\label{eq:spectral_radii_equal}
r_{ 2}(T_{ W}^{ p})= r_{ \infty}^{ p}
\end{equation}
where $r_{ \infty}$ defined in \eqref{eq:def_spectral_radius}.\\
Secondly, if one assumes further hypothesis \eqref{eq:Wkpos} for $p=k$, $ \mu_{ 0}^{ (k)}:= r_{ 2}(T_{ W}^{ k})>0$ is an eigenvalue of $T_{ W}^{ k}$ with a unique normalized eigenfunction $h_{ 0}^{ (k)}$ that is bounded, continuous and strictly positive on $I$. Moreover, every other eigenvalue $ \mu_{ n}^{ (k)}$ of $T_{ W}^{ k}$ has modulus $ \left\vert \mu_{ n}^{ (k)} \right\vert< r_{ 2}(T_{ W}^{ k})$.
\end{prop}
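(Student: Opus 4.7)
The plan is to treat $T_W$ as an integral operator on $L^2(I,\nu)$ and combine the spectral theorem for compact self-adjoint operators with a Perron-type argument. First, hypothesis \eqref{eq:W_deg2} gives $\int_{I\times I} W(x,y)^2\, \nu(dx)\nu(dy) \leq C_W^{(2)}$, so $W\in L^2(I^2,\nu\otimes\nu)$ and $T_W$ is Hilbert--Schmidt, hence compact; self-adjointness follows from \eqref{eq:W_sym}. These properties pass to every power $T_W^p$, and the classical spectral theorem for compact self-adjoint operators then yields the discrete spectral description $\mathrm{Sp}(T_W^p)=\{0\}\cup\{\mu_n^{(p)}\}_{n\geq 1}$ with $\mu_n^{(p)}\to 0$.

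To prove $r_2(T_W^p)=r_\infty^p$, since the spectral radius is multiplicative under powers it suffices to show $r_2(T_W)=r_\infty$. Cauchy--Schwarz and \eqref{eq:W_deg2} give $\|T_W g\|_\infty \leq \sqrt{C_W^{(2)}}\,\|g\|_{L^2}$, so $T_W:L^2\to L^\infty$ is continuous, while $\nu$ being a probability measure furnishes the reverse embedding $L^\infty\hookrightarrow L^2$ with inclusion norm $1$. Factoring $T_W^{n+2}= T_W \circ T_W^n \circ T_W$ and interpreting the middle factor on $L^\infty$ then yields
$$\|T_W^{n+2}\|_{L^2\to L^2}\leq \|T_W\|_{L^\infty\to L^2}\,\|T_W^n\|_{L^\infty\to L^\infty}\,\|T_W\|_{L^2\to L^\infty},$$
so $r_2(T_W)\leq r_\infty$ by Gelfand's formula. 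Conversely, positivity of $T_W$ gives $\|T_W^n\|_{L^\infty\to L^\infty}=\|T_W^n\mathbf{1}\|_\infty$, and the $L^2\to L^\infty$ bound above yields $\|T_W^{n+1}\mathbf{1}\|_\infty \leq \sqrt{C_W^{(2)}}\,\|T_W^n\|_{L^2\to L^2}$, leading to $r_\infty\leq r_2(T_W)$.

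Under the primitivity assumption \eqref{eq:Wkpos}, the compact self-adjoint operator $T_W^k$ has the strictly positive kernel $W^{(k)}$. I would then invoke Jentzsch's theorem (the continuous Perron--Frobenius statement for positive compact integral operators with strictly positive kernel): its spectral radius $\mu_0^{(k)}$ is a positive, simple eigenvalue, the associated eigenfunction $h_0^{(k)}$ can be chosen strictly positive and normalized, and every other eigenvalue has strictly smaller modulus. To upgrade $h_0^{(k)}$ from an $L^2$-eigenfunction to a bounded continuous one, I would iterate $T_W:L^2\to L^\infty$ combined with $L^\infty\hookrightarrow L^2$ to conclude $T_W^{k-1}h_0^{(k)}\in L^\infty$, then substitute into the representation $h_0^{(k)}(x)=(\mu_0^{(k)})^{-1}\int_I W(x,y)(T_W^{k-1}h_0^{(k)})(y)\,\nu(dy)$ and use \eqref{eq:hyp_W_pseudolip_theta} to obtain the H\"older-type estimate
$$|h_0^{(k)}(x)-h_0^{(k)}(x')| \leq \frac{\|T_W^{k-1}h_0^{(k)}\|_\infty}{\mu_0^{(k)}}\,C_w\,\|x-x'\|^\vartheta.$$
Strict positivity on all of $I$ then follows from $W^{(k)}>0$ and $h_0^{(k)}\not\equiv 0$ via $h_0^{(k)}(x)=(\mu_0^{(k)})^{-1}\int_I W^{(k)}(x,y) h_0^{(k)}(y)\,\nu(dy)>0$.

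The main obstacle is the identification $r_2(T_W)=r_\infty$: a priori these spectral radii refer to two different Banach space settings, and the equality hinges on the combination of positivity of $T_W$, the $L^2\to L^\infty$ smoothing provided by \eqref{eq:W_deg2}, and $\nu$ being a probability measure. Once this is in place, matching primitivity \eqref{eq:Wkpos} with the strict-positivity hypothesis of Jentzsch's theorem is immediate since $W^{(k)}$ is literally the kernel of $T_W^k$.
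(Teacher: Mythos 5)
Your proof is correct. Most of it runs parallel to the paper's: compactness and self-adjointness of $T_W$ on $L^2(I)$ from \eqref{eq:W_deg2} and \eqref{eq:W_sym}, the spectral theorem for the discrete spectrum, the Jentzsch/Krein--Rutman theorem for the Perron eigenpair, and the $L^2\to L^\infty$ smoothing of $T_W$ combined with \eqref{eq:hyp_W_pseudolip_theta} to upgrade $h_0^{(k)}$ to a bounded H\"older-continuous, everywhere-positive function. The notable departure is your proof of \eqref{eq:spectral_radii_equal}. The paper first passes to $T_W^2$, shows it is compact on $L^\infty$ as well as on $L^2$, and then identifies the two non-zero spectra by showing each non-zero spectral element is an eigenvalue whose eigenfunction lies in both spaces (using the same $L^2\to L^\infty$ smoothing to pass an $L^2$-eigenfunction into $L^\infty$). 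You instead work with $T_W$ directly and Gelfand's formula, establishing two-sided operator-norm inequalities: the factorization $T_W^{n+2}=T_W\circ T_W^n\circ T_W$ with $L^2\xrightarrow{T_W}L^\infty\hookrightarrow L^2$ gives $r_2\leq r_\infty$, while positivity of the kernel (so $\|T_W^n\|_{L^\infty\to L^\infty}=\|T_W^n\mathbf{1}\|_\infty$) together with $\|T_W^{n+1}\mathbf{1}\|_\infty\leq\sqrt{C_W^{(2)}}\,\|T_W^n\|_{L^2\to L^2}$ gives $r_\infty\leq r_2$. Your route is more quantitative and avoids verifying compactness of $T_W^2$ on $L^\infty$ and the resulting spectral bookkeeping, at the modest cost of invoking positivity of $W$ (harmless here, since $W\geq 0$ is a standing assumption); the paper's route, by working directly with spectra, is closer in spirit to the formulation it ultimately needs for the Jentzsch step.
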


\begin{prop}\label{prop:lambda_temps_long_sur_critique}
Suppose that we are in the supercritical case \eqref{eq:CS_sur_critical}. Under Hypotheses \ref{hyp:cas_lin} and  \ref{hyp:surcritique+}, 
$\int_I \lambda(t,x)^2 \nu(dx) \xrightarrow[t\to\infty]{} \infty$.
\end{prop}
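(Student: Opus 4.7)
The plan is to project \eqref{eq:def_lambdabarre_linear} onto a positive principal eigenfunction of $T_W$ in $L^2(I,\nu)$, reducing the spatial equation to a scalar renewal equation of supercritical type, and then to conclude by Cauchy--Schwarz.

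First I would extract a positive eigenfunction of $T_W$ associated to $r_\infty$ itself. Proposition \ref{prop:spectral_TW_L2}, applied with the primitivity exponent $k$ of \eqref{eq:Wkpos}, provides a unique normalized strictly positive eigenfunction $h_0 := h_0^{(k)}$ of $T_W^k$ for its simple top eigenvalue $r_\infty^k$. Since $T_W$ commutes with $T_W^k$, the vector $T_W h_0$ also satisfies $T_W^k (T_W h_0) = r_\infty^k (T_W h_0)$ and therefore lies in the one-dimensional eigenspace of $T_W^k$ corresponding to $r_\infty^k$, so $T_W h_0 = c h_0$ for some real $c$; nonnegativity of $T_W h_0$ (from $W \geq 0$ and $h_0 > 0$) combined with $c^k = r_\infty^k$ forces $c = r_\infty$. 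Define $g(t) := \int_I \lambda(t,x) h_0(x)\, \nu(dx)$ and $v(t) := \int_I u_0(t,x) h_0(x)\, \nu(dx)$. Integrating \eqref{eq:def_lambdabarre_linear} against $h_0(x)\nu(dx)$, applying Fubini (justified by the boundedness of $\lambda$ given by Theorem \ref{thm:existence_lambda}) and using the self-adjointness of $T_W$ from \eqref{eq:W_sym}, I obtain the scalar renewal equation
\begin{equation*}
g(t) = v(t) + r_\infty \int_0^t h(t-s)\, g(s)\, ds, \qquad t \geq 0,
\end{equation*}
in which $g,v \geq 0$ because $\lambda, u_0 \geq 0$ and $h_0 > 0$, and $v(t) \to v_\infty := \int_I u\, h_0\, d\nu$ as $t \to \infty$ by Hypothesis \ref{hyp:cas_lin}.

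The heart of the argument is the supercritical analysis of this scalar equation. By Hypothesis \ref{hyp:surcritique+}, $\mathcal{L}(h)$ is finite, continuous and strictly decreasing on $[0,\infty)$, with $\mathcal{L}(h)(0) = \Vert h \Vert_1 > 1/r_\infty$ by \eqref{eq:CS_sur_critical} and $\mathcal{L}(h)(z) \to 0$ as $z \to \infty$; hence there is a unique Malthusian exponent $\beta > 0$ such that $r_\infty \mathcal{L}(h)(\beta) = 1$. Setting $G(t) := e^{-\beta t} g(t)$ and $\tilde h(t) := r_\infty e^{-\beta t} h(t)$ converts the equation into the proper renewal equation $G(t) = e^{-\beta t} v(t) + \int_0^t \tilde h(t-s) G(s)\, ds$ with $\Vert \tilde h \Vert_1 = 1$ and a nonnegative, directly Riemann integrable source. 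The key renewal theorem then yields $G(t) \to C_\infty = \int_0^\infty e^{-\beta t} v(t)\, dt \,\big/\, \int_0^\infty t\, \tilde h(t)\, dt$, which is strictly positive whenever $u_0 \not\equiv 0$ (the trivial case $u_0 \equiv 0$ gives $\lambda \equiv 0$ and is excluded). Consequently $g(t) \sim C_\infty e^{\beta t} \to \infty$, and Cauchy--Schwarz combined with $\Vert h_0 \Vert_{L^2(I,\nu)} = 1$ gives $g(t)^2 \leq \int_I \lambda(t,x)^2\, \nu(dx)$, establishing divergence of the right-hand side. The main obstacle I anticipate is this renewal step: the constant-source computation of Theorem~11 in \cite{delattre2016} must be adapted to the time-varying source $v(t)$, but since $v$ is bounded, continuous and converges at infinity, this reduces to a standard application of the key renewal theorem.
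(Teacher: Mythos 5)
Your proof is correct, and it reaches the conclusion by a genuinely shorter route than the paper at two places. First, you extract the principal eigenfunction of $T_{W}$ directly from the one of $T_{W}^{k}$ via the commutation relation $T_{W}T_{W}^{k}=T_{W}^{k}T_{W}$, whereas the paper first proves the claim under the stronger assumption $k=1$ (so that $W>0$) and then reduces the general $k$ to $k=1$ by iterating the convolution equation, replacing $h$ by $h^{\ast k}$ and $T_{W}$ by $T_{W}^{k}$. Second, and more importantly, once the scalar renewal asymptotic $g(t)\sim C_{\infty}e^{\beta t}$ is in hand you finish immediately via Cauchy--Schwarz, $g(t)^{2}\leq\left\Vert\lambda(t,\cdot)\right\Vert_{2}^{2}\left\Vert h_{0}\right\Vert_{2}^{2}$. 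The paper instead writes $\lambda=P_{0}\lambda+P_{1}\lambda$, uses Parseval, and spends most of the proof showing $\left\Vert P_{1}\lambda(t,\cdot)\right\Vert_{2}=o(e^{\sigma_{r}t})$ by comparison with auxiliary renewal equations at rates strictly between $\widetilde{r}(T_{W})$ and $r_{\infty}$. The trade-off is that the paper obtains the sharper asymptotic equivalence $\left\Vert\lambda(t,\cdot)\right\Vert_{2}\sim Ce^{\sigma_{r}t}$, while your argument gives only the lower bound $\left\Vert\lambda(t,\cdot)\right\Vert_{2}\geq g(t)\to\infty$, which is all the proposition asserts. Both proofs rely on the same renewal-theoretic input (the paper cites Feller's Theorem~4, you invoke the key renewal theorem after the Malthusian change of variable; these are interchangeable here), and both implicitly require $u_{0}\not\equiv0$ for the leading constant to be positive — you made this explicit, the paper leaves it implicit in Feller's statement. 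One small point worth spelling out if you write this up: the one-dimensionality of the $r_{\infty}^{k}$-eigenspace of $T_{W}^{k}$ (needed so that $T_{W}h_{0}$ is a scalar multiple of $h_{0}$) comes from the simplicity of the leading eigenvalue in the Jentzsch/Krein--Rutman theorem, not merely from the uniqueness of the normalized \emph{positive} eigenfunction.
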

The proofs of Propositions \ref{prop:spectral_TW_L2} and\ref{prop:lambda_temps_long_sur_critique} will be given in Section \ref{S:proof_prop:lambda_temps_long_sur_critique}.
\begin{remark}\label{rem:surcritical} Proposition \ref{prop:lambda_temps_long_sur_critique} provides a divergence result in $L^2$ norm, that is not uniform in $x$. But under more restrictive hypotheses on the connectivity of $W$ (without supposing $W$ symmetric), one can easily derive uniform divergence result. Assume $0<\inf_{x\in I} u(x)=: \underline{u}<\infty$  and $\Vert h \Vert_1 \inf_{x\in I} \Vert W(x,\cdot) \Vert_{1,\nu} >1$, we have then that $\inf_{x\in I} \lim_{t\to\infty} \lambda(t,x) = + \infty$. Note that by Fatou Lemma, $\inf_{x\in I} \Vert W(x,\cdot) \Vert_{1,\nu}  \leq r_\infty$, hence it also implies the result of Proposition \ref{prop:lambda_temps_long_sur_critique}.
\end{remark}
\begin{proof}[Proof of Remark \ref{rem:surcritical}.]
Let $v(t,x):=\inf_{s\geq t} \lambda(s,x)$. For all $x \in I$, set $$\underline{\ell}(x)=\liminf_{t\to\infty} \lambda(t,x).$$
We have for all $t>0$, using the positivity of $W,h$ and $\lambda$, and the fact that $\lambda(s,y)\geq v(\frac{t}{2},y)$ for all $s\in [\frac{t}{2},t]$,
\begin{align*}
\lambda(t,x) &= u_0(t,x) +\int_0^{\frac{t}{2}} \int_I W(x,y)h(t-s)\lambda(s,y)~ \nu(dy)~ ds+ \int_{\frac{t}{2}}^t \int_I W(x,y)h(t-s)\lambda(s,y) ~ \nu(dy) ~ ds\\
&\geq u_0(t,x) +  \int_0^{\frac{t}{2}} h(s)ds \int_I W(x,y) v\left(\frac{t}{2},y\right) \nu(dy),
\end{align*}
then taking $\displaystyle\lim\inf_{ t\to\infty}$, we obtain as $v(\cdot,y)$ is non decreasing by monotone convergence
\begin{align*}
\inf_{x\in I} \underline{\ell}(x)   &\geq \underline{u}+ \inf_{y\in I} \underline{\ell}(y) \Vert h \Vert _1  \inf_{x\in I}\int_I W(x,y) \nu(dy).
\end{align*}
As $u$ is positive and $\Vert h \Vert_1 \inf_{x\in I} \Vert W(x,\cdot) \Vert_{1,\nu} >1$ (in the subcritical case), it implies that $\inf_{x\in I} \underline{\ell}(x)  = \inf_{x\in I} \lim_{t\to\infty}  \inf_{s\geq t} \lambda(s,x)= + \infty	$ hence the result.
\end{proof}

\section{Applications}\label{S:application}

We give here examples of graphs $\left(\mathcal{G}^{(N)}\right)$ and corresponding graphons that satisfy the hypothesis of the paper. The main class of examples we have in mind fall into the framework of $W$-random graphs, see \cite{Lovsz2006,Luon2020}.

\subsection{A general class of examples}\label{S:general_class_ex}

Given a positive measurable kernel $(x,y)\mapsto \mathcal{P}(x,y)$ on $I^2$, for any $N\geq 1$  we consider the interaction kernel
\begin{equation}\label{eq:def_WN_P}
W_N(x,y):= \rho_N \min \left( \dfrac{1}{\rho_N}, \mathcal{P}(x,y) \right)
\end{equation}
with $ \rho_{ N}>0$. If $ \mathcal{ P}$ is bounded, by modifying $ \rho_{ N}$, we can suppose with no loss of generality $\left\Vert \mathcal{ P} \right\Vert_{ \infty}=1$ and $ W_{ N}(x,y)= \rho_{ N} \mathcal{ P}(x,y)$ whenever $ \rho_{ N} \leq \frac{ 1}{ \left\Vert \mathcal{ P} \right\Vert_{ \infty}}$. Then, one distinguish the dense case when $\lim_{N\to\infty} \rho_N= \rho>0$ and the diluted case when $\rho_N \to 0$.

\subsubsection{Uniformly bounded degrees} 
Suppose $\sup_x \int_I \mathcal{P}(x,y)\nu(dy)<\infty$. Recall that the prefactor $\kappa_i^{(N)}$ in \eqref{eq:def_lambdaiN} was here to ensure that the interaction remains of order 1 as $N\to\infty$. In the dense case renormalization is not necessary, one can take $\kappa_i^{(N)}=1$; and in the diluted case we can take $\kappa_i^{(N)}=\frac{1}{\rho_N}$. In either case, we take $w_N=\rho_N$. To satisfy Hypothesis \ref{hyp:conv_graph_concentration}, we require $\dfrac{N\rho_N}{\log(N)}\xrightarrow[N\to\infty]{} +\infty$. Hypothesis \ref{hyp:cvg_graph} or Hypothesis \ref{hyp:cvg_graph_infinf} with $W=\mathcal{P}$ are satisfied under regularity assumption on $\mathcal{P}$, see Propositions 3.2, 3.4, 3.6 and 3.9 of \cite{Luon2020}. Note that if $\rho_N=1$, it is a direct consequence of Proposition \ref{prop:scenario_hyp} (in this case $W^{\mathcal{G}_N^{(1)}}=W^{\mathcal{G}_N^{(2)}}$, see Definition \ref{def:graphs_G2} for the graph $\mathcal{G}_N^{(2)}$). Typical examples include the classic Erdös-Rényi graph with $\mathcal{P}=1$ (hence $W_N=\rho_N$ is uniform), interaction with the P-nearest neighbors (see \cite{Omelchenko2012}), or the EDD model previously defined in Example \ref{ex:EDD_exp}. These examples are thoroughly detailed in the next part.

\subsubsection{Unbounded degrees}\label{S:general_class_ex_div}
Suppose that $\mathcal{P}$ satisfies for all $x\in I$ : $\int_I \mathcal{P}(x,y)^2 \nu(dy) < \infty$ and $\mathcal{P}_*:=\inf_{z\in I} \int \mathcal{P}(z,y)\nu(dy)>0$, but  $\sup_x \int_I \mathcal{P}(x,y)\nu(dy)=\infty$. Then we take $\kappa_i^{(N)}=N\left(\rho_N \sum_{j=1}^N \min \left( \dfrac{1}{\rho_N}, \mathcal{P}(x_i,x_j)\right)\right)^{-1}$, and the macroscopic interaction kernel is $\displaystyle W(x,y)=\dfrac{\mathcal{P}(x,y)}{\int_I \mathcal{P}(x,z)\nu(dz)}$.
For such examples, see \cite{Luon2020}, Section 3.4. For instance, consider $\mathcal{P}(x,y)=\dfrac{1}{x^\alpha} g(y)$ with $g$ a probability measure on $[0,1]$ and $\alpha<\frac{1}{2}$.
\medskip

We present in the following different concrete examples of application of our results. We focus on the framework $I=[0,1]$ with the regular distribution of the positions $x_i^{(N)}=\frac{i}{N}$, $1\leq i \leq N$ and $\nu$ the Lebesgue measure. We take $f(x)=x$ to apply the results of Section \ref{S:sys-lim_temps}.

\subsection{Example: Erd\"os-R\'enyi graph}\label{S:ex_ER_cst}

Taking $ \mathcal{ P}\equiv 1$ with $ \rho_{ N}\in[0, 1]$, \eqref{eq:def_WN_P} becomes $W_{ N}\equiv \rho_{ N}$. This corresponds to the case where $ \mathcal{ G}^{ (N)}$ is a (possibly diluted) Erd\"os-R\'enyi random graph: the dense case  corresponds to $ \rho_{ N}\to \rho \in(0, 1]$ (and one takes $ \kappa_{ i}\equiv 1$ for all $i$) whereas the diluted case corresponds to $ \rho_{ N}\to 0$ (and one chooses $ \kappa_{ i} \equiv \frac{ 1}{ \rho_{ N}}$). The dilution condition (3.10) reduces to 
\begin{equation}
\label{eq:dilution_ER}
\frac{ N\rho_{ N}}{ \log N} \xrightarrow[ N\to\infty]{}+\infty.
\end{equation} 
Note that the condition \eqref{eq:dilution_ER} is the very same condition already met in the similar context of diffusions interacting on Erd\"os-R\'enyi random graphs (see \cite{DelattreGL2016,bet2020weakly}), in the quenched case (i.e. where the randomness of the graph is frozen). In the (technically simpler) annealed case (where one integrates also w.r.t. the randomness of the graph), it is possible to get rid of this supplementary $\log N$ term (that is  required, in the present quenched setting, for our Borel-Cantelli arguments to work) and assume only $ N \rho_{ N} \to \infty$ (as this has been done for diffusions in an annealed setting e.g. in \cite{Bhamidi2016,Coppini2019,bayraktar2021graphon}). Here, the limiting graphon is given by $ W \equiv \rho$ (with $ \rho=1$ in the diluted case). Condition \eqref{eq:hyp_W_pseudolip_theta} is then trivially satisfied and one can apply Theorems \ref{thm:cvg_0} and \ref{thm:cvg-sup_0} (the convergence of graphs is seen in Proposition \ref{prop:scenario_hyp}). As the degree is constant, Proposition \ref{prop:ell_resultat_souscritique} gives $r_\infty=\rho$. As Hypothesis \ref{hyp:surcritique+} is satisfied, there is a transition phase around $\rho_c=\frac{1}{\Vert h \Vert_1}$.

In the subcritical case $\Vert h \Vert_1 \rho<1$, Theorem \ref{thm:lambda_temps_long_sous_critique} gives that for any $x\in I$, $\lambda(t,x) \xrightarrow[t\to\infty]{} \ell(x)=\dfrac{u(x) (1-\Vert h \Vert_1 \rho) + \Vert u \Vert_{I,\nu,1} \Vert h \Vert_1 \rho}{1-\Vert h \Vert_1 \rho}$. Note that if $u_0$ is constant, $\ell=\dfrac{u}{1-\Vert h \Vert_1 \rho}$. Corresponding simulations are given in Figures~\ref{fig:simC} and~\ref{fig:simB}.

\begin{figure}[h]
\centering
\subfloat[Matrix of $\mathcal{G}^{(N)}$]{\includegraphics[width=0.4\textwidth]{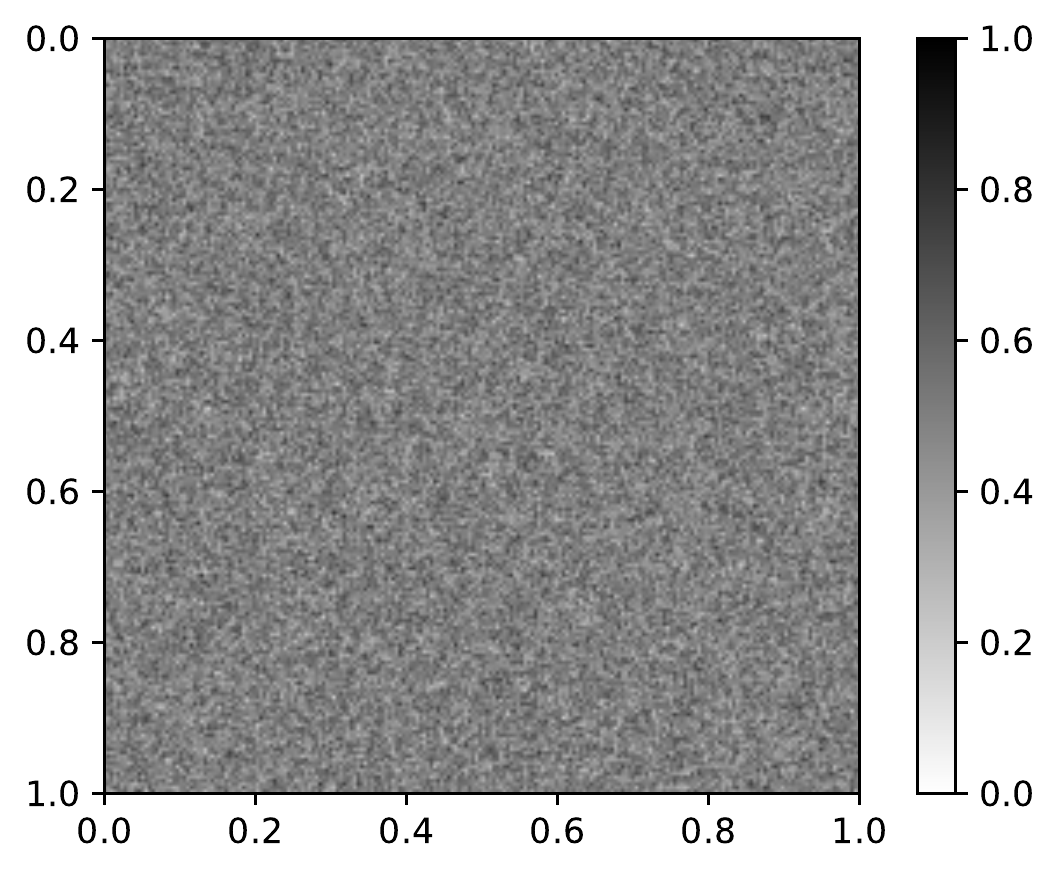}\label{subfig:simC_graphe}}
\quad
\subfloat[Each dot represents $\lambda_N(T,x)$ for $x\in \underline{x}^{(N)}$, and the plain line corresponds to the macroscopic limit $\ell(x)$.]{\includegraphics[width=0.5\textwidth]{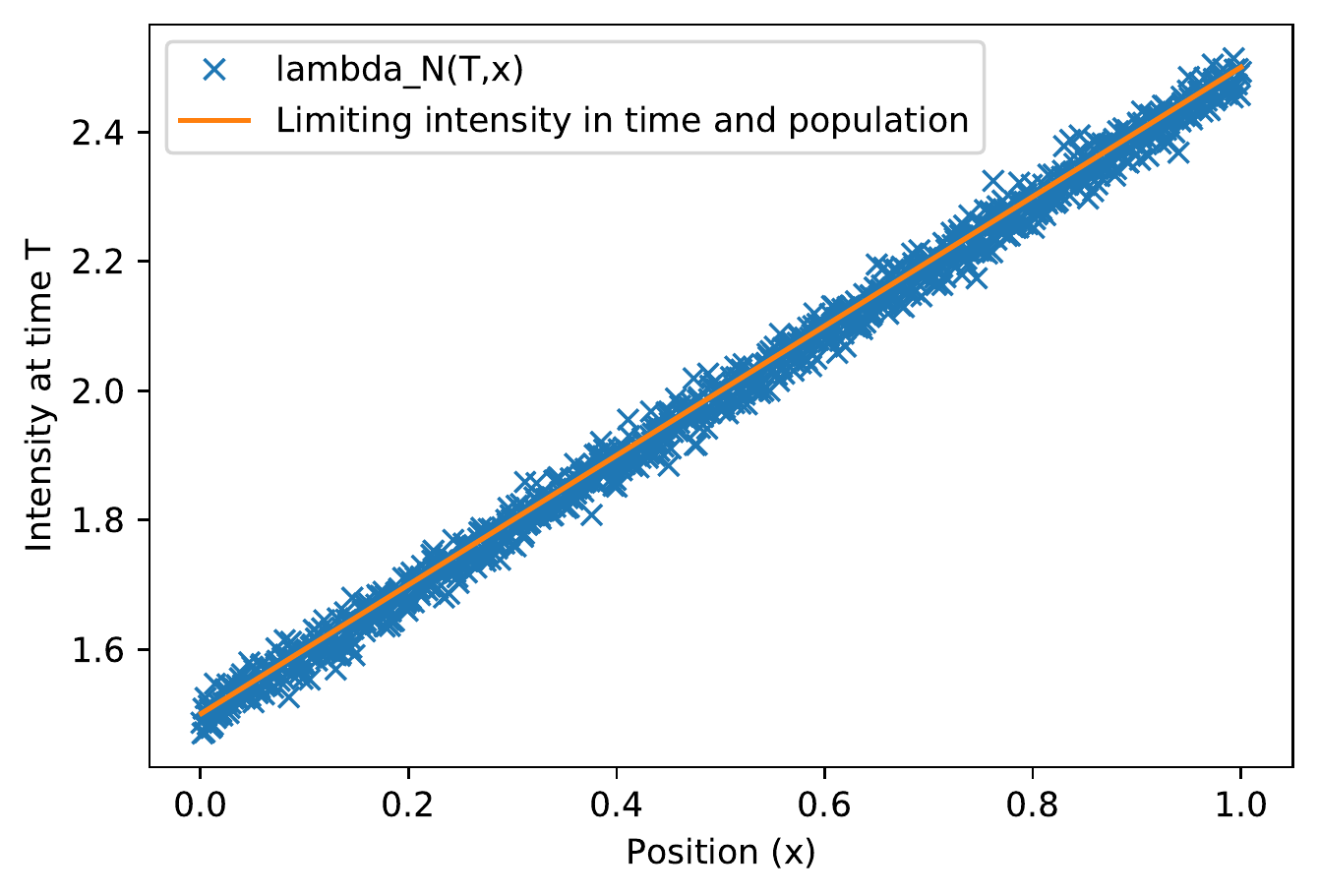}\label{subfig:simC_lambda(T)}}\\\subfloat[Evolution of microscopic and macroscopic intensities of three particles at positions $x=$0.25 (blue - the lowest), 0.5 (red) and 0.75 (green - the highest). In each case, the colored line represents $\lambda_N(t,x)$, the dashed line represents $\lambda(t,x)$ and the dotted line represents the limit $\ell (x)$.]{\includegraphics[width=0.60\textwidth]{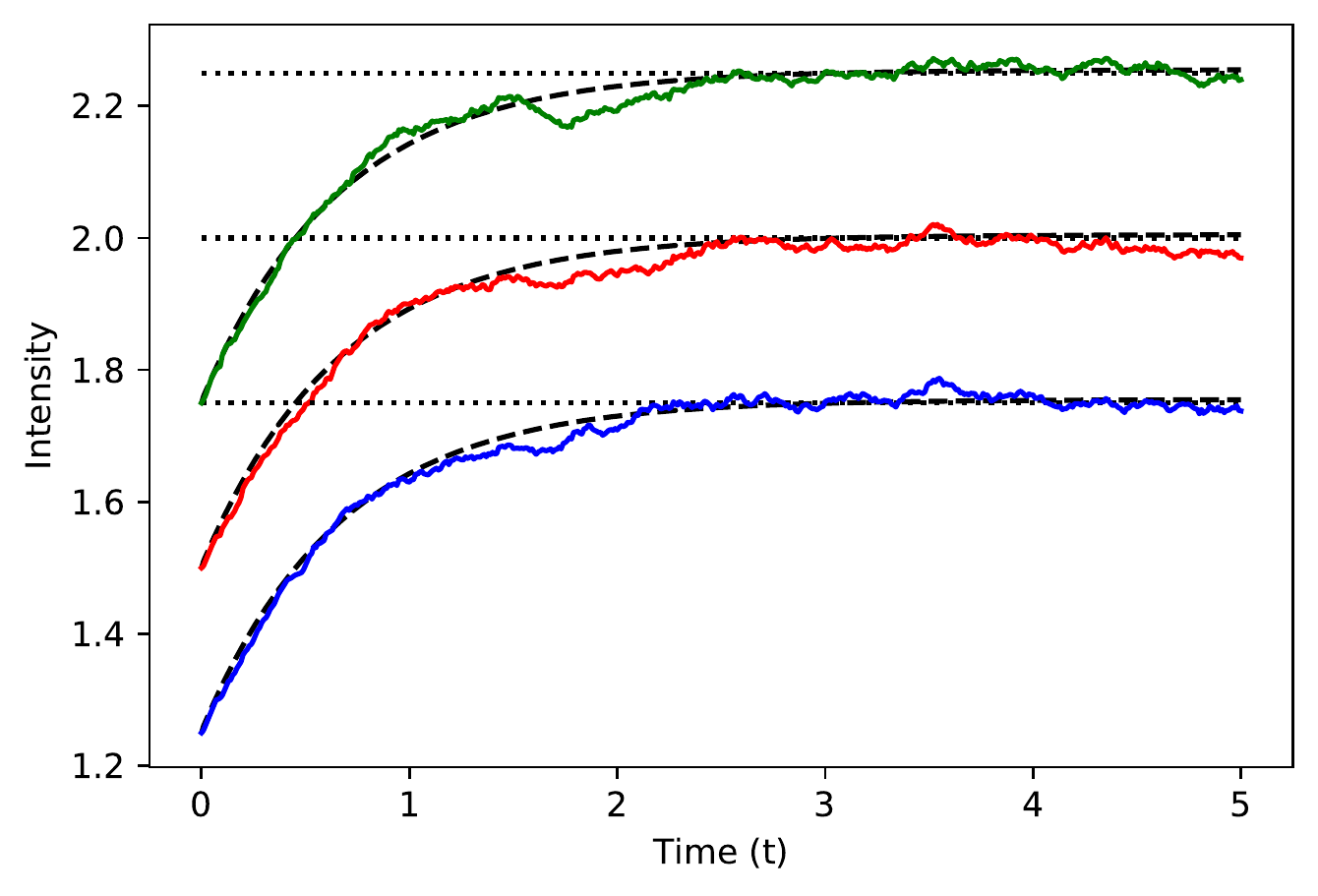}\label{subfig:simC_intensite(t)}}
\caption{Simulation of Example \ref{S:ex_ER_cst} with inhomogeneous $u_0$}%
\label{fig:simC}%
\fnote{We chose $h(t)=e^{-\alpha t}$ with $\alpha=2$, $p=0.5$ for the Erd\"os R\'enyi graph and $u_0(t,x)=x+1$. We are in the subcritical case $\Vert h \Vert_1 p <1$ and the limiting intensity is given by $\ell(x) =
x+\frac{1}{2}$. We run a simulation for $N=1000$ and $T=5$: in \ref{subfig:simC_graphe}, we show the matrix of the Erd\"os-R\'enyi graph $\mathcal{G}^{(N)}$. In \ref{subfig:simC_lambda(T)}, we represent the spatial distribution of intensities at fixed time $T$. In \ref{subfig:simC_intensite(t)}, we show the time evolution of the intensities for different positions. Note here that the inhomogeneity of $\ell(x)$ is due to the inhomogeneity of the $u_0$, not of the graph.}
\end{figure}

\begin{figure}[h!]
\centering
\subfloat[Matrix of $\mathcal{G}^{(N)}$]{\includegraphics[width=0.25\textwidth]{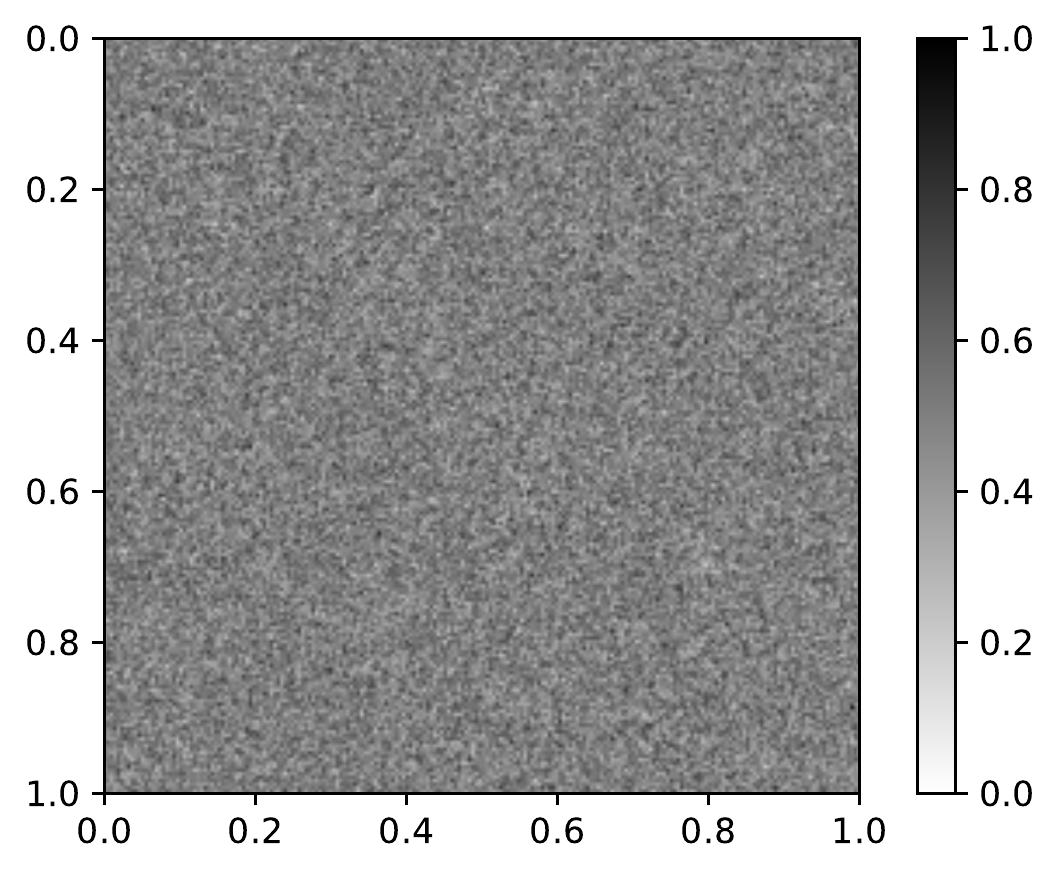}\label{subfig:simB_graphe}}
\quad
\subfloat[Each dot represents $\lambda_N(T,x)$ for $x\in \underline{x}^{(N)}$, and the plain line corresponds to the macroscopic limit $\ell(x)$.]{\includegraphics[width=0.50\textwidth]{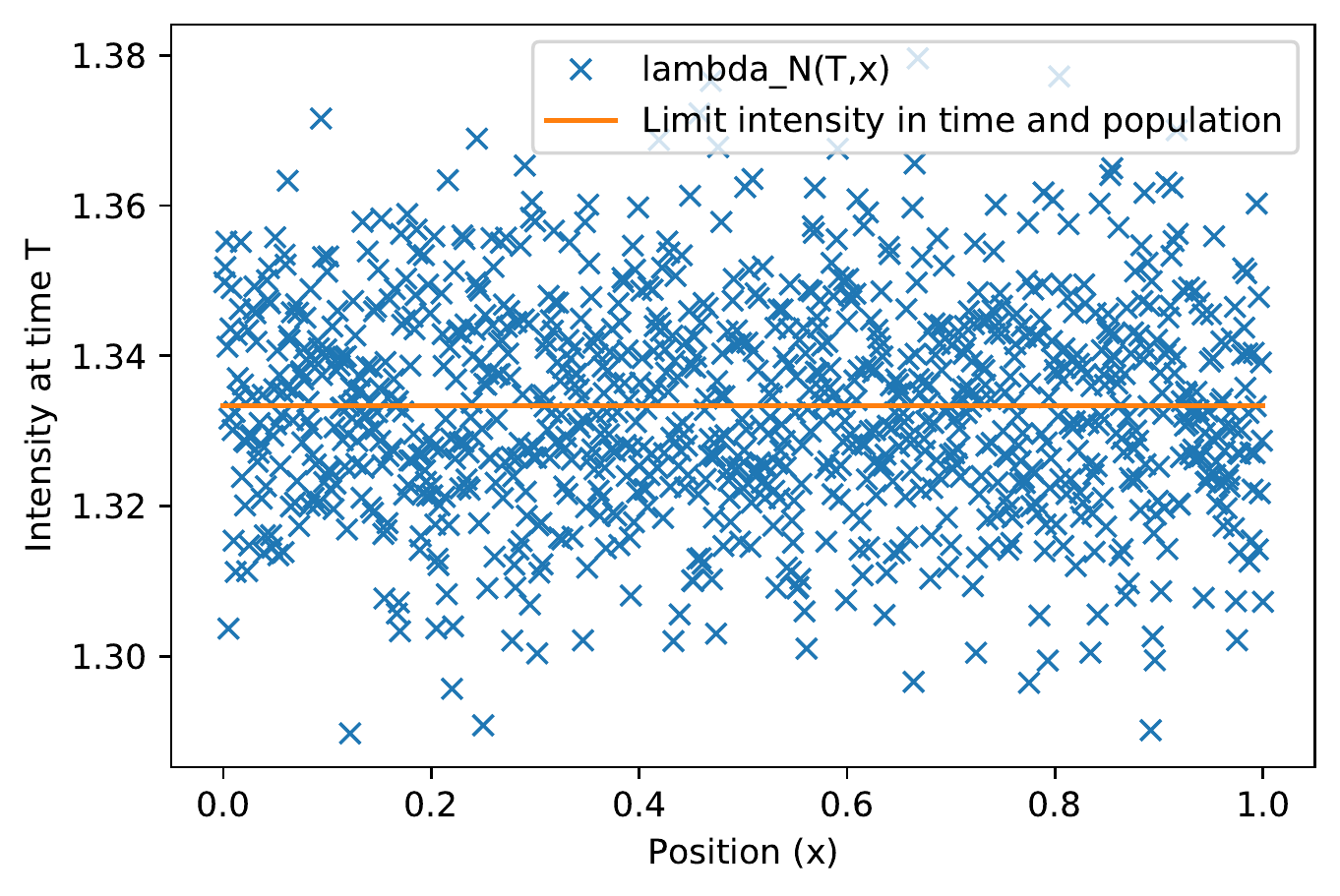}\label{subfig:simB_lambda(T)}}
\quad
\subfloat[Evolution of microscopic and macroscopic intensities of two particles at positions $x=$0.5 (red) and 0.75 (blue). The colored lines represent $\lambda_N(t,x)$, the dashed line represents $\lambda(t)$ and the dotted line represents the limit $\ell$.]{\includegraphics[width=0.60\textwidth]{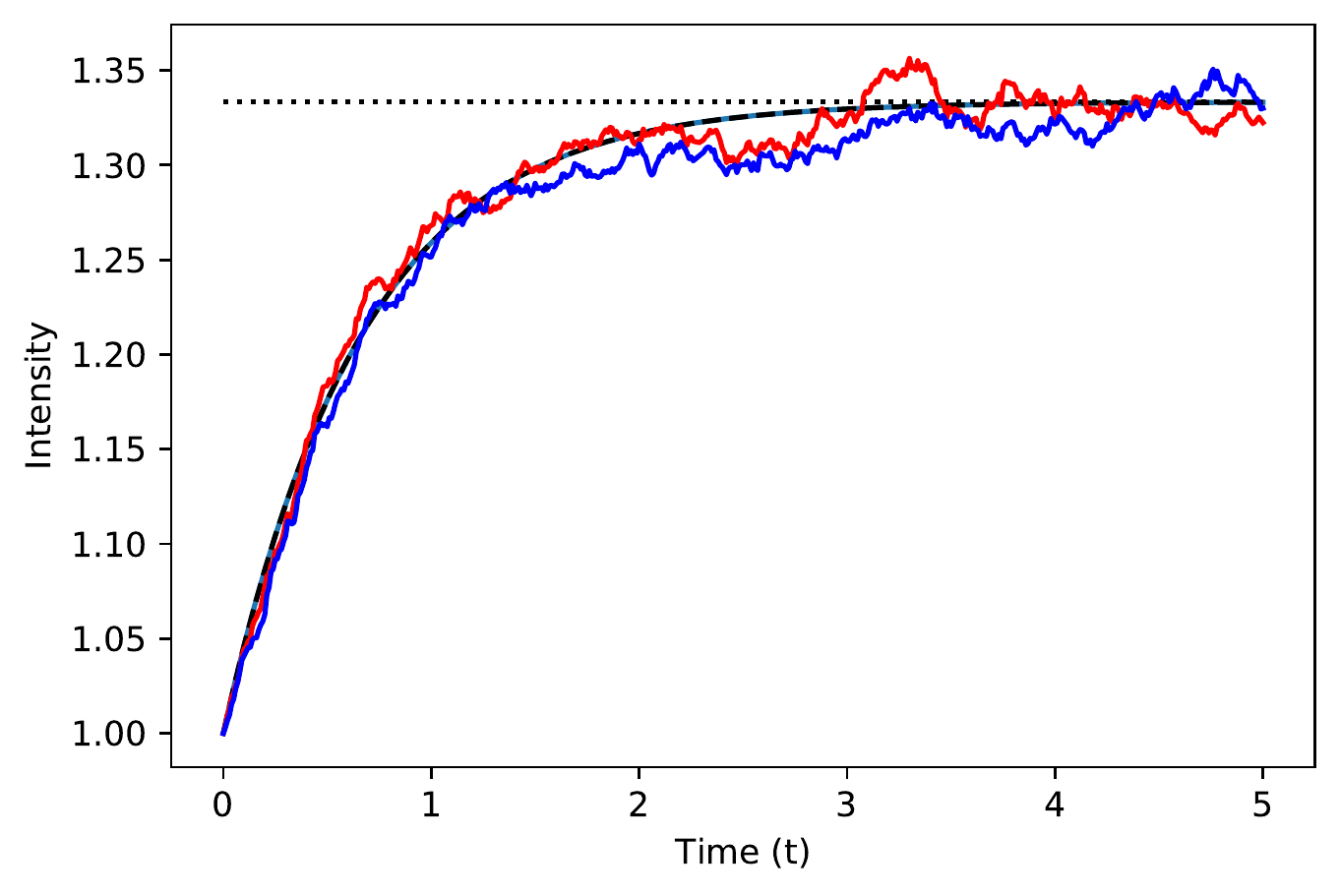}\label{subfig:simB_intensite(t)}}
\caption{Simulation of Example \ref{S:ex_ER_cst} with homogeneous $u_0$}%
\label{fig:simB}%
\fnote{We chose $h(t)=e^{-\alpha t}$ with $\alpha=2$, $p=0.5$ for the Erd\"os R\'enyi graph and $u_0(t,x)=1$, we are in the subcritical case ($\Vert h \Vert_1 p = \frac{1}{4}<1$). As the graph is homogeneous in space and the self-activity is constant, the limit solution of \eqref{eq:def_lambdabarre} dos not depend of the position: $\lambda(t)=
\frac{4}{3} - \frac{1}{3}e^{-\frac{3}{2}t}$. The limiting intensity is constant $\ell = \frac{4}{3}$. We run a simulation for $N=1000$ and $T=5$. In \ref{subfig:simB_graphe}, we show the matrix of the Erd\"os-R\'enyi graph $\mathcal{G}^{(N)}$. In \ref{subfig:simB_intensite(t)}, we show the time evolution of the intensities for different positions. In \ref{subfig:simB_lambda(T)}, we represent the spatial distribution of intensities at fixed time $T$.}
\end{figure}

In the supercritical case $\Vert h \Vert_1 \rho>1$, as $W$ is constant, we can directly apply Remark \ref{rem:surcritical} and obtain  $\inf_{x\in I} \lim_{t\to\infty} \lambda(t,x) = + \infty$.
 
\subsection{Example: P-nearest neighbor model \cite{Omelchenko2012}}\label{S:ex_degW_cst}  

Consider the kernel $W(x,y)=\mathbf{1}_{d_{\mathcal{S}_1}(x,y)< r}$ for any $(x,y)\in I^2$ for some fixed $r\in (0,\frac{1}{2})$ and with
\begin{equation}\label{eq:def_dS1}
d_{\mathcal{S}_1}(x,y)=\min(\vert x-y \vert,1-\vert x-y \vert).
\end{equation} 
It means that the particles at positions $x$ and $y$ interact if and only if they are at distance less than $r$ on the circle $\mathcal{S}_1:=\mathbb{R}_{/  [0,1]}$. This corresponds to a deterministic graph. As \eqref{eq:hyp_W_pseudolip_theta} is satisfied - for any $(x,x')\in I^2$, $\int_I \vert W(x,y)-W(x',y) \vert \nu(dy) = \int_0^1 \left| \mathbf{1}_{\vert x-y \vert< r} - \mathbf{1}_{\vert x'-y \vert< r}\right|dy\leq 4 \vert x-x'\vert$,  we can apply Theorems \ref{thm:cvg_0} and \ref{thm:cvg-sup_0}. As for any $x\in I$, $\int_I W(x,y) dy = 2r$, Proposition \ref{prop:ell_resultat_souscritique} gives that $r_\infty=2r$. The assumptions \eqref{eq:W_deg2} and \eqref{eq:W_sym} are trivially verified, and as $W^{(k)}$ is positive for $k:=\inf \left\{ n \geq 0, nr\geq \frac{1}{2}\right\}$, Hypothesis \ref{hyp:surcritique+} is satisfied and there is a transition phase around $r_c=\frac{1}{2 \Vert h \Vert_1}$. In the subcritical case ($r <r_c$), Proposition \ref{prop:ell_resultat_souscritique} gives that when $u_0$ is constant the limiting intensity is explicit and $\ell=\dfrac{u_0}{1-2r\Vert h \Vert_1}$. We give an example of simulation in this case in Figure \ref{fig:simD}. In the supercritical case ($r>r_c$), as the degree is constant, we can directly apply Remark \ref{rem:surcritical} and obtain  $\inf_{x\in I} \lim_{t\to\infty} \lambda(t,x) = + \infty$. 
 
\begin{figure}[h!]
\centering
\subfloat[Matrix of $\mathcal{G}^{(N)}$]{\includegraphics[width=0.25\textwidth]{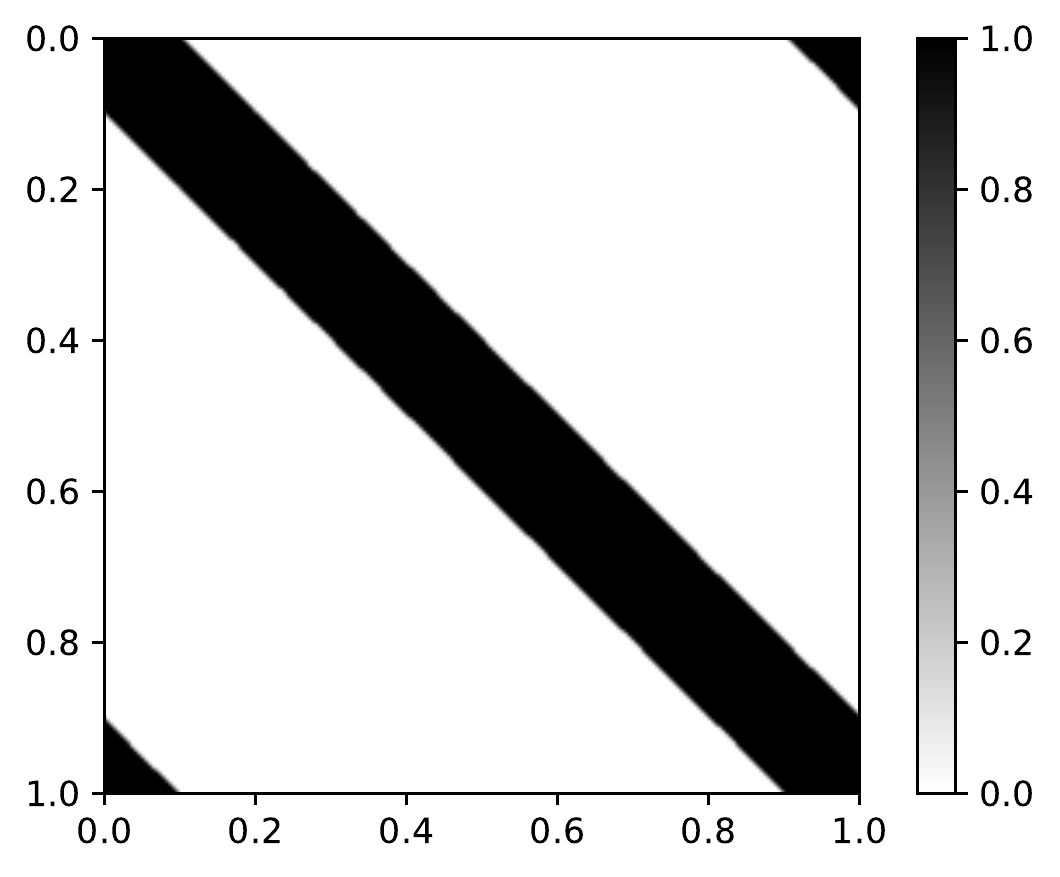}\label{subfig:simD_graphe}}
\quad
\subfloat[Evolution of microscopic and macroscopic intensities of two particles at positions $x=$0.5 (red) and 0.1 (blue). The colored lines represent $\lambda_N(t,x)$, the dashed line represents $\lambda(t)$ and the dotted line represents the limit $\ell$.]{\includegraphics[width=0.60\textwidth]{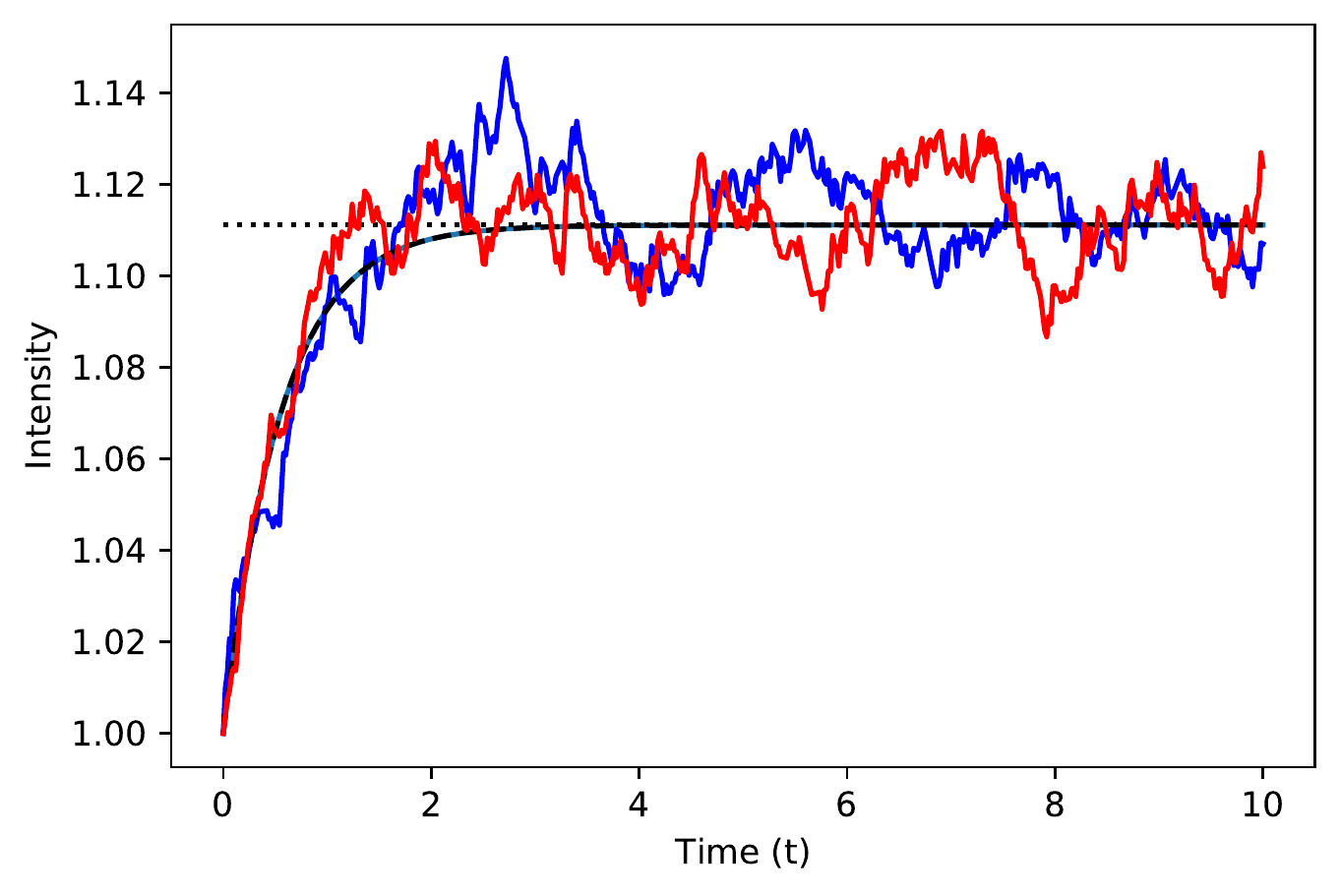}\label{subfig:simD_intensite(t)}}
\caption{Simulation of Example \ref{S:ex_degW_cst} in the subcritical case}%
\label{fig:simD}%
\fnote{We chose $h(t)=e^{-2t}$, $r=0.1$ and  $u_0(t,x)=1$, hence we are in the subcritical case as $2r \Vert h \Vert_1<1$. The graph is not homogeneous in space but has a symmetry and the self-activity $u_0$ is constant, hence the solution of \eqref{eq:def_lambdabarre} does not depend of the position: $\lambda(t)
=\frac{10}{9} - \frac{1}{9}e^{-\frac{9}{5}t}$. The limiting intensity is constant $\ell = \frac{10}{9}$. We run a simulation for $N=500$ particles and a final time $T=10$: in \ref{subfig:simD_graphe}, we show the matrix of the graph $\mathcal{G}^{(N)}$ obtained.  In \ref{subfig:simD_intensite(t)}, we show the time evolution of the intensities for different positions. We see that the simulated intensities follow indeed the behavior expected, as they are close to $\lambda(t,x)$ and converge toward a constant limit $\ell$.}
\end{figure}

\subsection{Example: Inhomogeneous graph with EDD  \cite{Chung2002}}\label{S:ex_W=fg} 

Recall Example \ref{ex:EDD_exp}: $W(x,y)=f(x)g(y)$ with $f$ and $g$ two positive bounded functions on $I$ such that $f,g \in L^2(I,\nu)$ and $\int_I g d\nu=1$. We also suppose that $f$ satisfies a H\"older condition for $\vartheta\in (0,1]$ and is bounded. Note that the indegree is $D(x)=f(x)$. Hypothesis \ref{hyp:existence_lambda_barre} is satisfied and we can apply Theorems \ref{thm:cvg_0} and \ref{thm:cvg-sup_0}. The operator $T_W$ is then defined as $T_Wk(x)=f(x) \langle g,  k\rangle$ for $k\in L^\infty$. An iteration gives $T_W^n  = \langle f,  g \rangle ^{n-1} T_W$ for all $n\geq 1$, and then $r_\infty=\langle f,  g \rangle$, so that the phase transition is given in term of $\langle f,  g \rangle\Vert h \Vert_1<1$ or $\langle f,  g \rangle\Vert h \Vert_1>1$ (and we retrieve Example \ref{ex:EDD_exp} in the exponential case).

In the subcritical case $\Vert h \Vert_1 \langle f,  g \rangle <1$, Theorem \ref{thm:lambda_temps_long_sous_critique} gives that for any $x\in I$, $\lambda(t,x) \xrightarrow[t\to\infty]{} \ell(x)$ where $\ell$ is the solution of \eqref{eq:def_l_lim}, that is $\ell(x)=u(x) +\Vert h \Vert_1 \dfrac{f(x)  \langle u,  g \rangle }{1-\Vert h \Vert_1 \langle f,  g \rangle}$. We give an example of simulation in the case $f=g$ in Figure \ref{fig:simE}.

\begin{figure}[h!]
\centering
\subfloat[Matrix of $\mathcal{G}^{(N)}$]{\includegraphics[width=0.25\textwidth]{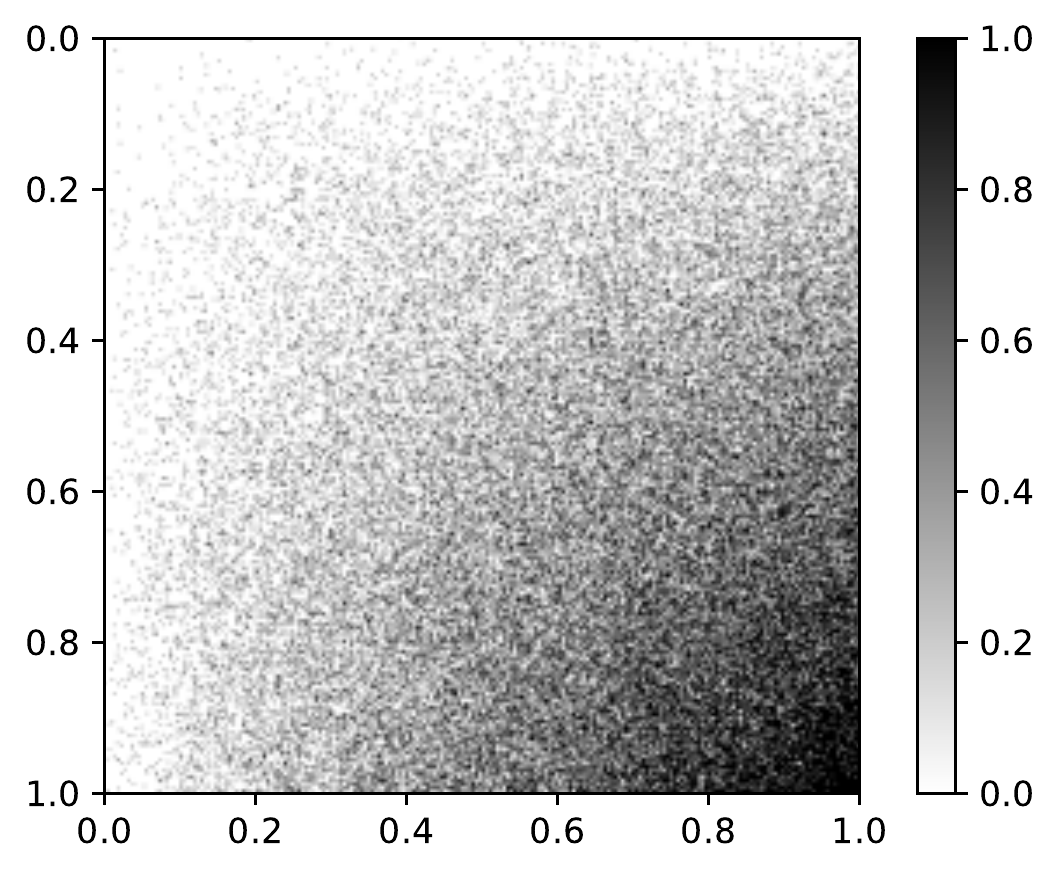}\label{subfig:simE_graphe_alea}}
\quad
\subfloat[Graphon $W(x,y)=xy$]{\includegraphics[width=0.25\textwidth]{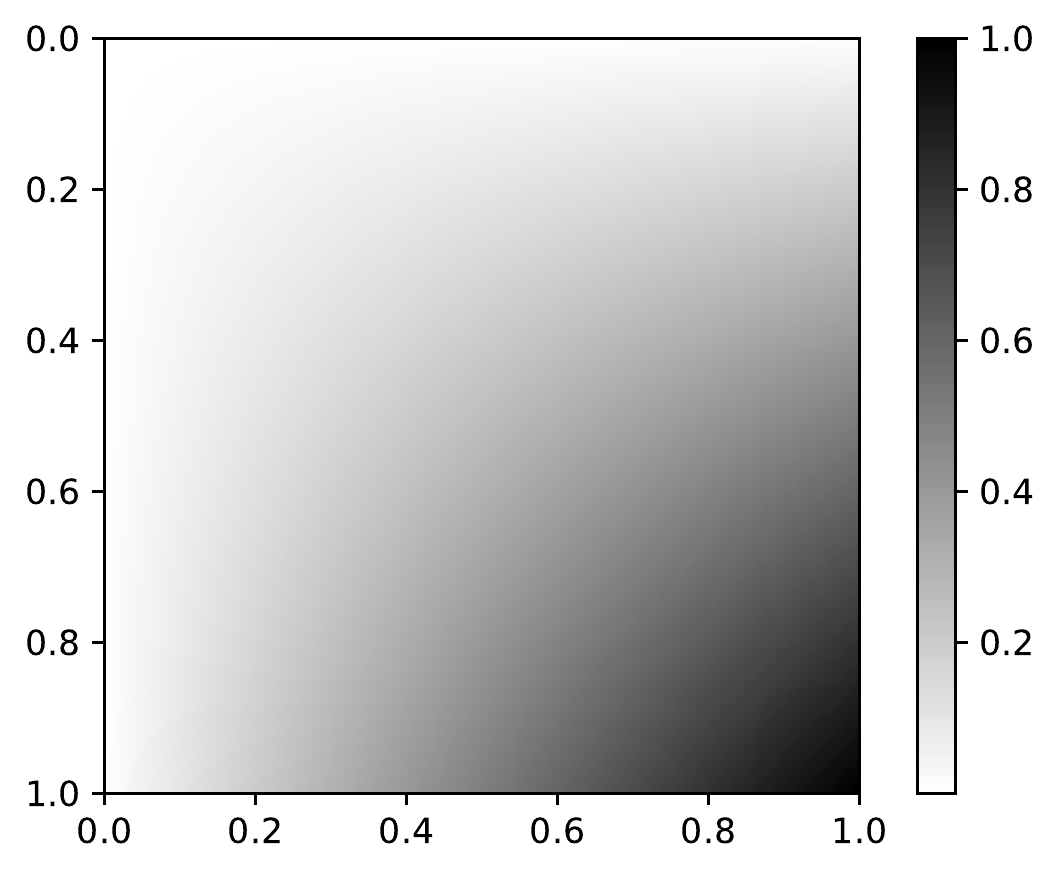}\label{subfig:simE_graphe_fix}}
\quad
\subfloat[Each dot represents $\lambda_N(T,x)$ for $x\in \underline{x}^{(N)}$, and the plain line corresponds to the macroscopic limit $\ell(x)$. ]{\includegraphics[width=0.30\textwidth]{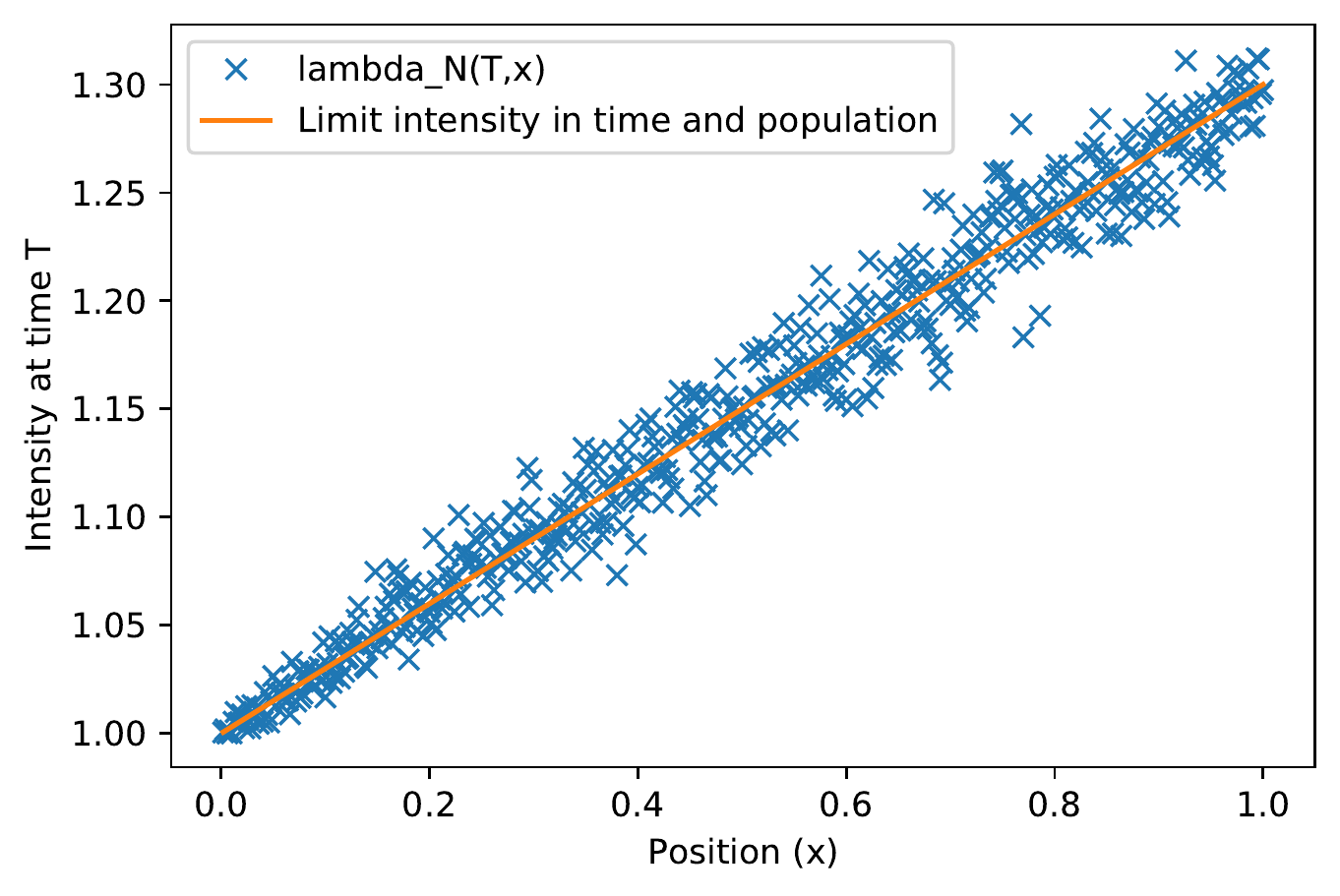}\label{subfig:simE_lambda(T)}}
\\
\subfloat[Evolution of microscopic and macroscopic intensities of two particles at positions $x=$0.5 (blue - the highest) and 0.3 (red - the lowest). In each case, the colored line represents $\lambda_N(t,x)$, the dashed line represents $\lambda(t,x)$ and the dotted line represents the limit $\ell (x)$.]{\includegraphics[width=0.60\textwidth]{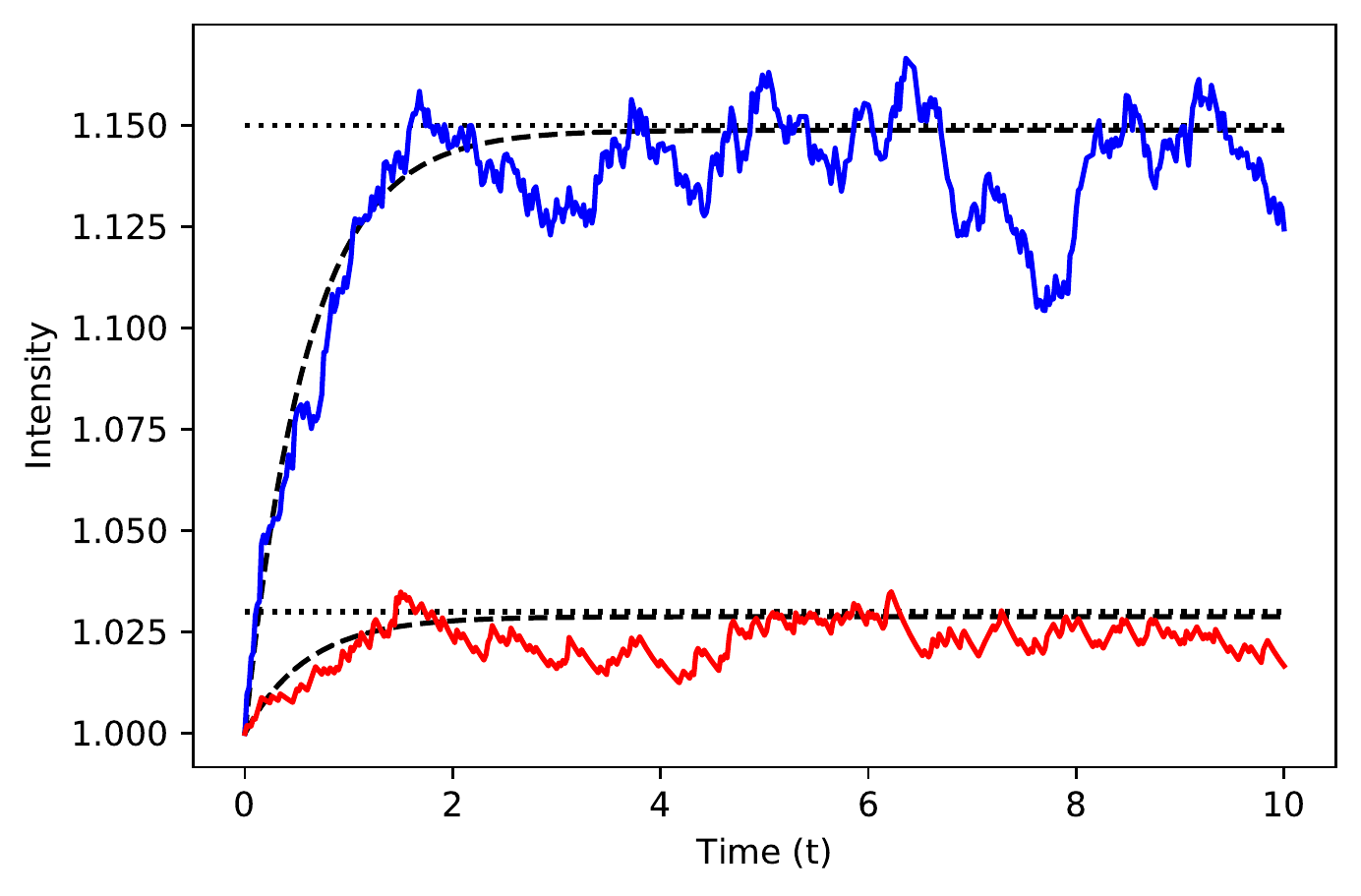}\label{subfig:simE_intensite(t)}}
\caption{Simulation of Example \ref{S:ex_W=fg}}%
\label{fig:simE}%
\fnote{We chose $h(t)=e^{-\alpha t}$ with $\alpha=2$, $u_0(t,x)=1$ and $f(x)=g(x)=x$, that is $W(x,y)=xy$: we are in the subcritical case ($\Vert h \Vert_1\langle f,  f \rangle<1$) and the limiting intensity is  $\ell(x) = 1 + \frac{3}{10}x$. We run a simulation for $N=500$ and $T=10$: in \ref{subfig:simE_graphe_fix}, we represent the graphon $W$, and in \ref{subfig:simE_graphe_alea} we show the matrix random graph $\mathcal{G}^{(N)}$ obtained.  In \ref{subfig:simE_lambda(T)}, we represent the spatial distribution of intensities at fixed time $T$. In \ref{subfig:simE_intensite(t)}, we show the time evolution of the intensities for different positions. Note here that the inhomogeneity of $\ell(x)$ is only due to the inhomogeneity of the kernel $W$. }
\end{figure}

\subsection{Example: Multi-class interaction populations}

Another interesting case concerns deterministic and inhomogeneous graphs modeling the macroscopic organization of neurons into vertical columns. A generic construction is the following: divide $I=(0,1]$ into $P$ consecutive subintervals $I_j$ with respective length $\alpha_j>0$, that is, $I_j=(\alpha_1+\cdots+\alpha_{j-1}, \alpha_1+\cdots+\alpha_j]$ and $\alpha_1+\cdots+\alpha_P=1$. Take any connectivity matrix $M$ between the $P$ populations, $M=(m_{ij})_{1\leq i,j \leq P}$ with $m_{ij}\in\{0,1\}$ modeling the deterministic connection between subpopulations $i$ and $j$. Take the self-activity fixed for each population, described by $u_0(t)=\left(u_{0,i}(t)\right)_{1\leq i \leq P}$ and converging towards $u=\left(u_i\right)_{1\leq i\leq P}$ as $t\to\infty$. Define finally $W(x,y)=\sum_{i,j=1}^P m_{ij} \mathbf{1}_{x\in I_i} \mathbf{1}_{y\in I_j}$, as well as $\widetilde{M}=\left( \alpha_j m_{ij}\right)_{1\leq i,j \leq P}$. Then $r_\infty=\rho(\widetilde{M})$ where $\rho(\widetilde{M})$ is the spectral radius of $\widetilde{M}$ so that the phase transition described above is given here in terms of $\rho(\widetilde{M}) \Vert h \Vert_1<1$ or $\rho(\widetilde{M})\Vert h \Vert_1>1$.

The limiting intensity $\lambda(t,x)$ is constant on each population, described by the vector $\widetilde{\lambda}(t)=\left(\lambda_i(t)\right)_{1\leq i \leq P}$ which solves $\widetilde{\lambda}(t)=u_0(t) + \int_0^t h(t-s) \widetilde{M}\widetilde{\lambda}(s)ds$. In the subcritical case, the limit $\ell=\left(\ell_i\right)_{1\leq i \leq P}$ is piecewise constant (on each population) and solves $\ell = u + \Vert h \Vert_1 \widetilde{M} \ell$. In the supercritical case, $\sum_{i=1}^P \alpha_i \lambda_i(t)^2 \xrightarrow{t\to\infty} \infty$ when $\widetilde{M}$ is symmetric and primitive.

\begin{rem}\label{rem:prim_mat}
A closer look at the proof of Theorem 2.3 and (2.5) of \cite{athreya1976feller} shows that $\lambda_i(t)\to \infty$ for all $i\in \llbracket 1, p \rrbracket$ in the simpler case when $M$ is only irreducible but not necessarily symmetric nor primitive (e.g. the case considered in \cite{Ditlevsen2017}).
\end{rem}

\section{Possible extensions}

Inhibition is an important factor in neuronal dynamics. In the present model, we restricted ourselves for simplicity to a non-negative interaction kernel $W$. Nevertheless, we can easily introduce a dependence in inhibition by considering signed spatial interaction $W$: take $w_{ij}$ in \eqref{eq:def_lambdabarre} of the form $w_{ij}=\kappa_i^{(N)}\xi_{ij}^{(N)} s(x_i,x_j)$, where $s(x,y)\in\{\pm 1\}$ expresses the nature of interaction between neurons located in $x$ and $y$: $s(x,y)=1$ if the interaction is excitatory or $-1$ if it is inhibitory. For instance, if the nature of the interaction only depends on the neuron sending the information, take $s(x,y)=s(y)$. The resulting macroscopic limit is now expressed in terms of a signed interaction kernel $W$. The results presented in this paper remain the same with appropriate regularity assumptions on $s$, up to notational changes in the norms where $W$ is replaced by $\vert W\vert$.

Another possible extension concerns the  memory kernel $h$. In our paper, this kernel is identical on the population. One could think that neurons can present an inhomogeneity in the way they remember the past information, that is considering a memory kernel depending also on the positions of the neurons $h(t,x,y)$. With enough regularity on such $h$, the same results hold up to notational changes.

\section{Proofs: existence and uniqueness of the model and its limit}

\subsection{Proof of Proposition \ref{prop:exis_H_N}}\label{S:proof_prop:exis_H_N}
 
We study the pathwise uniqueness by considering the total variation distance between two such processes. We show the existence by constructing a Cauchy sequence adapted and using a Picard iteration argument.
We follow the structure of the proof proposed in \cite{delattre2016} (Theorem 6). We consider a family of independent Poisson measures $\left(\pi_i\left(ds,dz\right)\right)_{1\leq i \leq N}$ with intensity $dsdz$. We denote $\kappa_i^{(N)}\xi_{ij}^{(N)}$ by $w_{ij}$. We start by showing uniqueness and we omit the notation $(N)$ for simplicity.  We set $\left(Z_i\left(t\right)\right)_{i \in  \llbracket 1, N \rrbracket , t\geq 0}$ and $\left(\underline{Z}_i\left(t\right)\right)_{i \in  \llbracket 1, N \rrbracket , t\geq 0}$ two solutions of the system \eqref{eq:def_ZiN} such that $\mathbf{E}\left[Z_i\left(t\right)\right]<+\infty$ and $\mathbf{E}\left[\underline{Z}_i\left(t\right)\right]<+\infty$ for any $i \in  \llbracket 1, N \rrbracket$ and $ t\geq 0$.
For any $i \in  \llbracket 1, N \rrbracket$, we consider the total variation distance between $Z_i$ and $\underline{Z}_i$ on $\left[0,t\right]$:
$$ \Delta_i(t):=\int_0^t \vert d\left( Z_i(s) - \underline{Z}_i(s) \right) \vert.$$
$\Delta_i(t)$ counts the number of unshared jumps between $Z_i$ and $\underline{Z}_i$ on $\left[0,t\right]$. We denote respectively by $\lambda_i$ and $\underline{\lambda}_i$ the stochastic intensities of $Z_i$ and $\underline{Z}_i$. As they are constructed on the same Poisson measure $\pi_i$, the unshared jumps are the points of $\pi_i$ located between the two intensities, thus we have 
$$\Delta_i(t)=\int_0^t \int_0^{+\infty}  \left| \mathbf{1}_{\left\{z \leq \lambda_i(s)\right\}} -  \mathbf{1}_{\left\{z \leq \underline{\lambda}_i(s)\right\}} \right|  \pi_i\left(ds,dz\right).$$
Setting  $\delta_i(t):=\mathbf{E}\left[\Delta_i(t)\right]$, we obtain with Fubini's Theorem 
$$\delta_i(t)= \mathbf{E} \left[\int_0^t \int_0^{+\infty} \left|  \mathbf{1}_{\left\{z \leq \lambda_i(s)\right\}} -  \mathbf{1}_{\left\{z \leq \underline{\lambda}_i(s)\right\}} \right|  dzds\right] = \int_0^t \mathbf{E} \left[ \left|   \lambda_i(s) -  \underline{\lambda}_i(s)\right| \right] ds.$$
Using \eqref{eq:def_lambdaiN} and as $f$ is Lipschitz continuous (Hypothesis \ref{hyp:existence_Zin}), we have
\begin{align*}
\delta_i(t)&= \int_0^t \mathbf{E} \left[ \left|  f \left(u_0\left(s,x_i\right) + \dfrac{1}{N} \sum_{j=1}^N w_{ij} \int_{\left]0,s\right[} h\left(s-u\right) dZ_j^{(N)}\left(u\right) \right)\right.\right.\\
& \left.\left. -  f \left(u_0\left(s,x_i\right) + \dfrac{1}{N} \sum_{j=1}^N w_{ij} \int_{\left]0,s\right[} h\left(s-u\right) d\underline{Z}_j^{(N)}\left(u\right) \right)\right|  \right] ds\\
& \leq L_f \dfrac{1}{N}  \sum_{j=1}^N  w_{ij} \mathbf{E} \left[ \int_0^t  \int_{\left]0,s\right[}  \left| h\left(s-u\right)\right| d\Delta_j(u) ds \right].
\end{align*}
We apply Lemma \ref{lem:interversion_int} ($\Delta_i$ is with finite variations, $\Delta_i(0)=0$ and $h$ is locally integrable) and obtain
$$\delta_i(t)
\leq L_f \dfrac{1}{N}  \sum_{j=1}^N  w_{ij}  \int_0^t   \left| h\left(t-s\right)\right|  \delta_j\left(s\right) ds.$$
We set $\delta(t)=\sum_{i=1}^N \delta_i(t)$ and $W_N=\max_{(i,j)\in \llbracket 1, N \rrbracket ^2} w_{ij}$. Then, summing on $i$, we have
$$\delta(t)
\leq  L_f ~W_N \int_0^t  \left| h\left(t-s\right)\right|  \delta\left(s\right)  ds.
$$
Since $h$ is locally integrable, $\delta$ is non-negative and locally bounded, we can apply Lemma \ref{lem:picard_gen} (i) and obtain that $\delta(t)=0$ for all $t\geq 0$. As each $\Delta_i$ is non-negative, we obtain that for all $i \in  \llbracket 1, N \rrbracket$ and $ t\geq 0$, $\Delta_i(t)=0$ almost surely. Hence $Z_i(t)=\underline{Z}_i(t)$ almost surely for all $i \in  \llbracket 1, N \rrbracket$ and $ t\geq 0$, which gives the uniqueness.
\medskip

We show now the existence of a process satisfying \eqref{eq:def_ZiN}. To do it, we proceed by iteration: for all $i \in  \llbracket 1, N \rrbracket$ and $ t\geq 0$, let $Z_{i,0}(t)=0$. Then, for all $n\geq 0$ we set:
$$Z_{i,n+1}(t)=\int_0^t \int_0^{+\infty} \mathbf{1}_{\left\{z\leq f\left(u_0\left(t,x_i\right) + \frac{1}{N}\sum_{j=1}^N w_{ij} \int_0^{s-} h\left(s-u\right) dZ_{j,n}\left(u\right) \right) \right\}}\pi_i(ds,dz).$$
With $i$ and $n$ fixed, such a process $\left(Z_{i,n+1}\right)$ exists: it is a counting process with stochastic intensity $\lambda_{i,n+1}(t)=f\left(u_0\left(t,x_i\right) + \frac{1}{N}\sum_{j=1}^N w_{ij} \int_0^{t-} h\left(t-u\right) dZ_{j,n}\left(u\right) \right)$. As for the uniqueness,  we set for all $i \in  \llbracket 1, N \rrbracket, n\geq 0$ and $t\geq 0$, $\delta_{i,n}(t) = \mathbf{E}\left[ \int_0^t \left|dZ_{i,n+1}(s)-dZ_{i,n}(s) \right| \right]$ and $\delta_{n}(t)= \sum_{i=1}^N \delta_{i,n}(t)$. As it was done previously, we find:
\begin{align*}
\delta_{i,n+1}(t) &= \mathbf{E}\left[ \int_0^t \left|dZ_{i,n+2}(s)-dZ_{i,n+1}(s) \right| \right] = \mathbf{E} \left[\int_0^t \int_0^{+\infty} \left|  \mathbf{1}_{\left\{z \leq \lambda_{i,n+2}(s)\right\}} -  \mathbf{1}_{\left\{z \leq \lambda_{i,n+1}(s)\right\}} \right|  dzds\right]\\
&\leq \int_0^t \mathbf{E} \left[ L_f \left| \dfrac{1}{N} \sum_{j=1}^N w_{ij} \int_{\left]0,s\right[} h\left(s-u\right) \left( dZ_{j,n+1}\left(u\right) - dZ_{j,n}\left(u\right) \right)\right|\right] ds.
\end{align*}
Summing on $i$ and using Lemma \ref{lem:interversion_int} we obtain
\begin{equation}\label{eq:ineg_delta1}
\delta_{n+1} (t) \leq L_f ~W_N   \int_0^t   \left| h\left(t-s\right)\right| \delta_{n}(s) ds.
\end{equation}
We want to apply Lemma \ref{lem:picard_gen}\textit{(ii)}, but for this we have to show that $\delta_n$ is locally bounded. We note $m_{i,n}(t) = \mathbf{E}\left[Z_{i,n}(t)\right]$ and $v_{n}(t)=\sum_{i=1}^N m_{i,n}(t)$. By construction,
$$m_{i,n+1}(t) = \mathbf{E} \left[ \int_0^t \int_0^{+\infty} \mathbf{1}_{\left\{z\leq f\left(u_0\left(s,x_i\right) + \frac{1}{N}\sum_{j=1}^N w_{ij} \int_0^{s-} h\left(s-u\right) dZ_{j,n}\left(u\right) \right) \right\}}\pi_i(ds,dz) \right].$$
As $\pi_i$ is a random Poisson measure with intensity $dsdz$, we have
$$m_{i,n+1}(t) = \mathbf{E} \left[ \int_0^t f\left(u_0\left(s,x_i\right) + \frac{1}{N}\sum_{j=1}^N w_{ij} \int_0^{s-} h\left(s-u\right) dZ_{j,n}\left(u\right) \right)ds \right].$$
By Hypothesis \ref{hyp:existence_Zin}, we have that $f(y)\leq f(0)+L_f\vert y \vert$ for all $y$ so that:  
\begin{align*}
m_{i,n+1}(t)
&\leq f(0)t + L_f  \Vert u_0 \Vert_\infty t + \frac{1}{N}\sum_{j=1}^N w_{ij} \int_0^t \int_0^{s-} \left|h\left(s-u\right)\right| dm_{j,n}(u) ds.
\end{align*}
Applying Lemma \ref{lem:interversion_int} and summing on $i$ we obtain
\begin{equation}\label{eq:preuve_exi_un_PH1}
v_{n+1}(t)\leq Nt\left(f(0) + L_f  \Vert u_0 \Vert_\infty  \right) + W_N \int_0^t \left|h\left(t-s\right)\right| v_{n}(s)ds.
\end{equation}
As $v_0=0$ and $h$ is locally integrable, by induction $v_n$ is locally bounded for all $n\geq 0$. Yet $\delta_n(t) =  \sum_{i=1}^N \mathbf{E}\left[ \int_0^t \left|dZ_{i,n+1}(s)-dZ_{i,n}(s) \right| \right] \leq v_{n+1}(t) + v_{n}(t)$ hence $\delta_n$ is indeed locally bounded for all $n$. Lemma \ref{lem:picard_gen}\textit{(ii)} and \eqref{eq:ineg_delta1} give then that for all $T>0$, there exists $C_T$ such that $\sup_{t\in [0,T]} \sum_{n\geq 0} \delta_n(t) \leq C_T < \infty $. Thus we have
$$\sup_{t\in [0,T]} \sum_{n\geq 0} \sum_{i=1}^N \mathbf{E}\left[ \int_0^t \left|dZ_{i,n+1}(s)-dZ_{i,n}(s) \right| \right]  \leq C_T < \infty. $$
Thus for $i$ fixed, the sequence of random variables $\left(Z_{i,n}\right)_n$ is Cauchy in $L^1$ on the space $D([0,t],\mathbb{R})$ with the expectation of the total variation distance. Hence there exists a process $Z_i$ such that $\mathbf{E}\left[ \int_0^T \vert dZ_{i,n}(s) - dZ_i(s) \vert \right] \xrightarrow[n\to\infty]{}0$. From this convergence and a diagonal argument, there exists an extraction $\varphi$ such that for all $i$, $$ \int_0^T \left| dZ_{i,\varphi(n)}(s) - dZ_i(s)\right| \xrightarrow[n\to\infty]{}0.$$ Since $ \int_0^T \vert dZ_{i,\varphi(n)}(s) - dZ_i(s) \vert $  is an integer, $Z_{i,\varphi(n)}$ is a.s. stationary and one obtains from this that the right hand side of
\begin{equation}\label{eq:ex_PH_extraction}Z_{i,\varphi(n)+1}(t)=\int_0^t \int_0^{+\infty} \mathbf{1}_{\left\{z\leq f\left(u_0\left(t,x_i\right) + \frac{1}{N}\sum_{j=1}^N w_{ij} \int_0^{s-} h\left(s-u\right) dZ_{j,\varphi(n)}\left(u\right) \right) \right\}}\pi_i(ds,dz)
\end{equation}
is equal to $\displaystyle \int_0^t \int_0^{+\infty} \mathbf{1}_{\left\{z\leq f\left(u_0\left(t,x_i\right) + \frac{1}{N}\sum_{j=1}^N w\left(x_j,x_i\right) \int_0^{s-} h\left(s-u\right) dZ_{j}\left(u\right) \right) \right\}}\pi_i(ds,dz)$. Hence the left hand side of \eqref{eq:ex_PH_extraction} converges too, towards some $\widetilde{Z}_i(t)$. It remains to show that $\widetilde{Z}=Z$. 
Fatou's Lemma gives $$\mathbf{E}\left[\int_0^T \vert dZ_i(s)- d\widetilde{Z}_i(s)\vert \right]  \leq \liminf_{n\to\infty} \mathbf{E}\left[ \int_0^T \vert dZ_{i,\varphi(n)}(s)- dZ_{i,\varphi(n)+1}(s) \vert \right]  = 0$$ as $\left(Z_{i,n}\right)_n$ is a Cauchy sequence. We have then that the limit process verifies a.s. 
$$Z_{i}(t)=\int_0^t \int_0^{+\infty} \mathbf{1}_{\left\{z\leq f\left(u_0\left(t,x_i\right) + \frac{1}{N}\sum_{j=1}^N w_{ij} \int_0^{s-} h\left(s-u\right) dZ_{j}\left(u\right) \right) \right\}}\pi_i(ds,dz).$$
This gives the existence of multivariate Hawkes process $\left(Z_{1}(t),...,Z_{N}(t)\right)_{t\geq 0}$ satisfying \eqref{eq:def_ZiN}. Now let us verify that $t \mapsto \sup_{1\leq i \leq N} \mathbf{E}[Z_i(t)]$ is locally bounded. Recall \eqref{eq:preuve_exi_un_PH1}: as $v_n$ is locally bounded, by Lemma \ref{lem:picard_gen}\textit{(iii)} for all $T>0$, there exists $C_T$ such that $\sup_{t\in [0,T]} \sup_{n\geq 0} v_n(t) \leq C_T < + \infty	$ hence
$$\sup_{t\in [0,T]} \sup_{n\geq 0} \sum_{i=1}^N \mathbf{E}\left[Z_{i,n}(t)\right] \leq C_T < + \infty	$$
and by dominated convergence, for all $ T>0$, $\displaystyle\sup_{t\in [0,T]} \sum_{i=1}^N \mathbf{E}\left[Z_{i}(t)\right] < + \infty$ and the proof is concluded.\qed

\subsection{Proof of Theorem \ref{thm:existence_lambda}}\label{S:proof_thm:existence_lambda}

We show existence and uniqueness of a continuous and bounded solution $\lambda$ to equation \eqref{eq:def_lambdabarre}. We follow the proof proposed in \cite{CHEVALLIER20191} (Proposition 5), with major changes to accomodate our hypotheses.
We apply Banach fixed-point Theorem. We consider the map $F$ defined on $\mathcal{C}_b\left(\left[0,T\right]\times I,\mathbb{R}\right)$ (the set of bounded continuous functions defined on $\left[0,T\right]\times I$) by, for any $g \in \mathcal{C}_b\left(\left[0,T\right]\times I,\mathbb{R}\right)$:
$$F(g)(t,x)=f\left(u_0(t,x)+\int_{I} W(x,y)\int_0^t h(t-s) g(s,y)ds ~\nu(dy) \right) \text{ for all }  (t,x) \in \left[0,T\right]\times I.$$
First, we check that $F$ takes values in $\mathcal{C}_b\left(\left[0,T\right]\times I,\mathbb{R}\right)$: consider $g\in\mathcal{C}_b\left(\left[0,T\right]\times I,\mathbb{R}\right)$. Let us show that $F(g)$ is \textbf{bounded}. 
Fix $(t,x) \in \left[0,T\right]\times I$. As $f$ is Lipschitz continuous, we have:
\begin{align*}
 F(g)(t,x) 
 & \leq f(0) + L_f \Vert u_0 \Vert_\infty + L_f \int_{\mathbb{R}^d} \vert W(x,y) \vert \int_0^t \vert h(t-s) \vert g(s,y)ds ~\nu(dy) .
\end{align*}
As $g$ is bounded and $h$ is locally integrable by Hypothesis  \ref{hyp:existence_Zin}, we have
 $$\sup_{t\in[0,T],~x\in I} F(g)(t,x) \leq f(0) + L_f \Vert u_0 \Vert_\infty + L_f \Vert h \Vert_{[0,T],1} \Vert g \Vert _\infty \sup_{x\in I} \int_{\mathbb{R}^d}  W(x,y)   ~\nu(dy)<\infty,$$
where we used Hypothesis \eqref{eq:hyp_w_int_nu_y}.
\medskip

We check now that $F(g)$ is \textbf{continuous}. We show the sequential continuity: we fix $(t,x) \in [0,T]\times I$ and a sequence $(t_n,x_n)$ converging to $(t,x)$. As $f$ is Lipschitz continuous, we have: 
\begin{multline}
\vert F(g)(t_n,x_n) - F(g)(t,x)\vert  \leq  L_f \left| u_0(t_n,x_n)-u_0(t,x) \right| \\  + L_f \left| \int_{I} W(x_n,y) \int_0^{t_n} h(t_n-s) g(s,y)ds\nu(dy)- \int_{I} W(x,y) \int_0^{t} h(t-s) g(s,y)ds \nu(dy) \right|.
\end{multline}
The first term  $L_f \left| u_0(t_n,x_n)-u_0(t,x) \right| $ tends to $0$ when $n$ tends to infinity as $u_0$ is continuous in time and space by Hypothesis \ref{hyp:existence_Zin}. To show the convergence of the second term, we use the following bound: 
\begin{align*}
&\left| \int_I W(x_n,y) \int_0^{t_n} h(t_n-s) g(s,y)ds\nu(dy) -\int_{I} W(x,y) \int_0^{t} h(t-s) g(s,y)ds \nu(dy) \right|
\\
&\leq \left| \int_{I} \left( W(x_n,y)- W(x,y)\right)  \int_0^{t_n} h(t_n-s) g(s,y)ds\nu(dy)\right| \\
&+\left|\int_{I} W(x,y) \left(\int_0^{t_n}h(t_n-s)g(s,y)ds-\int_0^{t} h(t-s) g(s,y)ds \right)\nu(dy) \right|=:A_n+B_n
\end{align*} 
As $h$ is locally integrable, $g$ bounded, we can upper bound $A_n$ immediately:
$$A_n 
\leq \Vert h \Vert_{[0,T],1} \Vert g \Vert _\infty  \int_{I} \left| W(x_n,y) - W(x,y) \right| \nu(dy)
\leq \Vert h \Vert_{[0,T],1} \Vert g \Vert _\infty  C_w \Vert x - x_n\Vert^\vartheta\xrightarrow[n\to\infty]{} 0,
$$ using \eqref{eq:hyp_W_pseudolip_theta}.
To study the convergence of $B_n$, we do a substitution and split the integral in two:
\begin{align*}
B_n
&\leq\int \vert W(x,y) \vert \left| \int_0^{t_n} h(u)g(t_n-u,y)du  - \int_0^t h(u)g(t-u,y)ds \right| \nu(dy)\\
&\leq \int \vert W(x,y)) \vert \left( \int_0^t \vert h(u) \vert \left| g(t_n-u,y)-g(t-u,y)\right|du\right) \nu(dy) \\
&+ \int \vert W(x,y) \vert \left( \int_{\min(t,t_n)}^{\max(t,t_n)} \vert h(u) \vert  g(t_n-u,y)du\right)  \nu(dy) =: a_n+b_n.
 \end{align*}
Since $g$ is continuous, for all $y\in I$ we have $\int_0^t \vert h(u) \vert \left| g(t_n-u,y)-g(t-u,y)\right|du \xrightarrow[n\to\infty]{} 0$, and since $\int_0^t  \vert h(u) \vert \left| g(t_n-u,y)-g(t-u,y)\right|du \leq 2 \Vert h \Vert_{[0,T],1} \Vert g \vert _\infty $, we see from dominated convergence theorem that $a_n \xrightarrow[n\to\infty]{} 0$.
We focus on the term $b_n$
\begin{align*}
\int_{\min(t,t_n)}^{\max(t,t_n)} \vert h(u) \vert  g(t_n-u,y)du 
& \leq \Vert g \Vert_{\infty} \int_0^T \vert h(u) \vert \mathbf{1}_{\left[\min(t,t_n),\max(t,t_n)\right]}(u)du .
\end{align*}
Yet $ \vert h(u) \vert \mathbf{1}_{\left[\min(t,t_n),\max(t,t_n)\right]}(u) \xrightarrow[n\to\infty]{} 0$, and we obtain $b_n\xrightarrow[n\to\infty]{} 0$ by dominated convergence as $h$ is locally integrable. We have shown that for all $\left(t,x\right)\in [0,T]\times I$, $ \lim_{n\to\infty} \vert F(g)(t_n,x_n) - F(g)(t,x) \vert =0$ for any sequence $(t_n,x_n)$ tending to $\left(t,x\right)$: $F(g)$ is continuous.
\medskip

We show now that there exists a constant $C>0$ such that for all $(t,x,z) \in \left[0,T\right]\times I^2$:
\begin{equation}\label{eq:Fg_lips_espace}
\vert F(g)(t,x) - F(g)(t,z) \vert \leq C \left( \Vert x-z \Vert +\Vert x-z \Vert^\vartheta\right).
\end{equation}
Let $(t,x,z) \in \left[0,T\right]\times I \times I$. As done previously ($f$ and $u_0$ are Lipschitz continuous), we have:
\begin{align*}
\vert F(g)(t,x) - F(g)(t,z)\vert 
&\leq L_f L_{u_0} \Vert x-z \Vert + L_f \int_{I} \int_0^t \vert h(t-s) \vert  g(s,y)   ds~\vert W(x,y)-W(z,y)\vert \nu(dy).
\end{align*}
Since $g$ is bounded, $h$ is locally integrable, using \eqref{eq:hyp_W_pseudolip_theta}
 \begin{align*}
\vert F(g)(t,x) - F(g)(t,z)\vert 
& \leq L_f L_{u_0} \Vert x-z \Vert + L_f \Vert g \Vert_\infty  \Vert h \Vert_{[0,T],1} C_w \Vert x - z \Vert^\vartheta,
\end{align*} 
which gives \eqref{eq:Fg_lips_espace} .
\medskip

Hence, $\mathcal{C}_b\left(\left[0,T\right]\times I,\mathbb{R}\right)$ is stable by $F$. We prove that $F$ admits a unique fixed point, which is $\lambda$ satisfying \eqref{eq:def_lambdabarre}. To do it, we show that some iteration of $F$ is contractive, and the Banach fixed-point Theorem gives the result. Let $t\in[0,T]$, $g$ and $\tilde{g}$ be two functions in $\mathcal{C}_b\left(\left[0,t\right]\times I,\mathbb{R}\right)$. We use the distance $D_t(g,\tilde{g}):=\sup_{s\in[0,t]} \sup_{x\in I} \vert g(s,x) - \tilde{g}(s,x)\vert$ which makes the space $\mathcal{C}_b\left(\left[0,t\right]\times I,\mathbb{R}\right)$ complete. Obviously, for all $s\leq t$, $D_s(g,\tilde{g})\leq D_t(g,\tilde{g})$. 
Let $x\in\mathbb{R}^d$. As previously,
\begin{align*}
\vert F(g)(t,x)-F(\tilde{g})(t,x)\vert
&\leq L_f \left( \sup_{z\in I} \int_{I} \vert W(z,y)\vert \nu(dy) \right)\int_0^t \vert h(t-s) \vert D_s(g,\tilde{g}) ds.
\end{align*}
Using Cauchy-Schwarz inequality, as $h$ is in $L^2_{loc}$ under Hypothesis \ref{hyp:existence_Zin}
\begin{align*}
\vert F(g)(t,x)-F(\tilde{g})(t,x)\vert & \leq L_f \left( \sup_{z\in I} \int_{I} \vert W(z,y)\vert \nu(dy) \right) \Vert h \Vert_{[0,T],2} \left( \int_0^t \left(D_s (g,\tilde{g})\right)^2 ds \right)^{\frac{1}{2}}.
\end{align*}
Using \eqref{eq:hyp_w_int_nu_y}, we have then shown the existence of a constant $C(f,w,\nu,h,T,p)$ such that for all mappings $g$ and $\tilde{g}$, for all $t\in[0,T]$:
\begin{equation}\label{eq:pt_fixe_distance_contracte}
D_t(F(g),F(\tilde{g}))\leq C \left( \int_0^t \left(D_s(g,\tilde{g})\right)^2ds\right)^{\frac{1}{2}}.
\end{equation}
By induction on $k \in \mathbb{N}$, with \eqref{eq:pt_fixe_distance_contracte}, we show that for all $t\in [0,T]$ and for any mappings $g$ and $\tilde{g}$: $D_t(F^k(g),F^k(\tilde{g}))\leq C^k \left( \dfrac{t^k}{k!} \right) ^{\frac{1}{2}}  D_t(g,\tilde{g})$. The initialisation is immediate, and then for $k\geq 0$, using (\ref{eq:pt_fixe_distance_contracte}) and the induction hypothesis
\begin{align*}
D_t(F^{k+1}(g),F^{k+1}(\tilde{g})) & \leq C \left( \int_0^t \left(D_s(F^k(g),F^k(\tilde{g}))\right)^2ds\right)^{\frac{1}{2}} \\
& \leq C \left( \int_0^t C^{2k} \dfrac{s^k}{k!} D_s(g,\tilde{g})^2 ds \right)^{\frac{1}{2}}  \leq C^{k+1} \left(\dfrac{t^{k+1}}{(k+1)!}\right)^{\frac{1}{2}} D_t(g,\tilde{g}),
\end{align*}
which concludes the induction. We have then for all $k$ and any functions $g$ and $\tilde{g}$ of $\mathcal{C}_b\left(\left[0,T\right]\times I,\mathbb{R}_+\right)$, the $k$-th iteration of $F$ verifies $D_T(F^{k}(g),F^{k}(\tilde{g})) \leq C^k \left( \dfrac{T^k}{k!} \right) ^{\frac{1}{2}}  D_T(g,\tilde{g})$. Hence there exists a rank $k$ such that $F^k$ is contractive, thus has a unique fixed point which is also then the unique fixed point of $F$ in $\mathcal{C}_b\left(\left[0,T\right]\times I,\mathbb{R}\right)$ that we call $\lambda$. Furthermore, we have shown that any image by $F$ verifies the property   \eqref{eq:Fg_lips_espace}, so in particular $\lambda$ verifies it too and \eqref{eq:pt_fixe_lips_espace} is then true (with $C_\lambda$ the constant of equation \eqref{eq:Fg_lips_espace} for $g=\lambda$). Note that such a $ \lambda$ is necessarily nonnegative, as the iterative map $F$ preserves positivity in both cases $f\geq0$ and $f(x)=x$ with $u_{ 0},h\geq0$.
\medskip

We focus now on the second part of Theorem \ref{thm:existence_lambda}: we consider $u_0$ continuously differentiable in time and $\dfrac{\partial u_0}{\partial t}$ bounded on $[0,T]\times I$, $h$ continuous and piecewise continuously differentiable, and $f(x)=x$. 
First, we ensure that \eqref{eq:pt_fixe_derivee} admits a unique continuous bounded solution. Then, by studying a sequence of functions that converges towards $\lambda$, we show that $\lambda$ is differentiable in time and $\dfrac{\partial \lambda}{\partial t}$  satisfies \eqref{eq:pt_fixe_derivee}.
Using the same method as above, we show that the map $G$ defined on $\mathcal{C}_b\left(\left[0,T\right]\times I,\mathbb{R}\right)$ by
$$ G(g)(t,x)=\dfrac{\partial u_0}{\partial t} (t,x) + h(t) \int_I W(x,y) \lambda(0,y) \nu(dy) + \int_I \int_0^t h(t-s) W(x,y) g (s,y) \nu(dy) ds$$
admits a unique fixed point called $\mu$.
Moreover, we can introduce a sequence of function $\left(\mu_n\right)_n$ that converges uniformly towards $\mu$ defined by iteration, $\mu_0=0$ and
$$ \mu_{n+1}(t,x) := \dfrac{\partial u_0}{\partial t} (t,x) + h(t) \int_I W(x,y) \lambda(0,y) \nu(dy) + \int_I \int_0^t h(t-s) W(x,y) \mu_n (s,y) \nu(dy) ds.$$
Similarly, we introduce a sequence of function $\left(\lambda_n\right)_n$ that converges uniformly towards $\lambda$ defined by iteration, $\lambda_0=0$ and $ \lambda_{n+1}(t,x):= u_0(t,x) + \int_I \int_0^t h(t-s) W(x,y) \lambda_n (s,y) \nu(dy) ds$. By induction, for every $n$, $\lambda_n$ is differentiable in time and bounded and then, by integration by parts we obtain  
\begin{equation}\label{eq:lambda_derivee_suite}
\dfrac{\partial \lambda_{n+1}}{\partial t}(t,x) 
=  \dfrac{\partial u_0}{\partial t} (t,x) + h(t)  \int_I  W(x,y) \lambda_n (0,y) \nu(dy) + \int_0^t \int_I W(x,y) h(t-s) \dfrac{\partial \lambda_n}{\partial s}(s,y) \nu(dy) ds.
\end{equation} 
Now, we can compare $\mu_n$ and $\dfrac{\partial \lambda_{n}}{\partial t}$: setting $ \varpi_{ n}(t,x):= \mu_{n}(t,x) - \dfrac{\partial \lambda_{n}}{\partial t}(t,x)$ for all $n$, for any $(t,x)\in [0,T]\times I$,
$$\left| \varpi_{n+1}(t,x)\right|= \left| h(t)  \int_I  W(x,y)\left( \lambda(0,y)- \lambda_n (0,y)\right) \nu(dy)+ \int_0^t \int_I W(x,y) h(t-s) \varpi_n(s,y) \nu(dy) ds\right|,$$
so that $\left\Vert \varpi_{ n+1}(t, \cdot) \right\Vert_\infty\leq h(t) C_W^{(1)} \Vert \lambda(0,\cdot) - \lambda_n(0,\cdot) \Vert_\infty + C_W^{(1)} \int_0^t h(t-s) \left\Vert  \varpi_{ n}(s,\cdot) \right\Vert_\infty ds$. We obtain, as $\left(\lambda_n\right)$ converges uniformly to $\lambda$, $\limsup_{n\to\infty} \left\Vert \varpi_{ n+1}(t,\cdot)\right\Vert_\infty \leq  C_W^{(1)} \int_0^t h(t-s) \limsup_{n\to\infty} \left\Vert \varpi_{ n}(s,\cdot)\right\Vert_\infty ds$. This gives from Lemma \ref{lem:picard_gen} \textit{(i)}, provided that one has verified that $\displaystyle\limsup_{n\to\infty} \left\Vert \varpi_{ n}(s,\cdot)\right\Vert_\infty<+\infty$ is finite, that $\displaystyle\sup_{t\in [0,T]} \lim\sup_{n\to\infty} \left\Vert \varpi_n(t,\cdot) \right\Vert_\infty =0$. It implies that, as $(\mu_n)$ converges uniformly to $\mu$, so does $\left( \dfrac{\partial\lambda_{n}}{\partial t} \right)_n$, and then as $\lambda$ is differentiable, $\lambda_n \xrightarrow[n\to\infty]{uniformly} \lambda$ and $\dfrac{\partial\lambda_{n}}{\partial t}  \xrightarrow[n\to\infty]{uniformly} \mu$, we obtain $\dfrac{\partial \lambda}{\partial t}=\mu$. It remains to check that $\displaystyle\limsup_{n\to\infty} \left\Vert \varpi_{ n}(s,\cdot)\right\Vert_\infty  $ is finite. As $\left(\lambda_n\right)$ converges to $\lambda$, it is uniformly bounded and as $\dfrac{\partial u_0}{\partial t}$ is bounded, we can find $g$ locally bounded such that, from \eqref{eq:lambda_derivee_suite}, $ \left\Vert \partial_{ t}\lambda_{n+1}(t,\cdot)\right\Vert_\infty \leq g(t)+ C_W^{(1)} \int_0^t  h(t-s) \left\Vert\partial_{ s} \lambda_n(s,\cdot)\right\Vert_\infty  ds$. Lemma \ref{lem:picard_gen} \textit{(iii)} gives then that $\displaystyle \sup_{s\in [0,T]} \sup_{n\geq 0} \left\Vert \partial_{ s}\lambda_n(s,\cdot)  \right\Vert_\infty <\infty$. We can do the same for $\left(\mu_n\right)$ and obtain $\displaystyle \sup_{s\in [0,T]} \sup_{n\geq 0} \left|\left|\mu_n(s,\cdot) \right|\right|_\infty <\infty$ which concludes to $\displaystyle\limsup_{n\to\infty} \left\Vert  \varpi_n(s,\cdot) \right\Vert_\infty  <\infty$ for any $s\in [0,T]$. \qed
\section{Proofs: Convergence of the mean-field process}

\subsection{Toolbox}
We present useful results that come up in the main proofs.
\begin{prop}\label{prop:toolbox_Xij} Recall the definitions of $\kappa_N$ and $w_N$ in Hypothesis \ref{hyp:conv_graph_concentration}. Let $\left(\alpha_{ij}\right)$ and $\left(\alpha_{ijk}\right)$ such that for every $(i,j,l)\in\llbracket 1,N \rrbracket ^3$, $\vert\alpha_{lj}\vert\leq 1$ and $ \vert\alpha_{ijl}\vert \leq 1$. Define 
$$X_j:= \dfrac{\kappa_N}{N} \sum_{l=1}^N \alpha_{lj} \overline{\xi}_{lj}, \quad \widetilde{X}_i:= \dfrac{\kappa_N}{N} \sum_{l=1}^N \alpha_{il} \overline{\xi}_{il}, \quad
X_{ij}:= \dfrac{\kappa_N}{N} \sum_{l=1}^N \alpha_{ijl} \overline{\xi}_{il},$$
with $\overline{\xi}_{lk}:=\xi_{lk}^{(N)}-W_N(x_l,x_k)$. Then, under Hypothesis \ref{hyp:conv_graph_concentration}, $\mathbb{P}$-almost surely if N is large enough:
$$\sup_{1\leq j \leq N} \vert X_j \vert \leq \varepsilon_N, \quad
\sup_{1\leq i \leq N} \vert \widetilde{X}_i \vert \leq \varepsilon_N \text{ and }\sup_{1\leq i,j \leq N} \vert X_{ij} \vert \leq \varepsilon_N,$$
for $\varepsilon_N:= 32 \dfrac{\kappa_N^2 w_N}{N}\log(N)$.
\end{prop}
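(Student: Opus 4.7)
All three estimates follow the same scheme: conditionally on the positions $\underline{x}^{(N)}$ (under which, by Definition \ref{def:espace_proba_bb}, the family $(\overline{\xi}_{lk})_{l,k}$ is a collection of independent centered random variables), apply a Bernstein-type concentration inequality to each single sum, take a union bound over the $O(N)$ (resp.~$O(N^{2})$) choices of indices, and conclude by a Borel--Cantelli argument. The scenarios of Definition \ref{def:scenarios} only enter at the last step, when integrating out the positions.

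\textbf{Concentration of a single sum.} Consider for definiteness $X_{j} = (\kappa_{N}/N)\sum_{l=1}^{N} \alpha_{lj}\overline{\xi}_{lj}$; the treatment of $\widetilde{X}_{i}$ and $X_{ij}$ is identical. Set $Y_{l} := (\kappa_{N}/N)\alpha_{lj}\overline{\xi}_{lj}$. Because $|\alpha_{lj}| \leq 1$ and $|\overline{\xi}_{lj}| \leq 1$, one has $|Y_{l}| \leq \kappa_{N}/N$; and because $\overline{\xi}_{lj}$ is a centered Bernoulli with variance $W_{N}(x_{l},x_{j})(1-W_{N}(x_{l},x_{j})) \leq w_{N}$, the $Y_{l}$'s (conditionally on $\underline{x}^{(N)}$) are independent with variance at most $\kappa_{N}^{2}w_{N}/N^{2}$, totaling at most $\kappa_{N}^{2}w_{N}/N$. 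Bernstein's inequality then yields, for any $t>0$,
\begin{equation*}
\mathbb{P}\bigl(|X_{j}| \geq t \,\bigm|\, \underline{x}^{(N)}\bigr) \;\leq\; 2\exp\!\left(-\frac{t^{2}/2}{\kappa_{N}^{2}w_{N}/N + \kappa_{N}t/(3N)}\right),
\end{equation*}
uniformly in $j$, with the same right-hand side (up to relabelling) for $\widetilde{X}_{i}$ and $X_{ij}$.

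\textbf{Union bound and Borel--Cantelli.} One inserts $t = \varepsilon_{N}$. Using that $w_{N} \geq 1/\kappa_{N}$ from \eqref{eq:kappaw_n1} combined with $\kappa_{N}^{2}w_{N}=o(N/\log N)$ from \eqref{eq:kappaw_nlog}, one obtains in particular $\kappa_{N}\log N = o(N)$, so that the sub-exponential correction $\kappa_{N}\varepsilon_{N}/(3N)$ is negligible compared to the variance term $\kappa_{N}^{2}w_{N}/N$. The Bernstein exponent is then essentially $\varepsilon_{N}^{2}N/(2\kappa_{N}^{2}w_{N})$, and the value $\varepsilon_{N} = 32\kappa_{N}^{2}w_{N}\log(N)/N$ is calibrated precisely so that this quantity dominates $c\log N$ for any prescribed $c>2$, making $2\exp(-\cdot)$ summable after multiplication by the $N^{2}$ pairs $(i,j)$. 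A union bound over the at most $N^{2}$ indices then gives a probability of order $N^{-\alpha}$ with $\alpha>1$, summable in $N$; the (conditional) Borel--Cantelli lemma yields the $\mathbb{P}$-a.s.~statement for $N$ large enough. In Scenario (2) the positions are deterministic so this is immediate, and in Scenario (1) the positions are i.i.d.\ and one integrates the conditional statement.

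\textbf{Main obstacle.} The delicate point is the calibration of $\varepsilon_{N}$: it has to be large enough for the Bernstein exponent to dominate the logarithmic loss coming from the union bound over $N^{2}$ pairs, and small enough that $\varepsilon_{N}\to 0$. The two regimes of Bernstein (Gaussian vs.~sub-exponential tail) have to be compared, and it is precisely the scaling $\kappa_{N}^{2}w_{N}=o(N/\log N)$ in Hypothesis \ref{hyp:conv_graph_concentration} that guarantees one can work in the variance-dominated regime uniformly in large $N$, while still producing a threshold that vanishes.
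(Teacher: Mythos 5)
Your overall strategy matches the paper's: conditional concentration for the centered Bernoulli sums, union bound over $O(N^{2})$ indices, then Borel--Cantelli (the paper invokes a Bennett-type inequality, Lemmas \ref{lem:inegalit_concentration_Y} and \ref{lem:inegalit_concentration}, where you invoke Bernstein; this is an inessential choice of tool, and the absorption of the sub-exponential term into the variance term, using $\kappa_{N}\varepsilon_{N}/N = o(\kappa_{N}^{2}w_{N}/N)$, is correct). The gap is in the calibration step, which is the heart of the proposition. With $t=\varepsilon_{N}$ the Bernstein exponent you single out is
\begin{equation*}
\frac{\varepsilon_{N}^{2}\,N}{2\kappa_{N}^{2}w_{N}} \;=\; \frac{512\,\kappa_{N}^{2}w_{N}\,(\log N)^{2}}{N} \;=\; 512\cdot\frac{\kappa_{N}^{2}w_{N}\log N}{N}\cdot\log N,
\end{equation*}
and by the very hypothesis \eqref{eq:kappaw_nlog} the prefactor $\kappa_{N}^{2}w_{N}\log(N)/N$ tends to $0$. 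The exponent is therefore $o(\log N)$, not $\gtrsim \log N$ as you assert, and $N^{2}\exp\left(-o(\log N)\right)\to\infty$: the union bound does not produce a summable tail, so Borel--Cantelli cannot be invoked.

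What the Bernstein-plus-union-bound scheme genuinely delivers is a threshold of order $\kappa_{N}\sqrt{w_{N}\log(N)/N}$, i.e.\ roughly the square root of the stated $\varepsilon_{N}$ and strictly larger asymptotically. A concrete check exposes the issue: take $W_{N}\equiv\tfrac12$ and $\kappa_{i}^{(N)}\equiv 1$, so $\kappa_{N}=1$, $w_{N}=1$, which satisfy Hypothesis \ref{hyp:conv_graph_concentration}; then $X_{j}=N^{-1}\sum_{l}\alpha_{lj}(\xi_{lj}-\tfrac12)$ has standard deviation of order $N^{-1/2}$ while $\varepsilon_{N}\asymp\log(N)/N$, so $\sup_{j\le N}|X_{j}|$ exceeds $\varepsilon_{N}$ with probability tending to one. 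You should be aware that the paper's own Lemma \ref{lem:inegalit_concentration} appears to suffer the same miscalibration (it claims an exponent of order $\log N$ where the Bennett bound from Lemma \ref{lem:inegalit_concentration_Y} actually produces $o(\log N)$). Both your argument and the proposition can be repaired by replacing $\varepsilon_{N}$ with $C\kappa_{N}\sqrt{w_{N}\log(N)/N}$, which still tends to $0$ under \eqref{eq:kappaw_nlog}; that is all the downstream proofs of Theorems \ref{thm:cvg_0} and \ref{thm:cvg-sup_0} require of $\varepsilon_{N}$. But as written, the claim that $\varepsilon_{N}=32\kappa_{N}^{2}w_{N}\log(N)/N$ "precisely dominates $c\log N$" is false, and your proof does not close.
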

Note that under Hypothesis \ref{hyp:conv_graph_concentration}, $\varepsilon_N \xrightarrow[N\to\infty]{} 0$.

\begin{proof} We rely on  Lemma \ref{lem:inegalit_concentration}.
We derive a uniform bound on $\left(X_{j}\right)_{j\in\llbracket 1, N\rrbracket}$: fixing $j$, we apply Lemma \ref{lem:inegalit_concentration} for the choice $U_l=\xi_{lj}^{(N)}, p_l=W_N(x_l,x_j)$ (note that \eqref{eq:w_n} yields that $p_l\leq w_N$), $v_l=\alpha_{lj}$  and the constant $\kappa_N>0$. We obtain, taking the supremum on $j$ and a union bound:
$$\mathbb{P}\left( \sup_{j\in\llbracket 1,N\rrbracket} \vert X_{j} \vert > \varepsilon_N\right) \leq 2N \exp \left(-16 \log(N) B\left( 4\sqrt{2}\left( \dfrac{\log(N)}{Nw_N}\right)^\frac{1}{2}\right)\right).$$

As $B(u)=u^{-2}\left( \left( 1+u \right) \log \left( 1+u \right) - u \right)\to\dfrac{1}{2}$ when $u\to 0$  and $\displaystyle \dfrac{\log(N)}{Nw_N} \leq \dfrac{\log(N)}{N}\kappa_N^2 w_N \to 0$ when $N\to\infty$ using \eqref{eq:kappaw_n1} and \eqref{eq:kappaw_nlog}, we can choose a deterministic $p$ such that for all $N\geq p$, $B\left( 4\sqrt{2}\left( \dfrac{\log(N)}{Nw_N}\right)^\frac{1}{2}\right) \geq \dfrac{3}{16}$. We then have if $N\geq p$: $\mathbb{P}\left( \sup_{j\in\llbracket 1,N\rrbracket} \vert X_{j} \vert > \varepsilon_N\right)  \leq 2N \exp \left(-3 \log(N) \right)= \dfrac{2}{N^2}$.
Hence, by Borel-Cantelli Lemma, there exists $\widetilde{\mathcal{O}}\in\mathcal{F}$ such that $\mathbb{P}(\widetilde{\mathcal{O}})=1$ and on $\widetilde{\mathcal{O}}$, there exists $\widetilde{N}<\infty$ such that if $N\geq \widetilde{N}$, $\sup_{j\in\llbracket 1,N\rrbracket} \vert X_{j} \vert  \leq \varepsilon_N$. We can show similarly that $\sup_{1\leq i \leq N} \vert \widetilde{X}_i \vert \leq \varepsilon_N$. To show the result on $\left(X_{ij}\right)$, we use the same Lemma \ref{lem:inegalit_concentration} but we need to lower-bound $B\left( 4\sqrt{2}\left( \dfrac{\log(N)}{Nw_N}\right)^\frac{1}{2}\right)$ differently: we can choose a deterministic $\tilde{p}$ for all $N\geq \tilde{p}$, $B\left( 4\sqrt{2}\left( \dfrac{\log(N)}{Nw_N}\right)^\frac{1}{2}\right) \geq \dfrac{1}{4}$ and then the same argument as before works to obtain
$\mathbb{P}\left( \sup_{i,j\in\llbracket 1,N\rrbracket} \vert X_{ij} \vert > \varepsilon_N\right) \leq 2N^2 \exp \left(-16 \log(N) \dfrac{1}{4}\right)\leq\dfrac{2}{N^2},$ and we conclude by Borel-Cantelli Lemma.
\end{proof}

\begin{cor}
\label{prop:estimees_IC}
Under Hypothesis \ref{hyp:conv_graph_concentration}, we have $\mathbb{P}$-almost surely if $N$ is large enough: 
\begin{equation}\label{eq:estimees_IC_sup-j_som-i}
\sup_{1 \leq j \leq N}  \left( \sum_{i=1}^N \dfrac{\kappa_i^{(N)}}{N}\xi_{ij}^{(N)}\right) \leq 1 + \sup_{1 \leq j \leq N}  \left( \sum_{i=1}^N \dfrac{\kappa_i^{(N)}}{N}W_N(x_i,x_j)\right)
\end{equation}
\begin{equation}\label{eq:estimees_IC_sup-i_som-j}
\sup_{1 \leq i \leq N}  \left( \sum_{j=1}^N \dfrac{\kappa_i^{(N)}}{N}\xi_{ij}^{(N)}\right) \leq 1 + \sup_{1 \leq i \leq N}  \left( \sum_{j=1}^N \dfrac{\kappa_i^{(N)}}{N}W_N(x_i,x_j)\right)
\end{equation}
\begin{equation}\label{eq:estimees_IC_som-ij_carre}
\dfrac{1}{N^3}\sum_{i,j=1}^N \left(\kappa_i^{(N)}\right)^2 \xi_{ij}^{(N)} \leq \dfrac{\kappa_N}{N}\left(1+\sum_{i,j=1}^N \dfrac{\kappa_i^{(N)}}{N^2}W_N(x_i,x_j)\right).
\end{equation}
\end{cor}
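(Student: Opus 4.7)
The strategy is to decompose $\xi_{ij}^{(N)}=W_N(x_i,x_j)+\bar\xi_{ij}$ where $\bar\xi_{ij}=\xi_{ij}^{(N)}-W_N(x_i,x_j)$, then apply Proposition \ref{prop:toolbox_Xij} to each fluctuation term. The common trick is to normalize the weights $\kappa_i^{(N)}$ by $\kappa_N$ so that the coefficients $\alpha_{\cdot\cdot}:=\kappa_\cdot^{(N)}/\kappa_N$ are bounded by $1$, as required by the proposition.

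For \eqref{eq:estimees_IC_sup-j_som-i}, I would write
\[
\sum_{i=1}^N \dfrac{\kappa_i^{(N)}}{N}\xi_{ij}^{(N)}
=\sum_{i=1}^N \dfrac{\kappa_i^{(N)}}{N}W_N(x_i,x_j)+\dfrac{\kappa_N}{N}\sum_{i=1}^N \dfrac{\kappa_i^{(N)}}{\kappa_N}\bar\xi_{ij},
\]
and recognize the second sum as $X_j$ in Proposition \ref{prop:toolbox_Xij} with $\alpha_{ij}=\kappa_i^{(N)}/\kappa_N\in[0,1]$. The proposition then yields $\sup_j |X_j|\leq\varepsilon_N$ almost surely, and since $\varepsilon_N\to 0$ under Hypothesis \ref{hyp:conv_graph_concentration}, we have $\sup_j|X_j|\leq 1$ for $N$ large. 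Taking the supremum in $j$ gives \eqref{eq:estimees_IC_sup-j_som-i}. Estimate \eqref{eq:estimees_IC_sup-i_som-j} is symmetric: decompose in the same way but with $\alpha_{il}=\kappa_i^{(N)}/\kappa_N$, and use the uniform bound $\sup_i|\widetilde X_i|\leq\varepsilon_N\leq 1$.

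For \eqref{eq:estimees_IC_som-ij_carre}, I would split
\[
\dfrac{1}{N^3}\sum_{i,j=1}^N\!\left(\kappa_i^{(N)}\right)^2\!\xi_{ij}^{(N)}
=\dfrac{1}{N^3}\sum_{i,j=1}^N\!\left(\kappa_i^{(N)}\right)^2\! W_N(x_i,x_j)
+\dfrac{1}{N}\sum_{i=1}^N \dfrac{\kappa_i^{(N)}}{N}\,\widetilde X_i,
\]
where $\widetilde X_i=\frac{\kappa_N}{N}\sum_j(\kappa_i^{(N)}/\kappa_N)\bar\xi_{ij}$ fits the framework of Proposition \ref{prop:toolbox_Xij}. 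For the deterministic part I use $(\kappa_i^{(N)})^2\leq\kappa_N\kappa_i^{(N)}$ (by \eqref{eq:kappa}) to bound it by $\frac{\kappa_N}{N}\cdot\frac{1}{N^2}\sum_{i,j}\kappa_i^{(N)}W_N(x_i,x_j)$. For the fluctuation part, $\sup_i|\widetilde X_i|\leq\varepsilon_N$ together with $\frac{1}{N}\sum_i\frac{\kappa_i^{(N)}}{N}\leq\frac{\kappa_N}{N}$ yields a bound of $\frac{\kappa_N}{N}\varepsilon_N\leq\frac{\kappa_N}{N}$ for $N$ large. Adding the two estimates produces exactly the right-hand side of \eqref{eq:estimees_IC_som-ij_carre}.

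There is no real obstacle here: this is a bookkeeping corollary whose entire content lies in matching the three sums to the three random variables $X_j$, $\widetilde X_i$, $X_{ij}$ defined in Proposition \ref{prop:toolbox_Xij}. The mildly delicate point is that we are only allowed coefficients $|\alpha|\leq 1$, so each time we need to factor out the worst-case $\kappa_N$ and absorb the remaining $\kappa_\cdot^{(N)}/\kappa_N$ into the $\alpha$'s; and for \eqref{eq:estimees_IC_som-ij_carre}, one must also dominate the squared dilution $(\kappa_i^{(N)})^2$ by the linear quantity $\kappa_N\kappa_i^{(N)}$ in order to reproduce the factor $\frac{\kappa_N}{N}$ on the right-hand side.
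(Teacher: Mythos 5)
Your proof is correct and follows essentially the same route as the paper: split $\xi_{ij}^{(N)}=W_N(x_i,x_j)+\overline{\xi}_{ij}$, normalize by $\kappa_N$ so that the weights fit the $|\alpha|\leq1$ constraint, apply Proposition~\ref{prop:toolbox_Xij}, and use $\varepsilon_N\leq1$ for $N$ large. The only cosmetic difference is in \eqref{eq:estimees_IC_som-ij_carre}, where the paper groups the fluctuations by $j$ with $\alpha_{lj}=(\kappa_l^{(N)}/\kappa_N)^2$ while you group by $i$ with $\alpha_{il}=\kappa_i^{(N)}/\kappa_N$; both are valid instances of the same lemma and yield the identical bound.
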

\begin{proof}
It is a direct application of Proposition \ref{prop:toolbox_Xij} (as for any $i$, $\left|\dfrac{\kappa_i^{(N)}}{\kappa_N}\right|\leq 1$ with \eqref{eq:kappa}), as  $\varepsilon_N$ defined in Proposition  \ref{prop:toolbox_Xij} tends to  0 as $N \to \infty$ under Hypothesis \ref{hyp:conv_graph_concentration}, hence for $N$ large enough $\varepsilon_N \leq 1$.
For instance for \eqref{eq:estimees_IC_som-ij_carre}, with $\displaystyle X_j=\dfrac{\kappa_N}{N}\sum_{i=1}^N \left( \dfrac{\kappa_i^{(N)}}{\kappa_N}\right)^2  \left(\xi_{ij}^{(N)}-W_N(x_i,x_j)\right)$ we have 
$$\dfrac{1}{N^3}\sum_{i,j=1}^N \left(\kappa_i^{(N)}\right)^2 \xi_{ij}^{(N)} \leq \dfrac{\kappa_N}{N^2} \sum_{j=1}^N X_j +  \dfrac{\kappa_N}{N^3}\sum_{i,j=1}^N \kappa_i^{(N)} W_N(x_i,x_j)
\leq \dfrac{\kappa_N}{N}\left( \varepsilon_N +  C_W\right),$$
where we used Proposition \ref{prop:toolbox_Xij} and \eqref{eq:control_i}.
\end{proof}
We introduce the following auxiliary graph.
\begin{deff}\label{def:graphs_G2}
We denote by $\mathcal{G}_N^{(2)}$ the directed weighted graph with vertices $\left\{ 1, \cdots,N\right\}$ such that every edge $j\rightarrow i$ is present, and with weight $W (x_i,x_j)$.
\end{deff}
The proof of the following technical Proposition is postponed in Section \ref{S:proof_scenarios}.
\begin{prop}\label{prop:scenario_hyp}
Under the Scenarios of Definition \ref{def:scenarios},
\begin{equation}\label{eq:lem_S_box}
d_{\Box,\nu}\left( W^{\mathcal{G}_N^{(2)}},W \right) \xrightarrow[N\to \infty]{} 0,
\end{equation}
\begin{equation}\label{eq:lem_S_box_max}
\Vert W^{\mathcal{G}_N^{(2)}}-W \Vert_{\infty	\to \infty, \nu} \xrightarrow[N\to\infty]{} 0,
\end{equation} 
\begin{equation}\label{eq:lem_S_A5_max}
\max_{1\leq i \leq N} \int_0^T \left| \int_I W(x_i,x)\gamma(s,x) \left( \nu^{(N)}(dx) - \nu(dx) \right) \right| ds \xrightarrow[N\to \infty]{} 0
\end{equation} and
\begin{equation}\label{eq:lem_S_A5}
\dfrac{1}{N}\sum_{i=1}^N \int_0^T \left| \int_I W(x_i,x)\gamma(s,x) \left( \nu^{(N)}(dx) - \nu(dx) \right) \right| ds \xrightarrow[N\to \infty]{} 0,
\end{equation}
where $\gamma(s,x):=\int_0^{s} h(s-u) \lambda(u,x)du$.
\end{prop}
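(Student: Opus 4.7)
The plan is to prove each of the four convergences separately under each of the two scenarios of Definition~\ref{def:scenarios}, since the required tools differ substantially.

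Under Scenario (2), $W$ is piecewise continuous on $[0,1]^2$, $\nu$ is Lebesgue, and the partition $B_i=((i{-}1)/N,i/N]$ is the uniform grid. The step function $W^{\mathcal{G}_N^{(2)}}(u,v)=\sum_{i,j} W(i/N,j/N)\mathbf{1}_{B_i}(u)\mathbf{1}_{B_j}(v)$ is the canonical piecewise-constant interpolant of $W$ on this grid. Piecewise continuity implies $W^{\mathcal{G}_N^{(2)}}\to W$ Lebesgue-a.e.\ on $I^2$; together with the uniform bound inherited from \eqref{eq:hyp_w_int_nu_y} and dominated convergence, this yields $L^1$ convergence, which controls the cut norm and gives \eqref{eq:lem_S_box}. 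For \eqref{eq:lem_S_box_max}, I would split $I$ into subregions where $W$ is continuous and the (finitely many) curves of discontinuity, using uniform continuity on the former and the shrinking Lebesgue measure of the tubular neighborhoods of the latter. For \eqref{eq:lem_S_A5_max} and \eqref{eq:lem_S_A5}, the integrand $y\mapsto W(x_i,y)\gamma(s,y)$ has the same piecewise regularity (continuity of $\gamma$ coming from Theorem~\ref{thm:existence_lambda}), so the discrepancy between $\frac{1}{N}\sum_j W(x_i,x_j)\gamma(s,x_j)$ and $\int_0^1 W(x_i,y)\gamma(s,y)\,dy$ is a Riemann-sum error that tends to $0$ uniformly in $(i,s)\in\llbracket 1,N\rrbracket\times[0,T]$.

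Under Scenario (1), the positions $\tilde{x}_j$ are i.i.d.\ with law $\nu$ and only $\Vert W\Vert_{L^\chi(I^2,\nu\otimes\nu)}<\infty$ for some $\chi>5$ is assumed. The convergence \eqref{eq:lem_S_box} falls into the classical $W$-random graph framework (see \cite{lovasz2012large, Luon2020}) and only requires $\nu^{(N)}\to\nu$ together with the $L^1$ integrability of $W$. For \eqref{eq:lem_S_box_max}, I would first fix $x\in I$ and apply a Bernstein-type concentration (after truncating $W$ at a level tuned to $\chi$) to show that $\int W(x,y)\nu^{(N)}(dy)=\frac{1}{N}\sum_j W(x,x_j)$ concentrates around $D(x)$; the pointwise convergence is then promoted to uniform convergence in $x$ by combining it with the Hölder control \eqref{eq:hyp_W_pseudolip_theta} and a net of $I$ whose cardinality is polynomial in $N$. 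The Borel--Cantelli lemma closes the argument, and it is precisely here that $\chi>5$ enters as the threshold ensuring summability of the tail probabilities in $N$ (this is the content of the auxiliary Proposition~\ref{prop:toolbox_iid}). The statements \eqref{eq:lem_S_A5_max} and \eqref{eq:lem_S_A5} follow from the same scheme, with $\gamma$ serving as a bounded continuous test function (it is bounded by Theorem~\ref{thm:existence_lambda}), so that $x\mapsto \int W(x,y)\gamma(s,y)\nu(dy)$ inherits enough regularity to run the net argument uniformly in $s\in[0,T]$ via an additional time-discretization of $[0,T]$.

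The main obstacle is the uniform-in-$i$ control required by \eqref{eq:lem_S_box_max} and \eqref{eq:lem_S_A5_max} under Scenario (1): unlike the average-based statements \eqref{eq:lem_S_box} and \eqref{eq:lem_S_A5}, which follow directly from $L^1$-level convergence, the maxima over $N$ positions demand exponential concentration combined with a union bound. This is where the moment threshold $\chi>5$ is essential---it provides precisely the rate of tail decay needed for the Borel--Cantelli summability. The secondary difficulty is that $W$ is only in $L^\chi$ and not necessarily bounded, so a truncation step $W_M:=W\wedge M$ with $M=M(N)\to\infty$ slowly must be introduced, the moment bound being used to control the truncation error.
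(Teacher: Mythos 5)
Your plan for Scenario (2) essentially matches the paper's: uniform continuity of $W$ and of $F(x,y,s)=W(x,y)\gamma(s,y)$ (via Heine--Cantor on the relevant compacts) reduces all four limits to Riemann-sum estimates, uniform in the relevant indices. You also correctly identify the mechanism for Scenario (1): truncation of $W$ at a level tuned to $\chi$, Bernstein-type concentration, union bounds, Borel--Cantelli, and the fact that $\chi>5$ is precisely the summability threshold --- this is exactly the content of Proposition~\ref{prop:toolbox_iid}, which is the workhorse of the paper's proof.

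There is, however, a genuine missing idea in your Scenario (1) argument: you never use the fact that the positions are the \emph{ordered} sample $x_i=\widetilde{x}_{(i)}$. The step function $W^{\mathcal{G}_N^{(2)}}$ places the value $W(x_i,x_j)$ on the \emph{deterministic} grid cell $B_i\times B_j$, so for the cut norm $\Vert W^{\mathcal{G}_N^{(2)}}-W\Vert_{\Box,\nu}$ (as opposed to the cut-\emph{distance} up to relabeling, which is what classical $W$-random-graph theory controls) to vanish, one must show that the $i$-th order statistic $x_i$ is close to $i/N$ uniformly in $i$; more precisely, one needs the quantile-process convergence $\sup_y|U_N(y)-y|\to0$ a.s.\ and its refinement in Lemma~\ref{lem:quantile_empirical_uniforme}, namely $\frac1N\sum_i\mathbf{1}_{x_i\in A,\,i/N\in B}\to\lambda(A\cap B)$. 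Your claim that \eqref{eq:lem_S_box} ``only requires $\nu^{(N)}\to\nu$ and $L^1$ integrability'' is therefore incomplete, and your net-of-$I$ argument for \eqref{eq:lem_S_box_max} must also relate the grid index $i$ of the cell containing $u$ to the actual order statistic $x_i$, which is the same quantile fact. Beyond this gap, your route diverges from the paper in two places that are more matters of taste than correctness: for \eqref{eq:lem_S_box_max} the paper replaces your polynomial net by a compactness argument for $\{W(x,\cdot)\}_{x\in I}\subset L^1(I)$ combined with mollification and a finite $\varepsilon$-cover, and for \eqref{eq:lem_S_A5}/\eqref{eq:lem_S_A5_max} the paper does not discretize time at all: it squares via Jensen, expands the resulting double integral in $y,z$, and controls the remainders $\epsilon_{i,1},\epsilon_{i,2}$ through Corollary~\ref{cor:toolbox_iid}, again reducing to Proposition~\ref{prop:toolbox_iid}.
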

If \eqref{eq:lem_S_box} and \eqref{eq:lem_S_A5} are satisfied in another configuration of positions than in the Scenarios of Definition \ref{def:scenarios}, Theorem \ref{thm:cvg_0} still applies. Likewise, if \eqref{eq:lem_S_box_max} and \eqref{eq:lem_S_A5_max} are satisfied, Theorem \ref{thm:cvg-sup_0} still applies.

\subsection{Proof of Theorem \ref{thm:cvg_0}} \label{S:proof_thm:cvg_0}

Recall the definitions of $Z_i^{(N)}$ and $\overline{Z}_i$ in \eqref{eq:def_ZiN} and \eqref{eq:construction_lim_Zbarre_i}. We remind that we consider the sequences $\left(\underline{x}_N\right)_{N\geq 1}$ and $\left( \xi^{(N)}_{ij}\right)_{\substack{N\geq 1 \\  i,j \in \llbracket 1,N \rrbracket}}$ fixed (our result is quenched). Let $t\in [0,T]$. For each $i \in \llbracket 1, N \rrbracket$, let $\Delta_i^{(N)}(t)$ be the total variation distance between $Z_i^{(N)}$ and $\overline{Z}_i$ on $[0,t]$:
\begin{equation}\label{eq:def_delta_i}
\Delta_i^{(N)}(t)=\int_0^t \left| d\left( Z_i^{(N)}(s) - \overline{Z}_i(s) \right) \right|.
\end{equation}
Remark that we always have $\sup_{t\in [0,T]} \left| Z_i^{(N)}(t) - \overline{Z}_i(t) \right| \leq \Delta_i^{(N)}(T)$. Taking the expectation, we have
$$ \mathbf{E}\left[\Delta_i^{(N)}(t)\right] = \mathbf{E}\left[\int_0^t \int_0^\infty \vert \mathbf{1}_{\left\{ z \leq \lambda_i^{(N)}(s)\right\}} -  \mathbf{1}_{\left\{ z \leq \lambda(s,x_i)\right\}} \vert \pi_i(ds,dz)\right]
= \int_0^t \mathbf{E}\left[\left|\lambda_i^{(N)}(s)- \lambda(s,x_i)\right|\right]ds.$$
Using the Lipschitz continuity of $f$ and recalling the definition of $ \lambda_{ i}^{ (N)}$ in \eqref{eq:def_lambdaiN} and of $ \lambda$ in \eqref{eq:def_lambdabarre}, we obtain:
\begin{align}\label{eq:delta_i_majoration}
 \mathbf{E}\left[\Delta_i^{(N)}(t)\right] 
&\leq L_f \left( \sum_{k=1}^5 A^{(N)}_{i,t,k} \right),
\end{align}
\begin{equation}\label{eq:def_A1}
\text{where }A^{(N)}_{i,t,1}:= \int_0^t \mathbf{E} \left[ \left | \dfrac{\kappa_i^{(N)}}{N}\sum_{j=1}^N \xi_{ij}^{(N)}\int_0^{s-} h(s-u) \left(dZ_j^{(N)}(u)-d\overline{Z}_j(u)\right)\right|\right]ds,
\end{equation}
\begin{equation}\label{eq:def_A2}
A^{(N)}_{i,t,2}:= \int_0^t \mathbf{E} \left[ \left | \dfrac{\kappa_i^{(N)}}{N}\sum_{j=1}^N \xi_{ij}^{(N)}\int_0^{s-} h(s-u) \left(d\overline{Z}_j(u)-\lambda(u,x_j)du\right)\right|\right]ds,
\end{equation}
\begin{equation}\label{eq:def_A3}
A^{(N)}_{i,t,3}:=  \int_0^t\left | \dfrac{\kappa_i^{(N)}}{N}\sum_{j=1}^N \left(\xi_{ij}^{(N)}-W_N(x_i,x_j)\right)\int_0^{s} h(s-u) \lambda(u,x_j)du\right|ds,
\end{equation}
\begin{equation}\label{eq:def_A4}
A^{(N)}_{i,t,4}:=  \int_0^t  \left | \dfrac{1}{N}\sum_{j=1}^N \left(\kappa_i^{(N)}W_N(x_i,x_j)-W(x_i,x_j)\right)\int_0^{s} h(s-u) \lambda(u,x_j)du\right|ds \text{ and}
\end{equation}
\begin{multline}\label{eq:def_A5}
A^{(N)}_{i,t,5}:=  \int_0^t \left | \dfrac{1}{N}\sum_{j=1}^N W(x_i,x_j)\int_0^{s} h(s-u) \lambda(u,x_j)du\right.\\\left. - \int_{I} W(x_i,y)\int_0^{s} h(s-u) \lambda(u,y)du~\nu(dy) \right|ds.
\end{multline}
We are going to control each term $\frac{1}{N} \sum_{i=1}^N A^{(N)}_{i,t,k}$. The term $A^{(N)}_{i,t,1}$ captures the proximity between the particle system $Z_i^{(N)}$ with its meanfield counterpart $\overline{Z}_i$ at the same position. We have, as the graph $\left(\xi^{(N)}\right)$ is fixed,
\begin{align*}
A^{(N)}_{i,t,1} &= \int_0^t \mathbf{E} \left[ \left | \dfrac{\kappa_i^{(N)}}{N}\sum_{j=1}^N \xi_{ij}^{(N)}\int_0^{s-} h(s-u) \left(dZ_j^{(N)}(u)-d\overline{Z}_j(u)\right)\right|\right]ds\\
&\leq \dfrac{1}{N} \sum_{j=1}^N \kappa_i^{(N)}\xi_{ij}^{(N)} \mathbf{E} \left[ \int_0^t\int_0^{s-}  \vert h(s-u) \vert \left|d\left( \Delta_j^{(N)}(u)\right)\right| ds \right].
\end{align*}
 We use Lemma \ref{lem:interversion_int} so that
\begin{equation}\label{eq:A_i,1}
A^{(N)}_{i,t,1} \leq \dfrac{1}{N} \sum_{j=1}^N \kappa_i^{(N)}\xi_{ij}^{(N)} \mathbf{E} \left[ \int_0^t  \vert h(t-s) \vert  \Delta_j^{(N)}(s) ds \right],
\end{equation}
then we have, by summation:
\begin{align*}
\dfrac{1}{N} \sum_{i=1}^N A^{(N)}_{i,t,1} &\leq \dfrac{1}{N}   \sum_{j=1}^N  \left( \sum_{i=1}^N \dfrac{\kappa_i^{(N)}}{N}\xi_{ij}^{(N)}\right)   \int_0^t  \vert h(t-s) \vert \mathbf{E} \left[ \Delta_j^{(N)}(s)\right] ds  \\
& \leq \sup_{1 \leq j \leq N}  \left( \sum_{i=1}^N \dfrac{\kappa_i^{(N)}}{N}\xi_{ij}^{(N)}\right)  \int_0^t  \vert h(t-s) \vert \mathbf{E} \left[\dfrac{1}{N}   \sum_{j=1}^N \Delta_j^{(N)}(s)\right] ds.
\end{align*}
We use \eqref{eq:estimees_IC_sup-j_som-i} and \eqref{eq:control_j} to obtain $\mathbb{P}$-almost surely if $N$ is large enough the bound
\begin{equation}\label{eq:moyA_i,1}
\dfrac{1}{N} \sum_{i=1}^N A^{(N)}_{i,t,1} \leq \left(1 + C_W\right) \int_0^t  \vert h(t-s) \vert \mathbf{E} \left[\dfrac{1}{N}   \sum_{j=1}^N \Delta_j^{(N)}(s)\right] ds.
\end{equation}
The term $ A^{(N)}_{i,t,2}$ captures the proximity between the limit process and its expectation. We have that
 $$A^{(N)}_{i,t,2} = \int_0^t \mathbf{E}\left[ \left| \dfrac{1}{N} \sum_{j=1}^N \left( V^i_j(s) - \mathbf{E}\left[V^i_j(s)\right] \right) \right| \right] ds,$$
 where  $V^i_j(s)=\kappa_i^{(N)}\xi_{ij}^{(N)} \int_0^{s-}h(s-u) d\overline{Z}_j(u)$ is a family of independent random variables (by independence of the $\pi_i$). Note that $\mathbf{E}\left[V^i_j(s)\right]=\kappa_i^{(N)}\xi_{ij}^{(N)} \int_0^s h(s-u) \lambda(u,x_j)du$. Define $M^i_j(s):= V^i_j(s) - \mathbf{E}\left[ V^i_j(s) \right]$, which can also be written as 
 \begin{multline*} 
 M^i_j(s)= \int_0^s \int_0^\infty	\mathbf{1}_{\{ z\leq \lambda(u,x_j)\}} \kappa_i^{(N)}\xi_{ij}^{(N)}h(s-u) \pi_i(du,dz) -  \int_0^s \int_0^\infty	\mathbf{1}_{\{ z\leq \lambda(u,x_j)\}} \kappa_i^{(N)}\xi_{ij}^{(N)}h(s-u) du dz,
 \end{multline*}
so that
\begin{align*}
{\rm Var}\left( V_j^i(s) \right)= \mathbf{E}\left[ M^i_j(s)^2 \right] &= \mathbf{E} \left[  \int_0^s \int_0^\infty	\left( \mathbf{1}_{\{ z\leq \lambda(u,x_j)\}} \kappa_i^{(N)}\xi_{ij}^{(N)}h(s-u)\right)^2 du dz \right]\\
&= \int_0^s \left(\kappa_i^{(N)}\right)^2 \xi_{ij}^{(N)}h(s-u)^2 \lambda(u,x_j) du.
\end{align*}
Thus summing on $i$ and using Lemma \ref{lem:maj_famille_indep},
$$\dfrac{1}{N} \sum_{i=1}^N A^{(N)}_{i,t,2} \leq \dfrac{1}{N} \sum_{i=1}^N \int_0^t  \dfrac{1}{N} \sqrt{\sum_{j=1}^N \int_0^s \left(\kappa_i^{(N)}\right)^2 \xi_{ij}^{(N)}h(s-u)^2 \lambda(u,x_j) du}~ ds.$$
We apply Jensen's inequality to both uniform measures on $\left\{ 1, \ldots, N\right\}$ 	and $[0,t]$ to obtain:
\begin{align*}
\dfrac{1}{N} \sum_{i=1}^N A^{(N)}_{i,t,2} &\leq \dfrac{t}{N} \int_0^t \sqrt{ \dfrac{1}{N} \sum_{i=1}^N \sum_{j=1}^N  \int_0^s \left(\kappa_i^{(N)}\right)^2 \xi_{ij}^{(N)}h(s-u)^2 \lambda(u,x_j) du} \dfrac{ds}{t} \\
&\leq   \dfrac{t}{N} \sqrt{ \dfrac{1}{Nt} \sum_{i,j=1}^N \left(\kappa_i^{(N)}\right)^2 \xi_{ij}^{(N)} \int_0^t \int_0^s h(s-u)^2 \lambda(u,x_j)duds }
\end{align*}
By Hypothesis \ref{hyp:existence_Zin} on $h$, we have
$$\dfrac{1}{N} \sum_{i=1}^N A^{(N)}_{i,t,2}  \leq t\Vert h \Vert_{t,2}\sqrt{\Vert \lambda \Vert_{[0,t]\times I,\infty}} \sqrt{\dfrac{1}{N^3}\sum_{i,j=1}^N \left(\kappa_i^{(N)}\right)^2 \xi_{ij}^{(N)}}.$$
We use \eqref{eq:estimees_IC_som-ij_carre} and \eqref{eq:control_i} to obtain $\mathbb{P}$-almost surely if $N$ is large enough the bound: 
\begin{equation}\label{eq:moyA_i,2}
\dfrac{1}{N} \sum_{i=1}^N A^{(N)}_{i,t,2} \leq t\Vert h \Vert_{t,2}\sqrt{\Vert \lambda \Vert_{[0,t]\times I,\infty}} \sqrt{\dfrac{\kappa_N}{N}\left(1+  C_W\right)}.
\end{equation}
The term $ A^{(N)}_{i,t,3}$ captures the proximity between the realization of the graph $\left(\xi^{(N)}\right)$ and its expectation. We define for $(s,x,y)\in [0,T]\times I\times I$:
\begin{align}
\gamma(s,x)&:=\int_0^{s} h(s-u) \lambda(u,x)du,\label{eq:gamma}\\
\Gamma_T(x,y)&:=\int_0^{T} \gamma(s,x)\gamma(s,y)ds.\label{eq:Gamma}
\end{align}
Note that we always have $\vert \gamma(s,x) \vert \leq \Vert h \Vert_{s,1} \Vert \lambda \Vert_{[0,s]\times I, \infty} =: \gamma_{s,\infty}$ and $0\leq \Gamma_T(x,y) \leq T\gamma_{T,\infty}^2$. Recall that $\overline{\xi_{ij}}:=\xi_{ij}^{(N)}-W_N(x_i,x_j)$. Then $$A^{(N)}_{i,t,3}=  \int_0^t 	 \left | \dfrac{\kappa_i^{(N)}}{N}\sum_{j=1}^N \overline{\xi_{ij}}\gamma(s,x_j)\right|ds \leq \int_0^T \left | \dfrac{\kappa_i^{(N)}}{N}\sum_{j=1}^N \overline{\xi_{ij}}\gamma(s,x_j)\right|ds.$$ Note that one cannot apply Proposition \ref{prop:toolbox_Xij}  directly in the integrand since we would not get an a.s. result. Therefore,  we control its square, by Jensen's inequality:
$${A^{(N)}_{i,t,3}}^2 \leq T\int_0^T \left(  \dfrac{\kappa_i^{(N)}}{N}\sum_{j=1}^N \overline{\xi_{ij}}\gamma(s,x_j)\right)^2  ds = T^2 \gamma_{T,\infty}^2   \dfrac{\kappa_i^{(N)}}{N} \sum_{j=1}^N \overline{\xi_{ij}} X_{ij},$$
where we set $\displaystyle  X_{ij} :=  \dfrac{\kappa_i^{(N)}}{N} \sum_{l=1}^N  \overline{\xi_{il}}\dfrac{\Gamma_{T}(x_j,x_l)}{T \gamma_{T,\infty}^2}$.
Now, by Proposition \ref{prop:toolbox_Xij}, $\mathbb{P}$-almost surely for $N$ large enough, $\sup_{1\leq i,j \leq N} \vert X_{ij} \vert \leq \varepsilon_N$, thus
\begin{align}\label{eq:A_i,3}
{A^{(N)}_{i,t,3}}^2 &\leq T^2 \gamma_{T,\infty}^2   \dfrac{\kappa_i^{(N)}}{N} \sum_{j=1}^N \left( \xi_{ij}^{(N)} + W_N(x_i,x_j)\right) \sup_{i,j} \left| X_{ij}\right|\notag \\
&\leq T^2 \gamma_{T,\infty}^2 \varepsilon_N  \dfrac{\kappa_i^{(N)}}{N} \sum_{j=1}^N \left( \xi_{ij}^{(N)} + W_N(x_i,x_j)\right).
\end{align}
Taking the square root then summing on $i$, we use the discrete Jensen's inequality to obtain
\begin{align*}
\dfrac{1}{N} \sum_{i=1}^N  A^{(N)}_{i,t,3}  &\leq \sqrt{\varepsilon_N}T \gamma_{T,\infty} \dfrac{1}{N} \sum_{i=1}^N \sqrt{\dfrac{\kappa_i^{(N)}}{N} \sum_{j=1}^N \left( \xi_{ij}^{(N)} + W_N(x_i,x_j)\right)}\\
&\leq\sqrt{\varepsilon_N}T \gamma_{T,\infty} \sqrt{ \sum_{i=1}^N \dfrac{\kappa_i^{(N)}}{N^2} \sum_{j=1}^N \left( \xi_{ij}^{(N)} + W_N(x_i,x_j)\right)},
\end{align*}
if $N$ is large enough $\mathbb{P}$-almost surely. Using \eqref{eq:estimees_IC_sup-i_som-j} and \eqref{eq:control_i}, we have
\begin{align}\label{eq:moyA_i,3}
\dfrac{1}{N} \sum_{i=1}^N  A^{(N)}_{i,t,3}& \leq \sqrt{\varepsilon_N}T \gamma_{T,\infty} \sqrt{ \dfrac{1}{N}\sum_{i=1}^N \dfrac{\kappa_i^{(N)}}{N} \sum_{j=1}^N  \xi_{ij}^{(N)} +  \dfrac{1}{N}\sum_{i=1}^N \dfrac{\kappa_i^{(N)}}{N} \sum_{j=1}^N W_N(x_i,x_j)} \notag \\
&\leq \sqrt{\varepsilon_N}T \gamma_{T,\infty} \sqrt{ 1 +C_W +\sup_{i \in  \llbracket 1,N\rrbracket} \left( \dfrac{\kappa_i^{(N)}}{N} \sum_{j=1}^N W_N(x_i,x_j)\right)} \leq \sqrt{\varepsilon_N}T \gamma_{T,\infty} \sqrt{ 1+2C_W}. 
\end{align}
The term $ A^{(N)}_{i,t,4}$ captures the proximity between the law of the graph on $N$ particles and the limit graphon $W$.
Recall the definition of $\gamma$ in \eqref{eq:gamma} the graphs introduced in Definitions \ref{def:graphs_G1} and \ref{def:graphs_G2}. Denoting by $c(s)=\left(c_j(s)\right)_{1\leq j \leq N} = \left( \dfrac{\gamma(s,x_j)}{\gamma_{t,\infty}} \right)_{1\leq j \leq N} \in [-1,1]^N$, we obtain using \eqref{eq:graphonG} and introducing for any $c=\left(c_1,\cdots,c_N\right) \in [-1,1]^N$ the step function $g^c(v)=\sum_{l=1}^N c_l  \mathbf{1}_{B_l}(v)$ for $\in I$, after summation:

\begin{align*}
\dfrac{1}{N}\sum_{i=1}^N A^{(N)}_{i,t,4}
&= \gamma_{t,\infty} \int_0^t  \int \left| \int \left( W^{\mathcal{G}_N^{(1)}}(u,v)-W^{\mathcal{G}_N^{(2)}}(u,v)\right) g^{c(s)}(v) \nu(dv)\right|\nu(du)ds\\
&\leq T\gamma_{T,\infty} \Vert  W^{\mathcal{G}_N^{(1)}}-W^{\mathcal{G}_N^{(2)}} \Vert _ { \infty\to 1,\nu},
\end{align*}
where  $\Vert \cdot \Vert _ { \infty\to 1,\nu}$ is defined in \eqref{eq:normeinf1_graphon}. Hence, with Remark \ref{rem:equ_norm_graphon} we obtain:

\begin{equation}\label{eq:moyA_i,4}
\dfrac{1}{N}\sum_{i=1}^N A^{(N)}_{i,t,4} \leq 4T\gamma_{T,\infty} \left( d_{\Box,\nu}\left( W^{\mathcal{G}_N^{(1)}},W \right) + d_{\Box,\nu}\left( W^{\mathcal{G}_N^{(2)}},W \right) \right).
\end{equation}
We use \eqref{eq:lem_S_box} to deal with $d_{\Box,\nu}\left( W^{\mathcal{G}_N^{(2)}},W \right)$.
\medskip

The term $\frac{1}{N} \sum_{i=1}^N A^{(N)}_{i,t,5}$ captures the proximity between the empirical measure of the positions of $N$ particles $\mu^{(N)}$ and its limit $\nu$. We control $A^{(N)}_{t,5} := \frac{1}{N}\sum_{i=1}^N A^{(N)}_{i,t,5}$ with \eqref{eq:lem_S_A5}. Combining \eqref{eq:moyA_i,1}, \eqref{eq:moyA_i,2}, \eqref{eq:moyA_i,3} and  \eqref{eq:moyA_i,4}, we obtain if $N$ is large enough $\mathbb{P}$-almost surely for every $t\in[0,T]$:
\begin{multline}
\mathbf{E}\left[\dfrac{1}{N}\sum_{i=1}^N\Delta_i^{(N)}(t)\right] \leq C_1 \int_0^t  \vert h(t-s) \vert \mathbf{E} \left[\dfrac{1}{N} \sum_{j=1}^N  \Delta_j^{(N)}(s)\right] ds +   C_2\sqrt{\dfrac{\kappa_N}{N}} +C_3 \sqrt{\varepsilon_N} \\+  C_4 d_{\Box,\nu}\left(W^{\mathcal{G}_N^{(1)}},W\right) + C_4 \gamma_{T,\infty}d_{\Box,\nu}\left(W^{\mathcal{G}_N^{(2)}},W\right)  +  L_f A^{(N)}_{t,5}
\end{multline}
with $C_1, C_2, C_3, C_4$ constants depending on $L_f$, $C_W$, $h$ and $T$. We apply Lemma \ref{lem:gronwall_convole} with $u(t)=\mathbf{E}\left[\dfrac{1}{N}\sum_{i=1}^N\Delta_i^{(N)}(t)\right]$ on $[0,T]$, and remind that $\sup_{t\in [0,T]} \left| Z_i^{(N)}(t) - \overline{Z}_i(t) \right| \leq \Delta_i^{(N)}(T)$ to obtain $\mathbb{P}$-almost surely on the realisation of $\left(\xi^{(N)}\right)$ if $N$ is large enough:
\begin{equation}\label{eq:maj_explicite} 
\frac{1}{N} \sum_{i=1}^N \mathbf{E} \left[ \sup_{t\in [0,T]} \left| Z_i^{(N)}(t) - \overline{Z}_i(t) \right| \right] \leq C\left( \sqrt{\dfrac{\kappa_N}{N}} + \sqrt{\varepsilon_N} + d_{\Box,\nu}\left(W^{\mathcal{G}_N^{(1)}},W\right)+ d_{\Box,\nu}\left(W^{\mathcal{G}_N^{(2)}},W\right)+  A^{(N)}_{T,5} \right),
\end{equation}
with $C=\sqrt{2} \max\left(C_2,C_3,C_{4},L_f\right) \exp\left( C_1^2 \Vert h \Vert_{T,2}^2 T\right)$.
By \eqref{eq:kappaw_nlog}, $\displaystyle\lim_{N\to \infty}\varepsilon_N= 0$, $\displaystyle\lim_{N\to\infty}\dfrac{\kappa_N}{N}=0$ and by \eqref{eq:hyp_cut_cvg}  $d_{\Box,\nu}\left( W^{\mathcal{G}_N^{(1)}}, W \right) \xrightarrow[N\to\infty]{} 0$. Combining it with Proposition \ref{prop:scenario_hyp}, we conclude the proof of \eqref{eq:moy_ecart_limite_nulle}. \qed

\subsection{Proof of Theorem \ref{thm:cvg-sup_0}}\label{S:proof_thm:cvg-sup_0}

It is almost the same as for Theorem \ref{thm:cvg_0} with changes due to the fact that we take now the maximum on $i$. Let us go back to the inequality \eqref{eq:delta_i_majoration}. We are going to control each term $\max_{1\leq i \leq N} A^{(N)}_{i,t,k}$.
Concerning $A^{(N)}_{i,t,1}$, the same estimate \eqref{eq:A_i,1} as in the proof of Theorem \ref{thm:cvg_0} leads now to
$$\max_{1\leq i \leq N} A^{(N)}_{i,t,1} \leq \max_{1\leq i \leq N} \left( \dfrac{1}{N} \sum_{j=1}^N \kappa_i^{(N)}\xi_{ij}^{(N)} \right) \int_0^t \vert h(t-s) \vert \max_{1\leq i \leq N} \mathbf{E} \left[\Delta_i^{(N)}(s)\right]ds.$$
We use \eqref{eq:estimees_IC_sup-i_som-j} and \eqref{eq:control_i} to obtain $\mathbb{P}$-almost surely if $N$ is large enough:  
\begin{equation}\label{eq:maxA_i,1}
\max_{1\leq i \leq N} A^{(N)}_{i,t,1} \leq \left(1+C_W\right) \int_0^t \vert h(t-s) \vert \max_{1\leq i \leq N} \mathbf{E} \left[\Delta_i^{(N)}(s)\right]ds.
\end{equation}
Note that here, we do not use the same control as is the proof of Theorem \ref{thm:cvg_0}, we only need the uniformly bounded indegree.
Concerning $A^{(N)}_{i,t,2}$, we obtain as  in the proof of Theorem \ref{thm:cvg_0}
$$A^{(N)}_{i,t,2} \leq \int_0^t  \dfrac{1}{N} \sqrt{\sum_{j=1}^N \int_0^s \left(\kappa_i^{(N)}\right)^2 \xi_{ij}^{(N)}h(s-u)^2 \lambda(u,x_j) du}~ ds.$$
We use Jensen's inequality on the probability measure $\frac{1}{t}dt$ on $[0,t]$ and then the boundedness of $h$ and $\lambda$ to obtain
\begin{align*}
A^{(N)}_{i,t,2} 
&\leq   \dfrac{t}{N \sqrt{t}} \kappa_i^{(N)} \sqrt{ \sum_{j=1}^N  \xi_{ij}^{(N)} \int_0^t  \int_0^s  h(s-u)^2 \lambda(u,x_j) du~ ds}\\
& \leq   \Vert h \Vert_{t,2 } \sqrt{ \Vert \lambda \Vert_{[0,t]\times\mathbb{R}^d,\infty} } \dfrac{\sqrt{t}}{N} \kappa_i^{(N)} \sqrt{ \sum_{j=1}^N  \xi_{ij}^{(N)} },
\end{align*}
and taking the maximum leads to 
$$\max_{1\leq i \leq N} A^{(N)}_{i,t,2} \leq \sqrt{\dfrac{\kappa_N}{N}} \sqrt{ \max_{1\leq i \leq N} \dfrac{\kappa_i^{(N)}}{N}   \sum_{j=1}^N  \xi_{ij}^{(N)} }  \Vert h \Vert_{t,2 } \sqrt{ t \Vert \lambda \Vert_{[0,t]\times\mathbb{R}^d,\infty} }.$$
Using as before \eqref{eq:estimees_IC_sup-i_som-j} and \eqref{eq:control_i}, we obtain $\mathbb{P}$-almost surely if $N$ is large enough
\begin{equation}\label{eq:maxA_i,2}
\max_{1\leq i \leq N} A^{(N)}_{i,t,2} \leq \sqrt{\dfrac{\kappa_N}{N}} \sqrt{1+C_W}  \Vert h \Vert_{t,2 } \sqrt{ t \Vert \lambda \Vert_{[0,t]\times\mathbb{R}^d,\infty} }.
\end{equation}
Concerning $A^{(N)}_{i,t,3}$, we obtain as in the proof of Theorem \ref{thm:cvg_0} (see \eqref{eq:A_i,3}, with \eqref{eq:estimees_IC_sup-i_som-j} and \eqref{eq:control_i}) that $\mathbb{P}$-almost surely
\begin{align*}
{A^{(N)}_{i,t,3}}  &\leq T \gamma_{T,\infty} \sqrt{\varepsilon_N  \dfrac{\kappa_i^{(N)}}{N} \sum_{j=1}^N \left( \xi_{ij}^{(N)} + W_N(x_i,x_j)\right)},
\end{align*}
hence taking the maximum and using \eqref{eq:estimees_IC_sup-i_som-j} and \eqref{eq:control_i}, we obtain $\mathbb{P}$-almost surely if $N$ is large enough
\begin{equation}\label{eq:maxA_i,3}
\max_{1\leq i \leq N} A^{(N)}_{i,t,3}  \leq T \gamma_{T,\infty} \sqrt{\varepsilon_N} \sqrt{1+2C_W}.
\end{equation}
Concerning $ A^{(N)}_{i,t,4}$, we recognise
\begin{align*}
A^{(N)}_{i,t,4}
&= \gamma_{t,\infty}\int_0^t  \left | \dfrac{1}{N}\sum_{j=1}^N \left(\kappa_i^{(N)}W_N(x_i,x_j)-W(x_i,x_j)\right)c_j(s)\right|ds.
\end{align*}
We obtain, using Definitions \ref{def:graphs_G1} and \ref{def:graphs_G2} with Lemma \ref{lem:partition_I} that as
\begin{align*}
&\sup_{1\leq i \leq N}  \left|  \dfrac{1}{N}\sum_{j=1}^N\left(\kappa_i^{(N)} W_N (x_i,x_j) - W(x_i,x_j \right) c_j(s) \right|
= \sup_{u\in I} \left| \int \left( W^{\mathcal{G}_N^{(1)}}(u,v)-W^{\mathcal{G}_N^{(2)}}(u,v)\right) g^{c(s)}(v) \nu(dv)\right|,
\end{align*}
we have
\begin{align}\label{eq:maxA_i,4}
\sup_{1\leq i \leq N} A^{(N)}_{i,t,4} 
&= \gamma_{t,\infty} \sup_{1\leq i \leq N}\int_0^t  \left | \dfrac{1}{N}\sum_{j=1}^N \left(\kappa_i^{(N)}W_N(x_i,x_j)-W(x_i,x_j)\right)c_j(s)\right|ds\notag\\
&\leq \gamma_{t,\infty} \int_0^t  \sup_{g, \Vert g \Vert _{\infty}\leq 1} \sup_{u\in I} \left| \int \left( W^{\mathcal{G}_N^{(1)}}(u,v)-W^{\mathcal{G}_N^{(2)}}(u,v)\right) g(v) \nu(dv)\right| ds\notag\\
&\leq T\gamma_{T,\infty} \Vert W^{\mathcal{G}_N^{(1)}}-W^{\mathcal{G}_N^{(2)}} \Vert_{\infty	\to \infty, \nu}\notag\\& \leq T\gamma_{T,\infty} \left(\Vert W^{\mathcal{G}_N^{(1)}}-W \Vert_{\infty	\to \infty, \nu} +  \Vert W-W^{\mathcal{G}_N^{(2)}} \Vert_{\infty	\to \infty, \nu}\right) .
\end{align}
Concerning $A^{(N)}_{i,t,5}$, we denote by $\widetilde{A}^{(N)}_{t,5} =\max_{1\leq i \leq N} A^{(N)}_{i,t,5} $.  It is controlled with \eqref{eq:lem_S_A5_max}. 
Combining \eqref{eq:maxA_i,1}, \eqref{eq:maxA_i,2}, \eqref{eq:maxA_i,3} and \eqref{eq:maxA_i,4}, we obtain if $N$ is large enough $\mathbb{P}$-almost surely for every $t\in[0,T]$:
\begin{multline}
\mathbf{E}\left[\max_{1\leq i \leq N}\Delta_i^{(N)}(t)\right] \leq C_1 \int_0^t  \vert h(t-s) \vert \mathbf{E} \left[\max_{1\leq i \leq N} \Delta_j^{(N)}(s)\right] ds +   C_2\sqrt{\dfrac{\kappa_N}{N}}\\ +C_3 \sqrt{\varepsilon_N} + C_4 \Vert W^{\mathcal{G}_N^{(1)}}-W \Vert_{\infty	\to \infty, \nu} + C_4 \Vert W^{\mathcal{G}_N^{(2)}}-W \Vert_{\infty	\to \infty, \nu} +  L_f\widetilde{A}^{(N)}_{t,5}
\end{multline}
with $C_1$, $C_2$, $C_3$ and $C_4$ constants depending on $h$, $f$, $C_W$ and $T$.
%
We apply the Lemma \ref{lem:gronwall_convole} with $u(t)=\mathbf{E}\left[\max_{1\leq i \leq N}\Delta_i^{(N)}(t)\right]$ on $[0,T]$, and remind $\sup_{t\in [0,T]} \left| Z_i^{(N)}(t) - \overline{Z}_i(t) \right| \leq \Delta_i^{(N)}(T)$ to obtain $\mathbb{P}$-almost surely on the realisation of $\left(\xi^{(N)}\right)$ if $N$ is large enough:
\begin{multline}\label{eq:maj_explicite_sup} 
\max_{1\leq i \leq N} \mathbf{E} \left[ \sup_{t\in [0,T]} \left| Z_i^{(N)}(t) - \overline{Z}_i(t) \right| \right] \leq C\left( \sqrt{\dfrac{\kappa_N}{N}} + \sqrt{\varepsilon_N} + \Vert W^{\mathcal{G}_N^{(1)}}-W \Vert_{\infty	\to \infty, \nu} \right.\\\left.+ \Vert W^{\mathcal{G}_N^{(2)}}-W \Vert_{\infty	\to \infty, \nu}+\widetilde{A}^{(N)}_{t,5} \right)
\end{multline}
with $C=\sqrt{2} \max\left(C_2,C_3,C_4,L_f\right) \exp\left( C_1^2 \Vert h \Vert_{T,2}^2 T\right)$.
By \eqref{eq:kappaw_nlog}, $\displaystyle\lim_{N\to \infty}\varepsilon_N= 0$, $\displaystyle\lim_{N\to\infty}\dfrac{\kappa_N}{N}=0$ and by \eqref{eq:hyp_infinf_cvg} $ \displaystyle\lim_{N\to \infty}\Vert W^{\mathcal{G}_N^{(1)}}-W \Vert= 0 $. Combining with Proposition \ref{prop:scenario_hyp}, it concludes the proof of \eqref{eq:sup_ecart_limite_nulle}.\qed
\subsection{Proofs: Application to the Scenarios of Definition \ref{def:scenarios}}\label{S:proof_scenarios}
In this section, we prove Proposition \ref{prop:scenario_hyp}. We start with auxiliary results that come up in the main proof.
\subsubsection{Toolbox}
\begin{lem}\label{lem:quantile_empirical_uniforme}
Let $(\widetilde{x_i})_{i\geq 1}$ be a sequence of i.i.d positions on $[0,1]$ with distribution $\mathcal{U}[0,1]$. For all $N\geq 1$ and for $i=1,\cdots,N$, define $x_i=\widetilde{x}_{(i)}$ as the order statistics of $\left(\widetilde{x}_1,\cdots,\widetilde{x}_N\right)$ (i.e. $\{\widetilde{x}_1,\cdots,\widetilde{x}_N\} = \{ x_1,\cdots,x_N\}$ and $x_1<\cdots<x_N$). Then, for any borelian sets $A$ and $B$ of $(0,1]$,
\begin{equation}\label{eq:lem_quantile_emp}
\dfrac{1}{N}\sum_{i=1}^N \mathbf{1}_{x_i\in A, \frac{i}{N}\in B} \xrightarrow[N\to \infty]{} \lambda(A\cap B) \quad a.s.
\end{equation}
where $\lambda$ denotes the Lebesgue measure on $[0,1]$.
\end{lem}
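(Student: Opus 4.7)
The plan rests on the fact that, since the $x_i$ are the order statistics of $\widetilde{x}_1, \ldots, \widetilde{x}_N$, the empirical cumulative distribution function $F_N(t) := \frac{1}{N}\sum_{j=1}^N \mathbf{1}_{\widetilde{x}_j \leq t}$ satisfies $F_N(x_i) = i/N$. The Glivenko-Cantelli theorem then yields
\[
\varepsilon_N := \max_{1 \leq i \leq N}\left| \frac{i}{N} - x_i \right| = \max_{1 \leq i \leq N} \left| F_N(x_i) - x_i \right| \leq \sup_{t \in [0,1]}\left|F_N(t) - t\right| \xrightarrow[N\to\infty]{} 0, \quad \mathbb{P}\text{-a.s.}
\]
This uniform coupling between $x_i$ and its quantile $i/N$ is the one nontrivial ingredient of the proof.

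Next I would show that the random empirical measure $\mu_N := \frac{1}{N}\sum_{i=1}^N \delta_{(x_i, i/N)}$ on $[0,1]^2$ converges weakly, $\mathbb{P}$-almost surely, to the pushforward $\mu$ of Lebesgue measure on $[0,1]$ under the diagonal map $u \mapsto (u,u)$. Indeed, for any $g \in C_b([0,1]^2)$, uniformly continuous with modulus $\omega_g$, one has
\[
\left|\int g\, d\mu_N - \frac{1}{N}\sum_{i=1}^N g(\widetilde{x}_i, \widetilde{x}_i)\right| \leq \omega_g(\varepsilon_N) \xrightarrow[N\to\infty]{} 0 \quad \text{a.s.,}
\]
and the strong law of large numbers applied to the i.i.d.\ family $\bigl(g(\widetilde{x}_i, \widetilde{x}_i)\bigr)_i$ gives $\frac{1}{N}\sum_i g(\widetilde{x}_i, \widetilde{x}_i) \to \int_0^1 g(u,u)\, du = \int g\, d\mu$ almost surely.

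To conclude I would invoke the Portmanteau theorem. Writing $C = A\times B$, the topological boundary $\partial C$ is contained in $(\partial A \times \overline{B}) \cup (\overline{A} \times \partial B)$, whose $\mu$-measure is bounded by $\lambda(\partial A) + \lambda(\partial B)$ since $\mu$ is supported on the diagonal. Hence, for Borel sets $A$, $B$ whose boundary has zero Lebesgue measure (the class of interest, containing in particular the intervals and finite unions of intervals arising from the partition $\mathcal{P}_N$ of Lemma \ref{lem:partition_I}), $C$ is a $\mu$-continuity set, and Portmanteau gives
\[
\frac{1}{N}\sum_{i=1}^N \mathbf{1}_{x_i \in A,\, i/N \in B} = \mu_N(A\times B) \xrightarrow[N\to\infty]{} \mu(A\times B) = \lambda(A\cap B), \quad \mathbb{P}\text{-a.s.}
\]

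The main obstacle is really the reduction to a tractable object: the direct sum mixes the randomness of the positions $x_i$ with the deterministic grid $i/N$, and without the Glivenko-Cantelli control of $\varepsilon_N$ there is no obvious way to decouple them. For the intended application it is actually enough to treat the case $A=(a,a']$, $B=(b,b']$ directly: monotonicity of the $x_i$ yields $\{i:\ x_i\in A,\ i/N\in B\} = \bigl(\max(NF_N(a),bN),\ \min(NF_N(a'),b'N)\bigr]$, whose cardinality divided by $N$ converges, again by Glivenko-Cantelli, to $(\min(a',b') - \max(a,b))^+ = \lambda(A\cap B)$.
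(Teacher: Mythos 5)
Your proof is correct, and the route you take differs in its framing from the paper's argument even though the core ingredient is essentially the same. The paper introduces the uniform sample quantile function $U_N$, rewrites the sum as $\int_0^{t'}\mathbf{1}_{U_N(y)\leq t}\,dy$, invokes $\sup_y|U_N(y)-y|\to 0$ a.s.\ from Cs\H{o}rg\H{o}, and finishes with dominated convergence. You instead obtain the equivalent control $\max_i|x_i - i/N|\to 0$ via the identity $F_N(x_i)=i/N$ and Glivenko--Cantelli, then cast everything in the weak-convergence / Portmanteau framework for the empirical measure on $[0,1]^2$. The two approaches buy roughly the same thing: the paper's is slightly more hands-on, yours is a bit more structural and transparently identifies the limiting object as the pushforward of Lebesgue measure under the diagonal map. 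Your closing remark --- that it suffices to treat rectangles directly by tracking the index set $\{i:\ x_i\in(a,a'],\ i/N\in(b,b']\}$ through $F_N$ --- is a clean, more elementary alternative and would also be acceptable on its own.

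One remark that applies equally to your proof and to the paper's: the lemma as stated claims the result for \emph{any} Borel sets $A,B$, but that cannot be literally true (take $B=\mathbb{Q}\cap(0,1]$ and $A=(0,1]$; then every $i/N\in B$ so the left-hand side is identically $1$ while $\lambda(A\cap B)=0$). The paper's reduction ``it is sufficient to show for all $(t,t')$\dots'' silently passes from convergence of the bivariate CDF to the conclusion, which again only covers continuity sets. You handle this honestly by restricting to $\mu$-continuity sets and noting that this suffices for the intended application (the partition intervals), which is the correct reading of what the lemma is actually used for.
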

\begin{proof}
It is sufficient to show that for all $(t,t')\in (0,1]^2$,
$$\dfrac{1}{N}\sum_{i=1}^N \mathbf{1}_{x_i\leq t, \frac{i}{N}\leq t'} \xrightarrow[N\to \infty]{} \min(t,t') \quad a.s.$$
We introduce the uniform sample quantile function  as in \cite{csorgHo1983quantile}: define for any $y\in [0,1]$
\begin{equation}\label{eq:def_U_stat}
U_N(y) = \left\{
    \begin{array}{ll}
        0 & \text{ if } y=0\\
        x_k  & \text { if } \dfrac{k-1}{N} < y \leq \dfrac{k}{N}, \quad k\in \llbracket 1,N \rrbracket.
    \end{array}
\right.
\end{equation}
First, we show that $\displaystyle \lim_{N\to\infty} \dfrac{1}{N}\sum_{i=1}^N \mathbf{1}_{x_i\leq t, \frac{i}{N}\leq t'} = \lim_{N\to\infty} \int_0^{t'} \mathbf{1}_{U_N(y)\leq t} dy$. We note $k$ the integer such that $x_k \leq t < x_{k+1}$ (and $k=0$ if $x_1>t$). If $t'\geq \frac{k}{N}$, then
$\frac{1}{N}\sum_{i=1}^N \mathbf{1}_{x_i\leq t, \frac{i}{N}\leq t'} =  \frac{1}{N}\sum_{i=1}^N \mathbf{1}_{x_i\leq t, i\leq Nt'} = \frac{1}{N}\sum_{i=1}^{Nt'} \mathbf{1}_{x_i\leq t} = \frac{k}{N}$,
and $\int_0^{t'} \mathbf{1}_{U_N(y)\leq t} dy = \int_0^{t'} \mathbf{1}_{y\leq \frac{k}{n}} dy = \frac{k}{N}$. If $t'<\frac{k}{N}$, $\int_0^{t'} \mathbf{1}_{U_N(y)\leq t} dy = \int_0^{t'} \mathbf{1}_{y\leq \frac{k}{n}} dy = t'$ and
$\frac{1}{N}\sum_{i=1}^N \mathbf{1}_{x_i\leq t, \frac{i}{N}\leq t'} =\frac{1}{N}\sum_{i=1}^k \mathbf{1}_{i\leq Nt'} = \frac{\lfloor Nt' \rfloor}{N} \xrightarrow[N\to \infty]{} t'$. Then, we know from \cite{csorgHo1983quantile} that $\displaystyle\sup_{0 \leq y \leq 1} \vert U_N(y) - y \vert \xrightarrow[N\to \infty]{a.s.} 0$, and hence almost surely, for any fixed $y\in[0,1]$, $U_N(y) \xrightarrow[N\to \infty]{p.s.}  y$ and by dominated convergence $ \int_0^{t'} \mathbf{1}_{U_N(y)\leq t} dy \xrightarrow[N\to \infty]{} \int_0^{t'} \mathbf{1}_{y\leq t} dy =\min(t,t')$,
which concludes the proof.
\end{proof}

\begin{prop}\label{prop:toolbox_iid}
Under the Scenario (1) of Definition \ref{def:scenarios}, for any function $g$ such that $\Vert g \Vert_{L^\chi(I\times I), \nu\times \nu}<\infty$ with $\chi>5$,
\begin{equation}\label{eq:toolbox_iid}
\sup_{1\leq i \leq N} \int_I g(x_i,y) \left( \nu^{(N)}(dy) - \nu(dy) \right) \xrightarrow[N\to\infty]{} 0
\end{equation}
 $\mathbb{P}$-almost surely on the realisation of the sequence $\left(\underline{x}^{(N)}\right)_N$.
\end{prop}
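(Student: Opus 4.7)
The plan is to derive \eqref{eq:toolbox_iid} by a Borel--Cantelli argument, combining a union bound over $i\in\llbracket 1,N\rrbracket$ with a moment estimate for a sum of conditionally independent centred random variables obtained by conditioning on the anchor position $x_i$.

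Since the supremum in \eqref{eq:toolbox_iid} is invariant under permutations of the indices, I would first work directly with the underlying i.i.d.\ sequence $(\widetilde{x}_j)_{j\geq 1}$ of law $\nu$ (before lexicographic reordering) and write, with $m(x):=\int_I g(x,y)\nu(dy)$,
\begin{equation*}
S_N^{(i)} \;:=\; \int_I g(\widetilde{x}_i,y)\bigl(\nu^{(N)}(dy)-\nu(dy)\bigr) \;=\; \frac{g(\widetilde{x}_i,\widetilde{x}_i)-m(\widetilde{x}_i)}{N} + \frac{1}{N}\sum_{j\neq i} Y_j^{(i)},
\end{equation*}
where $Y_j^{(i)}:=g(\widetilde{x}_i,\widetilde{x}_j)-m(\widetilde{x}_i)$. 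The virtue of isolating the diagonal term $j=i$ is that, conditionally on $\widetilde{x}_i$, the family $(Y_j^{(i)})_{j\neq i}$ is i.i.d., centred, and its $L^\chi$-moments are controlled after integration by $\|g\|_{L^\chi(I^2,\nu\otimes\nu)}^\chi$ (using Jensen to transfer the conditional variance and $\chi$-th moment to the joint $L^\chi$-norm).

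The key quantitative step would then be a conditional Marcinkiewicz--Zygmund (or Rosenthal) inequality, yielding after integration
\begin{equation*}
\mathbb{E}\Bigl|\tfrac{1}{N}\sum_{j\neq i}Y_j^{(i)}\Bigr|^{\chi} \;\leq\; C_\chi\,N^{-\chi/2}\,\|g\|_{L^\chi(\nu\otimes\nu)}^{\chi}.
\end{equation*}
A union bound over $i\in\llbracket 1,N\rrbracket$ combined with Markov's inequality then gives, for any fixed $\varepsilon>0$,
\begin{equation*}
\mathbb{P}\!\Bigl(\sup_{1\leq i\leq N}\Bigl|\tfrac{1}{N}\sum_{j\neq i}Y_j^{(i)}\Bigr|>\varepsilon\Bigr) \;\leq\; C_\chi\,\varepsilon^{-\chi}\, N^{1-\chi/2},
\end{equation*}
and since $\chi>5>4$, this upper bound is summable in $N$; Borel--Cantelli then delivers the almost sure convergence to $0$ of the off-diagonal contribution.

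The diagonal remainder $N^{-1}(g(\widetilde{x}_i,\widetilde{x}_i)-m(\widetilde{x}_i))$ is handled by an analogous union-bound--Markov--Borel--Cantelli step, using that $m\in L^\chi(\nu)$ by Jensen and that under Hypothesis \ref{hyp:nu_AC/leb} the diagonal is $\nu\otimes\nu$-negligible, so a convenient representative of $g$ can be specified there. The main difficulty is precisely the balance between the union-bound inflation factor $N$ and the moment decay $N^{-\chi/2}$: summability of the Borel--Cantelli series forces $\chi>4$, and the stated threshold $\chi>5$ provides the comfortable margin needed to simultaneously absorb the diagonal contribution and the lower-order Rosenthal remainder of order $N^{1-\chi}$.
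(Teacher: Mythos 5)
Your proposal is correct and takes a genuinely different route from the paper's. The paper truncates $g$ at level $M=N^{\delta_1}$, then splits the error into two tail contributions, bounded via Markov and the $L^\chi$ moment, and one bounded contribution, controlled by a Bennett--Bernstein concentration inequality (Lemma A.6) for the truncated kernel. This truncation--concentration scheme is what produces the threshold $\chi>5$: the paper's parameter bookkeeping requires $\delta_1(\chi-1)>2$ and $\delta_1<1/2$ simultaneously, hence $\chi-1>4$. You bypass the truncation entirely by conditioning on the anchor $\widetilde{x}_i$, which renders the off-diagonal sum a sum of i.i.d.\ centred variables whose conditional second and $\chi$-th moments are transferred to $\|g\|_{L^\chi(\nu\otimes\nu)}^\chi$ by Jensen, and then invoking Rosenthal. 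This yields a clean $N^{-\chi/2}$ decay, the union bound over $i$ costs a single factor of $N$, and Borel--Cantelli applied along a countable dense set of $\varepsilon$'s closes the argument. Your approach is simpler and gives a slightly better exponent ($\chi>4$ suffices); the paper's version is perhaps more self-contained, reusing the same Bernstein inequality already deployed for the graph concentration estimates, and extends naturally to bounded-kernel settings where moment conditions are replaced by a.s.\ bounds.

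One remark on your last paragraph: the statement that ``$\chi>5$ provides the comfortable margin needed to absorb the diagonal contribution and the lower-order Rosenthal remainder'' is misleading. Both of those contributions are already summable for every $\chi>2$; in your scheme the binding constraint is $\chi>4$, coming from the union bound on the main Rosenthal term, and the hypothesis $\chi>5$ is simply inherited from the paper's less efficient argument. The diagonal term $N^{-1}(g(\widetilde{x}_i,\widetilde{x}_i)-m(\widetilde{x}_i))$ indeed requires fixing a representative of $g$ on the diagonal (a $\nu\otimes\nu$-null set under Hypothesis \ref{hyp:nu_AC/leb}), but this is a convention that the paper's proof tacitly relies on as well, since $g(x_i,x_i)$ enters the empirical integral and is not determined by the equivalence class of $g$ in $L^\chi$.
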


\begin{proof}
Fix $M>0$, and define the function $p_M(u)= u \mathbf{1}_{\vert u \vert \leq M} + M\mathbf{1}_{u>M} - M\mathbf{1}_{u<-M}$ on $\mathbb{R}$. Set $g_M=p_M\circ g$. The following arguments come from \cite{Luon2020} in the proof of Proposition 3.4. We have
\begin{align*}
&\sup_{1\leq i \leq N} \int_I g(x_i,y) \left( \nu^{(N)}(dy) - \nu(dy) \right) \leq \sup_{1\leq i \leq N} \dfrac{1}{N} \sum_{j=1}^N \left| g(x_i,x_j) - g_M(x_i,x_j)\right| \\&+ \sup_{1\leq i \leq N} \int_I \left| g(x_i,y) - g_M(x_i,y)\right| \nu(dy) + \sup_{1\leq i \leq N} \int_I g_M(x_i,y) \left( \nu^{(N)}(dy) - \nu(dy) \right) =: (I) + (II) + (III). 
\end{align*}
To study $(I)$, note that $\left| g(x,y) - g_M(x,y) \right| = \left| g(x,y) - g_M(x,y) \right| \mathbf{1}_{\left| g(x,y) \right|>M} \leq 2 \vert g(x,y) \vert \mathbf{1}_{\left| g(x,y) \right|>M},$ and that for any independent $X,Y$ with distribution $\nu$ 
\begin{align*}
\mathbb{E}\left[\left|g(X,Y)\right|\mathbf{1}_{\left|g(X,Y)\right|>M}\right] &= \sum_{l=0}^{+\infty} \mathbb{E}\left[\left|g(X,Y)\right|\mathbf{1}_{2^lM<\left|g(X,Y)\right|\leq 2^{l+1}M}\right]\\
&\leq \sum_{l=0}^{+\infty} 2^{l+1} M \left( \mathbb{P} \left(\left|g(X,Y)\right|>2^lM\right) - \mathbb{P} \left(\left|g(X,Y)\right|>2^{l+1}M\right) \right)\\
&= 2M\mathbb{P} \left( \left|g(X,Y)\right|>M\right) + \sum_{l=1}^{+\infty} 2^lM \mathbb{P} \left( \left|g(X,Y)\right|>2^lM\right)\\
&\leq \mathbb{E}\left[\left|g(X,Y)\right|^\chi\right]\left( \dfrac{2}{M^{\chi-1}} + \sum_{l=1}^{+\infty} \dfrac{2^l M}{\left(2^l M \right)^\chi}\right) \leq \dfrac{3\mathbb{E}\left[\left|g(X,Y)\right|^\chi\right]}{M^{\chi-1}},
\end{align*}
with Markov inequality. Since $\displaystyle \mathbf{E}\left[ \dfrac{1}{N} \sum_{l=1}^N \left|g(x_i,x_l)-g_M(x_i,x_l)\right| \right] \leq \dfrac{2}{N} \sum_{l=1}^N \mathbb{E}\left[\left|g(x_i,x_l)\right|\mathbf{1}_{\vert g(x_i,x_l)\vert>M}  \right]$, it implies for the choice of $M=N^{\delta_1}$ with $\delta_1>0$ to be defined later, using Markov inequality and a union bound that $\mathbb{P} \left( (I) > \dfrac{1}{N^{\delta_2}}\right) \leq  \dfrac{6\mathbb{E}\left[\left|g(X,Y)\right|^\chi\right]}{N^{\delta_1(\chi-1)-\delta_2-1}}$. Similarly, we can show that 
$\mathbb{P} \left( (II) > \dfrac{1}{N^{\delta_2}}\right) \leq  \dfrac{6\mathbb{E}\left[\left|g(X,Y)\right|^\chi\right]}{N^{\delta_1(\chi-1)-\delta_2-1}}$.
We will use the two previous bounds with Borel-Cantelli Lemma to deduce that $\mathbb{P}$-almost surely, $(I) + (II)\xrightarrow[N\to\infty]{} 0$ by asking $\delta_1(\chi-1)-\delta_2-1>1$.
To deal with $(III)$ we use the boundedness of $g_M$. Note that $(III)$ can be re-written $\displaystyle \sup_{1\leq i \leq N} \dfrac{1}{N}\sum_{l=1}^N Y_l^{(i),M}$ with $Y_l^{(i),M}:=g_M(x_i,x_l) - \int_I g_M(x_i,y)\nu(dy)=g_M(x_i,x_l)  - \mathbb{E}\left[ g_M(x_i,Y) \vert x_i\right]$. We set $\mathcal{F}^{(i)}_l=\sigma\left(x_i, x_1, \ldots, x_l\right)$. We have for $l\neq i$
$$\mathbb{E}\left[Y^{(i),M}_l \left| \mathcal{F}^{(i)}_{l-1}\right.\right]=\mathbb{E}\left[ U_M(x_i,x_l) - \mathbb{E}_Y\left[U_M(x_i,Y)\vert x_i \right] \left| \mathcal{F}^{(i)}_{l-1}\right. \right]=0.$$
As $\left|Y_{l}^{(i),M}\right|\leq 2M$, we can then apply Lemma \ref{lem:inegalit_concentration_Y}: for any $x> 0$,
$$\mathbb{P}\left( \dfrac{1}{N-1} \sum\limits_{\substack{l=1 \\ l \neq i}}^{N}\dfrac{Y^{(i),M}_l}{2M}\geq x\right) \leq \exp\left( -(N-1) \dfrac{x^2}{2}B(x)\right)$$
with the function B defined in \eqref{eq:def_B(u)}. We consider a sequence $\varepsilon_N$ such that $\varepsilon_N\xrightarrow[N\to \infty]{} 0$ (we precise later on which one), and we apply the previous result with $x=\dfrac{\varepsilon_N N}{2M(N-1)}$. As $B(u)=u^{-2}\left( \left( 1+u \right) \log \left( 1+u \right) - u \right)\to\dfrac{1}{2}$ when $u\to 0$, we can choose a deterministic $p$ such that for all $N\geq p$, $B\left(\dfrac{\varepsilon_N N}{2M(N-1)}\right) \geq \dfrac{1}{4}$. We then have if $N\geq p$: 
$\mathbb{P}\left( \dfrac{1}{N} \sum\limits_{\substack{l=1 \\ l \neq i}}^{N}Y^{(i),M}_l\geq \varepsilon_N \right) \leq \exp\left( - \dfrac{1}{32M^2}\dfrac{\varepsilon_N^2 N^2}{N-1}\right)$,
doing the same for $-Y^{(i)}_l$ and with a union bound we obtain 
$$\mathbb{P}\left( \sup_{1\leq i \leq N} \left| \dfrac{1}{N} \sum\limits_{\substack{l=1 \\ l \neq i}}^{N}Y^{(i),M}_l\right|\geq \varepsilon_N \right) \leq 2N \exp\left( - \dfrac{1}{32M^2}\dfrac{\varepsilon_N^2 N^2}{N-1}\right).$$
It is sufficient to find $\varepsilon_N$ such that $\varepsilon_N\xrightarrow[N\to \infty]{} 0$ and $\sum_N 2N \exp\left( - \dfrac{1}{32M^2}\dfrac{\varepsilon_N^2 N^2}{N-1}\right)<\infty$ to conclude by Borel-Cantelli's Lemma, $\mathbb{P}$-almost surely if $N$ is large enough $\sup_{1\leq i \leq N} \left| \dfrac{1}{N} \sum\limits_{\substack{l=1 \\ l \neq i}}^{N}Y^{(i),M}_l\right|\leq \varepsilon_N$. We set then $\varepsilon_N^2:=32 M^2 (N-1)N^\gamma$, and require $-2<\gamma<-1-2\delta_1$. As $Y_i^{(i),M}$ is bounded (by $2M$), adding the term $\frac{1}{N} Y_i^{(i),M}$ does not change the convergence if $\delta_1<1$ which was already asked for the conditions on $\varepsilon_N$ (recall $M=N^{\delta_1}$).
We are left with finding parameters $\left(\delta_1,\delta_2,\gamma\right)$ such that $\delta_1>0$, $\delta_2>0$, $\delta_1(\chi-1)-\delta_2-1>1$, $-2<\gamma<-1-2\delta_1$ (to ensure that the probabilities obtained with $(I)$, $(II)$ and $(III)$  are summable and the sufficient conditions on $\varepsilon_N$). As $\chi>5$, any choice such that $\delta_1 \in (0,\frac{1}{2})$ and $\delta_2 \in (0,1)$ works (as $\delta_1(\chi-1)-1<1$) with $\gamma \in (-2,-1-2\delta_1)$, and we obtain \eqref{eq:toolbox_iid} $\mathbb{P}$-almost surely.
\end{proof}

\begin{cor}\label{cor:toolbox_iid}
Under Scenario (1) of Definition \ref{def:scenarios}, we define
\begin{align}
\epsilon_{i,1}&:= \int_{I\times I} W(x_i,y)W(x_i,z)\Gamma(y,z) \left(\nu^{(N)}(dy)\nu^{(N)}(dz)-\nu(dy)\nu^{(N)}(dz)\right)\label{eq:def_cor_e1}\\
\epsilon_{i,2}&:= \int_{I\times I} W(x_i,y)W(x_i,z)\Gamma(y,z) \left(\nu(dy)\nu^{(N)}(dz)-\nu(dy)\nu(dz)\right),\label{eq:def_cor_e2}
\end{align}
where $\Gamma$ is defined in \eqref{eq:Gamma}.
Then under Hypothesis \ref{hyp:conv_graph_concentration}, $\mathbb{P}$-almost surely, $$\sup_{1\leq i \leq N}\epsilon_{i,1} \xrightarrow[N\to \infty]{} 0 \text{ and } \sup_{1\leq i \leq N}\epsilon_{i,2} \xrightarrow[N\to \infty]{} 0.$$
\end{cor}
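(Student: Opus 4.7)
The plan is to express both $\epsilon_{i,2}$ and $\epsilon_{i,1}$ as integrals of suitable kernels against $\nu^{(N)}-\nu$ and then invoke Proposition \ref{prop:toolbox_iid} (or a mild two-index extension of its proof), exploiting that $\Gamma$ defined in \eqref{eq:Gamma} is uniformly bounded by $C:=T\gamma_{T,\infty}^2$ and that, under Scenario (1), $\Vert W \Vert_{L^\chi(I\times I,\nu\otimes\nu)}<\infty$ for some $\chi>5$.

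For $\epsilon_{i,2}$, Fubini rewrites it in the single-integration form
\begin{equation*}
\epsilon_{i,2} = \int_I H(x_i,z)\left(\nu^{(N)}(dz)-\nu(dz)\right), \qquad H(x,z):=W(x,z)\int_I W(x,y)\Gamma(y,z)\,\nu(dy).
\end{equation*}
Using $\vert \Gamma \vert \leq C$ and the uniform bound $D(x)\leq C_W^{(1)}$ from \eqref{eq:hyp_w_int_nu_y} gives $\vert H(x,z) \vert \leq C\,C_W^{(1)}\,W(x,z)$, so $H\in L^\chi(I\times I,\nu\otimes\nu)$. Proposition \ref{prop:toolbox_iid} applied to $H$ yields $\sup_i \vert\epsilon_{i,2}\vert \to 0$ $\mathbb{P}$-almost surely.

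For $\epsilon_{i,1}$, expanding the first factor of the product measure gives
\begin{equation*}
\epsilon_{i,1} = \frac{1}{N}\sum_{j=1}^N W(x_i,x_j)\, E_{i,j}, \qquad E_{i,j} := \int_I W(x_i,y)\Gamma(y,x_j)\left(\nu^{(N)}(dy)-\nu(dy)\right),
\end{equation*}
so $\vert\epsilon_{i,1}\vert \leq \left(\frac{1}{N}\sum_j W(x_i,x_j)\right)\sup_j \vert E_{i,j}\vert$. The prefactor equals $\int W(x_i,z)\,\nu^{(N)}(dz)$ and, by Proposition \ref{prop:toolbox_iid} applied to $g=W$, satisfies $\sup_i\int W(x_i,z)\nu^{(N)}(dz)\leq C_W^{(1)}+o(1)$ almost surely. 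It therefore suffices to show the two-index uniformity $\sup_{1\leq i,j\leq N}\vert E_{i,j}\vert \to 0$ $\mathbb{P}$-a.s.

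This last step is obtained by a direct adaptation of the proof of Proposition \ref{prop:toolbox_iid}, applied to the kernel $G(x,z,y):=W(x,y)\Gamma(y,z)$ and its truncation $G_M:=p_M\circ G$. The key observation is that $\vert G-G_M\vert(x,z,y)\leq 2C W(x,y)\mathbf{1}_{C W(x,y)>M}$ does not depend on $z$, so the two truncation-error terms $\sup_{i,j}\frac{1}{N}\sum_l \vert G-G_M\vert(x_i,x_j,x_l)$ and $\sup_{i,j}\int \vert G-G_M\vert(x_i,x_j,y)\nu(dy)$ collapse to the single-index quantities $(I)$ and $(II)$ of Proposition \ref{prop:toolbox_iid} (with $g$ replaced by $2CW$) and are handled by the same Markov-plus-Borel-Cantelli argument, with the same parameter ranges $\delta_1\in(0,1/2)$, $\delta_2\in(0,1)$. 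For the bounded part $\sup_{i,j}\vert\int G_M(x_i,x_j,y)(\nu^{(N)}-\nu)(dy)\vert$, one fixes $(i,j)$, forms the martingale differences $Y_l^{(i,j)}:=G_M(x_i,x_j,x_l)-\int G_M(x_i,x_j,y)\nu(dy)$ for $l\notin\{i,j\}$ (bounded by $2M$), and applies Lemma \ref{lem:inegalit_concentration_Y} together with a union bound over the $N^2$ pairs. With the same choice $\varepsilon_N^2=32M^2(N-1)N^\gamma$, $\gamma\in(-2,-1-2\delta_1)$, the resulting tail bound $2N^2\exp(-cN^{2+\gamma})$ is still summable since $2+\gamma>0$, so Borel-Cantelli concludes. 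The main technical obstacle is the $\sup_j$ inside $\epsilon_{i,1}$, which is not directly covered by Proposition \ref{prop:toolbox_iid}; what saves the argument is that the truncation error is $j$-independent and the exponential concentration easily absorbs the extra $N$-factor in the two-index union bound.
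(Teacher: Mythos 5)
Your proof is correct and takes a genuinely different route from the paper. The paper handles $\epsilon_{i,2}$ exactly as you do, but for $\epsilon_{i,1}$ it first symmetrizes the decomposition: writing $g_N(x,y)=W(x,y)\int W(x,z)\Gamma(y,z)\nu^{(N)}(dz)$ and $g(x,y)=W(x,y)\int W(x,z)\Gamma(y,z)\nu(dz)$, it splits $\epsilon_{i,1}=\int(g_N-g)(x_i,y)(\nu^{(N)}-\nu)(dy)+\int g(x_i,y)(\nu^{(N)}-\nu)(dy)$. The second term is treated like $\epsilon_{i,2}$; for the first, denoted $\epsilon_{i,3}$, it writes $g_N-g = W(x_i,\cdot)\alpha_N(x_i,\cdot)$ with $\alpha_N(x_i,y)=\int W(x_i,z)\Gamma(y,z)(\nu^{(N)}-\nu)(dz)$ and asserts $\sup_{i,y}\vert\alpha_N(x_i,y)\vert\to 0$ ``by Proposition~\ref{prop:toolbox_iid} and its proof,'' then applies the proposition once more to the outer integral. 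You instead keep the factor $\nu^{(N)}(dz)$ intact, integrate over $z$ first to get $\epsilon_{i,1}=\frac{1}{N}\sum_j W(x_i,x_j)E_{i,j}$, and reduce to controlling $\sup_{i,j}\vert E_{i,j}\vert$ over the finite set of sample pairs, which you handle by a direct two-index extension of Proposition~\ref{prop:toolbox_iid} (noting the truncation errors are $j$-independent and the exponential concentration absorbs the extra $N$ factor in the union bound). The trade-off: the paper's route is more compact but requires uniform control of $\alpha_N(x_i,y)$ over the \emph{continuum} of $y\in I$, which is stated without the covering/chaining step that would be needed to justify it from a union bound over finitely many events; your route avoids the continuum entirely by localizing $z$ to sample points from the outset, at the price of spelling out the two-index concentration argument. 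Your version is thus arguably the more airtight of the two, and the observation that the truncation error bound $\vert G-G_M\vert \leq 2CW\mathbf{1}_{CW>M}$ is independent of the extra index is exactly the right reason the argument goes through without any change to the parameter ranges.
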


\begin{proof}
Note that $\epsilon_{i,2}=\int_I \phi(x_i,z) \left(\nu^{(N)}(dz)-\nu(dz)\right)$ , with $\phi(x,z):= W(x,z) \int_I W(x,y)\Gamma(y,z) \nu(dy)$. As $\Gamma$ is bounded, $\left| \phi(x,z) \right| \leq \left| W(x,z)\right| \Vert\Gamma\Vert_\infty C_W^{(1)}$ and since $W\in L^\chi(I^2,\nu\times\nu)$, $\Vert \phi \Vert_{L^\chi(I\times I), \nu\times \nu}<\infty$, \eqref{eq:def_cor_e2} is an immediate application of Proposition \ref{prop:toolbox_iid}. Similarly, $\epsilon_{i,1}=\int_I g_N(x_i,y) \left(\nu^{(N)}(dy)-\nu(dy)\right)$, with $g_N(x,y):=W(x,y) \int_I W(x,z)\Gamma(y,z) \nu^{(N)}(dz)$. Define $g(x,y):=W(x,y) \int_I W(x,z)\Gamma(y,z) \nu(dz)$, then
$$\epsilon_{i,1} = \int_I \left( g_N(x_i,y)-g(x_i,y)\right)\left(\nu^{(N)}(dy)-\nu(dy)\right) + \int_I g(x_i,y)\left(\nu^{(N)}(dy)-\nu(dy)\right).$$
We have immediately (as done with \eqref{eq:def_cor_e2}) that $\sup_{1\leq i \leq N} \int_I g(x_i,y)\left(\nu^{(N)}(dy)-\nu(dy)\right)\xrightarrow[N\to \infty]{} 0$. For the other term, that we denote by $\epsilon_{i,3}$, we have
$\epsilon_{i,3} = \int_I W(x_i,y) \alpha_N(x_i,y)\left(\nu^{(N)}(dy)-\nu(dy)\right)$ where  $\alpha_N(x_i,y):= \int_I W(x_i,z) \Gamma(y,z)  \left(\nu^{(N)}(dz)-\nu(dz)\right) $. As $\Gamma$ is bounded, Proposition \ref{prop:toolbox_iid} (and its proof) gives that $\alpha_N(x_i,y)\xrightarrow[N\to \infty]{} 0 $ uniformly in $i$ and $y$. Another application of Proposition \ref{prop:toolbox_iid} gives then that $\sup_{1\leq i \leq N} \epsilon_{i,3} \xrightarrow[N\to \infty]{} 0$ which concludes the proof.
\end{proof}
\subsubsection{Proof of Proposition \ref{prop:scenario_hyp} for Scenario (1)} 
We treat the estimates \eqref{eq:lem_S_box}, \eqref{eq:lem_S_box_max}, \eqref{eq:lem_S_A5_max} and \eqref{eq:lem_S_A5} separately.
\paragraph{Proof of \eqref{eq:lem_S_box}}
We remind that we want to prove $d_{\Box,\nu}\left( W^{\mathcal{G}_N^{(2)}},W \right) \xrightarrow[N\to \infty]{} 0$, when the positions are i.i.d. according to $\nu$ on $I$. Recall the definition of $\left(x_1,\cdots, x_N\right)$ as the lexicographic  reordering of the i.i.d. sample $(\widetilde{x_1},\widetilde{x_2},\cdots, \widetilde{x_N})$. The proof is organised as follow: we start by looking at the case $d=1$, $I=[0,1]$ and $\nu$ is the Lebesgue measure on $I$, and then extend to the general case.
\medskip

\textit{Step 1 - Approximation of $W$ in norm $L^1$.} We first prove that for $\varepsilon>0$, there exists $m\geq 1$ sufficiently large such that $\Vert W - W_{\mathcal{P}_m}\Vert_{L^1(I^2)}\leq \varepsilon$.
We fix $\varepsilon>0$.  As $W\in L^1(I^2,\nu)$, there exists $\widetilde{W}$ continuous such that $\Vert W-\widetilde{W}\Vert_{1,\nu}\leq \dfrac{\varepsilon}{3}$. As $\widetilde{W}$ is also uniformly continuous, there exists $\eta>0$ such that if $\Vert u - u'\Vert + \Vert v-v' \Vert \leq \eta$, $\left| \widetilde{W}(u,v)-\widetilde{W}(u',v') \right| \leq \dfrac{\varepsilon}{3}$. We fix $m$ large enough such that $\frac{1}{m}\leq \eta$, and denote by $\mathcal{P}_m=\sqcup_{i=1}^{m}J_i$ the partition with $J_i=\left(\frac{i-1}{m},\frac{i}{m}\right]$. It verifies then, for each $i\in(1,\cdots,m)$ $Diam(J_i)\leq\eta$. We define the step function (which average the values of $W$ over cells obtained with the partition)
\begin{equation}\label{eq:def_Pm}
W_{\mathcal{P}_m}(u,v):= m^2 \sum_{i,j=1}^m \int_{J_i\times J_j} W(x,y) \nu(dx) \nu(dy)\mathbf{1}_{J_i}(u)\mathbf{1}_{J_j}(v).
\end{equation}
We note $\mathcal{G}_N^{(3)}$ the directed weighted graph with vertices $\left\{ 1, \cdots,N\right\}$ such that every edge $j\rightarrow i$ is present, with weight $W_{\mathcal{P}_m}(x_i,x_j)$. We use it to upper-bound the cut-distance between $W$ and $W^{\mathcal{G}_N^{(2)}}$:
\begin{align}\label{eq:preuve_maj_box}
d_{\Box,\nu}\left( W^{\mathcal{G}_N^{(2)}},W \right) &\leq \Vert   W^{\mathcal{G}_N^{(2)}}-W\Vert_{1,\nu}\notag\\
&\leq \Vert   W^{\mathcal{G}_N^{(2)}}-W^{\mathcal{G}_N^{(3)}}\Vert_{1,\nu} + \Vert   W^{\mathcal{G}_N^{(3)}}-W_{\mathcal{P}_m}\Vert_{1,\nu} + \Vert  W_{\mathcal{P}_m} - W\Vert_{1,\nu}.
\end{align}
We are going to control each term of the right hand side of \eqref{eq:preuve_maj_box} in the following steps.
\medskip

\textit{Step 2 - Control of $ \Vert  W_{\mathcal{P}_m} - W\Vert_{1,\nu}$.} We have
\begin{align*}
\Vert  W_{\mathcal{P}_m} - W\Vert_{1,\nu}&\leq \Vert W_{\mathcal{P}_m} - \widetilde{W}_{\mathcal{P}_m}  \Vert_{1,\nu}+ \Vert \widetilde{W}_{\mathcal{P}_m}   -  \widetilde{W}  \Vert_{1,\nu}+\Vert  \widetilde{W} - W\Vert_{1,\nu}.
\end{align*}
As $\Vert W-\widetilde{W}\Vert_{1,\nu}\leq \dfrac{\varepsilon}{3}$, and as for any partition $\mathcal{P}$, $\Vert W_\mathcal{P}\Vert_{1,\nu} \leq \Vert W \Vert_{1,\nu}$, we have $\Vert W_{\mathcal{P}_m} - \widetilde{W}_{\mathcal{P}_m}\Vert_{1,\nu}\leq  \dfrac{\varepsilon}{3}$ and
\begin{align*}
\Vert \widetilde{W}_{\mathcal{P}_m}   -  \widetilde{W}  \Vert_{1,\nu} &= \sum_{i,j=1}^m \int_{J_i}\int_{J_j} \left| \widetilde{W}(u,v) - m^2 \int_{J_i}\int_{J_j} \widetilde{W}(x,y)\nu(dx)\nu(dy) \right| \nu(du)\nu(dv)\\
&\leq \sum_{i,j=1}^m \int_{J_i}\int_{J_j} m^2  \int_{J_i}\int_{J_j} \left| \widetilde{W}(u,v) - \widetilde{W}(x,y)\right|\nu(dx)\nu(dy)~\nu(du)\nu(dv)
\leq  \dfrac{\varepsilon}{3},
\end{align*}
hence $\Vert  W_{\mathcal{P}_m} - W\Vert_{1,\nu}\leq \varepsilon$ (recall here that $\nu$ is the Lebesgue measure on $[0,1]$).
\medskip

\textit{Step 3 - Control of $\Vert   W^{\mathcal{G}_N^{(2)}}-W^{\mathcal{G}_N^{(3)}}\Vert_{1,\nu}$.} For all $N\geq 1$, we recall from Lemma \ref{lem:partition_I}  $\left(B_1^{(N)},\cdots,B_N^{(N)}\right)$ the partition of $I$ with $B_i=\left( \dfrac{i-1}{N}, \dfrac{i}{N}\right]$ (we omit by simplicity the upper index $^{(N)}$). Using the notation introduced in \eqref{eq:graphonG} we have
\begin{align}\label{eq:def_F_preuve}
\Vert   W^{\mathcal{G}_N^{(2)}}-W^{\mathcal{G}_N^{(3)}}\Vert_{1,\nu}
&= \sum_{i,j=1}^N \int_{B_i}\int_{B_j} \left| W(x_i,x_j) - W_{\mathcal{P}_m}(x_i,x_j) \right|  \nu(du)\nu(dv) \notag \\
&=\dfrac{1}{N^2} \sum_{i,j=1}^N  \left| W(x_i,x_j) - W_{\mathcal{P}_m}(x_i,x_j) \right| = : \dfrac{1}{N^2} \sum_{i,j=1}^N F(x_i,x_j).
\end{align}
We use the following proposition to show that it converges almost surely to $\Vert  W_{\mathcal{P}_m} - W\Vert_{1,\nu} $.
\begin{prop}[Hoeffding \cite{hoeffding1961strong}]\label{prop:hoeffding}
Let $X_1,X_2,\cdots$ be a sequence of i.i.d. random variables with distribution $\nu$, and $f$ a real-valued measurable function. Then if $\mathbb{E}\left[\left| f(X_1,X_2) \right|\right]<+\infty$,
\begin{equation}\label{eq:prop_hoeffding}
\dfrac{1}{N(N-1)}\sum\limits_{\substack{i,j=1 \\ i \neq j}}^N f(X_i,X_j)\xrightarrow[N\to \infty]{a.s.} \mathbb{E}\left[f(X_1,X_2)\right] = \int\int f(x,y)\nu(dx)\nu(dy).
\end{equation}
\end{prop}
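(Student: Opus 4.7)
The plan is to reduce the statement to the classical reverse--martingale structure of $U$--statistics. First, by symmetrization, set $h(x,y) := \tfrac{1}{2}\bigl(f(x,y) + f(y,x)\bigr)$, so that $\sum_{i\neq j} f(X_i,X_j) = 2\sum_{i<j} h(X_i,X_j)$. The left-hand side of \eqref{eq:prop_hoeffding} rewrites as the symmetric $U$--statistic
\begin{equation*}
U_N \,=\, \binom{N}{2}^{-1}\sum_{i<j} h(X_i,X_j),
\end{equation*}
and $\mathbb{E}[h(X_1,X_2)] = \mathbb{E}[f(X_1,X_2)]$. Hence it suffices to prove $U_N \to \mathbb{E}[h(X_1,X_2)]$ almost surely.

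Next, I would introduce the \emph{exchangeable} $\sigma$--algebra $\mathcal{E}_N$ generated by all symmetric Borel functions of $(X_1,\ldots,X_N)$ together with the tail $(X_{N+1},X_{N+2},\ldots)$. Since the variables are i.i.d., a conditioning argument based on the invariance under permutations of the first $N$ coordinates shows that for each pair $i<j\leq N$,
\begin{equation*}
\mathbb{E}[h(X_i,X_j)\mid \mathcal{E}_N] \,=\, \mathbb{E}[h(X_1,X_2)\mid\mathcal{E}_N],
\end{equation*}
and averaging over all pairs gives $\mathbb{E}[h(X_1,X_2)\mid\mathcal{E}_N] = U_N$. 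Because $(\mathcal{E}_N)_{N\geq 2}$ is a decreasing family of $\sigma$--algebras and $h(X_1,X_2)\in L^1$ by assumption, the reverse martingale convergence theorem yields $U_N \to \mathbb{E}[h(X_1,X_2) \mid \mathcal{E}_\infty]$ almost surely and in $L^1$, where $\mathcal{E}_\infty := \bigcap_N \mathcal{E}_N$.

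The main obstacle—and the last step—is to identify this limit as the deterministic constant $\mathbb{E}[h(X_1,X_2)]$. Here I would invoke the Hewitt--Savage $0$--$1$ law, which states that the exchangeable $\sigma$--algebra of an i.i.d. sequence is $\mathbb{P}$--trivial, so every $\mathcal{E}_\infty$--measurable random variable is almost surely constant. This upgrades the reverse martingale limit to $\mathbb{E}[h(X_1,X_2)] = \int\!\!\int f(x,y)\,\nu(dx)\nu(dy)$, closing the argument. An alternative route would be an Etemadi-style truncation with Chebyshev bounds on $\mathrm{Var}(U_N) = O(1/N)$ (when $f\in L^2$) combined with a subsequence argument, but this requires extra moment assumptions and monotonicity tricks to reach only $L^1$, so the reverse martingale route is cleaner under the sole hypothesis $\mathbb{E}|f(X_1,X_2)|<\infty$.
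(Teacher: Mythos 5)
Your proof is correct: it is the classical reverse-martingale argument (symmetrize $f$ to obtain an $L^1$ symmetric kernel $h$, identify $U_N$ with $\mathbb{E}[h(X_1,X_2)\mid\mathcal{E}_N]$ for the decreasing family of exchangeable $\sigma$-algebras, apply backwards-martingale convergence, and invoke the Hewitt--Savage $0$--$1$ law to identify the limit as the constant $\mathbb{E}[f(X_1,X_2)]$). The paper does not prove this proposition itself but cites Hoeffding's 1961 report, whose argument is essentially the one you give, so your approach matches the cited source.
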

We have indeed $ \displaystyle\dfrac{1}{N^2} \sum_{i,j=1}^N F(x_i,x_j) = \dfrac{1}{N^2} \sum_{i=1}^N F(x_i,x_i) + \dfrac{N(N-1)}{N^2} \dfrac{1}{N(N-1)} \sum\limits_{\substack{i,j=1 \\ i \neq j}}^N F(x_i,x_j),$ where the second term converges as $N\to\infty$ to $\iint F(x,y)\nu(dx)\nu(dy)~ a.s.$ and
$$\dfrac{1}{N^2} \sum_{i=1}^N F(x_i,x_i) \leq \dfrac{1}{N} \left( \dfrac{1}{N} \sum_{i=1}^N \left| W(x_i,x_i)\right|  + \dfrac{1}{N} \sum_{i=1}^N \left| W_{\mathcal{P}_m}(x_i,x_i)\right| \right) \xrightarrow[N\to \infty]{}0$$
as the sums are controlled by Hypothesis \ref{hyp:existence_lambda_barre}.
\medskip 

\textit{Step 4 - Control of $\Vert   W^{\mathcal{G}_N^{(3)}}-W_{\mathcal{P}_m}\Vert_{1,\nu}$.} We have
\begin{align}\label{eq:GN3_Pm_uniforme}
\Vert   W^{\mathcal{G}_N^{(3)}}-W_{\mathcal{P}_m}\Vert_{1,\nu} 
&=  \sum_{i,j=1}^N\int_{B_i}\int_{B_j}  \left|  W_{\mathcal{P}_m}(x_i,x_j) - W_{\mathcal{P}_m}(x,y)\right| \nu(dx)\nu(dy).
\end{align}
Recalling \eqref{eq:def_Pm} and setting $\alpha_{kl}= m^2 \int_{J_k\times J_l} W(u,v) \nu(du)\nu(dv)$ we have
\begin{align*}
\Vert   W^{\mathcal{G}_N^{(3)}}-W_{\mathcal{P}_m}\Vert_{1,\nu} &= \sum_{i,j=1}^N\int_{B_i}\int_{B_j}  \left| \sum_{k,l} \alpha_{kl}\mathbf{1}_{J_k\times J_l}(x_i,x_j) - \sum_{k',l'} \alpha_{k'l'}\mathbf{1}_{J_{k'}\times J_{l'}}(x,y)\right| \nu(dx)\nu(dy)\\
&= \sum_{i,j=1}^N \sum_{k,l} \sum_{k',l'} \left| \alpha_{kl}- \alpha_{k'l'}\right| \mathbf{1}_{J_k\times J_l}(x_i,x_j) \int_{B_i}\int_{B_j} \mathbf{1}_{J_{k'}\times J_{l'}}(x,y)\nu(dx)\nu(dy)\\
&= \sum_{k,l} \sum_{k',l'} \left| \alpha_{kl}- \alpha_{k'l'}\right|\sum_{i,j=1}^N   \mathbf{1}_{J_k\times J_l}(x_i,x_j) \nu(J_{k'}\cap  B_i) \nu(J_{l'}\cap B_j).
\end{align*}
We consider $N$ large enough ($N>m$) such that every box $B_i=\left(\frac{i-1}{N},\frac{i}{N}\right]$ (of size $\frac{1}{N}$) is inside a larger box $J_{k'}=\left(\frac{k'-1}{m},\frac{k'}{m}\right]$ (of size $\frac{1}{m}$) (there might be some $B_i$ that are on two different parts of the partition $\mathcal{P}_m$, but we can neglect this contribution - at most of order $\frac{m}{N} \xrightarrow[N\to \infty]{} 0$). Then $\nu(J_{k'}\cap  B_i)=\mathbf{1}_{\left\{ B_i \subset J_{k'}\right\}}\nu(B_i)=\dfrac{1}{N}\mathbf{1}_{\left\{\frac{i}{N} \in J_{k'}\right\}}$, and
\begin{align*}
\Vert   W^{\mathcal{G}_N^{(3)}}-W_{\mathcal{P}_m}\Vert_{1,\nu} &\leq \sum_{k,l=1}^m \sum_{k',l'=1}^m \left| \alpha_{kl}- \alpha_{k'l'}\right|\sum_{i,j=1}^N  \dfrac{1}{N^2} \mathbf{1}_{\left\{(x_i,x_j) \in J_k\times J_l, B_i\subset J_{k'}, B_j \subset J_{l'}\right\}}\\
&\leq \sum_{k,l=1}^m \sum_{k',l'=1}^m \left| \alpha_{kl}- \alpha_{k'l'}\right| \left(\dfrac{1}{N}\sum_{i=1}^N   \mathbf{1}_{\left\{x_i\in J_k, \frac{i}{N} \in J_{k'}\right\}}\right) \left(\dfrac{1}{N}\sum_{j=1}^N   \mathbf{1}_{\left\{x_j\in J_l, \frac{j}{N} \in J_{l'}\right\}}\right).
\end{align*}
Then, from Lemma \ref{lem:quantile_empirical_uniforme}, $\dfrac{1}{N}\sum_{i=1}^N   \mathbf{1}_{\left\{x_i\in J_k, \frac{i}{N} \in J_{k'}\right\}}\xrightarrow[N\to \infty]{a.s.} \lambda(J_k\cap J_{k'})= \dfrac{1}{m} \mathbf{1}_{k=k'}$, hence we obtain that
\[\limsup_{ N\to\infty} \left\Vert W^{ \mathcal{ G}_{ N}^{ (3)}} -W_{  \mathcal{ P}_{ m}}\right\Vert_{ 1, \nu} \leq \frac{ 1}{ m^{ 2}}\sum_{ k,l=1}^{ m} \sum_{ k^{ \prime}, l^{ \prime}=1}^m \left\vert \alpha_{ k,l}- \alpha_{ k^{ \prime}, l^{ \prime}} \right\vert \mathbf{ 1}_{ k=k^{ \prime}}\mathbf{ 1}_{ l=l^{ \prime}}.\]
The claim is that the above bound is uniformly $0$ for all $m$: the sum reduces to $k=k^{ \prime}$ and $l=l^{ \prime}$ hence the prefactor $ \left\vert \alpha_{ k,l}- \alpha_{ k^{ \prime}, l^{ \prime}} \right\vert$ gives that this last contribution is $0$, thus almost-surely (on the realisation of the sequence of positions) we have $\Vert   W^{\mathcal{G}_N^{(3)}}-W_{\mathcal{P}_m}\Vert_{1,\nu}\xrightarrow[N\to \infty]{}0$.
\medskip

\textit{Conclusion when the positions are uniformly drawn - }From \eqref{eq:preuve_maj_box} and Steps 3 and 4, we obtain that $\limsup_{ N\to \infty} d_{ \Box} \left( W^{ \mathcal{ G}_{ N}^{ (2)}}, W\right) \leq 2\left\Vert W_{ \mathcal{ P}_{ m}} -W\right\Vert_{ 1, \nu}$. Choosing now $m$ as in Step 2 gives that $\limsup_{ N\to \infty} d_{ \Box} \left( W^{ \mathcal{ G}_{ N}^{ (2)}}, W\right) \leq \varepsilon$ for all $ \varepsilon>0$,  which concludes the proof for the case $x_i \sim \mathcal{U}(0,1)$.
 \medskip
 
\textit{Generalisation: from $[0,1]$ to $[0,1]^d$ - } Consider the case $x_i=\left(u_i^{(1)}, \cdots, u_i^{(d)}\right)$ where $\left(u_i^{(j)}\right)_{1\leq j \leq d}$ are drawn uniformly on $(0,1]$ (but not necessarily independent), and the partition $\displaystyle I=(0,1]^d = \bigsqcup_{i=1}^{N}B_i = \bigsqcup_{i=1}^{N} \left(  \left(\frac{i-1}{N},\frac{i}{N}\right] \times (0,1]^{d-1} \right)$. Proposition \ref{prop:hoeffding} still apply, and the treatment of the terms  $\Vert   W^{\mathcal{G}_N^{(2)}}-   W^{\mathcal{G}_N^{(3)}}\Vert_{1,\nu}$ and $\Vert W_{\mathcal{P}_m}-W\Vert_{1,\nu}$ in \eqref{eq:preuve_maj_box} remains the same. For the term $\Vert   W^{\mathcal{G}_N^{(3)}}-W_{\mathcal{P}_m}\Vert_{1,\nu}$, it suffices to note that the chosen partition $\bigsqcup B_i$ only affects the first coordinates to conclude by the same arguments.

\textit{General case-} Consider $\nu$ absolutely continuous w.r.t. Lebesgue measure, and $I \subset \mathbb{R}^d$. From Sklar's theorem (see Theorem 2.3.3 of \cite{Nelsen1999}) we have:
$$ f_{\nu}(x^{(1)},\cdots,x^{(d)})=c(F_{1}(x^{(1)}),\cdots,F_{d}(x^{(d)}))f_{1}(x^{(1)})\cdots f_{d}(x^{(d)}),$$
where $c$ is the copula density function of $\nu$, $f_{i}$ the $i$-th marginal probability density functions, $F_{i}$ the $i$-th marginal cumulative distribution functions and $f_{\nu}$ the density of $ \nu$ w.r.t. Lebesgue: $\nu(dx)=f_{\nu}(x^{(1)},\cdots,x^{(d)})dx^{(1)}\cdots dx^{(d)}$. It implies, by the change of variables  $u=\left(F_1(x^{(1)}),\cdots,F_d(x^{(d)})\right)$ that $c(u)du=f_\nu(x)dx$. Define also $u_i=\left(F_1(x_i^{(1)},\cdots,F_d(x_i^{(d)})\right)$ and $$W_F\left( u,v \right):= W\left( \left(F_1^{-1}(u^{(1)}),\cdots,F_d^{-1}( u^{(d)})\right), \left(F_1^{-1}(v^{(1)}),\cdots,F_d^{-1}( v^{(d)})\right)\right),$$ the previous change of variable gives then with $B_i:=\left( F_1^{-1}\left(\frac{i-1}{N},\frac{i}{N}\right]\times F_2^{-1}\left((0,1]\right) \times \cdots \times F_d^{-1}\left( (0,1] \right) \right)$,
(note that this partition corresponds to the one introduced in Lemma \ref{lem:partition_I})
\begin{align*}
\Vert   W^{\mathcal{G}_N^{(2)}}-W\Vert_{1,\nu} &= \sum_{i,j=1}^N \int_{B_i}\int_{B_j} \left| W(x_i,x_j)-W(x,y)\right| \nu(dx)\nu(dy)\\
&= \sum_{i,j=1}^N \int_{\left(\frac{i-1}{N},\frac{i}{N}\right]\times  (0,1]^{d-1}}\int_{\left(\frac{j-1}{N},\frac{j}{N}\right]\times (0,1]^{d-1}}\left| W_F(u_i,u_j)-W_F(u,v)\right| c(u)c(v)du ~dv.
\end{align*}
The previous case gives immediately the result.

\paragraph{Proof of \eqref{eq:lem_S_box_max} }
We remind that we want to prove $\Vert W^{\mathcal{G}_N^{(2)}}-W \Vert_{\infty	\to \infty, \nu} \xrightarrow[N\to\infty]{} 0$. 
As in the proof of \eqref{eq:lem_S_box}, we start with the case $I=[0,1]$, $\widetilde{x_i}\sim \mathcal{U}\left(0,1\right)$ i.i.d. (then $\nu$ is the Lebesgue measure). What changes is that we no longer integrate with respect to the first variable, but we take the supremum. The approximation in $L^1(I^2)$ is not adapted anymore, thus we approximate $W$ differently. Recall that $$\Vert W^{\mathcal{G}_N^{(2)}}-W \Vert_{\infty	\to \infty, \nu}  = \sup_{g,\Vert g \Vert_\infty\leq 1} \sup_{u\in I} \left| \int_I \left( W^{\mathcal{G}_N^{(2)}}(u,v) - W(u,v) \right) g(v) \nu(dv) \right|.$$
\medskip 

\textit{Step 1 - A first bound.} Fixing $g$ such that $\Vert g \Vert_\infty\leq 1$ and $u \in I$, for any $N$ there exists a unique $i$ such that $u\in B_i^{(N)}=\left(\dfrac{i-1}{N},\dfrac{i}{N}\right]$. Then
\begin{align*}
\left| \int_I \left( W^{\mathcal{G}_N^{(2)}}(u,v) - W(u,v) \right) g(v) \nu(dv) \right| 
& = \left| \sum_{j=1}^N \int_{B_j} \left( W\left(x_i,x_j\right)-W(u,v)\right)g(v)\nu(dv) \right|\\
&\leq   \left| \sum_{j=1}^N \int_{B_j} \left( W\left(x_i,x_j\right)-W(x_i,v)\right)g(v)\nu(dv) \right| \\&+ \left|  \int_{I} \left( W(x_i,v)-W(u,v)\right)g(v)\nu(dv) \right| =: A(g,u) + B(g,u).
\end{align*}
\medskip

\textit{Step 2 - Upper-bound of $A(g,u)$ by approximated functions independent of $g$.} As $\Vert g \Vert_\infty\leq 1$, we have $A(g,u)\leq \sum_{j=1}^N \int_{B_j} \left| W\left(x_i,x_j\right)-W(x_i,v)\right| dv$. Note that is does not depend anymore on $g$ and it depends on $u$ only by  the index $i$. To control this term, we first approximate $W$ by a stepfunction in $L^1(I)$, $\widetilde{W}_{\mathcal{P}_m}$. Introduce $\left(\varphi_\eta\right)_{\eta>0}$ as $\varphi_\eta(x)=\eta^{-1} \phi(\frac{x}{\eta})$ where $\phi$ is a  non-negative continuous function of $I$ with $\int\phi=1$. Define for all $x\in I$ $\widetilde{W}_\eta(x,\cdot):=W(x,\cdot)\ast \varphi_\eta$. Note that $y\mapsto \widetilde{W}_\eta(x,y) \in \mathbb{R}$ is a continuous function for all $x\in [0,1]$. As for any $(x,x')\in I^2$, $ \Vert W(x,\cdot) - W(x',\cdot) \Vert_1 \leq C_w \Vert x-x'\Vert^\vartheta$ using \eqref{eq:hyp_W_pseudolip_theta}, $x\mapsto W(x,\cdot)$ is continuous from $[0,1]$ to $L^1(I)$, so that the set of functions $F:=\left\{ W(x,\cdot), x\in [0,1]\right\}$ is compact. Hence, for $\varepsilon>0$, we can find $p\geq 1$ and $p$ positions $y_1,\cdots,y_p$ such that $F\subset \cup_{k=1}^p B_{L_1}\left( W(y_k,\cdot), \varepsilon\right)$. Then, there exists $\eta>0$ such that for all $k\leq p$, $\Vert \widetilde{W}_\eta(y_k,\cdot)-W(y_k,\cdot)\Vert_{I,1}\leq \varepsilon$. From now, we may omit the notation $\eta$ for $\widetilde{W}$.
Let $m\geq 1$ and $\mathcal{P}_m=\sqcup_{i=1}^{m}J_i$ for $J_r=\left( \dfrac{r-1}{m},\dfrac{r}{m}\right]$ the regular partition of $I$ of order $m$. For any kernel $H$ on $I^2$, define
\begin{equation}\label{eq:def_tilde_PM}
H_{\mathcal{P}_m}\left(x,v \right):= m\sum_{r=1}^m \left( \int_{J_r} H(x,y)dy \right) \mathbf{1}_{J_r}(v).
\end{equation} 
The function $H\mapsto H_{\mathcal{P}_m}$ is continuous: $\Vert H_{\mathcal{P}_m}\Vert_{L^1(I^2)}\leq \Vert H \Vert_{L^1(I^2)}$. Note that this definition is different from the one used in the proof of \eqref{eq:lem_S_box} where we integrated on both variables.
By continuity of $y\mapsto \widetilde{W}(y_k,y)$ for all $k=1\cdots p$, there exists $m\geq 1$ such that $ \sup_{1\leq l \leq p} \Vert \widetilde{W}\left(y_l,\cdot\right)-\widetilde{W}_{\mathcal{P}_m}\left(y_l,\cdot\right) \Vert_\infty \leq \varepsilon$, and thus $\displaystyle \sup_{1\leq l \leq p} \int \left| \widetilde{W}\left(y_l,y\right)-\widetilde{W}_{\mathcal{P}_m}\left(y_l,y\right) \right| dy \leq \varepsilon$. Then, for any $x\in I$,
\begin{align*}
\Vert W(x,\cdot) - \widetilde{W}_{\mathcal{P}_m}(x,\cdot)\Vert_{I,1} &\leq \underbrace{\Vert W(x,\cdot) - W(y_l,\cdot)\Vert_{I,1}}_{\leq \varepsilon \text{ by the cover of }F}  + \underbrace{\Vert  W(y_l,\cdot)- \widetilde{W}(y_l,\cdot)\Vert_{I,1}}_{\leq \varepsilon \text{ by the choice of }\eta} \\& + \underbrace{\Vert \widetilde{W}(y_l,\cdot) - \widetilde{W}_{\mathcal{P}_m}(y_l,\cdot)\Vert_{I,1}}_{\leq \varepsilon \text{ by the choice of }m}  +  \Vert \widetilde{W}_{\mathcal{P}_m}(y_l,\cdot)  - \widetilde{W}_{\mathcal{P}_m}(x,\cdot)\Vert_{I,1} \\
&\leq 3 \varepsilon +  \Vert \widetilde{W}(y_l,\cdot)  - \widetilde{W}(x,\cdot)\Vert_{I,1} \leq 4\varepsilon,
\end{align*}
where we used the fact that for any partition $\mathcal{P}$, $\Vert \widetilde{W}_\mathcal{P} \Vert_{I,1}\leq \Vert \widetilde{W}\Vert_{I,1}$ and 
because $$\left\Vert \widetilde{W}(y_{ l}, \cdot) - \widetilde{W}(x, \cdot)\right\Vert_{ I, 1}= \left\Vert  \left(W(y_{ l}, \cdot) - W(x, \cdot)\right)\ast \varphi_{ \eta}\right\Vert_{ I, 1}\leq \left\Vert W(y_{ l}, \cdot) - W(x, \cdot)\right\Vert_{ I, 1}\left\Vert \varphi_{ \eta}\right\Vert_{ I, 1} $$ by Young's inequality. By compactness of $F$ and since $ B_{ L^{ 1}} (W(y_{ l}, \cdot))_{ l=1,\cdots, p}$ is an $ \varepsilon$-covering of $F$, this last term is smaller than $ \varepsilon$.
Using this approximation, we can now upper bound $A(g,u)$ independently of the choice of $g$ and relying on the choice of $u$ only by the index $i$ such that $u\in B_i^{(N)}$: we have
\begin{align*}
A(g,u) &\leq \dfrac{1}{N} \sum_{j=1}^N \left| W(x_i,x_j) - \widetilde{W}_{\mathcal{P}_m}(x_i,x_j)\right| + \sum_{j=1}^N \int_{B_j} \left| \widetilde{W}_{\mathcal{P}_m}(x_i,x_j) - \widetilde{W}_{\mathcal{P}_m}(x_i,v)\right|dv \\&+ \sum_{j=1}^N \int_{B_j} \left| \widetilde{W}_{\mathcal{P}_m}(x_i,v)-W(x_i,v)\right|dv =: A_1^{(i)} + A_2^{(i)} + A_3^{(i)}.
\end{align*}
\medskip

\textit{Step 3 - Uniform control of the $A_k^{(i)}$.} As $A_3^{(i)}= \Vert \widetilde{W}_{\mathcal{P}_m}(x_i,\cdot) - W(x_i,\cdot)\Vert_{I,1}$, we control it by the work done previously  independently of the index $i$ (see Step 2): $\sup_i A_3^{(i)} \xrightarrow[N\to\infty]{} 0$. Set $g(x,y):=W(x,y)-\widetilde{W}_{\mathcal{P}_m}(x,y)$, and as $W\in L^\chi(I^2)$, so does $g$. We can then apply Proposition \ref{prop:toolbox_iid}  and we obtain  $$\sup_{1\leq i \leq N} \left| A_3^{(i)} - A_1^{(i)}\right| = \sup_{1\leq i \leq N} \int_I g(x_i,y)\left(\nu^{(N)}(dy)-\nu(dy)\right)\xrightarrow[N\to\infty]{} 0.$$
We focus now on $A_2^{(i)}$ and show that $\displaystyle \sup_{x} \sum_{j=1}^N \int_{B_j} \left| \widetilde{W}_{\mathcal{P}_m}(x,x_j)- \widetilde{W}_{\mathcal{P}_m}(x,v)\right| dv$ tends to 0: denoting by $\alpha_k(x)=m\int_{J_k}\widetilde{W}(x,y)dy$, we have
\begin{align*}
\sum_{j=1}^N\int_{B_j}  \left| \widetilde{W}_{\mathcal{P}_m}(x,x_j)- \widetilde{W}_{\mathcal{P}_m}(x,v)\right| dv &= \sum_{j=1}^N  \int_{B_j}\left| \sum_{k=1}^m \alpha_k(x) \mathbf{1}_{J_k}(x_j) - \sum_{k'=1}^m \alpha_{k'}(x)\mathbf{1}_{J_{k'}}(v) \right|dv\\
&\leq \sum_{k,k'=1}^m \left| \alpha_k(x) - \alpha_{k'}(x)\right| \sum_{j=1}^N \mathbf{1}_{J_k}(x_j) \left|J_{k'}\cap B_j\right|.
\end{align*}
Similarly to what has been done in Step 4 for the proof of \eqref{eq:lem_S_box}, we consider $N$ large enough ($N>m$) such that every box $B_i=\left(\frac{i-1}{N},\frac{i}{N}\right]$ is inside a larger box $J_{k'}=\left(\frac{k'-1}{m},\frac{k'}{m}\right]$, then $\nu(J_{k'}\cap  B_j)=\mathbf{1}_{\left\{ B_j \subset J_{k'}\right\}}\nu(B_j)=\dfrac{1}{N} \mathbf{1}_{\left\{\frac{j}{N} \in J_{k'}\right\}}$ and
$$\sum_{j=1}^N\int_{B_j}  \left| \widetilde{W}_{\mathcal{P}_m}(x,x_j)- \widetilde{W}_{\mathcal{P}_m}(x,v)\right| dv \leq \sum_{\substack{k,k'=1\\k\neq k'}}^m \left| \alpha_k(x) - \alpha_{k'}(x)\right| \sum_{j=1}^N  \dfrac{1}{N}\mathbf{1}_{\left\{x_j \in J_k, \frac{j}{N}\in J_{k'}\right\}}.$$
As $\alpha_k(x)\leq m\int_I \widetilde{W}(x,y)dy \leq mC_W^{(1)}$ which is independent of $x$ and $k$, 
$$ \sum_{j=1}^N\int_{B_j}  \left| \widetilde{W}_{\mathcal{P}_m}(x,x_j)- \widetilde{W}_{\mathcal{P}_m}(x,v)\right| dv \leq 2mC_W^{(1)}\sum_{\substack{k,k'=1\\k\neq k'}}^m \sum_{j=1}^N  \dfrac{1}{N}\mathbf{1}_{\left\{x_j \in J_k, \frac{j}{N}\in J_{k'}\right\}}.$$
From Lemma \ref{lem:quantile_empirical_uniforme}, $\dfrac{1}{N}\sum_{j=1}^N   \mathbf{1}_{\left\{x_j\in J_k, \frac{j}{N} \in J_{k'}\right\}}\xrightarrow[N\to \infty]{a.s.} \lambda(J_k\cap J_{k'})= \dfrac{1}{m} \mathbf{1}_{k=k'}$, thus almost-surely (on the realisation of the sequence of positions) $\displaystyle \sum_{j=1}^N\int_{B_j}  \left|  \widetilde{W}_{\mathcal{P}_m}(x,x_j)-  \widetilde{W}_{\mathcal{P}_m}(x,v)\right| dv $ tends to 0 independently on the choice of $x$. We have shown that $ \displaystyle \sup_{g,\Vert g \Vert_\infty\leq 1} \sup_{u\in I} A(g,u) \xrightarrow[N\to\infty]{}0$  $\mathbb{P}$-almost surely.
\medskip

\textit{Step 4 - Control of $B(g,u)$ and conclusion.} Using\eqref{eq:hyp_W_pseudolip_theta}  from Hypothesis \ref{hyp:existence_lambda_barre}, we have
$$B(g,u) \leq \int_I \left| W(x_i,v)-W(u,v) \right| \Vert g \Vert_\infty \nu(dv) \leq  C_w \Vert x_i - u\Vert ^\vartheta.$$
Let us show that $\displaystyle \sup_{x\in I} \sum_{i=1}^N \mathbf{1}_{B_i}(x) \Vert x_i-x \Vert ^\vartheta \xrightarrow[N\to\infty]{}0$. Recall \eqref{eq:def_U_stat}: we have $\displaystyle\sum_{i=1}^N \mathbf{1}_{B_i}(x) \Vert x_i-x \Vert ^\vartheta=\Vert U_N(x)-x\Vert^\vartheta$ by definition of $U_N$, the uniform sample quantile function. As we know from \cite{csorgHo1983quantile} that $\displaystyle\sup_{0 \leq y \leq 1} \vert U_N(y) - y \vert \xrightarrow[N\to \infty]{a.s.} 0$,  almost surely $ \displaystyle \sup_{g,\Vert g \Vert_\infty\leq 1} \sup_{u\in I} B(g,u) \xrightarrow[N\to\infty]{}0$.
It concludes the proof for \eqref{eq:lem_S_box_max}.
\paragraph{Proof of \eqref{eq:lem_S_A5} and \eqref{eq:lem_S_A5_max}} The term of interest is $A^{(N)}_{i,T,5}$, defined in \eqref{eq:def_A5}, we have by Jensen's inequality

\begin{align*}
{A^{(N)}_{i,T,5}}^2 &\leq \left( \int_0^T \left | \int_I  F\left( x_i,y,s\right) \left( \nu^{(N)}(dy)-\nu(dy)\right) \right| ds\right)^2\\
&\leq T \int_0^T \left( \dfrac{1}{N} \sum_{j=1}^N \left( F(x_i,x_j,s) - \int_I F(x_i,y,s)\nu(dy) \right) \right)^2 ds.\\
&\leq \dfrac{T}{N^2} \sum_{j,l=1}^N \int_0^T \left( F\left( x_i,x_j,s\right)F\left( x_i,x_l,s\right)+\left( \int_I F(x_i,y,s)\nu(dy)\right)^2 \right.\\
& \hspace{3cm} \left. -2 F\left( x_i,x_j,s\right)  \int_I F(x_i,y,s)\nu(dy) \right)ds.
\end{align*}
As $ F\left( x_i,y,s\right)F\left( x_i,z,s\right)=W(x_i,y)W(x_i,z) \gamma(s,y)\gamma(s,z)$ for any $y$ and $z$, denoting by $\Gamma(y,z) := \int_0^T \gamma(s,y)\gamma(s,z) ds$ we obtain
\begin{align}\label{eq:lem_maj_Ai5_S1}
{A^{(N)}_{i,T,5}}^2 &\leq \dfrac{T}{N^2} \sum_{j,l=1}^N  W(x_i,x_j)W(x_i,x_l)\Gamma(x_j,x_l) - \dfrac{2T}{N}\sum_{j=1}^N \int_I W(x_i,x_j)W(x_i,y)\Gamma(x_j,y)\nu(dy)\notag\\
&\hspace{3cm} + T \int_{I^2} W(x_i,y)W(x_i,z)\Gamma(y,z)\nu(dy)\nu(dz)\notag\\
&= T \int_{I\times I} W(x_i,y)W(x_i,z)\Gamma(y,z) \left( \nu^{(N)}(dy)\nu^{(N)}(dz) -2\nu^{(N)}(dy)\nu(dz)+\nu(dy)\nu(dz)\right) \notag\\
 &= T \left(\epsilon_{i,1} + \epsilon_{i,2}\right)
\end{align}
where $\epsilon_{i,1}$ and $\epsilon_{i,2}$ are defined and studied in Corollary \ref{cor:toolbox_iid}. Taking the square root and then summing on $i$ or taking the supremum, \eqref{eq:lem_S_A5}  and \eqref{eq:lem_S_A5_max} follow.

\begin{remark}\label{rem:regW_A5} If we ask for more regularity of $W$, we can have a more direct proof of \eqref{eq:lem_S_A5}. Assume that there exist $L_W>0$ and $M_W>0$ such that $$ \sup_{x\in I} \sup_{y \neq y'} \dfrac{\vert W(x,y) - W(x,y') \vert}{\Vert y - y' \Vert}\leq L_W \text{ and } \sup_{x,y\in I} \vert W(x,y) \vert \leq M_W.$$ Hypothesis \ref{hyp:existence_lambda_barre} is trivially satisfied with $\vartheta=1$ and $C_w=L_W$, which implies that $\lambda$ is uniformly Lipschitz continuous in the second variable (in \eqref{eq:pt_fixe_lips_espace}, $\phi(x)= 2\Vert x \Vert$).
We show first that $F$ defined above in \eqref{eq:def_F_preuve} is also uniformly Lipschitz continuous in the second variable: for any $(x,y,y',s) \in I^3\times [0,T]$, 
$$F\left(x,y,s\right) - F\left(x,y',s\right)= \left(  W(x,y) - W(x,y') \right) \gamma(s,y) + W(x,y') \int_0^s h(s-u)\left( \lambda(u,y)-\lambda(u,y') \right) du,$$
then
\begin{align*}
\left| F\left(x,y,s\right) - F\left(x,y',s\right) \right| &\leq \left|  W(x,y) - W(x,y') \right | \vert\gamma(s,y)\vert \\&\quad+ \vert W(x,y') \vert \int_0^s \vert h(s-u) \vert \left| \lambda(u,y)-\lambda(u,y') \right| du\\
&\leq \Vert h \Vert_{T,1} \Vert y - y' \Vert \left( \Vert \lambda \Vert_{[0,T]\times I, \infty} L_W + 2 M_W C_\lambda \right) =: L_F \Vert y - y' \Vert ,
\end{align*}
with $L_F>0$ independent of the choice of $s$ and $x$.
As $A^{(N)}_{i,t,5}= \int_0^t \left | \int_I  F\left( x_i,y,s\right) \left( \nu^{(N)}(dy)-\nu(dy)\right) \right| ds$ and $F$ is uniformly Lipschitz continuous in the second variable with constant $L_F$, we have 
$$\dfrac{1}{N}\sum_{i=1}^N A^{(N)}_{i,T,5} \leq L_F  \sup_{g \in BL }\int_0^T \left | \int_I g(y)\left( \nu^{(N)}(dy)-\nu(dy)\right) \right| ds \leq TL_F d_{BL} \left(  \nu^{(N)},\nu \right)\xrightarrow[N\to \infty]{} 0$$
by Varadarajan Theorem (see \cite{Dudley2002} Theorem 11.4.1 and \cite{varadarajan1958convergence}).
\end{remark}
\subsubsection{Proof of Proposition \ref{prop:scenario_hyp} for Scenario (2)} \label{S:scenario_2}
Recall that  $I=[0,1]$, $x_i^{(N)}=\frac{i}{N}$, and $\nu(dx)=dx$. We focus on the case $W$ continuous. When $W$ is piecewise continuous, the same results follow as we can work on each rectangle where $W$ can be extended to a continuous function, and these rectangles are in finite number.
\paragraph{Proof of \eqref{eq:lem_S_box}}
Using Remark \ref{rem:equ_norm_graphon} and \eqref{eq:normeinf1_graphon}, we have
\begin{align*}
d_{\Box}\left( W^{\mathcal{G}_N^{(2)}},W \right) &\leq \Vert W^{\mathcal{G}_N^{(2)}} - W \Vert_{\infty\to 1}  = \sup_{\Vert g \Vert_{\infty}\leq 1} \int \left| \int \left(W^{\mathcal{G}_N^{(2)}} - W \right)(x,y) g(y)  dy \right| dx\\
&\leq \int \int \left| \left(W^{\mathcal{G}_N^{(2)}} - W \right)(x,y) \right| dxdy = \Vert W^{\mathcal{G}_N^{(2)}} - W \Vert_{L^1,[0,1]^2}\\
&= \sum_{i,j=1}^N \int_{\frac{i-1}{N}}^{\frac{i}{N}} \int_{\frac{j-1}{N}}^{\frac{j}{N}} \left | W\left(\frac{i}{N},\frac{j}{N}\right) - W(x,y) \right| dxdy.
\end{align*}
As $W$ is continuous on the compact $[0,1]^2$ in this scenario (2), it is uniformly continuous due to Heine-Cantor theorem thus for any $\varepsilon >0$, there exists $\eta>0$ such that $\vert x - x' \vert + \vert y - y' \vert \leq \eta \Rightarrow \left| W\left(x,y\right)  - W\left(x',y'\right)  \right| < \varepsilon$. For $N$ large enough, $\frac{1}{N} < \eta$ and then \eqref{eq:lem_S_box} holds as $d_{\Box}\left( W^{\mathcal{G}_N^{(2)}},W \right) \leq \sum_{i,j=1}^N \int_{\frac{i-1}{N}}^{\frac{i}{N}} \int_{\frac{j-1}{N}}^{\frac{j}{N}}  \varepsilon ~ dxdy = \varepsilon$.
\paragraph{Proof of \eqref{eq:lem_S_box_max}}
Recall that $$\Vert W^{\mathcal{G}_N^{(2)}}-W \Vert_{\infty	\to \infty, \nu}  = \sup_{g,\Vert g \Vert_\infty\leq 1} \sup_{u\in [0,1]} \left| \int_0^1 \left( W^{\mathcal{G}_N^{(2)}}(u,v) - W(u,v) \right) g(v) dv \right|.$$
As done for \eqref{eq:lem_S_box}, we use the uniform continuity of $W$: for any $\varepsilon >0$, we take $\eta>0$ such that  $\vert x - x' \vert + \vert y - y' \vert \leq \eta \Rightarrow \left| W\left(x,y\right)  - W\left(x',y'\right)  \right| < \varepsilon$. Fix $g$ such that $\Vert g \Vert_\infty\leq 1$ and $u \in ]0,1]$, for any $N$ there exists a unique $i$ such that $u\in B_i^{(N)}=\left(\dfrac{i-1}{N},\dfrac{i}{N}\right]$. For $N$ large enough, $\frac{2}{N} < \eta$ and we have then
\begin{align*}
\left| \int_0^1 \left( W^{\mathcal{G}_N^{(2)}}(u,v) - W(u,v) \right) g(v) dv \right|
&= \left| \sum_{j=1}^N \int_{B_j} \left( W\left(\dfrac{i}{N},\dfrac{j}{N}\right)-W(u,v)\right)g(v)dv \right|\\
&\leq \sum_{j=1}^N \int_{\frac{j-1}{N}}^\frac{j}{N} \left| W\left(\dfrac{i}{N},\dfrac{j}{N}\right)-W(u,v) \right| \vert g(v) \vert dv \leq \varepsilon,
\end{align*}
independently from the choices of $g$ and $u$: we have shown that $\Vert W^{\mathcal{G}_N^{(2)}}-W \Vert_{\infty	\to \infty, \nu} \xrightarrow[N\to\infty]{} 0$ for this Scenario.
\paragraph{Proof of \eqref{eq:lem_S_A5} and \eqref{eq:lem_S_A5_max}}
As $W$ is continuous on $[0,1]^2$ and $\left(s,y\right) \mapsto \gamma(s,y) = \int_0^s h(s-u) \lambda(u,y)du$ is also continuous on $[0,T]\times [0,1]$ as a convolution between $h$ locally integrable and $\lambda$ continuous, the application  $\left(x,y,s\right) \mapsto F\left(x,y,s\right) = W(x,y)\gamma(s,y)$ is continuous on the compact set $K=[0,1]\times[0,1]\times[0,T]$, it is uniformly continuous due to Heine-Cantor theorem. Then, for $\varepsilon >0$, there exists $\eta>0$ such that for any $\left(x,y,s\right)$ and $\left(x',y',s'\right)$ in $K$, $\vert x - x' \vert + \vert y - y' \vert + \vert s - s' \vert \leq \eta \Rightarrow \left| F\left(x,y,s\right)  - F\left(x',y',s'\right)  \right| < \epsilon$. For $N$ large enough, $\frac{1}{N} < \eta$ and we have then
\begin{align}\label{eq:lem_maj_Ai5_S2}
A^{(N)}_{i,T,5}&= \int_0^T \left | \sum_{j=1}^N \int_{\frac{j-1}{N}}^{\frac{j}{N}} F\left( x_i,x_j,s\right)dy -\sum_{j=1}^N \int_{\frac{j-1}{N}}^{\frac{j}{N}} F(x_i,y,s)dy \right|ds\notag\\
&\leq  \int_0^T \sum_{j=1}^N  \int_{\frac{j-1}{N}}^{\frac{j}{N}}  \left | F\left( x_i,x_j,s\right) - F\left( x_i,y,s\right) \right| dy~ds \leq T\varepsilon.
\end{align}
Summing on $i$ or taking the supremum, \eqref{eq:lem_S_A5}  and \eqref{eq:lem_S_A5_max} follow.
\section{Proofs: the empirical measure and the spatial profile}

\subsection{Proof of Theorem \ref{thm:cvg_dBL}}\label{S:proof_thm:cvg_dBL}

We prove the convergence of $\mathbf{E}\left[d_{BL}(\mu_N,\mu_\infty)\right] \xrightarrow[N\to\infty]{}0$.
Some of the following arguments come from \cite{CHEVALLIER20191}. We consider $\mathbb{D}\left([0,T],\mathbb{N}\right)$ with the distance $d_0$ introduced in \cite{billingsley} (§14) which makes it complete, and we have for any $\eta,\zeta$ in $\mathbb{D}\left([0,T],\mathbb{N}\right)$ , $d_0(\eta,\zeta)\leq \sup_{t\leq T} \vert \eta(t) - \zeta(t) \vert$. Recall that $ d_{BL}\left(\mu_N,\mu_\infty\right) = \sup_{\phi, \Vert \phi \Vert _ {BL}\leq 1} \left| \int \phi d\mu_N - \int \phi d\mu_\infty\right|$. We start by proving that for any  $\phi$  fixed, $\mathbf{E}\left| \int \phi\left( d\mu_N -d\mu_\infty\right)\right| \xrightarrow[N\to\infty]{} 0$. By an argument of compactness, we show how it implies \eqref{eq:thm_cvg_dBL}.
\medskip

\textit{Step 1 - Convergence when $\phi$ is fixed.} 
We fix $\phi$ a real-valued function on $S$ (recall that $S:=\mathbb{D}\left([0,T],\mathbb{N}\right)\times I$) such that $ \Vert \phi \Vert _{BL}\leq 1$. Then with the coupling introduced in Definition \ref{def:couplageZZbarre}:
\begin{align*}
\mathbf{E}\left| \int \phi\left( d\mu_N -d\mu_\infty\right)\right| &= \mathbf{E}\left| \dfrac{1}{N} \sum_{i=1}^N \phi\left(Z_i^{(N)}, x_i\right) - \int \phi(\eta, x) P_{[0,T],\infty}\left( d\eta \vert x\right) \nu(dx) \right| \\
&\leq \mathbf{E}\left| \dfrac{1}{N} \sum_{i=1}^N \left( \phi\left(Z_i^{(N)}, x_i\right) - \phi\left(\overline{Z}_i,x_i\right)\right) \right| \\
&+ \mathbf{E}\left| \dfrac{1}{N} \sum_{i=1}^N \left(\phi\left(\overline{Z}_i,x_i\right) - \int \phi\left(\eta, x_i\right)  P_{[0,T],\infty}\left( d\eta \vert x_i\right)\right) \right|\\
&+ \mathbf{E}\left| \dfrac{1}{N} \sum_{i=1}^N \int \phi\left(\eta, x_i\right)  P_{[0,T],\infty}\left( d\eta \vert x_i\right)  - \int \phi(\eta, x) P_{[0,T],\infty}\left( d\eta \vert x\right) \nu(dx) \right|\\&:=A+B+C.
\end{align*}
The term $A$ is treated easily with Theorems \ref{thm:cvg_0} or \ref{thm:cvg-sup_0}: as $\phi$ is Lipschitz continuous and $\Vert \phi \Vert_L \leq 1$, 
$$A\leq  \dfrac{1}{N} \sum_{i=1}^N \mathbf{E} \left[ d_0\left( Z_i^{(N)}, \overline{Z}_i\right) \right] \leq \dfrac{1}{N} \sum_{i=1}^N \mathbf{E} \left[ \sup_{t\leq T} \left| Z_i^{(N)}(t) -  \overline{Z}_i(t)\right| \right] \xrightarrow[N\to\infty]{} 0.$$
To treat $B$, we set for each $i\in\llbracket 1,N \rrbracket$ $G_i:=\phi\left(\overline{Z}_i,x_i\right)$, it is a random variable with expectation $\int \phi\left(\eta, x_i\right)  P_{[0,T],\infty}\left( d\eta \vert x_i\right)$. We have then applying Lemma \ref{lem:maj_famille_indep}, $B\leq \dfrac{1}{N} \sqrt{\sum_{i=1}^N{\rm Var}(G_i)}$. To calculate ${\rm Var}(G_i)$, let $\left(\widetilde{Z}_i(t)\right)_{0\leq t \leq T}$ be an independent copy of $\left(\overline{Z}_i(t)\right)_{0\leq t \leq T}$ and set $\widetilde{G}_i:=g\left(\widetilde{Z}_i,x_i\right)$, then denoting by $\widetilde{E}$ the expectation taken with respect to $\widetilde{G}_i$, we have
$${\rm Var}(G_i)=\mathbf{E}\left[ \left( G_i - \mathbf{E}\left[G_i\right]\right)^2\right] = \mathbf{E} \left[\widetilde{E}\left[ G_i - \widetilde{G}_i \right]^2 \right] \leq \mathbf{E} \left[ \widetilde{E}\left[ \left(G_i - \widetilde{G}_i \right)^2\right] \right] $$
by Jensen's inequality. We have, as $\Vert g \Vert_L\leq 1$:
$$\widetilde{E}\left[ \left(G_i - \widetilde{G}_i \right)^2\right] \leq \widetilde{E} \left[ d_0\left( \overline{Z}_i,\widetilde{Z}_i\right) ^2\right]  \leq \widetilde{E}\left[ \left( \sup_{0\leq t \leq T} \left| \overline{Z}_i(t)- \widetilde{Z}_i(t) \right| \right)^2\right]\leq 2 \overline{Z}_i(T)^2 + 2\widetilde{E}\left[ \widetilde{Z}_i(T) ^2\right] $$
as the processes are increasing. Thus we obtain ${\rm Var}(G_i) \leq 4 \mathbf{E} \left[ \overline{Z}_i (T) ^2 \right]$.
As $Z_i(T)$ is a Poisson random variable with rate $\int_0^T \lambda(t,x_i)dt$,
$$\mathbf{E}\left[\widetilde{Z}_i(T) ^2\right] = {\rm Var}\left(\widetilde{Z}_i(T)\right) + \left( \mathbf{E}\left[\widetilde{Z}_i(T)\right]\right)^2 = \int_0^T \lambda(t,x_i)dt+ \left( \int_0^T \lambda(t,x_i)dt\right)^2$$
which is finite as $\lambda$ is bounded (Theorem \ref{thm:existence_lambda}). We have then shown that $B\xrightarrow[N\to \infty]{} 0$.
To treat $C$, note that it can be rewritten as
$$C=\left| \int \int \phi(\eta,x) P_{[0,T],\infty}(d\eta\vert x) \left(\nu^{(N)}(dx)-\nu(dx)\right)\right|.$$
We denote by $h$ the bounded function $h(x)=\int \phi(\eta,x)P_{[0,T],\infty}(d\eta\vert x)$. Under Scenario (1), $C=\left|  \dfrac{1}{N}\sum_{i=1}^N h(x_i) - \int_I h(x) \nu(dx)\right|\xrightarrow[N\to\infty]{} 0$ by the Law of Large Numbers. Under Scenario (2), we recognise a Riemann sum with $C=\left| \dfrac{1}{N} \sum_{i=1}^N h\left(\frac{i}{N}\right) - \int_0^1 h(x)dx \right|$: it suffices to show that $h$ is continuous to have $C \xrightarrow[N\to\infty]{} 0$. Fix $x$ in $I$ and consider a sequence $\left(x_n\right)$ such that $x_n \xrightarrow[n\to\infty]{} x$. We have
$$\vert h(x_n) -  h(x) \vert \leq \int \left| \phi(\eta,x_n) - \phi(\eta,x) \right| P_{[0,T],\infty}(d\eta\vert x_n)+  \left| \int \phi(\eta,x) \left(  P_{[0,T],\infty}(d\eta\vert x) -  P_{[0,T],\infty}(d\eta\vert x_n) \right) \right|.$$
We deal with the first term: by the Lipschitz continuity of $\phi$ and the fact that $P_{[0,T],\infty}(\cdot\vert x_n)$ is a probability measure, we have $\int \left| \phi(\eta,x_n) - \phi(\eta,x) \right| P_{[0,T],\infty}(d\eta\vert x_n) \leq  \Vert x-x_n \Vert \xrightarrow[n\to\infty]{} 0$. As $x$ is fixed, to have the second term $ \left| \int \phi(\eta,x) \left(  P_{[0,T],\infty}(d\eta\vert x) -  P_{[0,T],\infty}(d\eta\vert x_n) \right) \right| \xrightarrow[n\to\infty]{} 0,$ we show that for any function $\psi$ with Lipschitz constant $\Vert \psi \Vert_L \leq 1$ defined on $\mathbb{D}\left([0,T],\mathbb{N}\right)$, the function $\rho(y):= \int \psi(\eta) P_{[0,T],\infty}(d\eta\vert y)$ is continuous on $I$: let $\pi$ be a random Poisson measure with intensity $dsdz$ on $\mathbb{R}_+\times \mathbb{R}_+$, and for each $y \in I$ construct a Poisson point process $\overline{Z}^y$ on $[0,T]$ with intensity $\lambda(\cdot,y)$ by taking $\overline{Z}^y(t)= \int_0^t\int_0^\infty \mathbf{1}_{z\leq \lambda(s,y)}\pi(ds,dz)$. Then, as $\psi$ is Lipschitz continuous,
\begin{align*}
\vert \rho (x) - \rho(x_n) \vert &= \left| \mathbf{E} \left[ \psi\left( \overline{Z}^{x_n}\right) - \psi\left( \overline{Z}^{x}\right) \right] \right| \leq \mathbf{E} \left[ d_0 \left( \overline{Z}^{x_n}, \overline{Z}^{x} \right)\right]\\
&\leq  \mathbf{E} \left[ \sup_{0\leq t \leq T}\left| \overline{Z}^{x_n}(t) - \overline{Z}^{x}(t) \right|\right] \leq \mathbf{E} \left[ \int_0^t \left| d\left(  \overline{Z}^{x_n}(s) - \overline{Z}^{x}(s) \right)\right| \right]\\
&\leq \mathbf{E}\left[ \int_0^t \int_0^\infty \left| \mathbf{1}_{z\leq \lambda(s,x_n)} - \mathbf{1}_{z\leq\lambda(s,x)}\right| \pi(ds,dz) \right]\\
&\leq \int_0^t \left| \lambda(s,x_n) - \lambda(s,x) \right| ds \leq T \Vert x-x_n \Vert^\vartheta \xrightarrow[n\to\infty]{}0,
\end{align*}
with \eqref{eq:pt_fixe_lips_espace}. Then $\rho$ is indeed continuous on $I$, and so is $h$ hence  $C=\left| \frac{1}{N} \sum_{i=1}^N h\left(\frac{i}{N}\right) - \int_0^1 h(x)dx \right| \xrightarrow[N\to\infty]{} 0$.
We have shown that for any function $\phi$ on $S$ such that $\Vert \phi \Vert_{BL}\leq 1$, we have 
\begin{equation}\label{eq:thm_cvf_dBL_gfixe}
\mathbf{E}\left| \int \phi\left( d\mu_N -d\mu_\infty\right)\right| \xrightarrow[N\to\infty]{} 0.
\end{equation}
\medskip 

\textit{Step 2 - Approximation of any $\phi$ by a finite set of functions and conclusion.}
To derive \eqref{eq:thm_cvg_dBL}, we use an argument from Lemma 4.5 of \cite{Luon2020} and Theorem 11.3.3 of \cite{Dudley2002}.
For all $\varepsilon>0$, there exists a compact set $K\subset S$ with $\mu_\infty(K)>1-\varepsilon$. The set of functions $B:=\left\{\phi_{|K}, \Vert \phi \Vert_{BL}\leq 1\right\}$, restricted to $K$ is a compact set by Arzela-Ascoli Theorem, hence there exists $k\geq 1$ and $k$ functions in $B$ $\phi_1,\cdots	, \phi_k$ such that for any $\phi$ satisfying $\Vert \phi \Vert_{BL}\leq 1$, there exists $j\leq k$ that verifies $\sup_{y\in K} \left| \phi(y) - \phi_j(y)\right| \leq \varepsilon$. We denote by $K^\varepsilon:=\left\{ z\in S, d_S\left( z,K\right)< \varepsilon \right\}$. Then $\sup_{z\in K^\varepsilon} \left| \phi(z) - \phi_j(z)\right|<3\varepsilon$ as for any $z \in K^\varepsilon$, we can find $y_z \in K$ such that $d_S(z,y_z)<\varepsilon$ and
\begin{align*}
\left| \phi(z) - \phi_j(z)\right| &\leq \left| \phi(z) - \phi(y_z)\right| +\left| \phi(y_z) - \phi_j(y_z)\right| + \left| \phi_j(y_z) - \phi_j(z)\right|\\
&\leq \Vert \phi \Vert_L d_S(z,y_z) + \varepsilon + \Vert \phi_j \Vert_L d_S(z,y_z) \leq 3 \varepsilon.
\end{align*}
We introduce the function on $S$: $g(z)=\max \left(0, 1-\dfrac{d_S(z,K)}{\varepsilon}\right)$. Note that $\mathbf{1}_{K} \leq g \leq \mathbf{1}_{K^\varepsilon}$ and $g$ is bounded and Lipschitz continuous. Then, integrating on $\mu_N$, we obtain $\mu_N(K^\varepsilon) \geq \int g d\mu_N$.
We put together all the previous bounds to have, for any $\phi$ such that $\Vert \phi \Vert_{BL}\leq 1$:
\begin{align*}
\left| \int \phi \left(d\mu_\infty - d\mu_N\right)\right| &\leq \int \left| \phi - \phi_j \right|  \left(d\mu_\infty + d\mu_N\right) + \left| \int \phi_j  \left(d\mu_\infty - d\mu_N\right)\right|\\
&\leq \int_{K^\varepsilon} \left| \phi - \phi_j \right|  \left(d\mu_\infty + d\mu_N\right) + \int_{S-K^\varepsilon} \left| \phi - \phi_j \right|  \left(d\mu_\infty + d\mu_N\right) \\&\quad + \left| \int \phi_j  \left(d\mu_\infty - d\mu_N\right)\right|\\
&\leq 3\varepsilon . 2 + 2\mu_\infty\left(S-K^\varepsilon\right) + 2\mu_N\left(S-K^\varepsilon\right) +  \left| \int \phi_j  \left(d\mu_\infty - d\mu_N\right)\right|.
\end{align*}
Hence, taking the supremum on such function $\phi$  we obtain 
$$ \sup_{\phi, \Vert \phi \Vert _ {BL}\leq 1} \left| \int \phi \left(d\mu_N -  d\mu_\infty\right)\right| \leq 8\varepsilon + 2\left(1-\int g d\mu_N\right) + \max_{1\leq j \leq k} \left| \int \phi_j  \left(d\mu_\infty - d\mu_N\right)\right|.$$
Using \eqref{eq:thm_cvf_dBL_gfixe}, for $N$ large enough $\mathbb{E}\left[\int gd\mu_N\right]>\int gd\mu_\infty - \varepsilon$ and as $\int gd\mu_\infty \geq \mu_\infty(K) \geq 1 -\varepsilon$, we have $\mathbb{E}\left[\int gd\mu_N\right]>1-2\varepsilon$ and then 
$\mathbf{E} \left[ d_{BL}\left(\mu_N,\mu_\infty\right)\right] \leq 12\varepsilon + \mathbf{E}\left[\max_{1\leq j \leq k} \left| \int \phi_j  \left(d\mu_\infty - d\mu_N\right)\right| \right]$
and using \eqref{eq:thm_cvf_dBL_gfixe}, $ \mathbf{E}\left[\max_{1\leq j \leq k} \left| \int \phi_j  \left(d\mu_\infty - d\mu_N\right)\right| \right] \xrightarrow[N\to\infty]{}0$ as there is a finite number of functions considered, which concludes the proof of \eqref{eq:thm_cvg_dBL}.\qed
\subsection{Proof of Proposition \ref{prop:cvg_profil_spatial}} \label{S:proof_prop:cvg_profil_spatial}
We show the convergence of the spatial profile $U_N$, when the positions are regularly distributed on $[0,1]$ and $W$ is continuous.  We have
\begin{align*}
&\mathbf{E}\left[  \int_0^T  \int_0^1 \left| U_N(t,x) - u(t,x) \right| dx ~dt \right]  \leq \mathbf{E} \left[ \int_0^T \int_0^1 \left| \sum_{i=1}^N \mathbf{1}_{\left( \frac{i-1}{N},\frac{i}{N}\right]}(x) \left( U_{i,N}(t) - u(t,x) \right) \right| dx dt \right]\\
&\leq \mathbf{E}\left[ \int_0^T \dfrac{1}{N} \sum_{i=1}^N \left| U_{i,N}(t) - u(t,x_i) \right| dt \right] + \int_0^T \int_0^1 \left| \sum_{i=1}^N \mathbf{1}_{\left( \frac{i-1}{N},\frac{i}{N}\right]}(x) \left( u(t,x_i) - u(t,x) \right) \right| dx dt.
\end{align*}
The first term is dealt with the proof of Theorem \ref{thm:cvg_0}: recall \eqref{eq:delta_i_majoration}, we recognise
$$\int_0^T \mathbf{E} \left[ \left |  U_{i,N}(t) - u(t,x_i)  \right|\right]dt \leq \left( \sum_{k=1}^5 A^{(N)}_{i,T,k} \right),$$
and we have showed that $\displaystyle \dfrac{1}{N}\sum_{i=1}^N  A^{(N)}_{i,T,k} \xrightarrow[N\to \infty]{}0 ~\mathbb{P}$-almost surely for each $k=1,\ldots,5$. We then have $\mathbf{E}\left[ \int_0^T \dfrac{1}{N} \sum_{i=1}^N \left| U_{i,N}(t) - u(t,x_i) \right| dt \right]   \xrightarrow[N\to \infty]{}0 ~\mathbb{P}$-almost surely.
The other term is treated easily: as $u$ is continuous on the compact set $[0,T]\times [0,1]$ it is uniformly continuous. Fix $\varepsilon > 0$, then there exists $\eta>0$ such that if $\Vert t - t'\Vert + \Vert x-x' \Vert \leq \eta$, $\left| u(t,x)-u(t',x')\right| \leq \dfrac{\varepsilon}{T}$. We have then for $N$ large enough (such that $\dfrac{1}{N} \leq \eta$):
\begin{align*}
\int_0^T \int_0^1 \left| \sum_{i=1}^N \mathbf{1}_{\left( \frac{i-1}{N},\frac{i}{N}\right]}(x) \left( u(t,x_i) - u(t,x) \right) \right| dx dt &= \int_0^T \sum_{i=1}^N \int_{\frac{i-1}{N}}^\frac{i}{N} \left|  u(t,x_i) - u(t,x) \right| dx dt\\& \leq \int_0^T \dfrac{\varepsilon}{T} dt  = \varepsilon,
\end{align*}
which concludes the proof.\qed
\section{Proofs: Behavior in large time limit - Linear case}
\subsection{Proof of Theorem \ref{thm:lambda_temps_long_sous_critique}} \label{S:proof_thm:lambda_temps_long_sous_critique}

We show that in the subcritical case, $\lambda(\cdot,x)$ has a large time limit given by \eqref{eq:def_l_lim}. Assumption \eqref{eq:CS_sous_critical} implies the existence of some $n_0$ such that $\Vert h \Vert_1^{n_0}  \Vert T_W^{n_0} \Vert <1$.
\medskip

\textit{Step 1 -}
We show existence and uniqueness of $\ell$ by applying Banach fixed-point Theorem. We consider the map defined on $\mathcal{C}_b\left(I,\mathbb{R}\right)$ (the set of bounded continuous functions defined on $I$):
\begin{align*}
&F :  g \longmapsto F(g) \text{ such that for all } x\in I,\\
&F(g)(x)=u(x)+\Vert h \Vert_1 \int_{I} W(x,y)g(y)\nu(dy).
\end{align*}
As $u$ is bounded on $I$, $F(g)$ is bounded for any $g\in\mathcal{C}_b\left(I,\mathbb{R}\right)$ by $\Vert u \Vert_\infty + \Vert h \Vert_1 \Vert g \Vert_\infty C_W^{(1)}<\infty	$.
We check now that for any $g$, $F(g)$ is continuous. Let $(x,z) \in I \times I$. We have as $u$ is Lipschitz continous and using \eqref{eq:hyp_W_pseudolip_theta},  for any $g\in\mathcal{C}_b\left(I,\mathbb{R}\right)$:
\begin{align*}
\vert F(g)(x) - F(g)(z)\vert &\leq\left| u(x)-u(z) \right| + \Vert h \Vert_1 \left| \int_{I} \left(W(x,y)-W(z,y)\right)g(y)\nu(dy)\right|\\
&\leq \Vert u \Vert_L \Vert x-z \Vert + \Vert h \Vert_1 \Vert g \Vert_\infty  \Vert x-z \Vert^\vartheta.
\end{align*} 
We have then shown the existence of a constant $C_g$ independent of the choice of $(x,z)$ such that $\vert F(g)(x) - F(g)(z)\vert  \leq C_g\phi \left(\Vert x-z \Vert\right) . $ Hence, $\mathcal{C}_b\left(I,\mathbb{R}\right)$ is stable by $F$. 

We are going to prove that $F$ admits an \textbf{unique fixed point}, which is $\ell$ satisfying \eqref{eq:def_l_lim}. To do it, we show that some iteration of $F$ is contractive, and then the Banach fixed-point Theorem gives the result. Let $g$ and $\tilde{g}$ be two functions in $\mathcal{C}_b\left(I,\mathbb{R}\right)$. As $Fg = u + \Vert h \Vert_1 T_Wg$, we have immediately that $F^{n_0} g = \sum_{k=0}^{n_0-1} \Vert h \Vert_1^k T_W^k u + \Vert h \Vert_1^{n_0} T_W^{n_0} g$. Then
$ \Vert F^{n_0} g - F^{n_0} \tilde{g}\Vert =   \Vert h \Vert_1^{n_0} T_W^{n_0} (g-\tilde{g}) \leq  \Vert h \Vert_1^{n_0}  \Vert T_W^{n_0} \Vert \Vert g-\tilde{g} \Vert_\infty.$
As $n_0$ is chosen such that $\Vert h \Vert_1^{n_0}  \Vert T_W^{n_0} \Vert <1$, $F^{n_0}$ is contractive, thus has an unique fixed point which is also the unique fixed point of $F$ in $\mathcal{C}_b\left(I,\mathbb{R}\right)$ that we call $\ell$, solution to \eqref{eq:continuite_ell}. Note that such a $\ell$ is necessarily nonnegative, as the iterative map $F$ preserves positivity.
\medskip

\textit{Step 2 -} Let us show that under the present hypotheses, $\sup_{t\geq 0} \sup_{x\in I} \left| \lambda(t,x)\right|<\infty$. As $\lambda(t,x) = u_0(t,x) + h \ast \left( T_W \lambda\right) (t,x)$, $T_W\lambda(t,x) = T_W u_0(t,x) + h \ast T^2_W \lambda(t,x)$ and the iteration gives $\lambda(t,x) = \left( \sum_{k=0}^{n_0-1} h^{\ast k} \ast T_W^k u_0 \right) (t,x) + h^{\ast n_0} \ast T_W^{n_0} \lambda(t,x)$ for any $(t,x)\in \mathbb{R}_+\times I$, hence $\Vert \lambda(t,\cdot) \Vert_\infty \leq C(u_0,h,W) + \Vert h \Vert_1^{n_0} \Vert T_W ^{n_0} \Vert \Vert \lambda(t,\cdot) \Vert_\infty$ with $C(u_0,h,W)$ a positive constant. As we are in the subcritical case, it gives then $\sup_{t\geq 0} \Vert \lambda(t,\cdot) \Vert_{\infty}\leq \dfrac{C(u_0,h,W)}{1-\Vert h \Vert_1^{n_0} \Vert T_W^{n_0} \Vert} <\infty $. As $\lambda$ is then continuous and bounded on $\mathbb{R}_+\times I$, we can define its (temporal) Laplace transform: for any $x\in I$ and $z>0$, let
\begin{equation}\label{eq:def_laplace_lambda}
\Lambda(z,x):=\int_0^\infty e^{-tz} \lambda(t,x)dt.
\end{equation}
Let us study $ z\Lambda(z,x)$. We have, for any $x\in I $ and $z>0$, $$ z\Lambda(z,x) = \int_0^\infty ze^{-tz} \lambda(t,x)dt= \lambda(0,x) + \int_0^\infty e^{-tz} \dfrac{\partial \lambda}{\partial t} (t,x) dt.$$  Suppose that we are able to show that $\displaystyle I(x):= \int_0^\infty \left| \dfrac{\partial \lambda}{\partial t}(t,x) \right| <\infty$ for some $x$. Then, by dominated convergence theorem, $\displaystyle \int_0^\infty e^{-tz} \dfrac{\partial \lambda}{\partial t} (t,x) dt$ converges as $z\to 0$ to the finite limit $\displaystyle \int_0^\infty \dfrac{\partial \lambda}{\partial t} (t,x) dt$. This implies in particular that $\lambda(t,x)$ has a finite limit as $t\to\infty$, and we have in this case $\displaystyle \lim_{z\to 0} z\Lambda(z,x) = \lim_{t\to\infty} \lambda(t,x)$.
We have, by integrating by parts
\begin{align*}
\dfrac{\partial \lambda}{\partial t} (t,x)&= \dfrac{\partial u_0}{\partial t}(t,x)+	h(0)\int_I W(x,y) \lambda(t,y)\nu(dy) + \int_0^t \int_I W(x,y)h'(t-s)\lambda(s,y)\nu(dy)~ds\\
&= \dfrac{\partial u_0}{\partial t}(t,x)+ \int_I W(x,y) h(t) \lambda(0,y)\nu(dy) + \int_0^t \int_I W(x,y)h(t-s)\dfrac{\partial \lambda}{ \partial s}(s,y) \nu(dy)~ds,
\end{align*}
where we used Theorem \ref{thm:existence_lambda} for the regularity of $\dfrac{\partial \lambda}{ \partial s}$. We also know from Theorem \ref{thm:existence_lambda} that $(t,x) \to \dfrac{\partial \lambda}{ \partial t}(t,x)$ is bounded of $[0,T]\times I$ for any $T>0$, which implies that for any $A>0$, $\displaystyle \sup_{x\in I} \int_0^A \left| \dfrac{\partial \lambda}{ \partial t}(t,x) \right| dt <\infty$. Integrating on $[0,A]$, we have using \eqref{eq:borne_partial_u0}
\begin{align*}
\int_0^A\left| \dfrac{\partial \lambda}{\partial t} (t,x) \right| dt &\leq C_{u_0} + \Vert h \Vert_1  \Vert \lambda \Vert_\infty D(x) + \int_0^A \int_0^t \int_I W(x,y)  h(t-s) \left| \dfrac{\partial \lambda}{\partial s} (s,y) \right| \nu(dy)dsdt.
\end{align*}
Yet with a change in the bounds of the integrals ($0\leq s \leq t \leq A$)
\begin{align*}
\int_0^A \int_0^t \int_I W(x,y) h(t-s)  \left| \dfrac{\partial \lambda}{\partial s} (s,y) \right| \nu(dy) ~ ds~ dt =& \int_I W(x,y) \int_0^A \int_0^t  h(t-s)  \left| \dfrac{\partial \lambda}{\partial s} (s,y) \right| ds~ dt~ \nu(dy)\\
=& \int_I W(x,y) \int_0^A \left( \int_s^A h(t-s) dt \right)  \left| \dfrac{\partial \lambda}{\partial s} (s,y) \right| ds ~ \nu(dy)\\
\leq & ~ \Vert h \Vert_1 \int_I W(x,y) \int_0^A   \left| \dfrac{\partial \lambda}{\partial s} (s,y) \right| ds~ \nu(dy).
\end{align*}
Setting $\displaystyle I_A(x):= \int_0^A  \left| \dfrac{\partial \lambda}{\partial s} (s,x) \right|ds$ and $C=C_{u_0} + \Vert h \Vert_1  \Vert \lambda \Vert_\infty C_W^{(1)}$, we have shown that $\displaystyle I_A(x) \leq C + \Vert h \Vert_1 \left( T_W I_A \right)(x)$ and by iteration $$I_A(x) \leq  \sum_{k=0}^{n_0-1} C \Vert h \Vert_1 ^k {C_W^{(1)}}^k + \Vert h \Vert _1 ^{n_0} T_W^{n_0}I_A(x) \leq  \sum_{k=0}^{n_0-1} C \Vert h \Vert_1 ^k {C_W^{(1)}}^k + \Vert h \Vert _1 ^{n_0} \Vert T_W^{n_0}\Vert \Vert I_A \Vert_\infty,  $$ and then  $ \Vert I_A \Vert_\infty \leq \dfrac{C(h,u_0,W)}{1-\Vert h \Vert _1 ^{n_0} \Vert T_W^{n_0}\Vert }=C'$, with $C'$ a positive constant independent of $A$. We can then let $A\to\infty$ to obtain $\sup_{x\in I} I(x) < \infty$. Hence by dominated convergence $\lim_{z\to 0} z\Lambda(z,x)$ exists for any $x\in I$ and is equal to $\lim_{t\to\infty}\lambda(t,x)=:\ell(x)$ that can now be defined. 
Coming back to the definition on $\Lambda$, we do the same for $u_0$ and define for any $x\in I$ and $z> 0$ $U(z,x):= \int_0^\infty e^{-tz} u_0(t,x)dt$. As $u_0(t,x)\xrightarrow[t\to\infty]{}u(x)$, note that $\lim_{z\to 0}zU(z,x)=u(x)$. As $h$ is integrable in this framework, we can also define its Laplace transform for any $z\geq 0$ by $H(z):= \int_0^\infty e^{-tz} h(t)dt$, with $H(0)=\Vert h \Vert_1$. Using the fact that the Laplace transform of a convolution is the product of the Laplace transforms, we have for any $x\in I$  and $z>0$
\begin{align}\label{eq:laplace_pointfixe_z}
z\Lambda(z,x)
&= zU(z,x) + H(z)  \int_I W(x,y) z \Lambda(z,y) \nu(dy).
\end{align}
Letting $z\to 0$ in \eqref{eq:laplace_pointfixe_z}, we obtain that $\ell$ is solution of the equation \eqref{eq:def_l_lim}.\qed
\subsection{Proof of Propositions \ref{prop:spectral_TW_L2} and \ref{prop:lambda_temps_long_sur_critique}} \label{S:proof_prop:lambda_temps_long_sur_critique}\

\textit{Proof of Proposition \ref{prop:spectral_TW_L2}} .
The boundedness of $T_{ W}$ on $L^{ 2}(I)$ follows from \eqref{eq:W_deg2} and Cauchy-Schwarz inequality: for $g\in L^{ 2}(I)$
\begin{align*}
\left\Vert T_{ W}g \right\Vert_{ 2}^{ 2} &= \int \left(\int W(x,y) g(y) \nu({\rm d}y)\right)^{ 2} \nu({\rm d}x)\\
& \leq \int \left( \int W(x,y)^{ 2} \nu({\rm d}y)\right) \left(\int g(y)^{ 2} \nu({\rm d}y)\right) \nu({\rm d}x) \leq \left\Vert g \right\Vert_{ 2}^{ 2} C_W^{(2)}.
\end{align*}
It is standard to see that $T_{ W}$ is compact on $L^{ 2}(I)$ and selfadjoint, by \eqref{eq:W_sym}, so that the same result holds readily for $T_{ W}^{ p}$ for all $p\geq1$. The fact that the spectrum of $T_{ W}^{ p}$ is made of a countable set of eigenvalues with no other accumulation points than $0$ is a mere application of the spectral theorem for compact operators. Let us now prove \eqref{eq:spectral_radii_equal}: first note that it suffices to prove that $r_{ 2}(T_{ W}^{ 2})= r_{ \infty}(T_{ W}^{ 2})$. Indeed, for any continuous operator $T$ with spectral radius $r(T)$, for all $p\geq1$, $ r(T^{ p})^{ \frac{ 1}{ p}}= \left( \lim_{ n\to\infty} \left\Vert T^{ pn} \right\Vert^{ \frac{ 1}{ n}}\right)^{ \frac{ 1}{ p}}= \lim_{ n\to\infty} \left\Vert T^{ pn} \right\Vert^{ \frac{ 1}{ pn}}= r(T)$, so that $r(T^{ p})= r(T)^{ p}$. Hence $r_{ 2}(T_{ W}^{ 2})= r_{ \infty}(T_{ W}^{ 2})$ gives $r_{ 2}(T_{ W})= r_{ \infty}(T_{ W})$ and \eqref{eq:spectral_radii_equal} follows. We prove that $r_{ 2}(T_{ W}^{ 2})= r_{ \infty}(T_{ W}^{ 2})$ by proving that they have the same spectrum. To do so, first note that $T_{ W}^{ 2}: L^{ \infty}(I) \to L^{ \infty}(I)$ is compact: consider $\left(f_n\right)_n$ a bounded sequence of $L^\infty(I)$. It is then also bounded in $L^2(I)$, and as $T_W:L^2(I)\to L^2(I)$ is compact, there exists a subsequence $\left(f_{\phi(n)}\right)$ such that $T_Wf_{\phi(n)}$ converges in $L^2(I)$ to a certain $g$. Then for any $x\in I$,
$$ \vert T_W^2 f_{\phi(n)} - T_Wg \vert (x) \leq \int_I W(x,y) \left| T_W f_{\phi(n)}(y) - g(y) \right| dy \leq \sqrt{C_W^{(2)}} \Vert T_Wf_{\phi(n)}-g\Vert_2 \xrightarrow[n\to\infty]{} 0,$$ thus $T_W^2:L^\infty(I)\to L^\infty(I)$ is compact. Hence, if one denotes by $ \sigma_{ \infty}(T_{ W}^{ 2})$ and $ \sigma_{ 2}(T_{ W}^{ 2})$ the corresponding spectrum of $T_{ W}^{ 2}$ (in $L^{ \infty}(I)$ and $L^{ 2}(I)$ respectively), we have that each nonzero element of $ \sigma_{ \infty}(T_{ W}^{ 2})$ and $ \sigma_{ 2}(T_{ W}^{ 2})$ is an eigenvalue of $T_{ W}^{ 2}$: let $\mu \in \sigma_2(T_W^2)\setminus\{0\}$, there exists $g\in L^2(I)$ such that $\mu g = T_W^2g$. As $$\left| T_W^2g(x) \right| = \left| \int_I W(x,y) \int_I W(y,z) g(z) ~ \nu(dz)\nu(dy)\right| \leq C_W^{(1)}\sqrt{C_W^{(2)}} \Vert g \Vert_2 <\infty,$$ $g = \frac{1}{\mu}T_W^2g \in L^\infty(I)$ and $\mu \in \sigma_\infty(T_W^2)$. Conversely, let  $\mu \in \sigma_\infty(T_W^2)\setminus\{0\}$, there exists $g \in L^\infty(I)$ such that $\mu g = T_W^2g$. As $L^\infty(I)\subset L^2(I)$, $\mu \in \sigma_2(T_W^2)$. Hence $r_{ 2}(T_{ W}^{ 2})= r_{ \infty}(T_{ W}^{ 2})$ and \eqref{eq:spectral_radii_equal} follows.

Let us now prove the second part of Proposition~\ref{prop:spectral_TW_L2}: this is essentially a reformulation of the Jentzsh-Krein-Rutman Theorem (see Theorem \ref{thm:schaefer6.6}): under assumption \eqref{eq:W_sym}, the spectral radius $r_{ 2}(T_{ W}^{ k})$ is an eigenvalue of $T_{ W}^{ k}$ with a unique normalized eigenfunction $h_{ 0}$ such that $h_{ 0}>0$, $ \nu$ a.e. on $I$ and every other eigenvalue $ \mu$ of $T_{ W}^{ k}$ has modulus $ \left\vert \mu \right\vert< r_{ 2}(T_{ W}^{ k})$. It remains to prove that $h_{0}^{(k)}$ is in fact continuous and bounded.  As $\Vert T_W^{ k}h_0^{ (k)}\Vert_\infty \leq  \left(C_{ W}^{ (1)}\right)^{ k-1}\sqrt{C_W^{(2)}} \Vert h_0^{ (k)} \Vert_2$ (using Cauchy-Schwarz inequality) and $h_0=\frac{1}{r(T_W^{ k})}T_{ W}^{ k}h_0^{ (k)}$, $h_0^{ (k)}$ is bounded. Condition \eqref{eq:hyp_W_pseudolip_theta} implies that $T_Wh_0^{(k)}$ is continuous on $I$, hence $h_0^{(k)}$ is a positive continuous function on $I$.\qed
\medskip

\textit{Proof of Proposition \ref{prop:lambda_temps_long_sur_critique}.}
We show that in the supercritical case, $\int_I \lambda(t,x)^2 \nu(dx) \xrightarrow[t\to\infty]{} \infty$. 

Consider for the moment the case $k=1$ (see \eqref{eq:Wkpos}). One benefit of working in $L^{ 2}(I)$ instead of $L^{ \infty}(I)$ is to take advantage of the Hilbert structure associated to $T_{ W}$: we know from the spectral theorem, that we can complete $h_0$ in an Hilbert orthonormal basis $\left( h_0, h_1, \cdots\right)$ of eigenvectors in $L^2(I)$ associated to the eigenvalues $(\mu_0=r_\infty, \mu_1, \mu_2, \cdots)$ with $\sup_{k\geq 1} \vert \mu_k \vert =: \widetilde{r}(T_W) < r_\infty$. We denote by $P_0$ the projection on $\text{Vect}(h_0)$ and $P_1=Id-P_0$: for any $g\in L^2(I)$, $P_0 g = \left\langle g\, ,\, h_{ 0}\right\rangle h_0=:p_0(g) h_0$ (with $p_0(g) \in \mathbb{R}$) and $P_1 g = \sum_{n\geq 1}  \left\langle g\, ,\, h_n\right\rangle h_n$. The strategy of proof of Proposition \ref{prop:lambda_temps_long_sur_critique} is then to analyse separately the dynamics of $P_{ 0} \lambda$ and $P_{ 1} \lambda$ for $ \lambda$ solution to \eqref{eq:def_lambdabarre_linear}. Concerning $P_{ 0} \lambda$, as $P_{ 0}$ projects onto $h_{ 0}$, eigenfunction associated to the dominant eigenvalue $r_{ 2}(T_{ W})$, its analysis reduces to a simple one-dimensional linear convolution equation, whose behavior in large time has been analysed in details (see \cite[Lemma 26]{delattre2016} or \cite[Th 4]{Feller1941}). The second step is to show that the contribution of $P_{ 1} \lambda$ remains of lower order as $t\to\infty$.

We focus first on the dynamics of $P_0\lambda$.
Using \eqref{eq:def_lambdabarre_linear}, as $T$ and $P_0$ commute,
{\begin{align*}
p_0(\lambda(t,\cdot))h_0= P_0 \lambda(t,\cdot) &= P_0 u_0(t,\cdot) + \int_0^t h(t-s) T_W P_0 \lambda(s,\cdot) ds\\
&=  \left(p_0(u_0(t,\cdot)) + r_\infty\int_0^t h(t-s) p_0(\lambda(s,\cdot))  ds\right)h_0.
\end{align*}
As $h_0(x)>0$ everywhere (since $h_0$ is continuous), we obtain that $ p_0(\lambda(t,\cdot))$ solves the convolution equation in $\mathbb{R}$
\begin{equation}\label{eq:convolution_proj0}
 p_0(\lambda(t,\cdot)) =  p_0(u_0(t,\cdot)) +  r_\infty \int_0^t h(t-s) p_0(\lambda(s,\cdot)) ds.
\end{equation}
Theorem \ref{thm:feller4} gives then that $ p_0(\lambda(t,\cdot)) \sim_{t\to\infty} Ce^{\sigma_r t}$  where $C>0$ depends on the parameter functions and $\sigma_r>0$ verifies $ r_\infty \int_0^\infty e^{-\sigma_rt} h(t) dt = 1$. We focus now on the other projection, $P_1\lambda$. We project on the rest of the space and take the norm $L^2(I)$:
\begin{align*}
\Vert P_1 \lambda(t,\cdot) \Vert_2 &\leq \Vert P_1 u_0(t,\cdot)\Vert_2 + \int_0^t h(t-s) \Vert T_W P_1 \lambda(s,\cdot) \Vert_2 ds.
\end{align*}
As $\displaystyle T_W P_1 \lambda(s,\cdot) =T_W\left(   \sum_{n\geq 1}  \left\langle P_1 \lambda(s,\cdot)\, ,\, h_n\right\rangle h_n  \right) = \sum_{n\geq 1} \left\langle P_1 \lambda(s,\cdot)\, ,\, h_{ n}\right\rangle \mu_nh_n $, we have  $\displaystyle \Vert T_W P_1 \lambda(s,\cdot) \Vert_2^2 = \sum_{n\geq 1} \left| \left\langle P_1 \lambda(s,\cdot)\, ,\, h_n\right\rangle\right|^2 \vert \mu_n \vert^2 \leq \widetilde{r}(T_W)^2 \Vert  P_1 \lambda(s,\cdot)\Vert_2^2$ so that
\begin{equation}\label{eq:proj_p1_ineg}
\Vert P_1 \lambda(t,\cdot) \Vert_2 \leq \Vert P_1 u_0(t,\cdot)\Vert_2 + \widetilde{r}(T_W) \int_0^t h(t-s) \Vert P_1 \lambda(s,\cdot) \Vert_2 ds.
\end{equation}
If we define $\alpha(t)= \Vert P_1 \lambda(t,\cdot) \Vert_2$, we see that $\alpha$ satisfies the convolution inequality $$\alpha(t)\leq  \Vert P_1 u_0(t,\cdot)\Vert_2 + \widetilde{r}(T_W) \int_0^t h(t-s)\alpha(s) ds,$$
hence we can compare it to $\beta_{\widetilde{r}}(t)$ solution of the convolution equality
$$\beta(t)=  \Vert P_1 u_0(t,\cdot)\Vert_2 +1+ \widetilde{r}(T_W) \int_0^t h(t-s)\beta(s) ds$$with Lemma \ref{lem:ineg_sol_conv}: $\alpha(t)\leq \beta(t)$ for all $t\geq 0$, that is

 $\Vert P_1 \lambda(t,\cdot) \Vert_2 \leq \beta_{\widetilde{r}}(t)$ for all $t\geq 0$.  We want now to show that $\beta_{\widetilde{r}}(t) = o\left( e^{\sigma_r t} \right)$ when $t\to\infty$. 
First suppose that we are in the case $\Vert h \Vert_1 \widetilde{r}(T_W) >1$. We apply (as done for $P_0$) Theorem \ref{thm:feller4} and obtain $ \beta_{\widetilde{r}}(t) \sim_{t\to\infty} \widetilde{C}e^{\sigma_{\widetilde{r}} t}$ where $\widetilde{C}>0$ depends on the parameter functions and $\sigma_{\widetilde{r}}>0$ verifies $ \widetilde{r}(T_W) \int_0^\infty e^{-\sigma_{\widetilde{r}} t} h(t) dt = 1$. In this case, $\sigma_{\widetilde{r}} < \sigma_r$ as $ \widetilde{r} (T_W) < r_\infty$, and $\beta_{\widetilde{r}}(t) = o\left( e^{\sigma_r t} \right)$ follows. Suppose now that we are in the case $\Vert h \Vert_1 \widetilde{r}(T_W) \leq 1$. As  $\Vert h \Vert_1 r_\infty >1$, we can find $\overline{r}$ such that $\widetilde{r}(T_W)<\overline{r} < r_\infty$ and  $\Vert h \Vert_1 \overline{r}>1$. Then, considering $\delta$ satisfying $\delta_{\overline{r}}(t) = \Vert P_1 u_0(t,\cdot)\Vert_2 + 2 + \overline{r} \int_0^t h(t-s) \delta_{\overline{r}}(s) ds$, as done before Lemma \ref{lem:ineg_sol_conv} gives $\beta_{\widetilde{r}}(t)\leq \delta_{\overline{r}}(t)$ and Theorem \ref{thm:feller4} gives $\delta_{\overline{r}}(t) \sim_{t\to\infty} \overline{C}e^{\sigma_{\overline{r}} t}$ where $\overline{C}>0$ depends on the parameter functions and $\sigma_{\overline{r}}>0$ verifies $ \overline{r} \int_0^\infty e^{-\sigma_{\overline{r}} t} h(t) dt = 1$. We have then that $\beta_{\widetilde{r}}(t) \leq \delta_{\overline{r}} (t) \sim_{t\to\infty} \overline{C}e^{\sigma_{\overline{r}} t} = o\left( e^{\sigma_r t} \right)$.
In any case, we obtain $ \Vert P_1 \lambda(t,\cdot) \Vert_2 =o\left( e^{\sigma_rt}\right)$, and as Parseval equality gives $$ \Vert \lambda(t,\cdot) \Vert_2^2 = \Vert P_0 \lambda(t,\cdot) \Vert_2^2 + \Vert P_1 \lambda(t,\cdot) \Vert_2^2,$$
it implies that $\Vert \lambda(t,\cdot) \Vert_2 \sim_{t\to\infty} C  e^{\sigma_rt} \xrightarrow[t\to\infty]{}+\infty$, with $C$ a positive constant, whence the result.
\medskip

\textit{Case $k>1$.} We deal with $k=2$ and leave the generalisation to the reader. Hypothesis \ref{hyp:surcritique+} \eqref{eq:Wkpos} is then that the kernel of $T_W^2$ is positive. As $\lambda(t,x) = u_0(t,x) + \int_0^t h(t-s) T_W\lambda(s,\cdot)(x) ds$, we have $T_W\lambda(t,\cdot)(x) = T_Wu_0(t,\cdot)(x) + \int_0^t h(t-s) T_W^2 \lambda(s,\cdot)(x)ds$ and  
\begin{align}\label{eq:lambda_Wk=2}
\lambda(t,x) &= u_0(t,x) + \int_0^t h(t-s) T_Wu_0(s,\cdot)(x)ds + \int_0^t h(t-s) \int_0^s h(s-u) T_W^2 \lambda(u,\cdot)(x)duds \notag\\
&= v_0(t,x) + \int_0^t \tilde{h}(t-s) T_W^2 \lambda(s,\cdot)(x)ds.
\end{align} 
with $\tilde{h}=h\ast h$ and $v_0(t,x)=u_0(t,x) + \int_0^t h(t-s) T_Wu_0(s,\cdot)(x)ds$. As $\Vert \tilde{h} \Vert_1 = \int_0^\infty \int_0^t h(t-s)h(s)dsdt = \int_0^\infty h(s)\int_s^\infty h(t-s)dtds = \Vert h \Vert_1^2$ and $\sqrt{r(T_W^2)} = r(T_W)$, the condition \eqref{eq:CS_sur_critical} implies $\Vert \tilde{h}\Vert_1 r(T_W^2) =  \Vert h \Vert _1 ^2 r(T_W) ^2 >1$. Then, we can apply the previous case ($k=1$) on $\lambda$ satisfying \eqref{eq:lambda_Wk=2}. \qed

\begin{appendix}
\newcounter{theorem}
 \setcounter{theorem}{0}
     \renewcommand{\thetheorem}{\Alph{section}\arabic{theorem}}
     \renewcommand{\thesection}{\Alph{section}}

\section{Useful results}

We remind here different results that we use frequently in this paper. The proof of the Lemmas \ref{lem:interversion_int} and \ref{lem:picard_gen} can be found respectively in Lemmas 22 and 23 of \cite{delattre2016}.
 
\begin{lem}\label{lem:interversion_int} 
Let $\phi:\left[0,\infty\right[ \longrightarrow \mathbb{R}$ be locally integrable and $\alpha:\left[0,\infty\right[ \longrightarrow \mathbb{R}$ with finite variations on compact intervals such that $\alpha(0)=0$. Then for all $t\geq 0$, we have 
$$ \int_0^t \int_0^{s-} \phi\left(s-u\right)d\alpha\left(u\right)ds = \int_0^t \int_0^{s} \phi\left(s-u\right)d\alpha\left(u\right)ds = \int_0^t  \phi\left(t-s\right)\alpha\left(s\right)ds.$$
\end{lem}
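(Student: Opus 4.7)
The plan is a two-step Fubini-type argument. For the first equality, observe that the difference
\[
\int_0^t \int_0^{s} \phi(s-u)\,d\alpha(u)\,ds - \int_0^t \int_0^{s-} \phi(s-u)\,d\alpha(u)\,ds = \int_0^t \phi(0)\,\Delta\alpha(s)\,ds,
\]
where $\Delta\alpha(s)=\alpha(s)-\alpha(s-)$. Since $\alpha$ has finite variation on $[0,t]$, the set $\{s\in[0,t]:\Delta\alpha(s)\neq 0\}$ is at most countable, hence Lebesgue-negligible, and the right-hand side vanishes. So the two forms of the inner integral produce the same outer integral.

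For the second (and main) equality, I would apply Fubini's theorem on $[0,t]\times[0,t]$ to the function $(s,u)\mapsto \phi(s-u)\mathbf{1}_{\{0\le u\le s\le t\}}$ with respect to the product measure $ds\otimes |d\alpha|(u)$. The integrability condition to be checked is
\[
\int_0^t \int_0^t |\phi(s-u)|\mathbf{1}_{\{u\le s\}}\,ds\,|d\alpha|(u) = \int_0^t \left(\int_0^{t-u} |\phi(v)|\,dv\right) |d\alpha|(u),
\]
which is finite because $\phi$ is locally integrable (so the inner integral is uniformly bounded by $\|\phi\|_{[0,t],1}$) and $|d\alpha|$ has finite total mass on $[0,t]$. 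Swapping the order and then substituting $v=s-u$ inside gives
\[
\int_0^t \int_0^s \phi(s-u)\,d\alpha(u)\,ds = \int_0^t \int_u^t \phi(s-u)\,ds\,d\alpha(u) = \int_0^t \left(\int_0^{t-u}\phi(v)\,dv\right) d\alpha(u).
\]

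The remaining step is to convert this last expression into $\int_0^t \phi(t-s)\alpha(s)\,ds$. Setting $\Phi(r):=\int_0^r\phi(v)\,dv$, which is absolutely continuous with $\Phi(0)=0$, the right-hand side reads $\int_0^t \Phi(t-u)\,d\alpha(u)$. An integration by parts (Stieltjes, valid since $\Phi$ is absolutely continuous and $\alpha$ has finite variation) together with $\alpha(0)=0$ and $\Phi(0)=0$ gives
\[
\int_0^t \Phi(t-u)\,d\alpha(u) = \bigl[\Phi(t-u)\alpha(u)\bigr]_0^t + \int_0^t \phi(t-u)\alpha(u)\,du = \int_0^t \phi(t-s)\alpha(s)\,ds,
\]
which is the target identity. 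The only mildly delicate point is the integrability check licensing Fubini; once this is in place everything else is an explicit substitution and an integration by parts.
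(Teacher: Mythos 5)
Your proof is correct. The paper itself does not give a proof of this lemma; it simply refers to Lemma 22 of \cite{delattre2016}, so there is no internal argument to compare against. Your structure — (i) the two forms of the inner Stieltjes integral differ by $\phi(0)\Delta\alpha(s)$, which vanishes $ds$-a.e.\ since $\alpha$ has at most countably many jumps; (ii) Fubini on $ds\otimes|d\alpha|$ justified by $\left(\int_0^{t-u}|\phi(v)|\,dv\right)\le \Vert\phi\Vert_{[0,t],1}$ and the finite total variation of $\alpha$ on $[0,t]$; (iii) a Stieltjes integration by parts with the absolutely continuous $\Phi(r)=\int_0^r\phi$, where the boundary term drops out using both $\Phi(0)=0$ and the hypothesis $\alpha(0)=0$ — is the standard and natural route, and each step is valid under the stated hypotheses. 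A minor variant of step (iii) is to write $\alpha(s)=\int_0^s d\alpha(u)$ in the target expression and apply Fubini a second time; this is essentially the same computation as your integration by parts and does not change anything substantive.
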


\begin{lem}\label{lem:maj_famille_indep}
Let $\left(X_i\right)_{1\leq i \leq N}$ be a family of $N$ independent random variables. Then
$$ \mathbf{E} \left[ \left| \dfrac{1}{N} \sum_{i=1}^N \left( X_i - \mathbf{E}\left[X_i\right]\right) \right| \right] \leq \dfrac{1}{N} \sqrt{\sum_{i=1}^N{\rm Var} \left(X_i\right)}.$$
\end{lem}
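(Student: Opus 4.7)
The plan is to apply Jensen's inequality (equivalently, Cauchy--Schwarz) to move the absolute value inside a square root, and then exploit independence to compute the variance of the centered average explicitly.

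First, I would set $Y_N := \frac{1}{N} \sum_{i=1}^N (X_i - \mathbf{E}[X_i])$ and note that $\mathbf{E}[Y_N] = 0$. Applying Jensen's inequality to the concave function $x \mapsto \sqrt{x}$ yields
\[
\mathbf{E}\left[ \left| Y_N \right| \right] = \mathbf{E}\left[ \sqrt{Y_N^2} \right] \leq \sqrt{ \mathbf{E}\left[ Y_N^2 \right]} = \sqrt{\mathrm{Var}(Y_N)}.
\]
(Implicitly, if any $X_i$ has infinite variance the right-hand side is $+\infty$ and the inequality is trivial, so we may assume $\mathrm{Var}(X_i) < \infty$ for all $i$.)

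Next, I would use the independence of the $X_i$ to compute the variance of the sum. Since $\mathrm{Var}$ is additive on independent random variables and $\mathrm{Var}(aX) = a^2 \mathrm{Var}(X)$,
\[
\mathrm{Var}(Y_N) = \frac{1}{N^2} \sum_{i=1}^N \mathrm{Var}(X_i - \mathbf{E}[X_i]) = \frac{1}{N^2} \sum_{i=1}^N \mathrm{Var}(X_i).
\]
Combining the two displays gives the claimed bound. There is no real obstacle here; the result is a one-line consequence of Jensen and independence, and the statement is included as a utility lemma for the variance-type estimates appearing in the proofs of Theorems \ref{thm:cvg_0} and \ref{thm:cvg-sup_0} (see in particular the treatment of $A_{i,t,2}^{(N)}$).
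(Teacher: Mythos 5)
Your proof is correct and follows essentially the same route as the paper's: apply Jensen's inequality (via concavity of the square root) to bound $\mathbf{E}[|Y_N|]$ by $\sqrt{\mathrm{Var}(Y_N)}$, then compute the variance of the centered average using independence. The two arguments coincide step for step, up to cosmetic differences in how the centered variable is introduced.
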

\begin{proof}
We set $Y:= \dfrac{1}{N} \sum_{i=1}^N X_i$, then $\mathbf{E}\left[Y\right] =  \dfrac{1}{N} \sum_{i=1}^N \mathbf{E}[ X_i]$ and ${\rm Var}(Y)=\dfrac{1}{N^2}  \sum_{i=1}^N {\rm Var}( X_i)$ by independence. 
We have $\mathbf{E} \left[ \left| \dfrac{1}{N} \sum_{i=1}^N \left( X_i - \mathbf{E}\left[X_i\right]\right) \right| \right]  = \mathbf{E} \left[ \left| Y - \mathbf{E}[Y] \right| \right] \leq \sqrt{\mathbf{E}\left[ \left( Y - \mathbf{E}[Y]  \right)^2 \right]} = \sqrt{{\rm Var}(Y) } $ using Jensen's inequality, and the result follows with the expression of ${\rm Var}(Y)$.
\end{proof}

\begin{lem}\label{lem:picard_gen}
Let $\phi:\left[0,\infty\right[ \longrightarrow \left[0,\infty\right[$ be a locally integrable function and $g:\left[0,\infty\right[ \longrightarrow \left[0,\infty\right[$ a locally bounded function.
\begin{enumerate}[label=(\roman*)]
\item Let $u$ be a locally bounded nonnegative function such that for all $t\geq 0$: $u(t) \leq g(t)+ \int_0^t \phi (t-s) u(s) ds.$
Then for all $T\geq 0$ there exists $C_T$ (depending on $T$ and $\phi$) verifying $\displaystyle\sup_{[0,T]} u(t) \leq C_T \sup_{[0,T]} g(t).$
\item Let $\left(u_n\right)$ be a sequence of locally bounded non-negative functions such that for all $t\geq 0$ and $n\geq 0$: $u_{n+1}(t) \leq \int_0^t \phi (t-s) u_n(s) ds.$
Then for all $T\geq 0$ there exists $C_T$ (depending on $T$, $\phi$ and $u_0$) verifying $\displaystyle\sup_{[0,T]} \sum_{n\geq 0} u_n(t) \leq C_T. $
\item Let $\left(u_n\right)$ be a sequence of locally bounded non-negative functions such that for all $t\geq 0$ and $n\geq 0$: $u_{n+1}(t) \leq g(t) + \int_0^t \phi (t-s) u_n(s) ds.$
Then for all $T\geq 0$ there exists $C_T$ (depending on $T$, $\phi$, $u_0$ and $g$) verifying $\displaystyle\sup_{[0,T]} ~\sup_{n\geq 0} ~ u_n(t) \leq C_T. $
\end{enumerate}
\end{lem}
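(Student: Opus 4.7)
The three statements are instances of a generalized (Volterra–)Gronwall argument, and the common ingredient is a weighted sup-norm trick. The plan is to first establish (i), and then to deduce (ii) and (iii) from essentially the same contraction estimate.

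For (i), fix $T>0$. Since $\phi\in L^1([0,T])$, dominated convergence gives $\int_0^T e^{-\lambda s}\phi(s)\,ds \to 0$ as $\lambda \to \infty$, so we may choose $\lambda=\lambda_T>0$ such that $q:=\int_0^T e^{-\lambda s}\phi(s)\,ds \leq \tfrac{1}{2}$. Multiplying the assumed inequality by $e^{-\lambda t}$ and setting $v(t):=e^{-\lambda t}u(t)$, $\tilde g(t):=e^{-\lambda t}g(t)$, we get
\[
v(t) \le \tilde g(t) + \int_0^t e^{-\lambda(t-s)}\phi(t-s)\,v(s)\,ds
\quad (t\in [0,T]).
\]
Taking the sup over $[0,T]$ on both sides yields $\sup_{[0,T]} v \leq \sup_{[0,T]} \tilde g + q \sup_{[0,T]} v$, hence $\sup_{[0,T]} v \leq 2 \sup_{[0,T]} g$, and finally $\sup_{[0,T]} u \leq 2e^{\lambda_T T}\sup_{[0,T]} g$. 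This gives (i) with $C_T = 2 e^{\lambda_T T}$. The one subtle point is that $u$ must already be known to be locally bounded for the sup-trick to make sense; this is precisely the hypothesis, so nothing more is needed.

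For (ii), one cannot directly apply (i) to $S(t):=\sum_{n\ge 0} u_n(t)$ because we do not yet know $S$ is locally bounded. The natural workaround is to truncate: let $S_N(t):=\sum_{n=0}^N u_n(t)$, which is a finite sum of locally bounded functions, hence locally bounded. Summing the recursive bound on $u_{n+1}$ yields
\[
S_N(t) \le u_0(t) + \sum_{n=0}^{N-1}\int_0^t \phi(t-s)u_n(s)\,ds
\le u_0(t) + \int_0^t \phi(t-s)S_N(s)\,ds.
\]
Applying (i) with $g=u_0$ gives $\sup_{[0,T]} S_N \le C_T \sup_{[0,T]} u_0$ with a constant $C_T$ that is independent of $N$. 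Letting $N\to\infty$ by monotone convergence produces the desired uniform bound on $\sum_n u_n$.

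For (iii), I would iterate directly in $n$ using the same weighted norm as in (i). With $\lambda,q$ as above and $W_n:=\sup_{t\in[0,T]} e^{-\lambda t}u_n(t)$ (which is finite since each $u_n$ is locally bounded by assumption), multiplying the recursion by $e^{-\lambda t}$ and taking the supremum gives $W_{n+1}\le \sup_{[0,T]} g + q W_n$. By induction, $W_n \le \tfrac{\sup_{[0,T]} g}{1-q} + q^n W_0 \le 2\sup_{[0,T]} g + W_0$, uniformly in $n$. Undoing the exponential weight yields $\sup_n \sup_{[0,T]} u_n \le e^{\lambda_T T}\bigl(2\sup_{[0,T]} g + \sup_{[0,T]} u_0\bigr)$, which is (iii). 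The only real obstacle throughout is the choice of $\lambda_T$, and this is handled uniformly by dominated convergence; the remaining steps are bookkeeping.
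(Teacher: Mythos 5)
Your proof is correct. A small but worth-mentioning point: the paper itself does not reprove this lemma — it defers to Lemma 23 of \cite{delattre2016} — so there is no in-paper argument to compare against. The cited reference establishes the three claims by iterating the convolution inequality and controlling the iterated kernels $\phi^{*n}$, using that $\int_0^T\phi^{*n}\to 0$ fast enough for the series to converge on compacts. Your approach instead uses an exponentially weighted sup-norm $\left\Vert v\right\Vert_\lambda=\sup_{[0,T]}e^{-\lambda t}|v(t)|$ and tunes $\lambda$ so that the convolution operator is a strict contraction ($q\le 1/2$). This is an equivalent and somewhat shorter route: the contraction estimate is established once and then reused verbatim in (i), (ii), and (iii). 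Two details that you handled correctly and that are the only real pitfalls here: in (ii), one cannot apply (i) to the infinite sum $\sum_n u_n$ directly because its local boundedness is not known a priori, and your truncation to $S_N$ followed by monotone convergence resolves this cleanly; and throughout, the hypothesis that $u$ (resp.\ each $u_n$) is locally bounded is exactly what guarantees the weighted sup-norm is finite, so the contraction inequality is not vacuous.
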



\begin{lem}\label{lem:ineg_sol_conv}
Let $r>0$, $h$ be a nonnegative locally integrable function, $u$, $\alpha$ and $\beta$ be locally bounded nonnegative continuous functions such that for all $t\geq 0$:
\begin{align*}
\alpha(t) &\leq u(t) + r \int_0^t h(t-s) \alpha(s) ds,\\
\beta(t) &= u(t)+1 + r \int_0^t h(t-s) \beta(s) ds.
\end{align*}
Then $\alpha(t)\leq \beta(t)$ for all $t\geq 0$.
\end{lem}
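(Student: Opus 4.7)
The plan is to argue directly on the difference $\delta(t):=\beta(t)-\alpha(t)$ using a contradiction/continuity argument. Subtracting the inequality for $\alpha$ from the equality for $\beta$ gives, for every $t\geq 0$,
\begin{equation*}
\delta(t) \;\geq\; 1 + r\int_0^t h(t-s)\,\delta(s)\,ds,
\end{equation*}
where I use that the $u$ terms cancel and that $h\geq 0$. At $t=0$ this yields $\delta(0)\geq 1>0$, and $\delta$ is continuous on $[0,\infty)$ since $\alpha,\beta$ are.

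Define $t_{0}:=\inf\{t\geq 0 : \delta(t)\leq 0\}\in [0,\infty]$. If $t_{0}=\infty$ there is nothing to prove, so assume $t_{0}<\infty$. By continuity of $\delta$ and $\delta(0)\geq 1$, we have $t_{0}>0$ and $\delta(t_{0})=0$, while $\delta(s)\geq 0$ for every $s\in[0,t_{0}]$ (strict on $[0,t_{0})$). Plugging $t=t_{0}$ into the inequality above and using $h\geq 0$ together with $\delta\geq 0$ on $[0,t_{0}]$, the integral term is nonnegative, hence
\begin{equation*}
0 \;=\; \delta(t_{0}) \;\geq\; 1 + r\int_0^{t_{0}} h(t_{0}-s)\,\delta(s)\,ds \;\geq\; 1,
\end{equation*}
a contradiction. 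Therefore $t_{0}=\infty$, i.e.\ $\delta(t)\geq 0$ and $\alpha(t)\leq\beta(t)$ for all $t\geq 0$.

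The only subtlety in this plan is the justification that $\delta$ remains nonnegative up to and including the candidate first zero $t_{0}$; this is handled by the infimum definition together with continuity of $\alpha$ and $\beta$, which are hypothesised in the statement. Apart from that, the argument is a one-line comparison once the inequality $\delta\geq 1+rh\ast\delta$ has been written down, and requires neither Gronwall nor iteration of the kernel.
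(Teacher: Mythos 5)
Your proof is correct and takes essentially the same approach as the paper: define the first time at which the comparison could fail, observe it is strictly positive since $\delta(0)\ge 1$, and derive a contradiction by plugging that time into the convolution inequality. Your write-up is if anything slightly more careful than the paper's (you explicitly justify $\delta(t_0)=0$ and $\delta\ge 0$ on $[0,t_0]$ by continuity of $\alpha$ and $\beta$), but the underlying argument is identical.
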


\begin{proof} Let $t^*= \inf \left\{ s>0,~\alpha(s) > \beta(s)\right\}$. 
Note that $t^*>0$ as $\beta(0)-\alpha(0) \geq 1$. Suppose $t^*<\infty$, then $\beta(t^*)-\alpha(t^*) \geq 1+\int_0^{t^*}h(t-s)\left(\beta(s) - \alpha(s) \right) ds \geq 1$ which is impossible, then necessarily $t^*=+\infty$ and $\alpha(t) \leq \beta(t)$ for all $t\geq 0$.
\end{proof}

\begin{lem}\label{lem:gronwall_convole}
Let $u$ and $h$ be locally square integrable functions, $u$ non-negative, $T>0$ and $\alpha, \beta$ two constants. Assume that for any $t \in[0,T]$, 
$u(t)\leq \alpha \int_0^t h(t-s) u(s) ds + \beta.$
Then $u$ satisfies the following Gr\"{o}nwall's inequality:
$u(T)\leq \sqrt{2} \beta \exp\left( \alpha^2 \Vert h \Vert_{T,2}^2 T\right)$.
\end{lem}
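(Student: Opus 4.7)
The plan is to reduce this convolution-type Grönwall inequality (with $L^2$ kernel rather than $L^1$) to the classical integral Grönwall inequality by passing to $u^2$ and using Cauchy--Schwarz to absorb the convolution into an ordinary integral.

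First, starting from the assumed bound $u(t)\le \alpha \int_0^t h(t-s) u(s)\,ds + \beta$, I would square both sides and use the elementary inequality $(a+b)^2 \le 2a^2+2b^2$ to obtain
\[
u(t)^2 \;\le\; 2\alpha^2 \left(\int_0^t h(t-s) u(s)\,ds\right)^{2} + 2\beta^{2}.
\]
Cauchy--Schwarz applied to the convolution term yields
\[
\left(\int_0^t h(t-s) u(s)\,ds\right)^{2} \le \left(\int_0^t h(t-s)^2\,ds\right)\left(\int_0^t u(s)^2\,ds\right) \le \Vert h \Vert_{T,2}^{\,2}\int_0^t u(s)^2\,ds,
\]
valid for $t\in[0,T]$ since $h\in L^2([0,T])$. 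Combining the two bounds gives the \emph{linear} integral inequality
\[
u(t)^2 \;\le\; 2\beta^2 + 2\alpha^2 \Vert h \Vert_{T,2}^{\,2} \int_0^t u(s)^2\,ds.
\]

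Next I would apply the standard (scalar) Grönwall lemma to the non-negative function $v(t):=u(t)^2$, which is locally integrable since $u\in L^2_{\mathrm{loc}}$. This yields
\[
u(T)^2 \;\le\; 2\beta^{2}\exp\!\Big( 2\alpha^{2}\Vert h \Vert_{T,2}^{\,2}\, T\Big),
\]
and taking the square root produces exactly the announced bound $u(T) \le \sqrt{2}\,\beta\,\exp(\alpha^2 \Vert h \Vert_{T,2}^{\,2}T)$. There is no real obstacle here; the only subtlety is the choice to square the inequality in order to get a pointwise-in-$t$ integral bound of the form amenable to classical Grönwall, which is precisely what forces the $L^2$-norm of $h$ (instead of $L^1$) to appear in the final constant — a natural trade-off when $h$ is only assumed locally square-integrable.
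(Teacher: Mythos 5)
Your proof matches the paper's argument exactly: square the inequality, use $(a+b)^2\le 2a^2+2b^2$ together with Cauchy--Schwarz to pass from the convolution with $L^2$ kernel $h$ to a linear integral bound on $u^2$, then apply the classical Gr\"onwall lemma and take the square root. No differences worth noting.
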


\begin{proof} Using Cauchy-Schwarz inequality, $u(t)^2 \leq 2 \alpha^2  \Vert h \Vert_{T,2}^2\int_0^t u(s)^2 ds+2\beta^2$. We conclude by applying standard Gr\"{o}nwall lemma to $u^2$ and taking the square root (since $u\geq 0$).
\end{proof}

\begin{lem}\label{lem:inegalit_concentration_Y}
Fix $N > 1$ and $\left(Y_l\right)_{l=1,\ldots,n}$ real valued random variables defined on a probability space $\left(\Omega, \mathcal{F}, \mathbb{P}\right)$. Suppose that there exists $\nu>0$ such that, almost surely, for all $l = 1,\ldots, n-1$, $Y_l\leq 1$, $\mathbb{E}\left[Y_{l+1} \left| Y_l \right.\right] = 0$  and $\mathbb{E}\left[Y_{l+1}^2 \left|Y_l\right.\right]\leq \nu$. Then 
$\mathbb{ P} \left(n^{ -1} (Y_{ 1}+ \ldots+ Y_{ n}) \geq x\right) \leq \exp \left( -n \frac{ x^{ 2}}{ 2v} B \left( \frac{ x}{ v}\right)\right)$ for all $x \geq 0$, where \begin{equation}\label{eq:def_B(u)}
B(u):= u^{-2}\left( \left( 1+u \right) \log \left( 1+u \right) - u \right).
\end{equation}
\end{lem}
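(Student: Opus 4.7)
The statement is a classical Bennett-type concentration inequality for martingale differences, and the plan is to follow the standard Chernoff-Cramér strategy.

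The first step is to bound the conditional moment generating function of each increment. For any $\lambda>0$ and $y\leq 1$, one has the pointwise inequality
\[
e^{\lambda y}\leq 1+\lambda y+(e^{\lambda}-1-\lambda)\,y^{2},
\]
which follows by writing $e^{\lambda y}-1-\lambda y=\sum_{k\geq2}\frac{(\lambda y)^{k}}{k!}$ and comparing term by term, using $y^{k}\leq y^{2}$ for $|y|\leq 1$ and $k\geq 2$ (together with the sign of $y^{k}$ when $y<0$). Applying this to $Y_{l+1}$ conditionally on $Y_{l}$ and taking the conditional expectation, the assumptions $\mathbb{E}[Y_{l+1}\mid Y_{l}]=0$ and $\mathbb{E}[Y_{l+1}^{2}\mid Y_{l}]\leq v$ yield
\[
\mathbb{E}\bigl[e^{\lambda Y_{l+1}}\bigm|Y_{l}\bigr]\leq 1+v(e^{\lambda}-1-\lambda)\leq\exp\bigl(v(e^{\lambda}-1-\lambda)\bigr),
\]
where the last step uses $1+u\leq e^{u}$.

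Next, set $S_{n}:=Y_{1}+\cdots+Y_{n}$. Iterating the tower property with respect to the filtration generated by the $Y_{l}$'s, the previous bound gives
\[
\mathbb{E}\bigl[e^{\lambda S_{n}}\bigr]\leq\exp\bigl(nv(e^{\lambda}-1-\lambda)\bigr).
\]
Markov's inequality applied to $e^{\lambda S_{n}}$ then yields, for every $\lambda>0$ and $x\geq 0$,
\[
\mathbb{P}\bigl(n^{-1}S_{n}\geq x\bigr)\leq\exp\bigl(-n\lambda x+nv(e^{\lambda}-1-\lambda)\bigr).
\]

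The final step is to optimize in $\lambda$. Differentiating the exponent, the minimum is attained at $\lambda^{\star}=\log(1+x/v)$, and plugging this back in produces
\[
\mathbb{P}\bigl(n^{-1}S_{n}\geq x\bigr)\leq\exp\!\left(-n\bigl[(v+x)\log(1+x/v)-x\bigr]\right).
\]
A direct algebraic rewriting of the bracket in the form $\frac{x^{2}}{2v}\cdot\frac{2v^{2}}{x^{2}}\bigl[(1+x/v)\log(1+x/v)-x/v\bigr]$ together with the definition $B(u)=u^{-2}\bigl((1+u)\log(1+u)-u\bigr)$ gives exactly $\exp\!\left(-n\frac{x^{2}}{2v}B(x/v)\right)$, as claimed. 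The main technical point is the pointwise inequality for $e^{\lambda y}$ in the first step; everything else is either an application of the tower property or elementary calculus.
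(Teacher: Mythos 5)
Your Chernoff--Bennett argument is a genuinely different route from the paper's, which simply black-boxes Corollary~2.4.7 of Dembo--Zeitouni (a bound via the relative-entropy rate $H\left(\frac{x+v}{1+v}\,\middle|\,\frac{v}{1+v}\right)$) together with their Exercise~2.4.21 to pass to the form $\frac{x^{2}}{2v}B(x/v)$. Your self-contained derivation is sound in strategy and arguably more transparent, but two points need correction.

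First, the justification of the pointwise inequality $e^{\lambda y}\leq 1+\lambda y+(e^{\lambda}-1-\lambda)y^{2}$ for $y\leq 1$. Your term-by-term comparison using $y^{k}\leq y^{2}$ only covers $|y|\leq 1$; for $y<-1$ the terms alternate in sign and $|y|^{k}>y^{2}$ for $k\geq 3$, so the comparison fails, while the lemma assumes only the one-sided bound $Y_{l}\leq 1$. The inequality is nevertheless true for all $y\leq 1$; the standard fix is to observe that $y\mapsto(e^{\lambda y}-1-\lambda y)/y^{2}$ (extended by $\lambda^{2}/2$ at $y=0$) is nondecreasing on $\mathbb{R}$, so its value at any $y\leq 1$ is at most its value at $y=1$, namely $e^{\lambda}-1-\lambda$. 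Replace the series argument by this monotonicity argument.

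Second, the final algebraic identity you claim does not hold. Setting $u=x/v$, the exponent obtained by optimizing in $\lambda$ is
\[
(v+x)\log(1+x/v)-x \;=\; v\bigl[(1+u)\log(1+u)-u\bigr] \;=\; vu^{2}B(u) \;=\; \frac{x^{2}}{v}\,B(x/v),
\]
which is \emph{twice} the quantity $\frac{x^{2}}{2v}B(x/v)$ appearing in the statement, not equal to it; your ``direct algebraic rewriting'' inserts a spurious factor of $2$. The proof still establishes the lemma, because $B(u)\geq 0$ for $u\geq 0$ implies $\exp\bigl(-n\tfrac{x^{2}}{v}B(x/v)\bigr)\leq\exp\bigl(-n\tfrac{x^{2}}{2v}B(x/v)\bigr)$, so you have in fact proved a strictly stronger inequality than the one stated (the paper's constant, inherited from the Dembo--Zeitouni exercise, is not sharp). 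You should replace ``gives exactly'' by the observation that your bound dominates the stated one, rather than asserting an equality that is false.
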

\begin{proof}
A direct application of \cite[Corollary 2.4.7]{zeitouni1998large} gives that $$ \mathbb{ P}\left(n^{ -1} (Y_{ 1}+ \ldots+ Y_{ n}) \geq x\right) \leq \exp \left( -n H \left( \frac{ x+v}{ 1+v} \vert \frac{ v}{ 1+v}\right)\right),$$ where $H(p\vert q):= p \log(p/q) +(1-p) \log((1-p)/(1-q))$ for $p,q\in [0, 1]$. Then, the inequality $ H \left( \frac{ x+v}{ 1+v} \vert \frac{ v}{ 1+v}\right)\geq \frac{ x^{ 2}}{ 2v} B \left( \frac{ x}{ v}\right)$ (see \cite[Exercise 2.4.21]{zeitouni1998large}) gives the result.
\end{proof}

\begin{lem}\label{lem:inegalit_concentration}
Fix $N\geq 1$, $\left(p_1,\ldots,p_N\right)$ in $[0,1]$ and a sequence $\left(v_1,\ldots,v_N\right)$ such that $\vert v_l\vert \leq 1$ for any $l\in \llbracket 1,N \rrbracket$. Suppose that there exists $\kappa_N>0$ and $w_N\in]0,1]$ such that $p_l\leq w_N$ for any $l \in \llbracket 1,N \rrbracket$.
Then, setting $\varepsilon_n:= 32 \dfrac{\kappa_N^2w_N}{N}\log(N)$ for $\left(U_1,\ldots,U_N\right)$ independent random variables with $U_l \sim \mathcal{B}(p_l)$, we have
\begin{equation}\label{eq:inegalit_concentration}
\mathbb{P}\left( \left| \dfrac{\kappa_N}{N} \sum_{l=1}^N \left( U_l - p_l\right) v_l \right| > \varepsilon_N \right) \leq 2\exp\left( -16 \log(N) B\left( 4\sqrt{2}\left(\dfrac{\log(N)}{Nw_N}\right)^{\frac{1}{2}}\right)\right)
\end{equation}
with $B$ defined in \eqref{eq:def_B(u)}.
\end{lem}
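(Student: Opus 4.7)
The plan is to view Lemma \ref{lem:inegalit_concentration} as a direct specialization of the Bennett-type bound of Lemma \ref{lem:inegalit_concentration_Y}. I would set $Y_l := (U_l - p_l) v_l$ for $l \in \llbracket 1,N \rrbracket$ and verify the three hypotheses required there: the $Y_l$ are independent, so the conditional moment assumptions reduce to $\mathbb{E}[Y_l]=0$, which is automatic; the pointwise bound $Y_l \leq |v_l|(1-p_l) \leq 1$ follows from $|v_l|\leq 1$ and $p_l \in [0,1]$; and the variance estimate $\mathbb{E}[Y_l^2] = v_l^2 p_l(1-p_l) \leq p_l \leq w_N$ supplies $\nu = w_N$. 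Lemma \ref{lem:inegalit_concentration_Y} then yields, for every $x>0$,
\[\mathbb{P}\Bigl(N^{-1}\textstyle\sum_{l=1}^N Y_l \geq x\Bigr) \leq \exp\Bigl(-\tfrac{Nx^2}{2w_N} B(x/w_N)\Bigr).\]

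The next step is to rewrite the event of interest in a form matching the left-hand side above. Since $\bigl|\tfrac{\kappa_N}{N}\sum_l Y_l\bigr| > \varepsilon_N$ is the same as $\bigl|N^{-1}\sum_l Y_l\bigr| > \varepsilon_N/\kappa_N$, I would apply the preceding one-sided bound with $x := \varepsilon_N/\kappa_N$, and apply it again to the sequence $(-Y_l)_l$ (which satisfies the very same hypotheses, since $-Y_l \leq |v_l| p_l \leq 1$) in order to control the lower deviation. Combining the two one-sided probabilities by a union bound then produces the prefactor $2$ appearing on the right-hand side of \eqref{eq:inegalit_concentration}.

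Finally, one reads off the exponent by a purely algebraic check: substituting $\nu = w_N$ and $x = \varepsilon_N/\kappa_N$ into $-\tfrac{Nx^2}{2w_N} B(x/w_N)$ is designed to yield precisely $-16\log(N)\, B\bigl(4\sqrt{2}(\log N/(Nw_N))^{1/2}\bigr)$, which simultaneously fixes the two constants $16$ and $4\sqrt{2}$ in the target bound. There is no real analytic obstacle here: the core content of the statement, namely the Bennett concentration itself, is entirely absorbed in the earlier Lemma \ref{lem:inegalit_concentration_Y} (which in turn rests on \cite[Corollary 2.4.7]{zeitouni1998large}), so that the present lemma amounts to a convenient repackaging through the right choice of scale $x = \varepsilon_N/\kappa_N$ together with a two-sided union bound.
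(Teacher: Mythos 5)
Your reduction to Lemma~\ref{lem:inegalit_concentration_Y} via $Y_l := (U_l - p_l)v_l$, the verification of its hypotheses (independence gives the conditional moment conditions, $Y_l \leq |v_l| \leq 1$, $\mathbb{E}[Y_l^2] = v_l^2 p_l(1-p_l) \leq w_N$, so $\nu = w_N$), the two-sided treatment via $\pm Y_l$ with a union bound, and the rescaling $x = \varepsilon_N/\kappa_N$, all follow exactly the paper's own one-line proof, so the strategy is the intended one.

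The gap is in what you call a ``purely algebraic check.'' Substituting $\nu = w_N$ and $x = \varepsilon_N/\kappa_N = 32\kappa_N w_N\log(N)/N$ into $-\tfrac{Nx^2}{2\nu}B(x/\nu)$ yields
\[ -\,\frac{512\,\kappa_N^2 w_N (\log N)^2}{N}\,B\!\left(\frac{32\kappa_N\log N}{N}\right), \]
which is \emph{not} $-16\log(N)\, B\bigl(4\sqrt{2}(\log N/(Nw_N))^{1/2}\bigr)$. Both expressions can be rewritten as $-\tfrac{Nw_N}{2}h(u)$ with $h(u):=(1+u)\log(1+u)-u$ increasing, but with different arguments $u$; they coincide only when $32\kappa_N^2 w_N\log(N)/N = 1$, which is not assumed and in fact contradicts Hypothesis~\ref{hyp:conv_graph_concentration}, under which this quantity tends to $0$. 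Indeed the lemma cannot hold as literally stated for all $\kappa_N>0$: as $\kappa_N\downarrow 0$ the threshold $\varepsilon_N/\kappa_N\to 0$ and the left-hand probability tends to $1$, while the right-hand side is a fixed quantity strictly below $1$ for $N$ large. The identity you want does hold under the corrected definition $\varepsilon_N := \bigl(32\,\kappa_N^2 w_N\log(N)/N\bigr)^{1/2}$, for which $x = \varepsilon_N/\kappa_N = (32 w_N\log(N)/N)^{1/2}$ gives $\tfrac{Nx^2}{2w_N}=16\log N$ and $x/w_N = 4\sqrt{2}(\log N/(Nw_N))^{1/2}$ exactly. So the statement almost certainly contains a missing square root; a correct write-up should either carry this correction or report the Bennett exponent one actually obtains, rather than asserting the displayed identity as a routine check.
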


\begin{proof}
This is a simple corollary of Lemma A.6 applied to $Y_{ l}:= (U_{ l}-p_{ l})v_{ l}$.
\end{proof}
\section{Useful results - spectral theory}
We include here advanced results of spectral analysis that are used in the paper.
\subsection{Jentzsch/Krein–Rutman Theorem}
The following theorem can be found in \cite{Schaefer1974} (Theorem 6.6) or in \cite{Zerner1987} (Theorem 1).
\begin{thm} \label{thm:schaefer6.6}
Let $E:=L^p(\mu)$, where $1\leq p \leq +\infty$ and $(X,\Sigma,\mu)$ is a $\sigma$-finite measure space. Suppose $T\in \mathcal{L}(E)$ is an operator given by a $(\Sigma\times \Sigma)$-measurable kernel $K\geq 0$, satisfying these two assumptions:
\begin{enumerate}[label=(\roman*)]
\item Some power of $T$ is compact.
\item $S\in \Sigma$ and $\mu(S)>0, \mu(X\setminus S)>0$ implies
$$\int_{X\setminus S}\int_S K(s,t)d\mu(s)d\mu(t)>0.$$
\end{enumerate}
Then $r(T)>0$ is an eigenvalue of $T$ with a unique normalized eigenfunction $f$ satisfying $f(s)>0$ $\mu$ a.e. Moreover, if $K(s,t)>0$ $\mu\otimes\mu$ a.e. then every other eigenvalue $\lambda$ of $T$ has modulus $\vert\lambda\vert < r(T)$.
\end{thm}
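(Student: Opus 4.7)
The statement is a version of the Jentzsch/Krein–Rutman theorem for positive integral operators on $L^p(\mu)$. My plan is to reduce to a compact positive operator via assumption (i), invoke the classical Krein–Rutman theorem on that power to obtain a positive eigen-element for $T^n$, lift this eigenfunction back to $T$ itself, and finally use the irreducibility assumption (ii) to upgrade to strict positivity and uniqueness; the strict peripheral bound will rely on the strengthened assumption $K>0$ a.e.

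First I would observe that $T$ preserves the positive cone $E_+=\{f\in L^p(\mu):\ f\geq 0\text{ a.e.}\}$ because $K\geq 0$, and so does each $T^k$. Let $n\geq 1$ be such that $T^n$ is compact. Condition (ii) prevents $T^n$ from vanishing on $E_+$ (otherwise a pair $(S,X\setminus S)$ of positive measure would contradict $\int_{X\setminus S}\!\int_S K\,d\mu\,d\mu>0$). The classical Krein–Rutman theorem for compact positive operators then yields that $r(T^n)=r(T)^n>0$ is an eigenvalue of $T^n$, with an eigenvector $g\in E_+\setminus\{0\}$. Setting
\[
h\;:=\;\sum_{j=0}^{n-1}r(T)^{-j}\,T^jg,
\]
one checks directly, using $T^ng=r(T)^ng$, that $Th=r(T)h$ and $h\in E_+$: the sum telescopes because multiplying by $T$ shifts the index and the boundary term $r(T)^{-n}T^ng$ reproduces $g$. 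Hence $r(T)$ is an eigenvalue of $T$ with a nonnegative eigenfunction.

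Next I would upgrade $h\geq 0$ to $h>0$ $\mu$-a.e.\ and prove uniqueness using (ii). If $S:=\{h=0\}$ had positive measure together with $\mu(X\setminus S)>0$, then $r(T)h(x)=\int K(x,y)h(y)\,d\mu(y)=0$ for $x\in S$ would force $\int_S\!\int_{X\setminus S}K\,d\mu\,d\mu=0$, contradicting (ii); hence $h>0$ a.e. For uniqueness, given two normalized eigenfunctions $h_1,h_2\in E_+$ for $r(T)$, both are strictly positive by the previous step, so the essential infimum $c$ of $h_1/h_2$ is finite and positive; the difference $h_1-ch_2$ is a nonnegative eigenfunction of $r(T)$ which cannot be strictly positive a.e.\ by definition of $c$, so it vanishes identically, giving $h_1=ch_2$.

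For the strict peripheral bound under the stronger assumption $K>0$ a.e., let $\lambda$ be an eigenvalue with $|\lambda|=r(T)$ and eigenfunction $u\in E$. The a.e.\ pointwise inequality $|Tu|\leq T|u|$ combined with $|Tu|=r(T)|u|$ yields $T|u|\geq r(T)|u|$; pairing with a positive eigenfunction of the adjoint $T^{*}$ (which exists by applying the same construction to the transposed kernel, which inherits (i) and (ii)) forces $T|u|=r(T)|u|$, and by uniqueness $|u|$ is a positive multiple of $h$, hence strictly positive. The equality case in $|Tu|\leq T|u|$ then requires $K(x,y)u(y)$ to have $\mu$-a.e.\ constant argument in $y$ for each $x$; since $K(x,y)>0$ a.e., this constrains the argument of $u(y)$ itself to be a.e.\ constant, so $u$ is a scalar multiple of $|u|$ and $\lambda=r(T)$. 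The main obstacle is precisely this last step — extracting from an equality in the modulus inequality a rigidity statement on the argument of $u$ — which genuinely requires $K>0$: without it, the disconnected example of Section~\ref{S:sys-lim_temps} shows that the strict peripheral bound can fail.
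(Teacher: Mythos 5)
The paper does not give its own proof of Theorem~\ref{thm:schaefer6.6}: the statement is imported verbatim from Schaefer \cite{Schaefer1974} (Theorem~6.6) and Zerner \cite{Zerner1987} (Theorem~1) and used as a black box, so there is nothing of the paper's to compare your argument against. Your outline is the standard Jentzsch strategy --- pass to a compact power $T^n$, lift the Krein--Rutman eigenvector back to $T$ via the telescoping average, use (ii) to force strict positivity and uniqueness, then a Wielandt-type modulus argument for the peripheral spectrum --- and the telescoping identity $Th=r(T)h$ as well as the argument that $\{h=0\}$ has zero measure (contradicting (ii) otherwise) are both correct.

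There are, however, two genuine gaps, and they are precisely the non-routine parts of the theorem. First, you invoke Krein--Rutman on $T^n$ to produce a cone eigenvector for $r(T^n)=r(T)^n>0$, but Krein--Rutman for a compact positive operator only yields such an eigenvector under the \emph{hypothesis} $r>0$; compact positive kernel operators can be quasi-nilpotent (the Volterra kernel $\mathbf{1}_{\{t<s\}}$ on $[0,1]$). Your justification ``(ii) prevents $T^n$ from vanishing on $E_+$'' does not address this, since $T^n\not\equiv 0$ on the cone is far weaker than $r(T^n)>0$. Establishing $r(T)>0$ for an irreducible power-compact positive operator is one of the substantive points of Jentzsch's theorem and needs its own argument. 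Second, your uniqueness step rests on the claim that the essential infimum of $h_1/h_2$ is finite and strictly positive for two a.e.-positive eigenfunctions in $L^p$. This is false for general a.e.-positive $L^p$ functions (e.g.\ $h_1(x)=\sqrt{x}$, $h_2\equiv 1$ in $L^2([0,1])$: the ratio has essential infimum $0$ and essential supremum $1$, and swapping them gives an unbounded ratio). One must exploit the eigenfunction relation and irreducibility further --- typically showing that $h_i=r(T)^{-m}T^m h_i$ for a power $m$ after which the kernel of $T^m$ is a.e.\ positive, whence the $h_i$ are mutually comparable --- before the ``subtract the largest multiple of $h_2$ from $h_1$'' step can be run. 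As a smaller point, the adjoint step in your peripheral-spectrum argument needs $1\le p<\infty$ (or a duality workaround), since $(L^\infty)^*\neq L^1$ even though the theorem is stated up to $p=\infty$.
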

\subsection{Renewal theory}
The following theorem can be found in \cite{Feller1941} (Theorem 4). This article studies the behavior of solutions of the integral equation
\begin{equation}\label{eq:renewal}
u(t)=g(t)+\int_0^t u(t-x)f(x)dx,
\end{equation}
where $f$ and $g$ are measurable, non-negative and bounded in every finite interval $[0,T]$.
\begin{thm}\label{thm:feller4}
Suppose $\int_0^\infty f(t)dt >1$, $\int _0^\infty g(t)dt=b<\infty$. Suppose moreover that there exists an integer $n\geq 2$ such that the moments $m_k=\int_0^\infty t^kf(t)dt$, $k=1,2,\cdots,n$, are finite and that the functions $f(t)$, $tf(t)$, $t^2f(t)$, $\cdots$, $t^{n-2}f(t)$ are of bounded total variation over $(0,\infty)$. Suppose finally that 
$$\lim_{t\to\infty}t^{n-2}g(t)=0 \text{ and } \lim_{t\to\infty} t^{n-2}\int_t^\infty g(x)dx=0.$$
Then there is a unique $\alpha_0>0$ such that $\mathcal{L}_f(\alpha_0)=1$ and for some constant $C$ depending on $(mu, f,g)$:
$$u(t) \sim_{t\to\infty} Ce^{\alpha_0t}, \text{ where $u$ solves \eqref{eq:renewal}.
}$$
\end{thm}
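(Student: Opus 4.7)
The plan is to reduce the equation \eqref{eq:renewal} to a standard (probabilistic) renewal equation by an exponential tilting, and then invoke the key renewal theorem.

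First, existence and uniqueness of $\alpha_0$ are immediate: the Laplace transform $\mathcal{L}_f$ is continuous and strictly decreasing on $[0,\infty)$ with $\mathcal{L}_f(0)=\int_0^\infty f >1$ and $\mathcal{L}_f(\alpha)\to 0$ as $\alpha\to\infty$ (by dominated convergence, since $f$ is locally integrable and $\int f$ is finite under the moment assumption). The intermediate value theorem yields a unique $\alpha_0>0$ with $\mathcal{L}_f(\alpha_0)=1$. Differentiability gives moreover $\mathcal{L}_f'(\alpha_0)=-\int_0^\infty t e^{-\alpha_0 t} f(t)\,dt<0$, which will serve to identify the constant $C$.

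Next, I would set $v(t):= e^{-\alpha_0 t} u(t)$, $\tilde{f}(t):= e^{-\alpha_0 t} f(t)$, and $\tilde{g}(t):= e^{-\alpha_0 t} g(t)$. Multiplying \eqref{eq:renewal} by $e^{-\alpha_0 t}$ yields
\begin{equation*}
v(t)= \tilde{g}(t)+ \int_0^t v(t-x) \tilde{f}(x)\,dx,
\end{equation*}
which is now a \emph{proper} renewal equation since $\int_0^\infty \tilde{f}=\mathcal{L}_f(\alpha_0)=1$, so $\tilde{f}$ is a probability density with finite mean $\tilde{m}:= -\mathcal{L}_f'(\alpha_0)= \int_0^\infty t\tilde{f}(t)\,dt<\infty$.

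The heart of the argument is then to apply the key (Smith) renewal theorem, which gives $v(t)\to \tfrac{1}{\tilde{m}}\int_0^\infty \tilde{g}(s)\,ds = \tfrac{\mathcal{L}_g(\alpha_0)}{-\mathcal{L}_f'(\alpha_0)}$ as $t\to\infty$, so that $u(t)\sim C e^{\alpha_0 t}$ with $C= \mathcal{L}_g(\alpha_0)/(-\mathcal{L}_f'(\alpha_0))$. For this, two technical verifications are needed: (a) that the tilted density $\tilde{f}$ is non-arithmetic (automatic here since $f$ is a continuous-type locally integrable density, not concentrated on a lattice), and (b) that $\tilde{g}$ is directly Riemann integrable on $[0,\infty)$. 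The latter is where the hypotheses $t^{n-2}g(t)\to 0$ and $t^{n-2}\int_t^\infty g\to 0$ are used: combined with the exponential decay factor $e^{-\alpha_0 t}$, these imply that $\tilde{g}$ is bounded, continuous a.e., and that its upper/lower step approximations on any mesh converge, yielding direct Riemann integrability. The bounded variation conditions on $t^k f(t)$ for $k\le n-2$ are what guarantee, via Feller's sharper version of the renewal theorem, that the error $v(t)-\tfrac{1}{\tilde m}\int_0^\infty \tilde{g}$ decays fast enough for the asymptotic equivalence to be meaningful (and justify, in the original formulation of \cite{Feller1941}, an $n$-term expansion rather than just a leading term).

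The main obstacle is the careful verification of direct Riemann integrability of $\tilde{g}$ and the non-arithmetic property of $\tilde{f}$ in full generality, together with the identification of the constant $C$ (which requires the differentiability of $\mathcal{L}_f$ at $\alpha_0$ and Fubini to interchange the integrals defining $\tilde{m}$). Since this is essentially the classical Feller renewal theorem, I would simply invoke \cite[Theorem 4]{Feller1941} once the tilting reduction above has been performed.
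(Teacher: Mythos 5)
The paper states this as a quoted result from \cite{Feller1941} and gives no proof, so your sketch is filling a gap rather than reproducing an argument from the paper. Your route is the standard modern one: exponential tilting by $e^{-\alpha_0 t}$ turns \eqref{eq:renewal} into a proper renewal equation with probability kernel $\tilde f(t)=e^{-\alpha_0 t}f(t)$ of finite mean $\tilde m=-\mathcal{L}_f'(\alpha_0)$, and the key renewal theorem then gives $v(t)\to\mathcal{L}_g(\alpha_0)/\tilde m$, hence $u(t)\sim Ce^{\alpha_0 t}$ with the constant you identify. The two technical points are handled correctly in outline: $g$ is bounded (bounded on compacts plus $t^{n-2}g(t)\to0$, $n\geq2$), so $\tilde g$ is dominated by the non-increasing integrable function $\|g\|_\infty e^{-\alpha_0 t}$, giving direct Riemann integrability; and $\tilde f$ is non-arithmetic because it is absolutely continuous. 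Two remarks. First, Feller's 1941 proof does not use this tilting reduction (the key renewal theorem postdates it) but a Tauberian/Laplace-transform argument yielding an $n$-term asymptotic expansion, which is why the bounded-variation and higher-moment hypotheses appear in the statement; your leading-order argument does not actually use them, and produces a cleaner proof of just the part the paper needs. Second, your closing sentence that you ``would simply invoke [Feller1941, Theorem 4]'' is circular, since that is precisely the result being proved; you should instead cite Smith's key renewal theorem at that point, but this is a presentation slip and does not affect the correctness of the preceding reduction.
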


\end{appendix}

\bibliographystyle{abbrv}

\begin{thebibliography}{}

\end{thebibliography}


\begin{thebibliography}{10}

\bibitem{Amari1977}
S.-I. Amari.
\newblock Dynamics of pattern formation in lateral-inhibition type neural
  fields.
\newblock {\em Biological Cybernetics}, 27(2):77--87, 1977.

\bibitem{athreya1976feller}
K.~Athreya and K.~R. Murthy.
\newblock Feller's renewal theorem for systems of renewal equations.
\newblock {\em Journal of the Indian Institute of Science}, 58(10):437, 1976.

\bibitem{Baladron2012}
J.~Baladron, D.~Fasoli, O.~Faugeras, and J.~Touboul.
\newblock Mean-field description and propagation of chaos in networks of
  {Hodgkin}-{Huxley} and {FitzHugh}-{Nagumo} neurons.
\newblock {\em The Journal of Mathematical Neuroscience}, 2(1):10, 2012.

\bibitem{bayraktar2021graphon}
E.~Bayraktar, S.~Chakraborty, and R.~Wu.
\newblock Graphon mean field systems, 2021.
\newblock arXiv:2003.13180.

\bibitem{bet2020weakly}
G.~Bet, F.~Coppini, and F.~R. Nardi.
\newblock Weakly interacting oscillators on dense random graphs, 2020.
\newblock arXiv:2006.07670.

\bibitem{Bhamidi2016}
S.~Bhamidi, A.~Budhiraja, and R.~Wu.
\newblock Weakly interacting particle systems on inhomogeneous random graphs.
\newblock {\em Stochastic Processes and their Applications}, 129, 12 2016.

\bibitem{billingsley}
P.~Billingsley.
\newblock {\em Convergence of Probability Measures}.
\newblock Wiley, New York, 1968.

\bibitem{Borgs2011}
C.~Borgs, J.~Chayes, L.~Lov{\'{a}}sz, V.~S{\'{o}}s, and K.~Vesztergombi.
\newblock Limits of randomly grown graph sequences.
\newblock {\em European Journal of Combinatorics}, 32(7):985--999, Oct. 2011.

\bibitem{Borgs2018}
C.~Borgs, J.~T. Chayes, H.~Cohn, and Y.~Zhao.
\newblock An {$L^p$} theory of sparse graph convergence {II}: {LD} convergence,
  quotients and right convergence.
\newblock {\em Ann. Probab.}, 46(1):337--396, 2018.

\bibitem{Borgs2019}
C.~Borgs, J.~T. Chayes, H.~Cohn, and Y.~Zhao.
\newblock An {$L^p$} theory of sparse graph convergence {I}: {L}imits, sparse
  random graph models, and power law distributions.
\newblock {\em Trans. Amer. Math. Soc.}, 372(5):3019--3062, 2019.

\bibitem{Bosking1997}
W.~H. Bosking, Y.~Zhang, B.~Schofield, and D.~Fitzpatrick.
\newblock Orientation selectivity and the arrangement of horizontal connections
  in tree shrew striate cortex.
\newblock {\em The Journal of Neuroscience}, 17(6):2112--2127, Mar. 1997.

\bibitem{bremaud1996stability}
P.~Br{\'e}maud and L.~Massouli{\'e}.
\newblock Stability of nonlinear {Hawkes} processes.
\newblock {\em The Annals of Probability}, pages 1563--1588, 1996.

\bibitem{Bressloff2011}
P.~C. Bressloff.
\newblock Spatiotemporal dynamics of continuum neural fields.
\newblock {\em Journal of Physics A: Mathematical and Theoretical},
  45(3):033001, Dec. 2011.

\bibitem{Budhiraja2016}
A.~Budhiraja and R.~Wu.
\newblock Some fluctuation results for weakly interacting multi-type particle
  systems.
\newblock {\em Stochastic Processes and their Applications}, 126(8):2253--2296,
  Aug. 2016.

\bibitem{Chevallier2017}
J.~Chevallier.
\newblock Mean-field limit of generalized hawkes processes.
\newblock {\em Stochastic Processes and their Applications},
  127(12):3870--3912, Dec. 2017.

\bibitem{CHEVALLIER20191}
J.~Chevallier, A.~Duarte, E.~Löcherbach, and G.~Ost.
\newblock Mean field limits for nonlinear spatially extended {Hawkes} processes
  with exponential memory kernels.
\newblock {\em Stochastic Processes and their Applications}, 129(1):1 -- 27,
  2019.

\bibitem{Chung2002}
F.~Chung and L.~Lu.
\newblock Connected components in random graphs with given expected degree
  sequences.
\newblock {\em Annals of Combinatorics}, 6(2):125--145, Nov. 2002.

\bibitem{coppini2019long}
F.~Coppini.
\newblock Long time dynamics for interacting oscillators on graphs.
\newblock {\em Annals of Applied Probability (to appear)}, 2019.
\newblock arXiv:1908.01520.

\bibitem{Coppini2019}
F.~Coppini, H.~Dietert, and G.~Giacomin.
\newblock A {L}aw of {L}arge numbers and {L}arge {D}eviations for interacting
  diffusions on {E}rd{\H{o}}s{\textendash}{R}{\'{e}}nyi graphs.
\newblock {\em Stochastics and Dynamics}, 20(02):2050010, July 2019.

\bibitem{Cormier2020}
Q.~Cormier, E.~Tanr{\'{e}}, and R.~Veltz.
\newblock Long time behavior of a mean-field model of interacting neurons.
\newblock {\em Stochastic Processes and their Applications}, 130(5):2553--2595,
  May 2020.

\bibitem{Costa2020}
M.~Costa, C.~Graham, L.~Marsalle, and V.~C. Tran.
\newblock Renewal in {Hawkes} processes with self-excitation and inhibition.
\newblock {\em Advances in Applied Probability}, 52(3):879--915, Sept. 2020.

\bibitem{csorgHo1983quantile}
M.~Cs\"{o}rg\H{o}.
\newblock {\em Quantile processes with statistical applications}, volume~42 of
  {\em CBMS-NSF Regional Conference Series in Applied Mathematics}.
\newblock Society for Industrial and Applied Mathematics (SIAM), Philadelphia,
  PA, 1983.

\bibitem{Delarue2015}
F.~Delarue, J.~Inglis, S.~Rubenthaler, and E.~Tanr{\'{e}}.
\newblock Global solvability of a networked integrate-and-fire model of
  {McKean}{\textendash}{Vlasov} type.
\newblock {\em The Annals of Applied Probability}, 25(4):2096--2133, Aug. 2015.

\bibitem{delattre2016}
S.~Delattre, N.~Fournier, and M.~Hoffmann.
\newblock Hawkes processes on large networks.
\newblock {\em Ann. Appl. Probab.}, 26(1):216--261, 02 2016.

\bibitem{DelattreGL2016}
S.~Delattre, G.~Giacomin, and E.~Lu{\c{c}}on.
\newblock A {N}ote on {D}ynamical {M}odels on {R}andom {G}raphs and
  {F}okker{\textendash}{P}lanck equations.
\newblock {\em Journal of Statistical Physics}, 165(4):785--798, Nov. 2016.

\bibitem{zeitouni1998large}
A.~Dembo and O.~Zeitouni.
\newblock {\em Large deviations techniques and applications}, volume~38 of {\em
  Applications of Mathematics (New York)}.
\newblock Springer-Verlag, New York, second edition, 1998.

\bibitem{diaconis2007graph}
P.~Diaconis and S.~Janson.
\newblock Graph limits and exchangeable random graphs.
\newblock {\em Rend. Mat. Appl. (7)}, 28(1):33--61, 2008.

\bibitem{dion2020exponential}
C.~Dion, S.~Lemler, and E.~L\"{o}cherbach.
\newblock Exponential ergodicity for diffusions with jumps driven by a hawkes
  process.
\newblock {\em Theory of Probability and Mathematical Statistics}, 102:97--115,
  Mar. 2021.

\bibitem{Ditlevsen2017}
S.~Ditlevsen and E.~L\"{o}cherbach.
\newblock Multi-class oscillating systems of interacting neurons.
\newblock {\em Stochastic Processes and their Applications}, 127(6):1840--1869,
  June 2017.

\bibitem{Dudley2002}
R.~M. Dudley.
\newblock {\em Real Analysis and Probability}.
\newblock Cambridge University Press, Oct. 2002.

\bibitem{duval:hal-03233710}
C.~Duval, E.~Lu{\c c}on, and C.~Pouzat.
\newblock Interacting {H}awkes processes with multiplicative inhibition, 2021.
\newblock arXiv:2105.10597.

\bibitem{fatt1952spontaneous}
P.~Fatt and B.~Katz.
\newblock Spontaneous subthreshold activity at motor nerve endings.
\newblock {\em The Journal of physiology}, 117(1):109--128, 1952.

\bibitem{Feller1941}
W.~Feller.
\newblock On the integral equation of renewal theory.
\newblock {\em The Annals of Mathematical Statistics}, 12(3):243--267, Sept.
  1941.

\bibitem{FitzHugh1961}
R.~FitzHugh.
\newblock Impulses and physiological states in theoretical models of nerve
  membrane.
\newblock {\em Biophysical Journal}, 1(6):445--466, July 1961.

\bibitem{HAWKES1971}
A.~G. Hawkes.
\newblock Point spectra of some self-exciting and mutually exciting point
  processes.
\newblock {\em Biometrika}, 58(1):83--90, 1971.

\bibitem{Hawkes1974}
A.~G. Hawkes and D.~Oakes.
\newblock A cluster process representation of a self-exciting process.
\newblock {\em Journal of Applied Probability}, 11(3):493--503, Sept. 1974.

\bibitem{Heesen2021}
S.~Heesen and W.~Stannat.
\newblock Fluctuation limits for mean-field interacting nonlinear {H}awkes
  processes.
\newblock {\em Stochastic Process. Appl.}, 139:280--297, 2021.

\bibitem{Hodara2017}
P.~Hodara and E.~L\"{o}cherbach.
\newblock Hawkes processes with variable length memory and an infinite number
  of components.
\newblock {\em Advances in Applied Probability}, 49(1):84--107, Mar. 2017.

\bibitem{Hodgkin1952}
A.~L. Hodgkin and A.~F. Huxley.
\newblock Currents carried by sodium and potassium ions through the membrane of
  the giant axon of loligo.
\newblock {\em The Journal of Physiology}, 116(4):449--472, Apr. 1952.

\bibitem{hoeffding1961strong}
W.~Hoeffding.
\newblock The strong law of large numbers for u-statistics.
\newblock Technical report, North Carolina State University. Dept. of
  Statistics, 1961.

\bibitem{jacod2013limit}
J.~Jacod and A.~Shiryaev.
\newblock {\em Limit theorems for stochastic processes}, volume 288.
\newblock Springer Science \& Business Media, 2013.

\bibitem{janson2011graphons}
S.~Janson.
\newblock Graphons, cut norm and distance, couplings and rearrangements.
\newblock {\em volume~4 of New York Journal of Mathematics. NYJM Monographs},
  2013.

\bibitem{lacker2021local}
D.~Lacker, K.~Ramanan, and R.~Wu.
\newblock Local weak convergence for sparse networks of interacting processes,
  2021.
\newblock arXiv:1904.02585.

\bibitem{lapique1907}
L.~Lapique.
\newblock Recherches quantitatives sur l'excitation \'electrique des nerfs
  trait\'ee comme une polarization.
\newblock {\em Journal of Physiology and Pathololgy}, 9:620--635, 1907.

\bibitem{lovasz2012large}
L.~Lov{\'a}sz.
\newblock {\em Large networks and graph limits}, volume~60.
\newblock American Mathematical Soc., 2012.

\bibitem{Lovsz2006}
L.~Lov{\'{a}}sz and B.~Szegedy.
\newblock Limits of dense graph sequences.
\newblock {\em Journal of Combinatorial Theory, Series B}, 96(6):933--957, Nov.
  2006.

\bibitem{Luon2020}
E.~Lu{\c{c}}on.
\newblock Quenched asymptotics for interacting diffusions on inhomogeneous
  random graphs.
\newblock {\em Stochastic Processes and their Applications},
  130(11):6783--6842, Nov. 2020.

\bibitem{Luon2016}
E.~Lu{\c{c}}on and W.~Stannat.
\newblock Transition from gaussian to non-gaussian fluctuations for mean-field
  diffusions in spatial interaction.
\newblock {\em The Annals of Applied Probability}, 26(6):3840--3909, Dec. 2016.

\bibitem{DeMasi2014}
A.~D. Masi, A.~Galves, E.~L\"{o}cherbach, and E.~Presutti.
\newblock Hydrodynamic limit for interacting neurons.
\newblock {\em Journal of Statistical Physics}, 158(4):866--902, Nov. 2014.

\bibitem{McKean}
H.~P.~J. McKean.
\newblock Propagation of chaos for a class of non-linear parabolic equations.
\newblock In {\em Stochastic {D}ifferential {E}quations ({L}ecture {S}eries in
  {D}ifferential {E}quations, {S}ession 7, {C}atholic {U}niv., 1967)}, pages
  41--57. Air Force Office Sci. Res., Arlington, Va., 1967.

\bibitem{Medvedev2013}
G.~S. Medvedev.
\newblock The nonlinear heat equation on {W}-random graphs.
\newblock {\em Archive for Rational Mechanics and Analysis}, 212(3):781--803,
  Dec. 2013.

\bibitem{Mountcastle1997}
V.~Mountcastle.
\newblock The columnar organization of the neocortex.
\newblock {\em Brain}, 120(4):701--722, Apr. 1997.

\bibitem{Nagumo1962}
J.~Nagumo, S.~Arimoto, and S.~Yoshizawa.
\newblock An active pulse transmission line simulating nerve axon.
\newblock {\em Proceedings of the {IRE}}, 50(10):2061--2070, Oct. 1962.

\bibitem{Nelsen1999}
R.~B. Nelsen.
\newblock {\em An Introduction to Copulas}.
\newblock Springer New York, 1999.

\bibitem{Ogata1988}
Y.~Ogata.
\newblock Statistical models for earthquake occurrences and residual analysis
  for point processes.
\newblock {\em Journal of the American Statistical Association}, 83(401):9--27,
  Mar. 1988.

\bibitem{Oliveira2019}
R.~I. Oliveira and G.~H. Reis.
\newblock Interacting diffusions on random graphs with diverging average
  degrees: Hydrodynamics and large deviations.
\newblock {\em Journal of Statistical Physics}, 176(5):1057--1087, July 2019.

\bibitem{Oliveira2020}
R.~I. Oliveira, G.~H. Reis, and L.~M. Stolerman.
\newblock Interacting diffusions on sparse graphs: hydrodynamics from local
  weak limits.
\newblock {\em Electron. J. Probab.}, 25:Paper No. 110, 35, 2020.

\bibitem{Omelchenko2012}
I.~Omelchenko, B.~Riemenschneider, P.~H\"{o}vel, Y.~Maistrenko, and
  E.~Sch\"{o}ll.
\newblock Transition from spatial coherence to incoherence in coupled chaotic
  systems.
\newblock {\em Physical Review E}, 85(2), Feb. 2012.

\bibitem{Ouadah2019}
S.~Ouadah, S.~Robin, and P.~Latouche.
\newblock Degree-based goodness-of-fit tests for heterogeneous random graph
  models: Independent and exchangeable cases.
\newblock {\em Scandinavian Journal of Statistics}, 47(1):156--181, Oct. 2019.

\bibitem{Raad2020}
M.~B. Raad, S.~Ditlevsen, and E.~L\"{o}cherbach.
\newblock Stability and mean-field limits of age dependent {Hawkes} processes.
\newblock {\em Annales de l{\textquotesingle}Institut Henri Poincar{\'{e}},
  Probabilit{\'{e}}s et Statistiques}, 56(3), Aug. 2020.

\bibitem{raad2020stability}
M.~B. Raad and E.~L\"{o}cherbach.
\newblock Stability for {H}awkes processes with inhibition.
\newblock {\em Electron. Commun. Probab.}, 25:Paper No. 33, 9, 2020.

\bibitem{Schaefer1974}
H.~H. Schaefer.
\newblock {\em Banach Lattices and Positive Operators}.
\newblock Springer Berlin Heidelberg, 1974.

\bibitem{smith1980mechanisms}
D.~Smith.
\newblock Mechanisms of action potential propagation failure at sites of axon
  branching in the crayfish.
\newblock {\em The Journal of physiology}, 301(1):243--259, 1980.

\bibitem{Sznitman1989}
A.-S. Sznitman.
\newblock Topics in propagation of chaos.
\newblock In P.-L. Hennequin, editor, {\em Ecole d'Et{\'e} de Probabilit{\'e}s
  de Saint-Flour XIX --- 1989}, pages 165--251, Berlin, Heidelberg, 1991.
  Springer Berlin Heidelberg.

\bibitem{Touboul2014}
J.~Touboul.
\newblock Propagation of chaos in neural fields.
\newblock {\em The Annals of Applied Probability}, 24(3):1298--1328, June 2014.

\bibitem{varadarajan1958convergence}
V.~S. Varadarajan.
\newblock On the convergence of sample probability distributions.
\newblock {\em Sankhy{\=a}: The Indian Journal of Statistics (1933-1960)},
  19(1/2):23--26, 1958.

\bibitem{Wilson1972}
H.~R. Wilson and J.~D. Cowan.
\newblock Excitatory and inhibitory interactions in localized populations of
  model neurons.
\newblock {\em Biophysical Journal}, 12(1):1--24, Jan. 1972.

\bibitem{Zerner1987}
M.~Zerner.
\newblock Quelques propri{\'{e}}t{\'{e}}s spectrales des op{\'{e}}rateurs
  positifs.
\newblock {\em Journal of Functional Analysis}, 72(2):381--417, June 1987.

\end{thebibliography}

\end{document}